\newtheorem{theorem}{Theorem}[section]
\newtheorem{lemma}[theorem]{Lemma}
\newtheorem{proposition}[theorem]{Proposition}
\newtheorem{conjecture}[theorem]{Conjecture}
\newtheorem{corollary}[theorem]{Corollary}
\theoremstyle{definition}
\newtheorem{definition}[theorem]{Definition}
\newtheorem{assumption}[theorem]{Assumption}
\theoremstyle{remark}
\newtheorem{remark}[theorem]{Remark}
\numberwithin{equation}{section}
\newcommand{\ipa}[1]{\left(#1\right)}
\newcommand{\Gal}{{\mathrm {Gal}}}
\newcommand{\ord}{\mathrm{ord}}
\newcommand{\Pic}{\mathrm{Pic}}
\newcommand{\M}{\mathrm{M}}
\newcommand{\sign}{\mathrm{sign}}
\newcommand{\Ind}{{\mathrm{Ind}}}
\newcommand{\PGL}{{\mathrm{PGL}}}
\newcommand{\GL}{{\mathrm{GL}}}
\newcommand{\SU}{{\mathrm{SU}}}
\newcommand{\SO}{{\mathrm {SO}}}
\newcommand{\SL}{{\mathrm {SL}}}
\newcommand{\Z}{{\mathbb Z}}
\newcommand{\bH}{{\mathbb H}}
\newcommand{\mfM}{{\mathfrak M}}
\newcommand{\A}{{\mathbb A}}
\newcommand{\Q}{{\mathbb Q}}
\newcommand{\C}{{\mathbb C}}
\newcommand{\R}{{\mathbb R}}
\newcommand{\N}{{\mathbb N}}
\newcommand{\G}{{\mathbb G}}
\newcommand{\I}{{\mathbb I}}
\newcommand{\PP}{{\mathbb P}}
\newcommand{\cA}{{\mathcal A}}
\newcommand{\cR}{{\mathcal R}}
\newcommand{\cS}{{\mathcal S}}
\newcommand{\cD}{{\mathcal D}}
\newcommand{\cB}{{\mathcal B}}
\newcommand{\cK}{{\mathcal K}}
\newcommand{\cG}{{\mathcal G}}
\newcommand{\cM}{{\mathcal M}}
\newcommand{\cP}{{\mathcal P}}
\newcommand{\cO}{{\mathcal O}}
\newcommand{\mfg}{{\mathfrak g}}
\newcommand{\mfgltwo}{{\mathfrak gl}_2}
\newcommand{\ra}{{\rightarrow}}
\newcommand{\Hom}{{\mathrm {Hom}}}
\newcommand{\sgn}{\mathrm{sgn}}
\newcommand{\fp}{{\mathfrak{p}}}
\newcommand{\fq}{{\mathfrak{q}}}
\title[Periods of modular forms and applications to the conjectures of Oda and of Prasanna--Venkatesh]{Comparison of periods of modular forms and  applications to the conjectures of Oda and of Prasanna--Venkatesh}
\author{Xavier Guitart}
\address{Departament de Matemàtiques i Informàtica, Universitat de Barcelona}
\email{xevi.guitart@gmail.com}
\author{Santiago Molina}
\address{Departament de Matemàtica, Universitat de Lleida}
\email{santiago.molina@udl.cat} 
\begin{document}
\maketitle

\begin{abstract}
  We establish several formulas relating periods of modular forms on quaternion algebras over number fields to special values of $L$-functions. Our main inputs are the cohomological techniques for working with periods introduced in \cite{preprintsanti2}, along with explicit versions of the Waldspurger formula due to Cai--Shu--Tian \cite{CST}. We work in general even positive weights; when specialized to parallel weight 2, our formulas provide partial evidence for the conjectures of Oda and of Prasanna--Venkatesh in the case of forms associated to elliptic curves.
\end{abstract}

\section{Introduction}

 Periods of modular forms play an important role in number theory due to their connection with special values of $L$-functions and their appearance in several landmark conjectures, such as the Birch and Swinnerton-Dyer Conjecture or Deligne's conjecture on critical values of $L$-functions.  The goal of the present article is to prove several formulas relating periods of modular forms with special values of $L$-functions, and to provide applications to the conjectures of Oda and of Prasanna--Venkatesh.

\subsection{Statement of the main results}\label{sec:main results}
 
The setting we consider is the following. Let $F$ be a number field of degree $d$ and let $B$ be a quaternion algebra over $F$. Denote by $G$ the algebraic group associated to $B^\times/F^\times$. Let $\pi$ be an automorphic representation of $G$ of weight $\underline{k}=(k_\nu)_{\nu:F\hookrightarrow \bar \Q}\in (2\N)^d$ and conductor $N$. Let $L_{\underline k}$ be the number field fixed by ${\{\tau\in\Gal(\bar \Q/\Q)\colon k_{\tau\nu}=k_\nu \text{ for all }\nu \}}$, and let $L_\pi$ be the coefficient field of $\pi$, defined as the smallest extension of $L_{\underline k}$ that contains the Hecke eigenvalues of $\pi$  (in particular, for parallel weight, $L_\pi$ is the field of Hecke eigenvalues). Let $\Sigma_B$ be the set of infinite places of $F$ where $B$ splits and let $\varepsilon  \in \{\pm 1\}^{\Sigma_B}$ be a \emph{sign vector}; that is, a choice of sign for each place in $\Sigma_B$.  We say that $\varepsilon$ is of \emph{lowest degree} if it takes value $+1$ at all complex places, and of \emph{highest degree} if it takes value $-1$ at all complex places.  We define the periods of $\pi$ attached to a sign vector of lowest or highest degree as in \cite{preprintsanti2}. More precisely, inspired by the approach of \cite{harder} (see also \cite{ESsanti}), for any sign vector $\varepsilon$ we define an Eichler--Shimura morphism ${\rm ES}_\varepsilon$ that associates to any modular form for $G$ a certain cohomology class.  If $\varepsilon$ is of lowest or highest degree, the class corresponding  under $\mathrm{ES}_\varepsilon$ to a normalized newform for $\pi$ can be divided by a period, that we call $\Omega_\varepsilon^\pi$, so that it becomes $L_\pi$-rational. This determines the period $\Omega_\varepsilon^\pi$ up to multiplication by an element of $L_\pi^\times$. See \S \ref{cohoAG} for the construction of the Eichler--Shimura morphisms and \S \ref{sec:periods} for the definition of the periods.

We now state the three main results of this note, in some cases under simplifying assumptions that, while not strictly necessary, allow for a clearer presentation in the introduction. The most general versions are given in the main body of the text, with references to their location indicated in parentheses. The first result is in the particular case where $G=\PGL_2$. For consistency with the notation that we will use later on, let us denote in this case by $\Pi$ an automorphic representation of $\PGL_2$ and  by $\Omega_\varepsilon^\Pi$ the periods associated to a sign vector $\varepsilon \in \{\pm 1\}^{\Sigma_F}$  of lowest or highest degree (here $\Sigma_F$ denotes the set of infinite places of $F$).

\begin{proposition}[Corollary \ref{ShimRel}]\label{prop1}
    Let $\rho:\I_F/F^\times\rightarrow \{\pm 1\}$ be a quadratic Hecke character and let $E_\rho/F$ be the associated quadratic extension. Let $\varepsilon$ be the sign vector of lowest degree defined by $\varepsilon_\sigma=\rho_{\sigma}(-1)$ for all $\sigma\in\Sigma_F$. Then 
    \[
    \frac{L(1/2,\Pi, \rho)}{|d_F|^{\frac{1}{2}}\cdot\pi^{\frac{\underline k}{2}}\cdot i^{s_\varepsilon}\cdot |D_{\rho}|^{\frac{1}{2}}\cdot \Omega_\varepsilon^\Pi}\,\text{ belongs to }\, L_{\Pi},
    \]
    where $|d_F|$ (resp. $|D_\rho|$) denotes the norm of the different ideal of $F$ (resp. the norm of the the relative discriminant of $E_\rho/F$), $s_\varepsilon=\#\{\sigma\in\Sigma_F\colon \varepsilon_\sigma=-1\}$, and $\pi^{\frac{\underline{k}}{2}}$  stands for the real number $\pi$ raised to the half-sum of the components of the weigh vector \underline{k}.
\end{proposition}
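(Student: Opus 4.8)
The plan is to realize the central value $L(1/2,\Pi,\rho)=L(1/2,\Pi\otimes\rho)$ as a cohomological period of the normalized newform $\phi_0$ of $\Pi$ and then to read off the rationality statement from the definition of $\Omega^\Pi_\varepsilon$. \emph{First step: an integral representation.} I would start from Hecke's Mellin-transform representation of the twisted $L$-function: for a suitable pure tensor whose archimedean components are the weight-$\underline k$ extreme vectors and whose finite component is the new vector, one has $\int_{\I_F/F^\times}\phi_0\!\left(\begin{smallmatrix} y & 0 \\ 0 & 1\end{smallmatrix}\right)\rho(y)\,d^\times y = c_\infty\cdot c_{\mathrm{fin}}\cdot L(1/2,\Pi\otimes\rho)$, and use the explicit local zeta-integral computations of \cite{CST} to make $c_\infty,c_{\mathrm{fin}}$ explicit. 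At each $\sigma\in\Sigma_F$ the local integral of a weight-$k_\sigma$ discrete series against $\rho_\sigma$ contributes $\pi^{-k_\sigma/2}$ times a rational times a power of $i$ governed by $\rho_\sigma(-1)$, assembling into $\pi^{-\underline k/2}\, i^{-s_\varepsilon}$ up to $L_\Pi^\times$; at the places dividing the conductor of $\rho$ the new vector of $\Pi_v$ is not the new vector of $(\Pi\otimes\rho)_v$, so the local integral yields the local Gauss sum of $\rho$, whose product over all $v$ is the global Gauss sum $\tau(\rho)$ with $\tau(\rho)^2=\rho(-1)|D_\rho|$, hence $\tau(\rho)\in i^{s_\varepsilon}|D_\rho|^{1/2}L_\Pi^\times$ (using $\rho(-1)=\prod_\sigma\rho_\sigma(-1)=(-1)^{s_\varepsilon}$); and the Tamagawa normalization of the Haar measure on $\I_F/F^\times$ contributes $|d_F|^{1/2}$. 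The hypothesis $\varepsilon_\sigma=\rho_\sigma(-1)$ is exactly the condition under which the archimedean local integral is nonzero.

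\emph{Second step: cohomological translation.} Following the construction of $\mathrm{ES}_\varepsilon$ in \cite{preprintsanti2} (in the spirit of \cite{harder}), the left-hand side above equals, up to the archimedean constants already recorded, the pairing of the class $\mathrm{ES}_\varepsilon(\phi_0)$ with the homology class $[\Delta_\rho]$ of the $\rho$-twisted modular symbol attached to the diagonal torus of $\PGL_2$ (the adelic orbit of the split torus, with the level dictated by the conductor of $\rho$ and with $\rho$ as coefficient system). Here the $\varepsilon$-refinement is crucial: $\mathrm{ES}_\varepsilon(\phi_0)$ lies in the $\varepsilon$-eigenspace for the action of the group of connected components at infinity, which pairs nontrivially with the $\rho$-twisted diagonal cycle precisely when $\varepsilon_\sigma=\rho_\sigma(-1)$ at every $\sigma\in\Sigma_F$ — the same condition as before.

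\emph{Third step: rationality.} By the definition of the period recalled in \S\ref{sec:periods}, $\mathrm{ES}_\varepsilon(\phi_0)/\Omega^\Pi_\varepsilon$ lies in the $L_\Pi$-rational cohomology; on the other hand $[\Delta_\rho]$, once its conductor-$D_\rho$ level data are absorbed into the factor $|D_\rho|^{1/2}$ (equivalently the Gauss sum of the first step) and the adelic fundamental class is normalized $L_\Pi$-rationally — whence $|d_F|^{1/2}$ — defines an $L_\Pi$-rational homology class, since $\rho$ is quadratic and so $\Q$-valued. As the Poincaré pairing of $L_\Pi$-rational homology and cohomology takes values in $L_\Pi$, we obtain $\langle\mathrm{ES}_\varepsilon(\phi_0),[\Delta_\rho]\rangle/(|d_F|^{1/2}|D_\rho|^{1/2}\Omega^\Pi_\varepsilon)\in L_\Pi$, and combining this with the first two steps (which supply the remaining $\pi^{\underline k/2}$ and $i^{s_\varepsilon}$) gives the proposition.

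\emph{The main obstacle} is the archimedean analysis underlying the first two steps: one must evaluate the archimedean Eichler--Shimura map explicitly against the diagonal-torus orbit in each copy of $\bH^{\pm}$, twisted by $\rho_\sigma$, and match it with the archimedean zeta integral of \cite{CST} so as to recover \emph{exactly} the power $\pi^{\underline k/2}$ and \emph{exactly} the power $i^{s_\varepsilon}$, with the non-vanishing governed by $\varepsilon_\sigma=\rho_\sigma(-1)$. A secondary subtlety is a uniform treatment of the places dividing the conductor of $\rho$: one must check that the new-vector-versus-new-vector discrepancy there produces precisely the Gauss sum $\tau(\rho)$, hence — together with the normalization of $[\Delta_\rho]$ — precisely the factor $|D_\rho|^{1/2}$ rather than another power of the relative discriminant.
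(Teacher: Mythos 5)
Your proposal is essentially the paper's own argument. The paper's proof of Corollary \ref{ShimRel} proceeds exactly as you outline: it interprets the twisted adelic Mellin integral of the newform as the cap product of the modular symbol ${\rm MS}_\varepsilon(\Psi)$ against the fundamental class $\eta_{\G_m}$ of the split torus with $\rho$ as coefficient (Proposition \ref{ThmintMS}), deduces that this cap product divided by $\Omega_\varepsilon^\Pi$ is $L_\Pi(\rho)$-rational up to the normalization factor $|d_F|^{-1/2}$ (Remark \ref{algebraicitypairing1}), and then evaluates the integral locally via Whittaker/Kirillov models to extract $\Lambda(1/2,\Pi,\rho)$ times the Gauss sum $\mathfrak{g}(\rho^{-1})$; the $\pi^{\underline k/2}$ comes from the archimedean $L$-factors, the $i^{s_\varepsilon}|D_\rho|^{1/2}$ from Proposition \ref{propertiesGaussS} on quadratic Gauss sums, and the non-vanishing condition $\varepsilon_\sigma=\rho_\sigma(-1)$ arises precisely from the eigenvalue of $w_\sigma$ on $\underline{\delta s}_\lambda(\mu_{\underline 0})$ as you anticipate. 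The only cosmetic differences are (i) the paper shifts the newform by a matrix $b$ encoding the conductor $\mathfrak{c}(\rho_v)$ rather than changing the level of the cycle, and (ii) the square of the Gauss sum is computed adelically in Proposition \ref{propertiesGaussS} instead of via the classical identity $\tau(\rho)^2=\rho(-1)|D_\rho|$ you invoke — your shorthand $\rho(-1)=\prod_\sigma\rho_\sigma(-1)=(-1)^{s_\varepsilon}$ conflates the global value $\rho(-1)=1$ with the product over archimedean places, but the conclusion $\mathfrak{g}(\rho)\equiv i^{s_\varepsilon}|D_\rho|^{1/2}\pmod{\Q^\times}$ is what the paper proves and uses.
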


This result is to be expected for a reasonable notion of periods. Indeed, it is of the same type as those appearing in Shimura's seminal works on periods \cite{Shimura76, Shimura78},  and it aligns with Hida's results \cite{hida-duke} and with Blasius's conjecture \cite{blasius} (see \cite{JST} for some recent results in this direction). The main significance of this result is that it confirms the meaningfulness of the periods we define: they essentially coincide (up to algebraic or transcendental but controlled factors) with the periods considered elsewhere. Thus, it can be interpreted as a validation that our techniques are effective in proving results concerning the relationship between periods and special values of $L$-functions.

For the second and third main results we return to the more general situation where $\pi$ is an automorphic representation of a general  $G$; that is, associated to any quaternion algebra $B/F$. Denote by $\Pi$ the Jacquet-Langlands lift of $\pi$ to $\PGL_2$. Let $E$ be a quadratic étale $F$-algebra such that there exists an embedding of $F$-algebras $E\hookrightarrow B$. Assume that the relative discriminant $D$ of $E/F$ is coprime to the conductor $N$ of $\pi$, and that $\Sigma_B$ coincides with the set of infinite places where $E/F$ is split.

\begin{proposition}[Corollary \ref{AlgebraicityofL}]\label{prop2}
    Let  $\chi:\I_E/E^\times\I_F\rightarrow \{\pm 1\}$ be an anticyclotomic character of conductor $c$ coprime to $N$. Suppose also that $\chi$  satisfies the Heegner-type hypothesis described in Assumption \ref{Assonchi}. Let $\varepsilon$ be the sign vector   of lowest degree  such that $\varepsilon_\sigma=\chi_{\sigma}(-1)$ for all $\sigma \in \Sigma_B$, and let $\alpha\in F^\times$ be any element such that $E=F(\sqrt{\alpha})$. Denote by $r_1^B$ the number of real places in $\Sigma_F\setminus\Sigma_B$. Then, the quantity 
    \begin{equation}\label{eq:quantity is square}
\frac{L(1/2,\Pi,\chi)}{(-1)^{\left(\sum_{\sigma\not\in\Sigma_B}\frac{k_\sigma-2}{2}\right)}\cdot\pi^{r_1^B}\cdot\pi^{\underline k}\cdot(\Omega^\pi_\varepsilon)^{2}\cdot|cd_F|\cdot|D|^{-\frac{1}{2}} \cdot \alpha^{\frac{2-\underline{k}}{2}}}\,
\end{equation}
is a square in $L_\Pi$.
\end{proposition}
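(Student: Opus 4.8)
The plan is to combine the explicit Waldspurger formula of Cai--Shu--Tian \cite{CST} with the cohomological description of the periods $\Omega^\pi_\varepsilon$ coming from the Eichler--Shimura morphisms of \S\ref{cohoAG} and the comparison results of \cite{preprintsanti2}. The first step is to apply the central value formula of \cite{CST} to the pair $(\Pi,\chi)$. Under the standing hypotheses — $D$ coprime to $N$, $\Sigma_B$ equal to the set of infinite places at which $E/F$ is split, and the Heegner-type condition of Assumption \ref{Assonchi} — this presents $L(1/2,\Pi,\chi)$ as an explicit nonzero constant times the square of the global toric period
\[
P_\chi(\phi)\;=\;\int_{\T(F)\backslash \T(\A)}\chi(t)\,\phi(t)\,dt ,
\]
where $\T=\Res_{E/F}\G_m/\G_m\hookrightarrow G$ is the torus attached to a fixed optimal embedding $E\hookrightarrow B$ and $\phi$ is the normalized new vector of $\pi$. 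Passing from the local test vectors used in \cite{CST} to the new vector introduces an explicit correction at the places dividing $N$ and $D$, at the finite ramification set of $B$, and at infinity; tracking these local integrals is where the constant in \eqref{eq:quantity is square} is produced. Concretely, the archimedean zeta integrals at the $r_1^B$ real places outside $\Sigma_B$ (where $B$ is definite and $\T$ is anisotropic) and at the places of $\Sigma_B$ give the factors $\pi^{r_1^B}$, $\pi^{\underline k}$ and the sign $(-1)^{\sum_{\sigma\notin\Sigma_B}(k_\sigma-2)/2}$; the volume of $\T(F)\backslash\T(\A)$ together with the conductor $c$ of $\chi$ and the different of $F$ give $|cd_F|$ and $|D|^{-1/2}$; and the rational normalization of the coefficient module $\cV_{\underline k}$ along $\T$ — the relevant CM point being cut out using $\sqrt\alpha$ — produces $\alpha^{(2-\underline k)/2}$.

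Second, I would reinterpret the toric period cohomologically. Using $\mathrm{ES}_\varepsilon$ and the techniques of \cite{preprintsanti2}, the integral $P_\chi(\phi)$ is, up to the constants already isolated, the evaluation
\[
P_\chi(\phi)\;\doteq\;\langle\, \mathrm{ES}_\varepsilon(\phi),\ \Delta_\chi\,\rangle
\]
of the class $\mathrm{ES}_\varepsilon(\phi)$ against a CM homology class $\Delta_\chi$, obtained by pushing forward along $\T\hookrightarrow G$ the $\chi$-twisted fundamental class of the relevant quotient of $\T(\A)$. Two features of the construction are crucial. First, the prescribed sign vector $\varepsilon_\sigma=\chi_\sigma(-1)$ is exactly the one for which the archimedean components of $\Delta_\chi$ lie in the $\varepsilon$-part of $\cV_{\underline k}$, so that the pairing is nondegenerate on the $\pi$-isotypic component — for any other sign vector it would vanish identically. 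Second, Assumption \ref{Assonchi} is what guarantees that an optimal embedding with the correct level at the finite places exists, so that $\Delta_\chi$ is defined and pairs the \cite{CST} test data against the new vector without further local discrepancies.

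Third, I would deduce rationality together with the square. By its construction $\Delta_\chi$ is defined over $L_\pi$ — it is assembled from CM points (algebraic, with the contributions of their fields of definition already absorbed into $|cd_F|$, $|D|^{-1/2}$ and $\alpha^{(2-\underline k)/2}$) and the $\{\pm1\}$-valued character $\chi$ — while the defining property of $\Omega^\pi_\varepsilon$ says that $\mathrm{ES}_\varepsilon(\phi)/\Omega^\pi_\varepsilon$ is $L_\pi$-rational. Hence $b:=\langle \mathrm{ES}_\varepsilon(\phi),\Delta_\chi\rangle/\Omega^\pi_\varepsilon$ lies in $L_\pi$, and since Jacquet--Langlands preserves Hecke eigenvalues we have $L_\pi=L_\Pi$, so $b\in L_\Pi$. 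Substituting $P_\chi(\phi)\doteq\Omega^\pi_\varepsilon\,b$ into the formula from the first step and matching all the constants shows that the quantity in \eqref{eq:quantity is square} equals $b^2$, hence is a square in $L_\Pi$. The square is not an accident: it reflects the ``period squared'' shape of the Waldspurger/\cite{CST} formula, and the self-duality of $\chi$ (anticyclotomic and quadratic) and of $\pi$ is what allows one to replace the a priori $|P_\chi(\phi)|^2$ by $P_\chi(\phi)^2$ after the above normalizations, the sign $(-1)^{\sum_{\sigma\notin\Sigma_B}(k_\sigma-2)/2}$ in \eqref{eq:quantity is square} accounting for the interaction between complex conjugation and $\Omega^\pi_\varepsilon$ at the definite archimedean places.

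The main obstacle is the matching carried out across the first two steps: identifying the adelic toric integral with the cohomological cap product with \emph{exact} normalizations, and reconciling the local test vectors of \cite{CST} — at the primes dividing $N$ and $D$, at the finite ramification of $B$, and at infinity — with the new vector used to define $\Omega^\pi_\varepsilon$, a chain of purely local computations whose combined output must be precisely $\pi^{r_1^B}\pi^{\underline k}$, the sign, $|cd_F|$, $|D|^{-1/2}$ and $\alpha^{(2-\underline k)/2}$. A secondary but delicate point is verifying that Assumption \ref{Assonchi} is exactly the hypothesis needed for $\Delta_\chi$ to be nonzero and compatible with these choices, and that the reality and sign bookkeeping underlying the passage from $|P_\chi(\phi)|^2$ to $P_\chi(\phi)^2$ works uniformly, including when $L_\Pi$ is not totally real.
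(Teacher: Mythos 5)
Your strategy matches the paper's: the cohomological pairing $\cP(\Phi_0,\varepsilon,\chi)=\varphi(\mathrm{ES}_\varepsilon(\Phi_0)\cup\chi)\cap\eta_T$ is unwound into the adelic toric integral, the Cai--Shu--Tian version of Waldspurger's formula is applied (Theorem~\ref{mainTHMintro}), and rationality comes from the defining property of $\Omega^\pi_\varepsilon$ together with Lemma~\ref{rationalinvariants2} (source of $\alpha^{(2-\underline k)/2}$) and Remark~\ref{algebraicitypairing} (source of $|cd_F|$ and $|D|^{-1/2}$). The one ``delicate point'' you flag at the end does not arise in the paper's formulation: it uses the \emph{bilinear} Waldspurger pairing $\cP(\Phi_0,\varepsilon,\chi)\cdot\cP(\Phi_0,\varepsilon,\chi^{-1})$, which equals $\cP(\Phi_0,\varepsilon,\chi)^2$ automatically since $\chi$ is quadratic, so there is no passage from $|P_\chi|^2$ to $P_\chi^2$ and no issue about $L_\Pi$ being totally real.
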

This statement is a consequence of the main results of \cite{preprintsanti2}. However, we give a different, more direct proof, based on $(\mathfrak{g},K)$-cohomology techniques rather than on group cohomology as in  \cite{preprintsanti2}.
\begin{remark}
Assume that $G=\PGL_2$ so that, in particular, $E/F$ splits at all archimedean places. Suppose also that $\chi$ is of the form $\chi=\rho\circ{\rm N}_{E/F}$ for some Hecke character $\rho:\I_F/F^\times\rightarrow\{\pm 1\}$. Using Artin formalism and Proposition \ref{prop1},  and denoting by $\psi_E$ the quadratic character associated to $E/F$ we deduce that
\[
\frac{L(1/2,\Pi,\chi)}{\pi^{\underline k}(\Omega_\varepsilon^\Pi)^{2}|cd_F||D|^{\frac{-1}{2}} \alpha^{\frac{2-\underline{k}}{2}}}=\frac{(|D||D_\rho||D_{\rho\psi_E}|)^{\frac{1}{2}}\alpha^{\frac{{\underline k}-2}{2}}}{(-1)^{s_\varepsilon}|c|}\frac{L(1/2,\Pi, \rho)}{|d_F|^{\frac{1}{2}}\pi^{\frac{\underline k}{2}} |D_\rho|^{\frac{1}{2}} i^{s_\varepsilon} \Omega_\varepsilon^\Pi}\frac{L(1/2,\Pi, \rho\psi_E)}{|d_F|^{\frac{1}{2}}\pi^{\frac{\underline k}{2}} |D_{\rho\psi_E}|^{\frac{1}{2}}i^{s_\varepsilon} \Omega_\varepsilon^\Pi}\in L_\Pi,
\]
because $|D|^{\frac{1}{2}}|D_\rho|^{\frac{1}{2}}|D_{\rho\psi_E}|^{\frac{1}{2}}\in\Q^\times$. Therefore, in this situation the fact that the quantity \eqref{eq:quantity is square} belongs to $L_\pi$ can be easily deduced from Proposition \ref{prop1}. But Proposition \ref{prop2} is a stronger result even in this situation, since it shows that it is a square in $L_\pi$.
\end{remark}
The third main result of this note relates the periods attached to a  lowest degree sign vector $\varepsilon$ and its  highest degree opposite $-\varepsilon$ with the special value of the adjoint $L$-function of $\pi$. In the particular case where $F$ is totally real and $\pi$ is of parallel weight $2$, this amounts to the classical Riemann--Hodge period relation for Hilbert modular forms of \cite[Theorem 2.4]{oda}.

\begin{proposition}[Corollary \ref{cor3}]\label{prop3}
    Let $\varepsilon\in \{\pm1\}^{\Sigma_B}$ be a sign vector  of lowest degree and denote by $r_{1,B}$ the number of real places in $\Sigma_B$, by  $r_1^B$ the number of real places in $\Sigma_F\setminus\Sigma_B$, and by $r_2$ the number of complex places. We have that
    \begin{align}\label{eq:prop3}
\frac{L(1,\Pi,{\rm ad})}{\Omega^\pi_\varepsilon\cdot \Omega^\pi_{-\varepsilon}\cdot\pi^{2r_1^B+r_2}\cdot(\pi i)^{r_{1,B}}\cdot\pi^{\underline k} }\quad\text{belongs to}\; L_\Pi^\times.
\end{align}
\end{proposition}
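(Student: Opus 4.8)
The plan is to obtain the adjoint $L$-value as a product of two instances of the Waldspurger-type formula underlying Proposition \ref{prop2}, taken with a sign vector $\varepsilon$ and with its opposite $-\varepsilon$. First I would choose an auxiliary quadratic \'etale algebra $E/F$ embedding into $B$ and an anticyclotomic character $\chi$ over $E$ satisfying Assumption \ref{Assonchi}, arranged so that $\varepsilon_\sigma = \chi_\sigma(-1)$; this is always possible because the local sign conditions at the finite and infinite places can be met by suitably chosen Heegner data. Crucially, replacing $\chi$ by $\chi^{-1}$ (equivalently, twisting by the nontrivial class, so that the associated archimedean signs flip to $-\varepsilon$) governs the period $\Omega^\pi_{-\varepsilon}$ through the same Eichler--Shimura formalism. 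Applying Corollary \ref{AlgebraicityofL}, i.e.\ Proposition \ref{prop2}, once with $(\chi,\varepsilon)$ and once with the "opposite" datum $(\chi',-\varepsilon)$, yields that each of
\[
\frac{L(1/2,\Pi,\chi)}{(\ast_\varepsilon)\cdot (\Omega^\pi_\varepsilon)^2}\quad\text{and}\quad \frac{L(1/2,\Pi,\chi')}{(\ast_{-\varepsilon})\cdot (\Omega^\pi_{-\varepsilon})^2}
\]
is a square in $L_\Pi$, where $(\ast_{\pm\varepsilon})$ denotes the explicit archimedean and discriminant fudge factors from \eqref{eq:quantity is square}.

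Next I would invoke the factorization of Rankin--Selberg/base-change type: for an anticyclotomic $\chi$ over the quadratic algebra $E/F$ with $\psi_E$ the associated character, one has
\[
L(s,\Pi_E\otimes\chi)\;=\;L(s,\Pi\otimes\chi)\cdot L(s,\Pi\otimes\chi\psi_E)\qquad\text{when }E\text{ is split},
\]
and more relevantly $L(s,\Pi\times\Pi_E) = L(s,\Pi,\mathrm{ad})\cdot L(s,\psi_E)$ after accounting for the central character; taking $\chi$ and its conjugate/opposite so that the product $L(1/2,\Pi,\chi)\cdot L(1/2,\Pi,\chi')$ is (up to elementary factors and an $L$-value of a Hecke character, which lies in the expected field by Shimura/Siegel and contributes only powers of $\pi$ and algebraic numbers) equal to $L(1,\Pi,\mathrm{ad})$ times controlled constants. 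Combining this with the two square statements above, the product of the two quantities is a square times a square, hence the ratio
\[
\frac{L(1,\Pi,\mathrm{ad})}{(\text{archimedean and discriminant factors})\cdot \Omega^\pi_\varepsilon\,\Omega^\pi_{-\varepsilon}}
\]
lies in $L_\Pi^\times$; one then checks that the accumulated constants match $\pi^{2r_1^B+r_2}(\pi i)^{r_{1,B}}\pi^{\underline k}$ after the $|cd_F|$, $|D|^{\pm1/2}$ and $\alpha^{\cdots}$ factors cancel between the two applications.

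The main obstacle I expect is bookkeeping the archimedean and discriminant factors and, more delicately, showing that the \emph{product} of two "square in $L_\Pi$" statements descends to an honest element of $L_\Pi^\times$ on the nose (rather than up to a square root) once the $L$-function factorization is applied: the square roots must be the same at each place, which is where the choice of $\chi'$ as the genuine opposite of $\chi$ (same field $E$, conjugate character) rather than an unrelated character is essential, so that the two Heegner cycles pair against the \emph{same} modular class and its complex conjugate. A secondary technical point is verifying that the auxiliary $L(1/2,\psi_E)$-type factor appearing in the factorization contributes only elements of $L_{\underline k}^\times$ times the expected powers of $\pi$ (via the functional equation and rationality of critical Hecke $L$-values); alternatively, one can bypass this by choosing $E/F$ so that $\psi_E$ contributes trivially, i.e.\ by working with the split algebra $E = F\times F$ and a Dirichlet-type $\chi$, at the cost of re-deriving the $G$-version from the $\PGL_2$-version via Jacquet--Langlands as in the remark following Proposition \ref{prop2}.
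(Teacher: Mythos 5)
Your approach is genuinely different from the paper's, but there are several gaps that I think make it unsalvageable as written; let me describe the actual proof first and then the problems.

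The paper does \emph{not} obtain the adjoint $L$-value from products of Waldspurger/Gross formulas. Instead, it proves a Petersson-product formula in cohomology: for any $\varepsilon$, the cup product $\kappa\left(\mathrm{ES}_\varepsilon(\Phi_1)\cup\mathrm{ES}_{-\varepsilon}(\Phi_2)\right)$ lands in top degree $2r_B+r_2$, and capping with the fundamental class $\eta_G$ gives exactly $K\cdot\langle\Phi_1,\Phi_2\rangle$ for an explicit archimedean constant $K$. This uses the $2$- and $3$-cocycle computations of \S\ref{ExpliCCR}, \S\ref{ExplCCC}. Dividing by $\Omega^\pi_\varepsilon\Omega^\pi_{-\varepsilon}$ makes the cohomology class $L_\Pi$-rational, so $K\langle\Phi_0,\Phi_0\rangle/(\Omega^\pi_\varepsilon\Omega^\pi_{-\varepsilon}\,\mathrm{vol}(U_N))\in L_\Pi^\times$. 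One then substitutes the classical expression $\langle\Psi,\Psi\rangle \sim L(1,\Pi,\mathrm{ad})$ (via Whittaker functions and \cite[Proposition 2.1]{CST}) and normalization \eqref{condnormalization}, and a bookkeeping of local factors gives \eqref{eq:prop3}. The key missing idea in your proposal is precisely this pairing of the two Eichler--Shimura classes (for $\varepsilon$ and $-\varepsilon$) against $\eta_G$, which is what manufactures the product of the two periods on the nose.

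Now the gaps. First, your route cannot produce arbitrary $\varepsilon$. An anticyclotomic quadratic $\chi$ on $T(\A_F)$ is necessarily trivial on the connected group $T(F_\sigma)\simeq\C^\times$ at complex places $\sigma$, so the induced sign vector $\varepsilon_\sigma=\chi_\sigma(-1)$ is automatically $+1$ at all $\sigma\in\Sigma_B^\C$: it is of lowest degree. But Proposition~\ref{prop3} is stated for all $\varepsilon$, and moreover whenever $r_2>0$ the opposite vector $-\varepsilon$ is \emph{never} of lowest degree, so there is no anticyclotomic $\chi'$ with $\chi'_\sigma(-1)=-\varepsilon_\sigma$. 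Your remark that replacing $\chi$ by $\chi^{-1}$ flips the archimedean signs is also incorrect: for quadratic $\chi$ one has $\chi^{-1}=\chi$, and in any case $\chi_\sigma(-1)=\chi_\sigma^{-1}(-1)$. Second, the $L$-function factorization you invoke is not available: there is no identity of the form $L(1/2,\Pi,\chi)\cdot L(1/2,\Pi,\chi')=L(1,\Pi,\mathrm{ad})\cdot(\text{elementary factor})$ for quadratic twists $\chi,\chi'$; the Rankin--Selberg factorization $L(s,\Pi\times\Pi^\vee)=L(s,\Pi,\mathrm{ad})\zeta_F(s)$ lives at a different place entirely (a self-convolution at $s=1$, not a product of twisted central values at $s=1/2$). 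What \emph{is} true --- via Waldspurger --- is that $L(1/2,\Pi,\chi)/\langle\Phi_0,\Phi_0\rangle$ is a ratio of period integrals, and $\langle\Phi_0,\Phi_0\rangle\sim L(1,\Pi,\mathrm{ad})$; so the adjoint $L$-value sits in the \emph{denominator} of a Waldspurger-type statement, not in a product of two of them. Third, you would still need nonvanishing of the central values and an argument that passes from two "square in $L_\Pi$" statements to a clean $L_\Pi^\times$-statement; even if the factorization existed, you would only control $(\Omega^\pi_\varepsilon\Omega^\pi_{-\varepsilon})^2$ up to a square in $L_\Pi$, which is not enough to control $\Omega^\pi_\varepsilon\Omega^\pi_{-\varepsilon}$ modulo $L_\Pi^\times$.
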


 Hida proved in \cite{hida1} a similar algebraicity result for the adjoint $L$-function for the case where  $B=\GL_2$ over a general base field $F$, and for the case where $B$ is a quaternion algebra over a quadratic number field in \cite{hida2}.\footnote{We have been informed by Hida that he has recently established the result for quaternion algebras $B$ over totally real number fields.} The periods appearing in Hida's results are associated to the base change lift of $\Pi$ to an auxiliary quadratic extension $K/F$, while our periods are associated directly to $\Pi$.

In the remainder of the introduction we illustrate some applications of our formulas. We  show that they provide supporting evidence --occasionally conditional on well-established conjectures-- for the conjectures of Oda and Prasanna--Venkatesh, in the specific case where $\pi$ arises from an elliptic curve over $F$. We begin by giving a formulation of these conjectures in this particular setting.

\subsection{The conjectures of Oda and Prasanna-Venkatesh for elliptic curves}\label{sec: oda and PV} For the remainder of the introduction, we assume that $\Pi$ corresponds to an elliptic curve $A/F$; that is, $L(s, \Pi) = L(s, A)$. This implies, in particular, that  $\underline{k}=(2,\dots,2)$  and that $L_\Pi=\Q$. We fix an invariant differential $\omega_A\in H^0(A,\Omega_A^1)$. For any real place $\sigma\in\Sigma_F$, we will also denote by $\sigma\colon F\hookrightarrow\R$ the corresponding embedding. We can describe $A_{\sigma}:=A\times_{\sigma}\C$ as a complex torus
\[
A_\sigma(\C)\simeq \C/\Lambda_\sigma
\]
is such a way that $\omega_A$ corresponds under this identification to the differential $dz$. Since we are interested in $\Lambda_\sigma\otimes\Q$ rather than $\Lambda_\sigma$, without loss of generality we can, and do, assume that $ \Lambda_\sigma=\Z\Omega_{\sigma,1}+\Z\Omega_{\sigma,2}$ with $\Omega_{\sigma,1}\in\R$ and $\Omega_{\sigma,2}\in \R i$. Let $\tau_\sigma=\Omega_{\sigma,1}/\Omega_{\sigma,2}$ be the period of $A_\sigma$. The following conjecture, formulated by Oda \cite{oda83} in the case where $F$ is totally real, predicts that the geometric period $\tau_\sigma$ can be calculated from the automorphic periods of Jacquet--Langlands lifts of $\Pi$ to quaternion algebras. See \cite{DL} for an inspiring formulation of the conjecture with applications to the computation of algebraic points on elliptic curves, and \cite{GMS} for the more general case of arbitrary signature base field.
\begin{conjecture}[Oda]\label{Odaconj}
    Let $\sigma\in\Sigma_F$ be a real place. Let $B$ be any quaternion algebra that splits at $\sigma$, and let $G$ be the algebraic group associated to $B^\times/F^\times$. Suppose that $\Pi$ admits a Jacquet-Langlands lift $\pi$ to $G$. For $\varepsilon\in \{\pm1\}^{\Sigma_B\setminus\{\sigma\}}$  of lowest or highest degree, denote by $(\varepsilon,+)$ the sign vector of $\Sigma_B$ that coincides with $\varepsilon$ at the places different from $\sigma$, and it is $+1$ at $\sigma$; define analogously $(\varepsilon,-)$. Then,
    \begin{align}\label{eq:oda conj}
    \Omega^\pi_{(\varepsilon,+)}/\Omega^\pi_{(\varepsilon,-)}\equiv\tau_\sigma\mod\Q^\times.
    \end{align}
\end{conjecture}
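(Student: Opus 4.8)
The plan is to deduce Conjecture~\ref{Odaconj} from Proposition~\ref{prop2} together with the Birch--Swinnerton-Dyer conjecture for suitable quadratic twists of the elliptic curve $A$. The point that makes this possible is that Proposition~\ref{prop2} applies to the two sign vectors $(\varepsilon,+)$ and $(\varepsilon,-)$ simultaneously: for an anticyclotomic $\{\pm1\}$-valued character $\chi$ of $\I_E/E^\times\I_F$ satisfying Assumption~\ref{Assonchi}, the local root numbers of $\Pi\times\chi$ at the archimedean places are all $+1$ no matter what the values $\chi_\tau(-1)$ are --- at a split place the local root number is a square $W(\Pi_\tau\otimes\chi_{\tau,1})^{2}$, at an inert place it is $W(\Pi_\tau)\,W(\Pi_\tau\otimes\mathrm{sgn})=1$ --- while Assumption~\ref{Assonchi} forces all the finite root numbers to be $+1$ as well; hence the global root number of $\Pi\times\chi$ is $+1$ in both cases and the central value need not vanish.

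\textbf{Choice of auxiliary data.} Fix $\sigma$, $B$ and $\varepsilon\in\{\pm1\}^{\Sigma_B\setminus\{\sigma\}}$ as in the statement, and pick a quadratic field $E/F$ that embeds into $B$, whose set of split infinite places is exactly $\Sigma_B$, and whose relative discriminant is coprime to $N$. Choose $v\in F^\times$ which is negative at $\sigma$, positive at every other real place of $\Sigma_B$, and a local square at every prime dividing $N$, and set $\eta=\psi_{F(\sqrt v)/F}\circ\mathrm{N}_{E/F}$; this is an anticyclotomic $\{\pm1\}$-valued character of $\I_E/E^\times\I_F$, unramified at the primes dividing $N$, with $\eta_\sigma(-1)=-1$ and $\eta_\tau(-1)=+1$ for $\tau\in\Sigma_B\setminus\{\sigma\}$. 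Using the Waldspurger/Friedberg--Hoffstein nonvanishing theorem in a version that allows finitely many prescribed local conditions, choose an anticyclotomic $\{\pm1\}$-valued character $\chi^{+}$, of conductor coprime to $N\cdot\mathrm{cond}(\eta)$ and satisfying Assumption~\ref{Assonchi}, that realizes the sign vector $(\varepsilon,+)$ and has $L(1/2,\Pi,\chi^{+})\neq0$, and in such a way that $\chi^{-}:=\chi^{+}\eta$ --- which realizes $(\varepsilon,-)$ --- also has $L(1/2,\Pi,\chi^{-})\neq0$. Then $\chi^{+}$ and $\chi^{-}$ agree at every archimedean place of $E$ except the two real ones above $\sigma$, and $A_E^{(\chi^{-})}$ is the twist of $A_E^{(\chi^{+})}$ by $v$.

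\textbf{The computation.} Apply Proposition~\ref{prop2} to $\chi^{+}$ and to $\chi^{-}$. Since $\underline k=(2,\dots,2)$ the sign-- and $\alpha$--factors are trivial, and since the exponents of $\pi$ and the factor $|D|$ in the statement do not depend on the sign vector, dividing the two resulting identities gives
\[
\left(\frac{\Omega^\pi_{(\varepsilon,+)}}{\Omega^\pi_{(\varepsilon,-)}}\right)^{2}\equiv\frac{L(1,A_E^{(\chi^{+})})}{L(1,A_E^{(\chi^{-})})}\cdot\frac{|c^{-}|}{|c^{+}|}\pmod{(\Q^\times)^{2}},
\]
where we have written $L(1/2,\Pi,\chi^{\pm})=L(1,A_E^{(\chi^{\pm})})$ and $c^{\pm}$ for the conductors of $\chi^{\pm}$. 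Both $L$-values are nonzero, so $A_E^{(\chi^{\pm})}$ have Mordell--Weil rank $0$ over $E$; invoking the Birch--Swinnerton-Dyer conjecture for these curves and using that the order of the Tate--Shafarevich group is a perfect square, that the torsion order enters squared, and that $\sqrt{|d_E|}$ is common to both formulas, the ratio of $L$-values is congruent modulo $(\Q^\times)^{2}$ to the ratio of the periods $\Omega_{A_E^{(\chi^{+})}}/\Omega_{A_E^{(\chi^{-})}}$ times the ratio of the products of Tamagawa numbers, the latter being supported at the primes dividing $\mathrm{cond}(\eta)$. Finally, the archimedean period factors of $A_E^{(\chi^{+})}$ and $A_E^{(\chi^{-})}$ agree at every infinite place of $E$ except the two real ones above $\sigma$, where passing from $\chi^{+}$ to $\chi^{-}$ replaces $A_\sigma$ by its twist by $-1$; this interchanges the real and imaginary periods $\Omega_{\sigma,1}$ and $\Omega_{\sigma,2}$, and (after keeping track of the factor $i$ it introduces) contributes $\tau_\sigma^{2}$ modulo $(\Q^\times)^{2}$. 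Combining the three congruences, and assuming that the residual rational factor built from $|c^{-}|/|c^{+}|$ and the ratio of Tamagawa products is a perfect square, one obtains $\bigl(\Omega^\pi_{(\varepsilon,+)}/\Omega^\pi_{(\varepsilon,-)}\bigr)^{2}\equiv\tau_\sigma^{2}\pmod{(\Q^\times)^{2}}$, and extracting square roots gives $\Omega^\pi_{(\varepsilon,+)}/\Omega^\pi_{(\varepsilon,-)}\equiv\tau_\sigma\pmod{\Q^\times}$, which is~\eqref{eq:oda conj}.

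\textbf{Main obstacle.} The argument is conditional on the rank-$0$ case of the Birch--Swinnerton-Dyer conjecture for the twists $A_E^{(\chi^{\pm})}/E$, which is known only in limited situations over a general number field. The remaining difficulty is more technical but real: one must execute the choice of auxiliary data so that everything holds at once --- the nonvanishing theorem with the prescribed archimedean signs together with control of the conductor --- and one must show that the residual rational factor coming from $|c^{-}|/|c^{+}|$ and from the Tamagawa numbers is a perfect square. This is where the choice of $v$ matters: it would be cleanest to take $F(\sqrt v)/F$ unramified at every finite place, so that $\eta$ has conductor $1$ and $\chi^{+}$, $\chi^{-}$ share both the conductor and the finite reduction type of their twisted curves, but the narrow class group of $F$ does not always permit this, and in general a local analysis at the primes dividing $\mathrm{cond}(\eta)$ is needed. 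By contrast, the archimedean part --- the interchange of the real and imaginary periods of $A_\sigma$ under the $(-1)$-twist --- is elementary; but tracking the factor $i$ it introduces (against the normalization of the periods $\Omega^\pi_\bullet$) is what determines that the right-hand side comes out as $\tau_\sigma$ rather than as $i\tau_\sigma$, so this step should be carried out with care.
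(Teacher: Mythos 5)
This statement is a \emph{conjecture}, not a theorem of the paper: the paper offers only conditional and partial results towards it (Propositions~\ref{Oda1}, \ref{Oda2}, \ref{oldOda1}, \ref{oldOda2}, all modulo BSD/Beilinson or under the strong admissibility hypothesis, and some only modulo $(\Q^\times)^{1/2}$). So a ``proof'' of~\eqref{eq:oda conj} as stated cannot be expected. That said, your proposal does attempt a conditional argument in the spirit of the paper's Propositions~\ref{Oda1}--\ref{Oda2}, and it is worth comparing.

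Two genuine gaps prevent your argument from closing, and the second one is not the technical afterthought you present it as.

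\textbf{Lowest degree only.} An anticyclotomic $\{\pm1\}$-valued character $\chi$ of $\I_E/E^\times\I_F$ has $\chi_\sigma(-1)=1$ automatically at every complex $\sigma\in\Sigma_B$, because $-1$ is a norm from $E_\sigma\simeq\C\times\C$. Hence the sign vectors realised by such $\chi$ are exactly the lowest-degree ones ($\varepsilon_\sigma=1$ for $\sigma\in\Sigma_B^\C$). Your argument therefore cannot touch sign vectors $\varepsilon$ with $\varepsilon_\sigma=-1$ at a complex place, whereas the conjecture is for arbitrary $\varepsilon\in\{\pm1\}^{\Sigma_B\setminus\{\sigma\}}$. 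The paper handles the passage from lowest degree to arbitrary degree by an entirely separate argument (the Prasanna--Venkatesh conjecture, Proposition following Lemma~\ref{relperiodsPGLG}), and explicitly restricts all its BSD-based results to lowest degree.

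\textbf{The factor of $i$ is not recoverable from Proposition~\ref{prop2}.} Your right-hand side
\[
\frac{L(1,A_E^{(\chi^+)})}{L(1,A_E^{(\chi^-)})}\cdot\frac{|c^-|}{|c^+|}
\]
is a quotient of positive real numbers, hence positive. The BSD period ratio that you compute from the archimedean twist by $v<0$ at the two real places of $E$ above $\sigma$ is a ratio of \emph{absolute values} of periods, namely $|\tau_\sigma|^2$, and this too is positive. But $\tau_\sigma=\Omega_{\sigma,1}/\Omega_{\sigma,2}$ is purely imaginary by the normalisation $\Omega_{\sigma,1}\in\R$, $\Omega_{\sigma,2}\in i\R$, so $\tau_\sigma^2=-|\tau_\sigma|^2<0$. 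Since $-1\in\Q^\times\setminus(\Q^\times)^2$, the class of $\tau_\sigma^2$ in $\C^\times/(\Q^\times)^2$ differs from that of $|\tau_\sigma|^2$, and extracting a square root of your congruence yields $\Omega^\pi_{(\varepsilon,+)}/\Omega^\pi_{(\varepsilon,-)}\equiv |\tau_\sigma|\pmod{\Q^\times}$, which is off by a factor of $i$. You flag this concern in your last paragraph but do not resolve it, and I do not see how it can be resolved within your framework: Proposition~\ref{prop2} is a statement about $(\Omega^\pi_\varepsilon)^2$ and, on dividing two instances, all the other inputs you feed in are manifestly positive reals. The paper avoids this entirely by proving the $\PGL_2$ lowest-degree case (Proposition~\ref{Oda1}) from the \emph{linear} statement Proposition~\ref{prop1}, where the factor $i^{s_\varepsilon}$ appears explicitly in the denominator and contributes the extra $i$ when you form the quotient of periods with sign vectors differing at $\sigma$. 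If you want to push your $E$-based route through, you would need an integral, not squared, version of the formula over $E$ (a Gross--Zagier-type statement carrying its own explicit $i$-power), not Proposition~\ref{prop2}.

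\textbf{Further remarks.} Your remaining caveats --- the non-vanishing theorem with the prescribed local conditions, and the squareness of the residual conductor/Tamagawa factor --- are real, but the paper sidesteps all of them by working over $F$ rather than over $E$: in Proposition~\ref{Oda1} one chooses quadratic characters $\rho$ of $F$ directly via Proposition~\ref{strongadm} (Waldspurger over $F$), and the BSD input is Conjecture~\ref{BSDBeilinson} for the twists $A^\rho$ over $F$ rather than rank-zero BSD for twists of $A_E$ over $E$. This is both cleaner and closer to optimal: the paper deduces the unconditional ``modulo $(\Q^\times)^{1/2}$'' statement for general $G$ (Proposition~\ref{Oda2}) by squaring later, via Lemma~\ref{relperiodsPGLG}, rather than starting from a squared identity as you do.
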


A different conjecture associated with the periods $\Omega_{\varepsilon}^\Pi$ is a conjecture of Prasanna and Venkatesh \cite{PV}. To state the conjecture, we will need some ingredients: Let $M={\rm Ad}(h^1(A)_\Q)$ be the weight zero adjoint motive over $\Q$ associated with the elliptic curve $A$. We will denote by $\SL_1(B)\subseteq B$ the elements of norm 1, namely, the dual group of $G(F)=B^\times/F^\times$. If we write $\M_2(\bullet)_0$ for the subspace of matrices in $\M_2(\bullet)$ with zero trace, then we have an embedding
\begin{equation}\label{defvarphiBG}
\varphi:M_{B}\stackrel{\kappa}{\simeq}\bigoplus_{\nu:F\hookrightarrow\C}\M_2(\Q)_0\hookrightarrow \hat\mfg\otimes_\Q\bar\Q=B_0\otimes_\Q\bar\Q=\bigoplus_{\nu:F\hookrightarrow\C}\M_2(\bar\Q)_0,
\end{equation}
where $\hat\mfg=B_0=\{b\in B;\;{\rm Tr}(b)=0\}$ is the Lie algebra of $\SL_1(B)$, and $M_{B}$ is the Betti realization of $M$. Notice that $\kappa$ is provided by an identification $\Lambda_{\nu}\simeq\Z^2$, where $A_\nu(\C)\simeq\C/\Lambda_\nu$ for $\nu:F\hookrightarrow\C$. The Weil group of $\R$,  
\[
W_\R=\C^\times\rtimes\langle j\rangle,\qquad j^2=-1,\qquad j^{-1}zj=\bar z,
\]
acts naturally on both $\hat\mfg\otimes_\Q\C$ and $M_B\otimes_\Q\C$. Indeed, $W_\R$ acts on $\hat \mfg\otimes_\Q\C$ via composition of the natural conjugation and the archimedean parameter $W_\R\rightarrow \,^LG$ provided by $\pi$. Moreover, if we consider the Hodge decomposition $M_B\otimes_\Q\C=\bigoplus_{p+q=0}M_B^{pq}$, the action of $z\in \C^\times\subset W_\R$ is given by $z^{p}\bar z^{q}$ on $M_B^{pq}$, while $j$ acts as the complex conjugation involution $C_\infty$. Under the isomorphism $\hat \mfg\otimes_\Q\C\simeq M_B\otimes_\Q\C$ induced by $\varphi$, both actions are conjugated; namely, there exists $\delta\in {\rm Aut}(M_B\otimes_\Q\C)$ such that $\gamma\varphi(m)=\varphi(\delta^{-1}\gamma \delta m)$, for all $\gamma\in W_\R$ and $m\in M_B$. Moreover, we can choose $\delta$ so that it is an isometry with respect to the natural pairing induced by
\[
\langle\;,\;\rangle_\Q:B_0\times B_0\longrightarrow \Q;\qquad \langle b_1,b_2\rangle={\rm Tr}_{F/\Q}{\rm Tr}(b_1\cdot \bar b_2),
\]
where $\overline{(\cdot)}$ denotes the non-trivial conjugation on $B$. Hence, $\varphi$ induces an isomorphism
\[
\kappa_\delta:(M_B\otimes\C)^{W_\R}\longrightarrow (\hat\mfg\otimes_\Q\C)^{W_\R}=:\mathfrak{a};\qquad \kappa_\delta(m)=\varphi(\delta^{-1}m).
\]
On the one hand, Prasanna and Venkatesh define in \cite{PV} an action of $\bigwedge^\ast\mathfrak{a}^{\vee}$ on the automorphic cohomology. To describe such an action, write $\Sigma_{B}=\Sigma_B^\R\sqcup\Sigma_B^\C$, where $\Sigma_B^\R$ (resp. $\Sigma_B^\C$) is the subset of real places (resp. complex places), and notice that $\mathfrak{a}=(\hat\mfg\otimes_\Q\C)^{W_\R}=\bigoplus_{\Sigma_B^\C}\C\hat H_\sigma$, where $\hat H_\sigma$ corresponds to the matrix $\big(\begin{smallmatrix}
    1&\\&-1
\end{smallmatrix}\big)\in \M_2(\C)_0\simeq\hat \mfg_\sigma={\rm Lie}(\widehat{G(F_\sigma)})$. Write $\{\hat H_\sigma^\ast\}\subset\mathfrak{a}^\vee$ for the dual basis. Given $\varepsilon_\C=(\varepsilon_\sigma)\in \{\pm 1\}^{\Sigma_B^\C}$, we define
\[
n_{\varepsilon_\C}:=\#\{\sigma\in \Sigma_B^\C\colon \varepsilon_\sigma=-1\};\qquad \hat H_{\varepsilon_\C}^\ast:=\bigwedge_{\varepsilon_\sigma=-1}\hat H_\sigma^\ast\in \bigwedge^{n_{\varepsilon_\C}}\mathfrak{a}^\vee.
\]
As we will see in Lemma \ref{LemPraVen}, all the Eichler--Shimura morphisms we construct can be recovered from the \emph{lowest-degree} Eichler--Shimura morphisms (those for which $n_{\varepsilon_\C} = 0$), together with the Prasanna--Venkatesh action of $\hat H_{\varepsilon_\C}^\ast$. More precisely, for any $\varepsilon_\R \in \{\pm1\}^{\Sigma_B^\R}$, let $(\varepsilon_\R, \underline{1}) \in \{\pm1\}^{\Sigma_B}$ denote the corresponding lowest-degree sign vector, obtained by extending $\varepsilon_\R$ with ones at the complex places. Then we have 
\begin{equation}\label{P-Vaction}
\hat H_{\varepsilon_\C}^\ast \circ {\rm ES}_{(\varepsilon_\R, \underline{1})} = i^{n_{\varepsilon_\C}} \cdot {\rm ES}_{(\varepsilon_\R, \varepsilon_\C)}.
 \end{equation} 
This describes how the Prasanna--Venkatesh action reconstructs the full $\pi$-isotypic cohomology from the lowest-degree contributions.

On the other hand, one can identify $(M_B\otimes\C)^{W_\R}$ with the extension of Deligne cohomology $H^1_{\cD}(M,\R(1))\otimes_\R\C$. A conjecture of Beilinson predicts that the natural regulator map $r$ between motivic and Deligne cohomologies provides an isomorphism 
\[
r:H^1_{\cM}(M,\Q(1))\rightarrow H^1_{\cD}(M,\R(1));\qquad r\left(H^1_{\cM}(M,\Q(1))\right)\otimes_\Q\R\simeq H^1_{\cD}(M,\R(1)).
\]
In particular, it defines a $\Q$-structure on $(M_B\otimes\C)^{W_\R}$. Prasanna and Venkatesh conjecture in \cite{PV} that the induced $\Q$-structure on $\bigwedge^\ast\mathfrak{a}^\vee$ transferred via $\kappa_\delta$ preserves $\pi$-isotypic components in cohomology with coefficients in $\Q$. Recall that, both in lowest and highest degree, the periods are chosen in such a way that, after applying the Eichler--Shimura maps ${\rm ES}_\varepsilon$ to the normalized eigenform, the resulting cohomology classes become $\Q$-rational once divided by $\Omega_\varepsilon^\pi$. Thus,  the following conjecture is an immediate consequence of the Prasanna--Venkatesh conjectures together with \eqref{P-Vaction}.
\begin{conjecture}\label{PVconj}
We have that $r(H^1_{\cM}(M,\Q(1)))\otimes_\Q\C=\mathfrak{a}$. Moreover, if $\Pi$ admits a Jacquet-Langlands lift $\pi$ to $G$ then, for all $\varepsilon_\R\in \{\pm 1\}^{\Sigma_B^\R}$,
    \[
    \frac{\Omega_{(\varepsilon_\R,\underline 1)}^\pi}{\Omega_{(\varepsilon_\R,-\underline 1)}^\pi}i^{r_2}\kappa_\delta^{-1}\left(\hat H_{-\underline 1}^\ast\right)\in \bigwedge^{{r_2}}r\left(H^1_{\cM}(M,\Q(1))\right)^\vee.
    \]
\end{conjecture}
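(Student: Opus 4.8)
The plan is to recast the conjecture as an equality of two rational structures on $\bigwedge^\ast\mathfrak{a}^\vee$ and to use the $L$-value formulas of Propositions \ref{prop2} and \ref{prop3} as the bridge between them. The first assertion, $r(H^1_{\cM}(M,\Q(1)))\otimes_\Q\C=\mathfrak{a}$, I would deduce from the injectivity-and-rational-structure part of Beilinson's conjecture for $M={\rm Ad}(h^1(A))$ twisted by $1$, together with the identifications $(M_B\otimes\C)^{W_\R}\simeq H^1_{\cD}(M,\R(1))\otimes_\R\C\simeq\mathfrak{a}$ recalled above; the rank equality $\dim_\Q H^1_{\cM}(M,\Q(1))=r_2$ that it invokes is consistent with the known non-vanishing of $L(1,\Pi,{\rm ad})$. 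So the substance is the period statement, which asks that the \emph{automorphic} $\Q$-structure produced by the Eichler--Shimura morphisms and the Prasanna--Venkatesh action agree, under $\kappa_\delta$, with the \emph{motivic} one $\bigwedge^\ast r(H^1_{\cM}(M,\Q(1)))^\vee$.

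On the automorphic side, the defining property of the periods makes the classes ${\rm ES}_{(\varepsilon_\R,\underline 1)}(f_0)/\Omega_{(\varepsilon_\R,\underline 1)}^\pi$ and ${\rm ES}_{(\varepsilon_\R,\varepsilon_\C)}(f_0)/\Omega_{(\varepsilon_\R,\varepsilon_\C)}^\pi$ rational over $L_\Pi=\Q$, so \eqref{P-Vaction} shows that the operator $\frac{\Omega_{(\varepsilon_\R,\underline 1)}^\pi}{\Omega_{(\varepsilon_\R,\varepsilon_\C)}^\pi}\,i^{-n_{\varepsilon_\C}}\hat H_{\varepsilon_\C}^\ast$ preserves $\Q$-rational automorphic cohomology; letting $\varepsilon_\C$ run over the sign vectors with a fixed number $n$ of $-1$'s, these operators cut out a $\Q$-structure $L_{\mathrm{aut}}^{(n)}\subseteq\bigwedge^{n}\mathfrak{a}^\vee$, and since flipping signs one place at a time telescopes in \eqref{P-Vaction} one gets $L_{\mathrm{aut}}^{(n)}=\bigwedge^{n}L_{\mathrm{aut}}^{(1)}$. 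Thus it suffices to pin down $L_{\mathrm{aut}}^{(1)}\subseteq\mathfrak{a}^\vee$, i.e. the $r_2$ ratios $\Omega_{(\varepsilon_\R,\underline 1)}^\pi/\Omega_{(\varepsilon_\R,\varepsilon_\C)}^\pi$ with $n_{\varepsilon_\C}=1$. Here Proposition \ref{prop3} enters: applied to any $\varepsilon$ it gives $\Omega_\varepsilon^\pi\Omega_{-\varepsilon}^\pi\equiv L(1,\Pi,{\rm ad})\,\bigl(\pi^{2r_1^B+r_2}(\pi i)^{r_{1,B}}\pi^{\underline k}\bigr)^{-1}\pmod{\Q^\times}$, and since the archimedean factor is independent of $\varepsilon$, the determinant of $L_{\mathrm{aut}}^{(1)}$ (a $\Q$-line in $\det\mathfrak{a}^\vee$) is forced to be independent of $\varepsilon_\R$, exactly as the motivic description requires; combined with Proposition \ref{prop2}, which expresses $(\Omega_\varepsilon^\pi)^2$ through a central value $L(1/2,\Pi,\chi)$, one passes from the products $\Omega_\varepsilon^\pi\Omega_{-\varepsilon}^\pi$ to the squares of the individual periods and determines $\det L_{\mathrm{aut}}^{(1)}$ explicitly in terms of $L(1,\Pi,{\rm ad})$, a ratio of central twisted values, and powers of $\pi$ and $i$.

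On the motivic side, Beilinson's conjecture computes the covolume of $r(H^1_{\cM}(M,\Q(1)))$ against the de Rham $\Q$-structure of $M$ as $L(1,\Pi,{\rm ad})$ up to $\Q^\times$ and an explicit power of $2\pi i$; that de Rham $\Q$-structure is, via the construction of $\kappa$ from the identifications $\Lambda_\nu\simeq\Z^2$ and the Betti--de Rham comparison of $h^1(A_\nu)$, the lattice generated in $\mathfrak{a}$ by $\bigoplus_\nu\M_2(\Q)_0$ and the archimedean periods of the $A_\nu$ --- the same periods $\Omega_{\sigma,1},\Omega_{\sigma,2}$ (equivalently $\tau_\sigma$) that enter Oda's conjecture (Conjecture \ref{Odaconj}). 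Expanding the motivic generator of $\det r(H^1_{\cM}(M,\Q(1)))^\vee$ in the basis $\{\hat H_\sigma^\ast\}$ and comparing with the automorphic generator of $\det L_{\mathrm{aut}}^{(1)}$ from the previous paragraph yields the equality of the two $\Q$-structures on $\det\mathfrak{a}^\vee$, and hence --- a $\Q$-structure on a one-dimensional space being its own determinant --- the full conjecture whenever $r_2\le 1$, in particular for $F$ totally real (where it is vacuous) and for $F$ with a single complex place.

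The main obstacle is threefold. First, the whole argument is conditional on Beilinson's conjecture for the adjoint motive, which is open, so one cannot expect an unconditional statement. Second, even granting Beilinson, matching the power of $\pi$ in Proposition \ref{prop3} and the ratio of archimedean factors implicit in Proposition \ref{prop2} with the power of $2\pi i$ predicted by Beilinson requires an honest computation of the Deligne period of $M={\rm Ad}(h^1(A))$ in terms of the matrix-coefficient basis $\{\hat H_\sigma\}$ and of the periods $\Omega_{\sigma,1},\Omega_{\sigma,2}$ of the $A_\sigma$; keeping track of all the $\Q^\times$- and $i$-ambiguities is delicate, and this is precisely where Oda's conjecture re-enters. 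Third, for $r_2\ge 2$ the determinant does not exhaust $L_{\mathrm{aut}}^{(1)}$: Propositions \ref{prop2} and \ref{prop3} only control products of periods, hence the top exterior power, while the conjecture for $0<n<r_2$ involves the individual ratios at the complex places, which these formulas do not reach; I would expect that range to remain genuinely open, with the methods above supplying only the $n\in\{0,r_2\}$ instances.
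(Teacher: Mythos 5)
The statement in question is a conjecture, not a theorem; the paper does not prove it, and no unconditional proof should be expected. What the paper does supply is conditional evidence for the extreme case $\varepsilon_\C=\underline{-1}$ (so $n_{\varepsilon_\C}=r_2$): under Conjectures \ref{BSDBeilinson} and \ref{R2calc}, Oda's conjecture for lowest-degree sign vectors, and the hypothesis that $N$ is not a square, the stated membership in $\bigwedge^{r_2}r(H^1_{\cM}(M,\Q(1)))^\vee$ holds; and for $G=\PGL_2$ the Oda input follows from Proposition \ref{Oda1}, giving a corollary conditional only on Beilinson. Your sketch aligns with this scope and reproduces the paper's strategy: Proposition \ref{prop3} controls $\Omega_\varepsilon^\pi\Omega_{-\varepsilon}^\pi$; Proposition \ref{prop2} feeds into equation \eqref{RelOmegaGOmega} to produce $(\Omega_\varepsilon^\pi)^2$ for lowest-degree $\varepsilon$ in terms of the periods $\Omega_{\sigma,i}$ of $A$; Oda in lowest degree transports between $\varepsilon_\R$ and $-\varepsilon_\R$; and Conjecture \ref{R2calc} identifies $\bigwedge^{r_2}r(H^1_{\cM}(M,\Q(1)))^\vee$ explicitly once one computes $\kappa_\delta(H_\sigma)=\hat H_\sigma$. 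Your three stated obstacles match the paper's hypotheses, and you correctly observe that the intermediate degrees $0<n_{\varepsilon_\C}<r_2$ are beyond the reach of $L$-value identities that see only products and squares of periods.

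One soft spot: your intermediate object $L_{\mathrm{aut}}^{(n)}$, the $\Q$-span of the rescaled $\hat H_{\varepsilon_\C}^\ast$, a priori depends on $\varepsilon_\R$, and the telescoping identity $L_{\mathrm{aut}}^{(n)}=\bigwedge^n L_{\mathrm{aut}}^{(1)}$ presupposes a multiplicativity of period ratios across complex places that Propositions \ref{prop2} and \ref{prop3} do not provide and that is, in effect, equivalent to the conjecture at intermediate $n$. The paper avoids this by working only at $n=r_2$, where $\det\mathfrak{a}^\vee$ is a line and the product and square relations are decisive; the determinant argument shows $\varepsilon_\R$-independence of $\det L_{\mathrm{aut}}^{(1)}$, as you note, but not of $L_{\mathrm{aut}}^{(1)}$ itself. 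This is consistent with your caveat about $r_2\ge 2$, but it means the $L_{\mathrm{aut}}^{(n)}$ scaffolding does not by itself upgrade the $n=r_2$ case to $n=1$ when $r_2\ge 2$.
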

Some of the applications of our formulas to the conjectures of Oda and Prasanna--Venkatesh rely on assuming Beilinson's conjectures for the motives $A$ and $M$. While the specialization of Beilinson's conjecture to these specific motives is likely well known to experts, we include it in Appendix \ref{ApdxBC} due to lack of a suitable reference. In the next \S, we summarize the relevant statements; namely, the portion of Beilinson's conjecture that we assume at various points, in the specific context of the motives $A$ and $M$.

\subsection{Beilinson's conjecture for $A$ and $M$} We continue with the same notation: $A$ is an elliptic curve over $F$, and $M = \mathrm{Ad}(h^1(A)_\Q)$. We decompose the set $\Sigma_F$ of archimedean places into real and complex ones, writing $\Sigma_F = \Sigma_F^\R \sqcup \Sigma_F^\C$. For any $\sigma \in \Sigma_F$, we fix an embedding $\sigma: F \hookrightarrow \C$ representing it; if $\sigma \in \Sigma_F^\C$, we denote by $\bar\sigma: F \hookrightarrow \C$ its composition with complex conjugation.
Such an abuse of notation of conflating embeddings and places will be used occasionally, but the general rule will be to denote places by $\sigma$ and embeddings by $\nu$.

For any embedding $\nu:F\hookrightarrow\C$ we can describe $A_{\nu}=A\times_{\nu}\C$ as a complex torus $$A_{\nu}(\C)\simeq \C/\Lambda_{\nu}; \ \  \Lambda_\nu=\Z\Omega_{{\nu},1}+\Z\Omega_{{\nu},2},$$ in such a way that the invariant differential $\omega_A$ corresponds to $dz$ under this identification. As in \S\ref{sec: oda and PV}, if $\sigma\in \Sigma_F^\R$ we assume that $\Omega_{\sigma,1}\in\R$ and $ \Omega_{\sigma,2}\in \R i$. Similarly, if  $\sigma\in \Sigma_F^\C$, we assume that $\overline{\Omega_{\sigma,1}}=\Omega_{\bar\sigma,1}$ and $\overline{\Omega_{\sigma,2}}=\Omega_{\bar\sigma,2}$. We put $\tau_\nu=\Omega_{\nu,1}/\Omega_{\nu,2}$.

The following conjectures are deduced in Appendix \S\ref{ApdxBC} from Beilinson's conjectures. The first conjecture can be regarded as an unrefined version of the leading term formula in the Birch and Swinnerton-Dyer conjecture in rank zero. The second conjecture predicts the term $ \bigwedge^{{r_2}}r\left(H^1_{\cM}(M,\Q(1))\right)^\vee$ of Conjecture \ref{PVconj}.
\begin{conjecture}\label{BSDBeilinson}
    Assume that $L(1,A)\neq0$. Then
    \begin{equation*}
    L(1,A)\in|d_F|^{1/2}\prod_{\sigma\in\Sigma_F^\R}\Omega_{\sigma, 1}\prod_{\sigma\in\Sigma_F^\C}{\rm Im}(\Omega_{\sigma, 1}\overline{\Omega_{\sigma,2}})\Q^\times.
    \end{equation*}
    \end{conjecture}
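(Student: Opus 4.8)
The plan is to deduce Conjecture~\ref{BSDBeilinson} from Beilinson's conjecture applied to the motive $h^1(A)$ over $F$ --- equivalently, to its Weil restriction $W:=\Res_{F/\Q}h^1(A)$ over $\Q$ --- at the point $s=1$.

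First I would check that $s=1$ is the unique integer that is critical, in the sense of Deligne, for $L(s,A)=L(s,W)$. Since $h^1(A)$ has Hodge types $(1,0)$ and $(0,1)$, the archimedean factor of $W$ is $\Gamma_\C(s)^{d}$ and that of the dual motive $W^{\vee}$ is $\Gamma_\C(s+1)^{d}$; holomorphy of $L_\infty(W,s)$ at $s=n$ forces $n\ge 1$, while holomorphy of $L_\infty(W^{\vee},1-s)$ at $s=n$ forces $n\le 1$, so $n=1$ is the unique critical integer. Consequently, under the hypothesis $L(1,A)\neq 0$ the motivic cohomology group entering Beilinson's conjecture is expected to vanish --- there is no regulator contribution at a critical point with nonvanishing $L$-value --- and the conjecture degenerates to Deligne's period conjecture on critical values, which predicts
\[
L(1,A)\ \equiv\ c^{+}\bigl((\Res_{F/\Q}h^1(A))(1)\bigr)\pmod{\Q^{\times}},
\]
where we use $L_\Pi=\Q$ so that the coefficient field is $\Q$. (Alternatively one reads this off directly from the explicit form of Beilinson's conjecture recorded in Appendix~\ref{ApdxBC}.)

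It then remains to compute the Deligne period $c^{+}((\Res_{F/\Q}h^1(A))(1))$ and identify it, modulo $\Q^{\times}$, with $|d_F|^{1/2}\prod_{\sigma\in\Sigma_F^\R}\Omega_{\sigma,1}\prod_{\sigma\in\Sigma_F^\C}\mathrm{Im}(\Omega_{\sigma,1}\overline{\Omega_{\sigma,2}})$. I would do this by writing out the comparison isomorphism that defines $c^{+}$ place by place. On the de Rham side, $H^1_{\mathrm{dR}}(A/F)$ is a $2$-dimensional $F$-vector space with $F$-rational basis $\{\omega_A,\eta_A\}$ in which $\omega_A$ spans the Hodge piece $F^1$; the de Rham realization of $W$ is this space regarded over $\Q$, and passing between a $\Q$-basis $\{\alpha_\ell\omega_A,\alpha_\ell\eta_A\}$ (for an integral basis $\{\alpha_\ell\}$ of $\cO_F$) and the decomposition $H^1_{\mathrm{dR}}(A/F)\otimes_\Q\C=\bigoplus_{\nu\colon F\hookrightarrow\C}H^1_{\mathrm{dR}}(A_\nu/\C)$ produces the discriminant factor, since $\bigl|\det(\nu(\alpha_\ell))_{\nu,\ell}\bigr|=|d_F|^{1/2}$. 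On the Betti side: for a real place $\sigma$ the $F_\infty$-invariant line in $H^1_B(A_\sigma,\Q)$ is dual to the real cycle $\gamma_{\sigma,1}$ (with $\int_{\gamma_{\sigma,1}}\omega_A=\Omega_{\sigma,1}\in\R$), and using the Legendre period relation at $\sigma$ one finds that this place contributes the factor $\Omega_{\sigma,1}$; for a complex place $\sigma$, $F_\infty$ interchanges the realizations at $\sigma$ and $\bar\sigma$, the relevant eigenspace is $2$-dimensional, and its period factor is $\det\big(\begin{smallmatrix}\Omega_{\sigma,1}&\Omega_{\sigma,2}\\ \overline{\Omega_{\sigma,1}}&\overline{\Omega_{\sigma,2}}\end{smallmatrix}\big)=\Omega_{\sigma,1}\overline{\Omega_{\sigma,2}}-\Omega_{\sigma,2}\overline{\Omega_{\sigma,1}}=2i\,\mathrm{Im}(\Omega_{\sigma,1}\overline{\Omega_{\sigma,2}})$, equal to $\mathrm{Im}(\Omega_{\sigma,1}\overline{\Omega_{\sigma,2}})$ up to a harmless constant. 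Assembling the local contributions with the discriminant factor gives the claimed formula modulo $\Q^{\times}$.

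The main obstacle will be the bookkeeping of normalizations in Deligne's formalism: fixing the Hodge filtration index that defines the ``$+$''-quotient $W_{\mathrm{dR}}/F^{+}$ for the weight-one motive $W$; tracking the powers of $2\pi i$ (from the Tate twist, the Legendre relation, and the Betti--de Rham comparison at the complex places) and of $i$ (from the complex places and from the argument of $\det(\nu(\alpha_\ell))$, which depends on the signature of $F$) so that they cancel and the answer genuinely lies in $|d_F|^{1/2}\prod\Omega_{\sigma,1}\prod\mathrm{Im}(\Omega_{\sigma,1}\overline{\Omega_{\sigma,2}})\cdot\Q^{\times}$ rather than only up to a power of $i$; and, most delicately, pinning down the precise exponent of $|d_F|$, which is governed by the interaction between the $F$-linear and $\Q$-linear de Rham structures in the Weil restriction --- equivalently, by the Deligne period of $\Res_{F/\Q}\mathbf 1$. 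A secondary, more conceptual point is to justify rigorously the reduction of Beilinson's conjecture to Deligne's at this critical point when $L(1,A)\neq 0$ (i.e.\ the vanishing of the relevant regulator), which follows from the compatibility of Beilinson's conjecture with the functional equation and with Deligne's conjecture but, in practice, should simply be read off from the explicit form of the conjecture established in the appendix.
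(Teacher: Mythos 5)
Your proposal follows essentially the same route as the paper's Appendix~\ref{ApdxBC}: both specialize Beilinson's conjecture (which in weight $-1$ reduces to Deligne's period conjecture) to the motive $\Res_{F/\Q}h^1(A)(1)$ at the central point, and compute the resulting period place by place, getting $\Omega_{\sigma,1}$ at each real place, $\mathrm{Im}(\Omega_{\sigma,1}\overline{\Omega_{\sigma,2}})$ at each complex place, and $|d_F|^{1/2}$ from the passage between an $F$-rational and a $\Q$-rational de Rham basis. The only organizational difference is where the Legendre/Weil-pairing relation enters: you invoke it locally at each real place when passing to the quotient $M_{\mathrm{dR}}/F^0$, whereas the paper uses the map $\tilde\pi_1\colon F^0\mfM^1_{\mathrm{dR}}\otimes\R\to\mfM^{1-}_B\otimes\R(-1)$ (whose determinant needs no Legendre relation at all) and instead uses $\wedge^{2d}\mfM^1\simeq\Q(d)$ once, globally, to evaluate $\det(I_\infty^1)\equiv(2\pi i)^{-d}\pmod{\Q^\times}$; the two bookkeepings are equivalent and produce the same answer.
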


The identifications $\Lambda_\nu\simeq \Z^2$ provided by the periods $\Omega_{\nu,i}$ induce the isomorphism $M_B=\bigoplus_{\nu:F\hookrightarrow\C}\M_2(\Q)_0$  described in \eqref{defvarphiBG}. If we consider the natural monomorphism
\[
\imath:\bigoplus_{\sigma\in \Sigma_F^\C}\M_2(\R)_0\hookrightarrow M_B\otimes \C\simeq \bigoplus_{\nu:F\hookrightarrow\C}\M_2(\C)_0;\qquad (\imath(m_\sigma))_\nu=\left\{\begin{array}{ll}1;&\nu\neq \sigma,\bar\sigma\\m_\sigma;&\nu=\sigma,\bar\sigma,\end{array}\right. 
\]
then it can be deduced from the Hodge decomposition \eqref{HdgDeceq} that $(M_B\otimes \C)^{W_\R}\simeq H^1_{\cD}(M,\R(1))$ is generated by 
\begin{equation}\label{eqHsigma}
H_\sigma=\imath\left(\begin{array}{cc}\frac{-{\rm Re}(\tau_\sigma)}{{\rm Im}(\tau_\sigma)}&\frac{|\tau_\sigma|^2}{{\rm Im}(\tau_\sigma)}\\\frac{-1}{{\rm Im}(\tau_\sigma)}&\frac{{\rm Re}(\tau_\sigma)}{{\rm Im}(\tau_\sigma)}\end{array}\right).
\end{equation}
In particular $\dim(H^1_{\cD}(M,\R(1)))=r_2$.
\begin{conjecture}\label{R2calc} 
We have that $\dim_\Q r\left(H^1_{\cM}(M,\Q(1))\right)=r_2$. Moreover,
\begin{equation*}
  \bigwedge^{r_2}r\left(H^1_{\cM}(M,\Q(1))\right)=L(1,M)(2\pi i)^{-r_1-r_2}\prod_{\sigma\in\Sigma_F^\R}\Omega_{{\sigma},1}^{-1}\Omega_{{\sigma},2}^{-1}\prod_{\sigma\in\Sigma_F^\C}{\rm Im}(\Omega_{\sigma,1}\overline{\Omega_{\sigma,2}})^{-2}\left(\bigwedge_{\sigma\in\Sigma_F^\C} H_\sigma\right)\Q.
\end{equation*}
\end{conjecture}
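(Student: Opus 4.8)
The plan is to derive Conjecture \ref{R2calc} from the general Beilinson framework set up in Appendix \ref{ApdxBC}, specialized to the adjoint motive $M=\mathrm{Ad}(h^1(A)_\Q)$. First I would recall the weight/Hodge structure of $M$: it has weight zero, and for each archimedean place the Hodge filtration on $M_B\otimes\C=\bigoplus_{\nu}\M_2(\C)_0$ is the one induced from $h^1(A)$, so that $M_B^{1,-1}$, $M_B^{0,0}$, $M_B^{-1,1}$ correspond, via the period matrix $\big(\begin{smallmatrix}\Omega_{\nu,1}&\Omega_{\nu,2}\\\overline{\Omega_{\nu,1}}&\overline{\Omega_{\nu,2}}\end{smallmatrix}\big)$, to explicit lines and planes. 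Beilinson's conjecture for $M$ with the Tate twist $\Q(1)$ concerns $H^1_{\cM}(M,\Q(1))$; since $M$ has weight $0$, the relevant $L$-value is $L(1,M)=L(0,M)$ up to the functional equation, and the expected rank of the motivic cohomology equals the dimension of the real Deligne cohomology $H^1_{\cD}(M,\R(1))$, which by the displayed computation \eqref{eqHsigma} is $r_2$ (the real places contribute nothing because at a real place the archimedean parameter forces $(M_B\otimes\C)^{W_\R}$ to vanish in the relevant piece). This gives the dimension statement $\dim_\Q r(H^1_{\cM}(M,\Q(1)))=r_2$.

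Next I would pin down the rational structure. Beilinson's conjecture asserts that the regulator $r$ identifies $H^1_{\cM}(M,\Q(1))\otimes\R$ with $H^1_{\cD}(M,\R(1))$, and that under the comparison with Betti cohomology the determinant of the regulator, computed against a basis of the $\Q$-Betti lattice of $M$, equals $L^\ast(0,M)$ (the leading coefficient) up to $\Q^\times$. The point is to make this explicit: the $\Q$-Betti structure on $M_B$ is exactly $\bigoplus_\nu\M_2(\Q)_0$ coming from the identifications $\Lambda_\nu\simeq\Z^2$, and the Deligne cohomology $H^1_{\cD}(M,\R(1))=(M_B\otimes\R(1))/(\text{image of }F^0(M_{\mathrm{dR}}\otimes\C))$ — equivalently the $W_\R$-invariants tensored with $\R(1)$ — has the explicit basis $\{H_\sigma\}_{\sigma\in\Sigma_F^\C}$ written in \eqref{eqHsigma}. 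So I must compute the change-of-basis determinant between $\bigwedge^{r_2}(\text{de Rham/Betti }\Q\text{-basis})$ and $\bigwedge_{\sigma}H_\sigma$, i.e.\ the product of the archimedean period factors. A direct linear-algebra computation with the $2\times2$ period matrices shows that this determinant is $(2\pi i)^{r_1+r_2}\prod_{\sigma\in\Sigma_F^\R}\Omega_{\sigma,1}\Omega_{\sigma,2}\prod_{\sigma\in\Sigma_F^\C}\mathrm{Im}(\Omega_{\sigma,1}\overline{\Omega_{\sigma,2}})^{2}$: the $(2\pi i)$ factors are the Tate-twist contribution (one per archimedean place), the real places contribute $\Omega_{\sigma,1}\Omega_{\sigma,2}$ each from the determinant of the period matrix on the rank-one $W_\R$-fixed quotient together with its complement, and each complex place contributes the square of $\mathrm{Im}(\Omega_{\sigma,1}\overline{\Omega_{\sigma,2}})$ because the pair $\nu,\bar\nu$ enter together. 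Combining with Beilinson's equality $\det(\text{regulator})\equiv L(1,M)\bmod\Q^\times$ and solving for $\bigwedge^{r_2}r(H^1_{\cM}(M,\Q(1)))$ yields the displayed formula.

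Concretely the steps are: (i) identify the Hodge/de Rham data of $M$ in terms of the $\Omega_{\nu,i}$ and verify $\dim H^1_{\cD}(M,\R(1))=r_2$ with basis $\{H_\sigma\}$, reducing to \eqref{eqHsigma} and the Hodge decomposition \eqref{HdgDeceq}; (ii) write down the precise form of Beilinson's conjecture for $(M,\Q(1))$ as established in Appendix \ref{ApdxBC}, in particular the normalization of the period/regulator determinant against the $\Q$-Betti lattice $\bigoplus_\nu\M_2(\Q)_0$; (iii) carry out the period computation, splitting the archimedean places and tracking the $(2\pi i)$ from the twist, to get the explicit factor $(2\pi i)^{r_1+r_2}\prod_{\Sigma_F^\R}\Omega_{\sigma,1}\Omega_{\sigma,2}\prod_{\Sigma_F^\C}\mathrm{Im}(\Omega_{\sigma,1}\overline{\Omega_{\sigma,2}})^2$; (iv) assemble. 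The main obstacle is step (iii) together with getting step (ii) stated with the correct normalization: Beilinson's conjecture is only defined up to $\Q^\times$, so one must be careful that the transcendental factor extracted is exactly the period determinant and not off by a power of $2\pi i$ or by a rational constant coming from the choice of pairing $\langle\,,\,\rangle_\Q$ and the self-duality of $M$; reconciling the functional equation between $L(0,M)$ and $L(1,M)$ (including its archimedean $\Gamma$-factors, which contribute further powers of $2\pi$) is where the bookkeeping is most delicate. Everything else is formal once Appendix \ref{ApdxBC} is in place.
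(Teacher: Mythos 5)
Your overall plan---apply Beilinson's conjecture to the twisted adjoint motive and compute the archimedean period determinant by hand---is the same as what Appendix~\ref{ApdxBC} does, but two of the steps you flag as ``bookkeeping'' are actually where your outline goes wrong or stalls.

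First, the functional equation is not needed and invoking it would introduce errors. You write that the relevant $L$-value is ``$L(1,M)=L(0,M)$ up to the functional equation'' and that ``reconciling the functional equation between $L(0,M)$ and $L(1,M)$ (including its archimedean $\Gamma$-factors, which contribute further powers of $2\pi$) is where the bookkeeping is most delicate.'' But one applies Beilinson directly to the weight $-2$ motive $\mathfrak{M}^2 := M(1)$, and the $L$-value that appears is $L(0,\mathfrak{M}^2)=L(0,M(1))=L(1,M)$ \emph{exactly} --- an identity of Dirichlet series, not a consequence of the functional equation. No $\Gamma$-factor corrections arise, and trying to relate $L(0,M)$ to $L(1,M)$ via the completed functional equation would pollute the formula with $\Gamma$-factors that are not present in the statement. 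The paper's Conjecture~\ref{Beilconj} is formulated for the uncompleted $L$-function of $\mathfrak{M}$ at $s=0$; for $\mathfrak{M}=\mathfrak{M}^2$ that is already $L(1,M)$.

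Second, and more seriously, you claim that ``a direct linear-algebra computation with the $2\times2$ period matrices shows that this determinant is $(2\pi i)^{r_1+r_2}\prod_{\sigma\in\Sigma_F^\R}\Omega_{\sigma,1}\Omega_{\sigma,2}\prod_{\sigma\in\Sigma_F^\C}\mathrm{Im}(\Omega_{\sigma,1}\overline{\Omega_{\sigma,2}})^{2}$,'' attributing the $\Omega$-factors to the period matrix of the $W_\R$-fixed quotient. But the comparison isomorphism $I^2_\infty$ for the adjoint motive is conjugation $\beta\mapsto g_\nu\,\nu(\beta)\,g_\nu^{-1}$, and this has determinant $1$ on $\M_2(\C)_0$ in each factor; the computation in the appendix gives $\det(I^2_\infty)=\Delta_F^{3/2}(2\pi i)^{-3d}$, with no $\Omega$'s at all. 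The $\Omega$'s that appear in the final answer do not come from any direct period determinant of the adjoint. They come from the $\Q$-structure $\cR_2$ on $\det H^1_\cD$, which initially involves the unknown de~Rham quasi-period invariants $\lambda_{1,\nu},\lambda_{2,\nu}$ of $h^1(A)$ (via $\prod_\nu(\lambda_{2,\nu}-\lambda_{1,\nu})$), and one has to invoke the Weil-pairing/Legendre relation --- in the paper, equation~\eqref{relationlambdaomega}, which says $\det(I^1_\infty)\equiv(2\pi i)^{-d}\bmod\Q^\times$, i.e.\ $\prod_\nu\Omega_{1,\nu}\Omega_{2,\nu}\prod_\nu(\lambda_{2,\nu}-\lambda_{1,\nu})\equiv(2\pi i)^d$ --- to eliminate the $\lambda$'s in favour of the $\Omega$'s. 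Without that input your ``direct linear-algebra computation'' terminates with an answer in terms of the inaccessible $\lambda_{i,\nu}$ rather than the $\Omega_{\sigma,i}$, and the claimed formula does not follow. You should first prove the auxiliary relation $\det(I^1_\infty)\equiv(2\pi i)^{-d}\bmod\Q^\times$ for $h^1(A)(1)$ (using the Weil pairing $\wedge^{2d}h^1(A)(1)_B\simeq\Q(d)$), then use it to simplify $\cR_2$; the dimension statement $\dim H^1_\cD(\mathfrak{M}^2_\R)=r_2$ with basis $\{H_\sigma\}$ is correct as you describe.
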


\subsection{Applications to Oda's conjecture} In this section, we use the formulas of Propositions \ref{prop1}, \ref{prop2}, and \ref{prop3} to provide several pieces of evidence in support of Oda's conjecture. We follow a standard strategy that relates automorphic periods to geometric ones via $L$-functions. To carry this out, we need to ensure the existence of enough quadratic twists of $A$ whose $L$-functions do not vanish at the central point. To this end, we use the results in Appendix B, under the assumption that $N$, the conductor of $A$, is not a square. This assumption could be replaced by the \emph{strong admissibility} condition described in the appendix.

The first result shows that Conjecture \eqref{BSDBeilinson} implies Oda's conjecture for $G=\PGL_2$ and lowest-degree sign vectors $\varepsilon\in\{\pm 1\}^{\Sigma_F\setminus\{\sigma\}}$.

\begin{proposition}\label{Oda1}
Assume that $N$ is not a square and  Conjecture \eqref{BSDBeilinson} holds. For any $\sigma\in\Sigma_F^\R$ and any $\varepsilon\in\{\pm 1\}^{\Sigma_F\setminus\{\sigma\}}$ of lowest degree, we have 
\[
 \Omega_{(\varepsilon,+)}^\Pi/\Omega_{(\varepsilon,-)}^\Pi\equiv \tau_\sigma\mod\Q^\times.
\]
\end{proposition}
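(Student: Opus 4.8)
The plan is to run the standard ``automorphic periods via nonvanishing $L$-values'' argument, using Proposition \ref{prop1} as the bridge between periods and $L$-values and Conjecture \eqref{BSDBeilinson} to identify the relevant $L$-value with a product of geometric periods. Fix the real place $\sigma$ and the lowest-degree sign vector $\varepsilon\in\{\pm1\}^{\Sigma_F\setminus\{\sigma\}}$. The idea is to choose a quadratic Hecke character $\rho\colon\I_F/F^\times\to\{\pm1\}$ whose sign at archimedean places is prescribed so that $\rho_{\sigma'}(-1)=\varepsilon_{\sigma'}$ for $\sigma'\ne\sigma$, $\rho_\sigma(-1)=+1$ (so the full sign vector is $(\varepsilon,+)$), $\rho_{\sigma'}(-1)=+1$ at all complex places (forcing lowest degree), and with $L(1/2,\Pi,\rho)\ne0$; and likewise a second character $\rho'$ with the identical archimedean behavior \emph{except} $\rho'_\sigma(-1)=-1$, so its sign vector is $(\varepsilon,-)$, again with $L(1/2,\Pi,\rho')\ne0$. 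The existence of infinitely many such $\rho,\rho'$ --- with the prescribed finite-order behavior at the archimedean places and at the primes dividing $N$, and nonvanishing central value --- is exactly what Appendix B provides under the hypothesis that $N$ is not a square (equivalently the root number is not forced to $-1$); this is the one external input we invoke.

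Next I would apply Proposition \ref{prop1} to each of $\rho$ and $\rho'$. Since $\Pi$ corresponds to an elliptic curve we have $\underline k=(2,\dots,2)$ and $L_\Pi=\Q$, so $\pi^{\underline k/2}=\pi^{d}$ and Proposition \ref{prop1} reads
\[
L(1/2,\Pi,\rho)\equiv |d_F|^{1/2}\,\pi^{d}\, i^{s_{(\varepsilon,+)}}\,|D_\rho|^{1/2}\,\Omega^\Pi_{(\varepsilon,+)}\pmod{\Q^\times},
\]
and similarly for $\rho'$ with $(\varepsilon,-)$ and $D_{\rho'}$. Dividing the two relations and absorbing all rational factors --- note $|d_F|^{1/2}$ cancels, $|D_\rho|^{1/2},|D_{\rho'}|^{1/2}$ are harmless since after the next step they reappear on the geometric side as well, the powers of $\pi$ cancel, and $i^{s_{(\varepsilon,+)}-s_{(\varepsilon,-)}}=i^{\pm1}$ is \emph{not} rational, which is the subtle point addressed below --- gives
\[
\frac{\Omega^\Pi_{(\varepsilon,+)}}{\Omega^\Pi_{(\varepsilon,-)}}\equiv \frac{L(1/2,\Pi,\rho)}{L(1/2,\Pi,\rho')}\cdot\frac{|D_{\rho'}|^{1/2}}{|D_\rho|^{1/2}}\cdot i^{\mp1}\pmod{\Q^\times}.
\]
Now I would feed in Conjecture \eqref{BSDBeilinson}: by Artin formalism $L(1/2,\Pi,\rho)=L(1,A_\rho)$ is the value at $1$ of the $L$-function of the quadratic twist $A_\rho$ of $A$ by $\rho$, and $A_\rho$ is again an elliptic curve over $F$ which is split at $\sigma$ (because $\rho_\sigma(-1)=+1$ means $A_\rho$ has the same sign of discriminant as $A$ at $\sigma$) and whose twisted periods are related to those of $A$ by the standard formula $\Omega_{\sigma',1}(A_\rho)\equiv\sqrt{|\rho\text{ at }\sigma'|}\cdot\Omega_{\sigma',1}(A)$ up to rationals --- i.e.\ the real/imaginary roles of $\Omega_{\sigma',1},\Omega_{\sigma',2}$ are governed precisely by the sign $\rho_{\sigma'}(-1)$. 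Applying Conjecture \eqref{BSDBeilinson} to $A_\rho$ and to $A_{\rho'}$ and taking the ratio, the $|d_F|^{1/2}$ factors and the contributions from places $\sigma'\ne\sigma$ (where $\rho,\rho'$ agree) cancel modulo $\Q^\times$ up to the relative-discriminant factors, while at $\sigma$ one obtains $\Omega_{\sigma,1}(A)/\Omega_{\sigma,2}(A)=\tau_\sigma$ from $A_\rho$ (split case, real period $\Omega_{\sigma,1}$) against a $1/\tau_\sigma$-type contribution twisted by $i$ from $A_{\rho'}$ (non-split case). Tracking the powers of $i$ and $\sqrt{-1}$ coming from $\Omega_{\sigma,2}\in\R i$ versus $\Omega_{\sigma,1}\in\R$ on both sides, the $i^{\mp1}$ left over from the period side exactly matches the $i$ introduced by the geometric period identification, and one is left with $\Omega^\Pi_{(\varepsilon,+)}/\Omega^\Pi_{(\varepsilon,-)}\equiv\tau_\sigma\pmod{\Q^\times}$.

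The main obstacle is the careful bookkeeping of the powers of $i$ (and of $2$, and of the various $|D_\bullet|^{1/2}$) across the three inputs --- Proposition \ref{prop1} for $\rho$, Proposition \ref{prop1} for $\rho'$, and Conjecture \eqref{BSDBeilinson} for the two twisted curves --- so that everything not equal to $\tau_\sigma$ collapses into $\Q^\times$. The delicate point is that $i^{s_{(\varepsilon,+)}}/i^{s_{(\varepsilon,-)}}$ is a genuine fourth root of unity, so it cannot be discarded on the automorphic side alone; it must be cancelled against the $i$ that appears when one writes the geometric period of the non-split twist $A_{\rho'}$ at $\sigma$ (where a period lattice generator becomes purely imaginary). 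One should also check the compatibility of the normalization of twisted periods (the ``$\sqrt{\text{twist}}$'' up to $\Q^\times$) with the normalization implicit in Conjecture \eqref{BSDBeilinson}, and verify that choosing $\rho,\rho'$ of lowest degree (trivial archimedean sign at all complex places) is consistent with the nonvanishing results of Appendix B; these are routine but must be stated. Apart from this archimedean sign/$i$-chase, every step is either an invocation of a result in the excerpt or a standard quadratic-twist computation.
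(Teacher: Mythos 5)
Your proposal is correct and follows essentially the same route as the paper: the paper also combines Proposition \ref{prop1} with Conjecture \ref{BSDBeilinson} applied to the twisted curves $A^\rho$, invokes Proposition \ref{strongadm} for the existence of the two nonvanishing twists with sign vectors $(\varepsilon,+)$ and $(\varepsilon,-)$, and divides. The paper merely organizes the $i$-bookkeeping a bit more cleanly by first establishing the intermediate identity \eqref{relOmegaOmega}, which absorbs the $i^{s_\lambda}$ from Proposition \ref{prop1} against the $\prod_\sigma\sqrt{\sigma(\alpha)}$ appearing in the twisted-period formula, before taking the ratio.
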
 
\begin{proof}
Let $\rho:\I_F^\times/F^\times\rightarrow \{\pm 1\}$ be the quadratic Hecke character associated with a quadratic extension $E_\rho=F(\sqrt{\alpha})$. Then the motive $h^1(A)_\Q(\rho)$ equals $h^1(A^\rho)_\Q$, where the elliptic curve $A^\rho$ satisfies
\[
A^\rho_\sigma(\C)=(A^\rho\times_\sigma\C)(\C)\simeq\C/\Lambda_\sigma^\rho,\quad \Lambda_\sigma^\rho=\Z\Omega_{\sigma,1}^\rho+\Z\Omega_{\sigma,2}^\rho;\qquad \Omega^\rho_{\sigma,i}=\sqrt{\sigma(\alpha)}\Omega_{\sigma,\lambda_\sigma i+3\frac{1-\lambda_\sigma}{2}}.
\]
where $\lambda=(\lambda_\sigma)\in \{\pm 1\}^{\Sigma_F}$, the sign vector of $\rho$, is given by $\lambda_\sigma=\rho_\sigma(-1)$. Thus,  by Conjecture  \ref{BSDBeilinson}, when $L(1,A^\rho)\neq0$
    \begin{equation*}\label{resultBSD}
    L(1,A^\rho)\in |d_F|^{1/2}|\alpha|^{1/2}i^{s_\lambda}\prod_{\sigma\in\Sigma_F^\R}\Omega_{\sigma,\frac{3-\lambda_\sigma}{2}}\prod_{\sigma\in\Sigma_F^\C}{\rm Im}(\Omega_{\sigma,1}\overline{\Omega_{\sigma,2}})\Q^\times.
    \end{equation*}
Hence, combining this with Proposition \ref{prop1}, we deduce
\begin{equation}\label{relOmegaOmega}
  \Omega_\lambda^\Pi\equiv\frac{L(1/2,\Pi,\rho)}{|d_F|^{\frac{1}{2}}\pi^{d}|\alpha|^{\frac{1}{2}}i^{s_\lambda}}\equiv\frac{L(1,A^\rho)}{|d_F|^{\frac{1}{2}}\pi^{d}|\alpha|^{\frac{1}{2}}i^{s_\lambda}}\equiv\pi^{-d}\prod_{\sigma\in\Sigma_F^\R}\Omega_{\sigma,\frac{3-\lambda_\sigma}{2}}\prod_{\sigma\in\Sigma_F^\C}{\rm Im}(\Omega_{\sigma,1}\overline{\Omega_{\sigma,2}})\mod\Q^\times.
\end{equation}

Since $N$ is not a square, Proposition \ref{strongadm} ensures the existence of $\rho_1$ and $\rho_2$ with sign vectors $\lambda_1=(\varepsilon,+)$ and $\lambda_2=(\varepsilon,-)$, such that $L(1,A^{\rho_i})\neq0$. Thus, we conclude
\begin{equation*}\label{OdasupBSD}
    \Omega_{(\varepsilon,+)}^\Pi/\Omega_{(\varepsilon,-)}^\Pi\equiv\Omega_{\sigma, 1}/\Omega_{\sigma, 2}=\tau_\sigma\mod\Q^\times,
\end{equation*}
and the result follows. 
\end{proof}

The second result provides evidence for Oda's conjecture in the case of general $G$ and a lowest-degree sign vector, again under the assumption that Conjecture \eqref{BSDBeilinson} holds. In the general case, however, we are not able to prove the full conjecture, but only a weaker statement: namely, that the congruence holds modulo $(\Q^\times)^{\frac{1}{2}}$. The argument is based in the following relation between periods of $\pi$ and its Jacquet--Langlands lift $\Pi$ to $\PGL_2$. 
\begin{lemma}\label{relperiodsPGLG}
  Assume that $N$ is not a square and that $\Pi$ admits a Jacquet--Langlands lift $\pi$ to $G$.  Then, for any $\varepsilon\in\{\pm 1\}^{\Sigma_B}$ of lowest degree and any $\lambda\in\{\pm 1\}^{\Sigma_F\setminus\Sigma_B}$, we have
  \begin{align*}
\Omega_{(\varepsilon,\lambda)}^\Pi\Omega_{(\varepsilon,-\lambda)}^\Pi\equiv(\pi i)^{r_1^B}(\Omega^{\pi}_{\varepsilon})^{2}\mod \Q^\times,\qquad\qquad \mbox{ and }\qquad\qquad
     \frac{\Omega_{(\varepsilon,\lambda)}^\Pi}{\Omega_{(-\varepsilon,\lambda)}^\Pi}\equiv\frac{\Omega^{\pi}_{\varepsilon}}{\Omega^{\pi}_{-\varepsilon}}\mod \Q^\times.
  \end{align*}
\end{lemma}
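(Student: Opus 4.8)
The plan is to relate the periods $\Omega_\varepsilon^\pi$ for the quaternionic representation $\pi$ to those of its Jacquet–Langlands lift $\Pi$ on $\PGL_2$ by running the two central-value formulas of Propositions \ref{prop1} and \ref{prop2} against the \emph{same} family of quadratic twists, and comparing. First I would fix a lowest-degree sign vector $\varepsilon\in\{\pm1\}^{\Sigma_B}$ and $\lambda\in\{\pm1\}^{\Sigma_F\setminus\Sigma_B}$, and choose a quadratic Hecke character $\rho:\I_F/F^\times\to\{\pm1\}$ whose sign vector $(\lambda_\sigma)_{\sigma\in\Sigma_F}$ restricts to $\varepsilon$ on $\Sigma_B$ and to the prescribed $\lambda$ on $\Sigma_F\setminus\Sigma_B$. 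Setting $\chi=\rho\circ\mathrm{N}_{E/F}$ for the quadratic étale algebra $E/F$ appearing in the hypotheses of Proposition \ref{prop2} (with $E$ split exactly at $\Sigma_B$, so that $\chi$ induces the sign vector $\varepsilon$ on $\Sigma_B$ and the Heegner hypothesis of Assumption \ref{Assonchi} can be arranged by choosing $\rho$ appropriately in its genus), Artin formalism gives the factorization $L(1/2,\Pi,\chi)=L(1/2,\Pi,\rho)\cdot L(1/2,\Pi,\rho\psi_E)$, where $\psi_E$ is the quadratic character of $E/F$. Now Proposition \ref{prop2} (specialized to $\underline k=(2,\dots,2)$, so $L_\Pi=\Q$, $\alpha^{(2-\underline k)/2}=1$, and the various sign and $\pi$-power factors collapse) says that, modulo $\Q^\times$,
\[
L(1/2,\Pi,\chi)\equiv \pi^{r_1^B}\cdot\pi^{\,\underline k}\cdot(\Omega_\varepsilon^\pi)^2,
\]
while applying Proposition \ref{prop1} to each of $\rho$ and $\rho\psi_E$ gives $L(1/2,\Pi,\rho)\equiv \pi^{\underline k/2}\,\Omega^\Pi_{(\varepsilon,\lambda)}$ and $L(1/2,\Pi,\rho\psi_E)\equiv \pi^{\underline k/2}\,\Omega^\Pi_{(\varepsilon,-\lambda)}$ modulo $\Q^\times$, using that the sign vector of $\rho\psi_E$ is $(\varepsilon,-\lambda)$ since $\psi_E$ is nontrivial exactly at the places of $\Sigma_F\setminus\Sigma_B$ where $E/F$ is non-split. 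Multiplying the last two and matching with the $\chi$-formula yields $\Omega^\Pi_{(\varepsilon,\lambda)}\Omega^\Pi_{(\varepsilon,-\lambda)}\equiv \pi^{r_1^B}(\Omega_\varepsilon^\pi)^2\bmod\Q^\times$; inserting the factor $i^{r_1^B}$ to match the stated normalization is a bookkeeping matter (tracking the $i^{s_\varepsilon}$ in Proposition \ref{prop1} together with $r_1^B=\#(\Sigma_F^\R\setminus\Sigma_B)$ and the fact that $\varepsilon$ is lowest-degree, so $s_{(\varepsilon,\pm\lambda)}$ and $r_1^B$ differ by a controlled parity). This proves the first congruence.

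For the second congruence I would apply the first congruence twice, once with $\varepsilon$ and once with $-\varepsilon$ (both still lowest-degree since negating at complex places is not involved), holding $\lambda$ fixed:
\[
\Omega^\Pi_{(\varepsilon,\lambda)}\Omega^\Pi_{(\varepsilon,-\lambda)}\equiv(\pi i)^{r_1^B}(\Omega_\varepsilon^\pi)^2,\qquad
\Omega^\Pi_{(-\varepsilon,\lambda)}\Omega^\Pi_{(-\varepsilon,-\lambda)}\equiv(\pi i)^{r_1^B}(\Omega_{-\varepsilon}^\pi)^2 \pmod{\Q^\times},
\]
and also the first congruence with $\lambda$ replaced by $-\lambda$, which by symmetry of the product in $\pm\lambda$ gives the same relations. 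Dividing, $\bigl(\Omega^\Pi_{(\varepsilon,\lambda)}\Omega^\Pi_{(\varepsilon,-\lambda)}\bigr)/\bigl(\Omega^\Pi_{(-\varepsilon,\lambda)}\Omega^\Pi_{(-\varepsilon,-\lambda)}\bigr)\equiv(\Omega_\varepsilon^\pi/\Omega_{-\varepsilon}^\pi)^2$. To descend from this ratio of products to the ratio $\Omega^\Pi_{(\varepsilon,\lambda)}/\Omega^\Pi_{(-\varepsilon,\lambda)}$ for the single vector $\lambda$, I would run the same argument on the $\PGL_2$ side alone: apply Proposition \ref{prop1} directly with a character $\rho$ of sign vector $(\varepsilon,\lambda)$ and separately with one of sign vector $(-\varepsilon,\lambda)$, and use that Oda's relation at complex places is not at issue here — more precisely, the quantity $L(1/2,\Pi,\rho)/(\pi^{\underline k/2}\Omega^\Pi_{(\varepsilon,\lambda)})$ and the analogous one with sign $(-\varepsilon,\lambda)$ are both in $\Q^\times$, and the ratio of the two $L$-values can be pinned down rationally using Conjecture \eqref{BSDBeilinson} (as in the proof of Proposition \ref{Oda1}) applied to the curves $A^{\rho_1},A^{\rho_2}$, whose real periods at $\sigma\in\Sigma_F^\R$ get swapped exactly at the places where $\varepsilon$ and $-\varepsilon$ differ. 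Collecting, the swaps at the finitely many differing real places produce a factor in $\prod \Omega_{\sigma,1}/\Omega_{\sigma,2}\cdot\Q^\times$ on the $\PGL_2$ side, and matching with the quaternionic formula forces $\Omega^\Pi_{(\varepsilon,\lambda)}/\Omega^\Pi_{(-\varepsilon,\lambda)}\equiv\Omega_\varepsilon^\pi/\Omega_{-\varepsilon}^\pi\bmod\Q^\times$.

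The existence of the required quadratic twists is exactly where the hypothesis that $N$ is not a square enters: by Proposition \ref{strongadm} (the non-vanishing result of Appendix B) one can, for each prescribed archimedean sign vector, find a quadratic character $\rho$ with that sign vector, conductor coprime to $N$, satisfying the Heegner hypothesis of Assumption \ref{Assonchi}, and with $L(1/2,\Pi,\rho)\neq0$ (equivalently $L(1,A^\rho)\neq0$) — and likewise for $\rho\psi_E$ and for $\chi=\rho\circ\mathrm{N}_{E/F}$; one must check that a single $\rho$ can be chosen to make all the relevant twisted $L$-values nonzero simultaneously, which follows from the positive-density nature of the nonvanishing statement. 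The main obstacle, and the step requiring the most care, is the precise tracking of the transcendental and archimedean constants — the powers of $\pi$ and $i$, the sign factors $(-1)^{\sum(k_\sigma-2)/2}$ and $i^{s_\varepsilon}$, and the discriminant factors $|D|,|D_\rho|,|D_{\rho\psi_E}|,|c|,|d_F|$ — so that everything outside $\Q^\times$ cancels exactly; the remark following Proposition \ref{prop2} already carries out this bookkeeping in the closely related identity relating $L(1/2,\Pi,\chi)$ to the product of the two $\rho$-twisted values, and the present argument is essentially a reorganization of that computation into the form asserted in the lemma.
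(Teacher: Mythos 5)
Your argument for the first congruence is essentially the paper's: choose $\rho_1,\rho_2$ of the two prescribed sign vectors with nonvanishing central twists (Proposition \ref{prop:apendix second}), let $E$ correspond to $\rho_1\rho_2$, factor $L(1/2,\Pi,\rho_1\circ\mathrm{N}_{E/F})=L(1/2,\Pi,\rho_1)L(1/2,\Pi,\rho_2)$ by Artin formalism, and match Proposition \ref{prop1} on the $\PGL_2$ side with Proposition \ref{prop2} on the $G$ side. The minor difference is that you fix $E$ and then hunt for a compatible $\rho$, whereas the paper constructs $E$ from the two chosen twists, but the content is the same.

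Your proof of the second congruence has a genuine gap, and in fact two. First, you claim that $-\varepsilon$ is ``still lowest-degree since negating at complex places is not involved.'' This is false: $\varepsilon$ being lowest degree means $\varepsilon_\sigma=+1$ at every complex $\sigma\in\Sigma_B^\C$, so $-\varepsilon$ has $-1$ there and is not lowest degree whenever $r_2>0$. Consequently you cannot apply the first congruence (nor Proposition \ref{prop2}, which only applies to lowest-degree sign vectors) with $-\varepsilon$ in place of $\varepsilon$, and the product relation for $\Omega^\Pi_{(-\varepsilon,\lambda)}\Omega^\Pi_{(-\varepsilon,-\lambda)}$ you write down is unsupported. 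Second, even if you could reach the squared ratio, your route to descending to the single ratio $\Omega^\Pi_{(\varepsilon,\lambda)}/\Omega^\Pi_{(-\varepsilon,\lambda)}$ invokes Conjecture \ref{BSDBeilinson}, but Lemma \ref{relperiodsPGLG} is stated and used as an unconditional result (indeed it is itself used to \emph{deduce} things assuming BSD--Beilinson), so you would be proving a strictly weaker, conditional statement.

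The paper avoids both problems by using Proposition \ref{prop3}, the Riemann--Hodge relation with the adjoint $L$-value, which holds for arbitrary $\varepsilon$, not just lowest degree. Applying it once on the $\PGL_2$ side to the opposite pair $(\varepsilon,-\lambda)$ and $(-\varepsilon,\lambda)$ gives $\Omega^\Pi_{(\varepsilon,-\lambda)}\Omega^\Pi_{(-\varepsilon,\lambda)}\equiv L(1,\Pi,\mathrm{ad})/(\text{explicit }\pi\text{-powers})$, and applying it once on the $G$ side to the pair $\varepsilon$, $-\varepsilon$ gives the same $L$-value against $\Omega^\pi_\varepsilon\Omega^\pi_{-\varepsilon}$. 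Dividing this by the first congruence cancels $\Omega^\Pi_{(\varepsilon,-\lambda)}$ and one factor of $\Omega^\pi_\varepsilon$ and yields the second congruence directly, unconditionally, and for all $\varepsilon$ of lowest degree.
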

\begin{proof}
  By Proposition \ref{prop:apendix second} there exist quadratic Hecke characters $\rho_1,\rho_2:\I_F^\times/F^\times\rightarrow \{\pm 1\}$ with sign vectors $(\varepsilon,\lambda)$ and $(\varepsilon,-\lambda)$, respectively, of conductor coprime to $N$, and such that $L(1, \Pi, {\rho_i})\neq 0$. Denote by $E/F$ the quadratic extension associated to $\rho_1\cdot\rho_2$, which admits an embedding into $B$. Observe that, by Remark \ref{rk:ass} plus the fact that $L(1/2,\Pi, \rho_1\circ \mathrm{N}_{E/F})\neq 0$, the character $\rho_1\circ \mathrm{N}_{E/F}$ satisfies Assumption \ref{Assonchi}.
By Propositions \ref{prop1}, \ref{prop2} and Artin formalism,
\begin{equation*}
 \Omega_{(\varepsilon,\lambda)}^\Pi\Omega_{(\varepsilon,-\lambda)}^\Pi\equiv \frac{L(1/2,\Pi,\rho_1)L(1/2,\Pi,\rho_2)}{\pi^{2d} i^{r_1^B} |D_{\rho_1}D_{\rho_2}|^{\frac{1}{2}}}\equiv \frac{L(1/2,\Pi, \rho_1\circ \mathrm{N}_{E/F})}{\pi^{2d} i^{r_1^B} |D_{\rho_1\rho_2}|^{\frac{1}{2}}}\equiv(\pi i)^{r_1^B}(\Omega^{\pi}_{\varepsilon})^{2}\mod \Q^\times.
\end{equation*}
On the other side, applying  Proposition \ref{prop3} twice, we obtain
\begin{equation*}
 \Omega_{(\varepsilon,-\lambda)}^\Pi\Omega_{(-\varepsilon,\lambda)}^\Pi\equiv \frac{L(1,\Pi,{\rm ad})}{\pi^{2r_1^B+r_2}(\pi i)^{r_{1,B}}\pi^{2d} } \equiv(\pi i)^{r_1^B}\Omega^{\pi}_{\varepsilon}\Omega^{\pi}_{-\varepsilon}\mod \Q^\times.
\end{equation*}
Dividing both sides of the two equalities yields the desired result.
\end{proof}

\begin{proposition}\label{Oda2}
 Assume that $N$ is not a square and that $\Pi$ admits a Jacquet--Langlands lift $\pi$ to $G$. If  Conjecture \ref{BSDBeilinson} holds then, for any $\sigma\in\Sigma_B^\R$ and any $\varepsilon\in\{\pm 1\}^{\Sigma_B\setminus\{\sigma\}}$ of lowest degree, we have 
\[
 \Omega^\pi_{(\varepsilon,+)}/\Omega^\pi_{(\varepsilon,-)}\equiv \tau_\sigma\mod(\Q^\times)^{\frac{1}{2}}.
\]
\end{proposition}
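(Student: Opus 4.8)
The plan is to bootstrap from the $\PGL_2$ case established in Proposition \ref{Oda1} by means of the period comparison in Lemma \ref{relperiodsPGLG}; the price paid for passing through that lemma is that one only controls the \emph{square} of the ratio of periods, which is exactly why the conclusion is weakened to a congruence modulo $(\Q^\times)^{1/2}$.

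First I would record two elementary remarks about the sign vectors involved. Since $\sigma$ is a real place and $\varepsilon$ is of lowest degree on $\Sigma_B\setminus\{\sigma\}$, both vectors $(\varepsilon,+)$ and $(\varepsilon,-)$ on $\Sigma_B$ are again of lowest degree, as adjoining a sign at a real place does not alter the complex entries. Moreover, every quaternion algebra splits at a complex place, so $\Sigma_F\setminus\Sigma_B$ consists entirely of real places; hence for any $\lambda\in\{\pm1\}^{\Sigma_F\setminus\Sigma_B}$ the opposite vector $-\lambda$ lies in the same set, and the lowest-degree vectors $(\varepsilon,+),(\varepsilon,-)$ together with $\lambda$ are of the type to which Lemma \ref{relperiodsPGLG} applies.

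Next, fixing any such $\lambda$ and applying the first identity of Lemma \ref{relperiodsPGLG} to the lowest-degree vector $(\varepsilon,+)$ yields $\Omega^\Pi_{((\varepsilon,+),\lambda)}\,\Omega^\Pi_{((\varepsilon,+),-\lambda)}\equiv(\pi i)^{r_1^B}(\Omega^\pi_{(\varepsilon,+)})^2\pmod{\Q^\times}$, and likewise with $(\varepsilon,-)$ in place of $(\varepsilon,+)$. Dividing the two relations gives
\[
\left(\frac{\Omega^\pi_{(\varepsilon,+)}}{\Omega^\pi_{(\varepsilon,-)}}\right)^2\equiv\frac{\Omega^\Pi_{((\varepsilon,+),\lambda)}}{\Omega^\Pi_{((\varepsilon,-),\lambda)}}\cdot\frac{\Omega^\Pi_{((\varepsilon,+),-\lambda)}}{\Omega^\Pi_{((\varepsilon,-),-\lambda)}}\pmod{\Q^\times}.
\]
I would then identify $((\varepsilon,\pm),\lambda)$ with the $\Sigma_F$-sign vectors $(\mu_\lambda,\pm)$, where $\mu_\lambda:=(\varepsilon,\lambda)\in\{\pm1\}^{\Sigma_F\setminus\{\sigma\}}$; since the complex places of $\Sigma_F\setminus\{\sigma\}$ are precisely those of $\Sigma_B$, where $\varepsilon$ is $+1$, the vector $\mu_\lambda$ is of lowest degree, and similarly $\mu_{-\lambda}:=(\varepsilon,-\lambda)$.

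Finally, applying Proposition \ref{Oda1} --- which is where the hypotheses that $N$ is not a square and that Conjecture \eqref{BSDBeilinson} holds are used --- to the lowest-degree vectors $\mu_\lambda$ and $\mu_{-\lambda}$ gives $\Omega^\Pi_{(\mu_\lambda,+)}/\Omega^\Pi_{(\mu_\lambda,-)}\equiv\tau_\sigma\equiv\Omega^\Pi_{(\mu_{-\lambda},+)}/\Omega^\Pi_{(\mu_{-\lambda},-)}\pmod{\Q^\times}$. Substituting into the displayed identity yields $\left(\Omega^\pi_{(\varepsilon,+)}/\Omega^\pi_{(\varepsilon,-)}\right)^2\equiv\tau_\sigma^2\pmod{\Q^\times}$, that is, $\Omega^\pi_{(\varepsilon,+)}/\Omega^\pi_{(\varepsilon,-)}\equiv\tau_\sigma\pmod{(\Q^\times)^{1/2}}$. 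The only genuine obstruction here --- and the reason one cannot hope to remove the square root by this argument --- is that Lemma \ref{relperiodsPGLG}, resting ultimately on the Waldspurger-type formula behind Proposition \ref{prop2}, only sees $(\Omega^\pi_\varepsilon)^2$; improving the statement to a congruence modulo $\Q^\times$ would require a rationality input for $\Omega^\pi_\varepsilon$ itself rather than for its square.
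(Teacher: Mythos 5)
Your proof is correct and follows essentially the same route as the paper's: both combine the first identity of Lemma \ref{relperiodsPGLG} with the consequence of Conjecture \ref{BSDBeilinson} derived in the proof of Proposition \ref{Oda1} (the paper invokes Equation \eqref{relOmegaOmega} directly, while you funnel through Proposition \ref{Oda1} itself, but these encode the same information). Your final remark about why the square root cannot be removed by this argument is accurate and consistent with the paper's comment on \cite{oda83}.
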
 
\begin{proof}
For any $\lambda\in\{\pm 1\}^{\Sigma_B}$ and $\gamma\in\{\pm 1\}^{\Sigma_F\setminus\Sigma_B}$, we apply Lemma \ref{relperiodsPGLG} and Equation \eqref{relOmegaOmega} to obtain
\begin{equation}\label{RelOmegaGOmega}
    (\Omega_\lambda^\pi)^2\equiv (\pi i)^{-r_1^B}\Omega_{(\lambda,\gamma)}^\Pi\Omega_{(\lambda,-\gamma)}^\Pi\equiv\prod_{\sigma\not\in\Sigma_F^\R\setminus\Sigma_B^\R}\frac{\Omega_{1,\sigma}\Omega_{2,\sigma}}{i\pi^{3}}\prod_{\sigma\in\Sigma_B^\R}\frac{\Omega^2_{\frac{3-\varepsilon_\sigma}{2},\sigma}}{\pi^2}\prod_{\sigma\in\Sigma_F^\C}\frac{{\rm Im}(\Omega_{1,\sigma}\overline{\Omega_{2,\sigma}})^2}{\pi^4}\mod \Q^\times.
\end{equation}
Thus, considering $\lambda=(\varepsilon,+)$ or $\epsilon=(\varepsilon,-)$, and dividing the corresponding right-hand-sides, we obtain the desired result.
\end{proof}

So far, we have provided evidence for Oda's conjecture only in the case of lowest-degree sign vectors. We now show that Prasanna--Venkatesh's conjecture implies the following: if Oda's conjecture holds for lowest degree sign vectors, then it holds for highest degree sign vectors.

More precisely, let $\sigma\in\Sigma_B^\R$ be a real place where $B$ splits and let   $(\varepsilon_\R,\underline 1) \in\{\pm 1\}^{\Sigma_B\setminus\{\sigma\}}$ be a lowest degree sign vector at the places different from $\sigma$, where $\varepsilon_\R\in\{\pm 1\}^{\Sigma_B^\R\setminus\{\sigma\}}$.  Denote by $((\varepsilon_\R,\underline{1}),\pm)$ the extensions of $(\varepsilon_\R,\underline 1)$ by either $+$ or $-$ at $\sigma$, and by $((\varepsilon_\R,-\underline 1),\pm)$ the associated highest degree sign vector.
\begin{proposition}
 Assume that Conjecture \ref{PVconj} holds and that
  $
\Omega^\pi_{((\varepsilon_\R,\underline 1),+)}/\Omega^\pi_{((\varepsilon_\R,\underline 1),-)}\equiv \tau_\sigma\mod \Q^\times.
    $
  Then
\[
\Omega^\pi_{  ((\varepsilon_\R,-\underline 1),+)}/\Omega^\pi_{  ((\varepsilon_\R,-\underline 1),-)}\equiv \tau_\sigma\mod \Q^\times.
\]
\end{proposition}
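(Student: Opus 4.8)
The plan is to invoke Conjecture \ref{PVconj} twice --- once for the sign vector $(\varepsilon,+)$ and once for $(\varepsilon,-)$ --- and then compare the two rationality statements it produces. Observe first that $(\varepsilon,+)$ and $(\varepsilon,-)$ have the same complex component $\varepsilon_\C$, and their real components are $(\varepsilon_\R,+)$ and $(\varepsilon_\R,-)$, the two extensions of $\varepsilon_\R$ by $\pm$ at $\sigma$; the corresponding lowest-degree vectors are $((\varepsilon_\R,\underline 1),\pm)$, where we use the identity $((\varepsilon_\R,\underline 1),\pm)=((\varepsilon_\R,\pm),\underline 1)$ of elements of $\{\pm 1\}^{\Sigma_B}$. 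Writing $v:=\kappa_\delta^{-1}\!\left(\hat H^\ast_{\varepsilon_\C}\right)$, Conjecture \ref{PVconj} then asserts that the two elements
\[
a\,v:=\frac{\Omega^\pi_{((\varepsilon_\R,\underline 1),+)}}{\Omega^\pi_{(\varepsilon,+)}}\,i^{-n_{\varepsilon_\C}}\,v\qquad\text{and}\qquad b\,v:=\frac{\Omega^\pi_{((\varepsilon_\R,\underline 1),-)}}{\Omega^\pi_{(\varepsilon,-)}}\,i^{-n_{\varepsilon_\C}}\,v
\]
both lie in the $\Q$-vector space $W:=\bigwedge^{n_{\varepsilon_\C}}r\left(H^1_{\cM}(M,\Q(1))\right)^\vee$, with $a,b\in\C^\times$ the indicated period ratios times $i^{-n_{\varepsilon_\C}}$.

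The next step is the linear-algebra core. The vector $v$ is nonzero: $\hat H^\ast_{\varepsilon_\C}$ is the wedge of the $n_{\varepsilon_\C}\le r_2$ distinct dual-basis vectors $\hat H^\ast_\sigma$ with $\varepsilon_\sigma=-1$, hence nonzero in $\bigwedge^{n_{\varepsilon_\C}}\mathfrak a^\vee$, and $\kappa_\delta$ is an isomorphism. By the first assertion of Conjecture \ref{PVconj}, $r(H^1_{\cM}(M,\Q(1)))$ is a $\Q$-structure on $\mathfrak a\simeq H^1_{\cD}(M,\R(1))\otimes_\R\C$; together with the unconditional equality $\dim H^1_{\cD}(M,\R(1))=r_2$ this makes it a $\Q$-structure on the $\R$-form $H^1_{\cD}(M,\R(1))$ as well, and therefore $W$ is a $\Q$-structure on the complexified wedge $\bigwedge^{n_{\varepsilon_\C}}H^1_{\cD}(M,\R(1))^\vee\otimes_\R\C$ that contains $v$. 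Since $av$ and $bv$ are nonzero elements of the $\Q$-structure $W$ and are proportional over $\C$ (with $av=(a/b)(bv)$), expressing $bv$ as one of the vectors of a $\Q$-basis of $W$ --- necessarily a $\C$-basis of the ambient space --- and reading off coordinates forces $a/b\in\Q^\times$.

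Finally, unwinding the definitions gives $a/b=\dfrac{\Omega^\pi_{((\varepsilon_\R,\underline 1),+)}}{\Omega^\pi_{((\varepsilon_\R,\underline 1),-)}}\cdot\dfrac{\Omega^\pi_{(\varepsilon,-)}}{\Omega^\pi_{(\varepsilon,+)}}\in\Q^\times$, so
\[
\frac{\Omega^\pi_{(\varepsilon,+)}}{\Omega^\pi_{(\varepsilon,-)}}\equiv\frac{\Omega^\pi_{((\varepsilon_\R,\underline 1),+)}}{\Omega^\pi_{((\varepsilon_\R,\underline 1),-)}}\pmod{\Q^\times},
\]
and the hypothesis that the right-hand ratio is $\equiv\tau_\sigma\pmod{\Q^\times}$ concludes the argument. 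The one point requiring genuine care is the one highlighted above: the $\Q$-vector space on the right of Conjecture \ref{PVconj} must be a bona fide $\Q$-structure on the relevant complexified Deligne-cohomology wedge, not merely some $\Q$-subspace, since it is precisely the $\Q$-structure property that forces $a/b$ to be rational rather than only algebraic. Everything else is careful bookkeeping with the indices of the sign vectors --- in particular, checking that the complex component, and hence $\hat H^\ast_{\varepsilon_\C}$ and $n_{\varepsilon_\C}$, is unchanged when one passes between $(\varepsilon,\pm)$ and their lowest-degree companions.
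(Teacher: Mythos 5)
Your proof is correct and follows essentially the same route as the paper: apply Conjecture \ref{PVconj} to $(\varepsilon,+)$ and $(\varepsilon,-)$, note both give $\Q$-rational multiples of the common vector $\kappa_\delta^{-1}(\hat H^\ast_{\varepsilon_\C})$ inside the $\Q$-structure $\bigwedge^{n_{\varepsilon_\C}}r\big(H^1_{\cM}(M,\Q(1))\big)^\vee$, and conclude the period ratio is rational. The only cosmetic difference is that the paper invokes the Appendix's statement that $\{\kappa_\delta^{-1}(\hat H^\ast_{\lambda_\C})\}$ form a basis of $\bigwedge^{n_{\varepsilon_\C}}\mathfrak a^\vee$, whereas you justify $v\neq 0$ directly from $\kappa_\delta$ being an isomorphism — both are adequate.
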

\begin{proof}
By assumption (Conjecture \ref{PVconj}), we have that  $\bigwedge^{r_2}r\left(H^1_{\cM}(M,\Q(1))\right)^\vee\simeq \Q$. Moreover, we will see in Appendix \ref{ApdxBC} that $\{\kappa_\delta^{-1}\left(\hat H_{-\underline 1}^\ast\right)\}$ defines a basis for $\bigwedge^{r_2}\mathfrak{a}^\vee\simeq \C$. Since
\[
\Omega^\pi_{((\varepsilon_\R,\underline 1),\pm)}/\Omega^\pi_{  ((\varepsilon_\R,-\underline 1),\pm)} i^{-r_2}\kappa_\delta^{-1}\left(\hat H_{{-\underline{1}}}^\ast\right)\in \bigwedge^{{r_2}}r\left(H^1_{\cM}(M,\Q(1))\right)^\vee,
 \]
 we deduce that $\Omega_{((\varepsilon_\R,\underline 1),+)}^\pi/\Omega_{{((\varepsilon_\R,-\underline 1),+)}}^\pi\equiv  \Omega_{((\varepsilon_\R,\underline 1),-)}^\pi/\Omega_{{((\varepsilon_\R,-\underline 1),-)}}^\pi\mod\Q^\times$ and the result follows.
 \end{proof}

Under the assumption that $F$ is totally real, we can prove Oda's conjecture in certain cases and give evidence for it in others, without invoking Conjecture \ref{BSDBeilinson}. More precisely, this applies when $\pi$ admits a Jacquet--Langlands lift to a quaternion algebra $B_0$ that splits at a single archimedean  place. In such cases, there exists a morphism $X_{B_0}\rightarrow A$ from the Shimura curve associated with $B_0$ to the elliptic curve $A$. Since $F$ is totally real, our Eichler--Shimura maps $\mathrm{ES}_\pm$ coincide with the classical ones given by integration of differential forms (see \cite{ESsanti}). This implies that Oda's conjecture holds for $G_0$, the multiplicative group of $B_0$, for geometric reasons. Finally, we use Propositions \ref{prop1}, \ref{prop2}, and \ref{prop3} to transfer the result to $\PGL_2$ and any general $G$. The precise formulation is given in the following statement.
\begin{proposition}\label{oldOda1}
Assume that $F$ is totally real, $N$ is not a square and $\Pi$ admits a Jacquet-Langlands lift  to a quaternion algebra that splits at a single archimedean place.  For any $\sigma\in\Sigma_F^\R$ and any $\lambda\in\{\pm 1\}^{\Sigma_F\setminus\{\sigma\}}$, we have 
\begin{equation*}
 \Omega^\Pi_{(\lambda,+)}/\Omega^\Pi_{(\lambda,-)}\equiv \tau_\sigma\mod\Q^\times.
\end{equation*}
If, moreover, $\Pi$ admits a Jacquet-Langlands lift $\pi$  to $G$ then, for any $\sigma\in\Sigma_B^\R$ and any $\varepsilon\in\{\pm 1\}^{\Sigma_B\setminus\{\sigma\}}$, we have 
\begin{equation}\label{eq:oda}
 \Omega^\pi_{(\varepsilon,+)}/\Omega^\pi_{(\varepsilon,-)}\equiv \tau_\sigma\mod(\Q^\times)^{\frac{1}{2}}.
\end{equation}
\end{proposition}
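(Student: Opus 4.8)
The plan is to deduce both assertions from a geometric proof of Oda's conjecture for quaternion algebras that split at a single archimedean place, transferred afterwards to $\PGL_2$ and to an arbitrary $G$ by means of the period relations of Lemma \ref{relperiodsPGLG} (which is where the hypothesis that $N$ be not a square enters, through the non-vanishing of enough quadratic twists). First I would produce, for every real place $\sigma\in\Sigma_F$, a quaternion algebra splitting precisely at $\sigma$ to which $\Pi$ lifts. Starting from a quaternion algebra $B_0$ as in the hypothesis, split at exactly one archimedean place, let $B_\sigma$ be the quaternion algebra over $F$ ramified at the same finite places as $B_0$ and at all real places other than $\sigma$. Its ramification set has the same (even) cardinality as that of $B_0$, so $B_\sigma$ exists; and since $\Pi_v$ is essentially square-integrable at the finite ramified places of $B_0$ (because $\Pi$ lifts to $B_0$) and is a discrete series at every real place (as $\Pi$ has parallel weight $2$), hence in the image of Jacquet--Langlands there, it follows that $\Pi$ admits a Jacquet--Langlands lift $\pi_\sigma$ to $G_\sigma=B_\sigma^\times/F^\times$. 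As $B_\sigma$ splits at the single archimedean place $\sigma$, the associated Shimura variety $X_{B_\sigma}$ is a curve.

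Next I would prove Oda's conjecture for $G_\sigma$ geometrically. Since $\pi_\sigma$ is the Jacquet--Langlands lift of $\Pi$ and $\Pi$ corresponds to $A$, the curve $A$ is an isogeny factor over $F$ of the $\pi_\sigma$-isotypic part of $\Jac(X_{B_\sigma})$, so there is a non-constant morphism $\phi\colon X_{B_\sigma}\to A$ over $F$. Because $F$ is totally real, the Eichler--Shimura morphisms ${\rm ES}_\pm$ for $\pi_\sigma$ coincide with the classical ones given by integration of differential forms on $X_{B_\sigma}$ (see \cite{ESsanti}); pulling $\omega_A$ back along $\phi$ and integrating over the $\pm1$-eigenspaces of complex conjugation on $H_1(X_{B_\sigma}\times_\sigma\C,\Q)$, a standard period computation then identifies $\Omega^{\pi_\sigma}_+/\Omega^{\pi_\sigma}_-$ with the ratio $\Omega_{\sigma,1}/\Omega_{\sigma,2}=\tau_\sigma$ of the real and imaginary periods of $A\times_\sigma\C$, modulo $\Q^\times$ (here $L_{\pi_\sigma}=L_\Pi=\Q$, so the periods are well defined up to $\Q^\times$). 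This is Oda's conjecture for $G_\sigma$. Feeding it into the second identity of Lemma \ref{relperiodsPGLG} applied to $B_\sigma$ --- for which every sign vector is of lowest degree, since $F$ is totally real --- gives $\Omega^\Pi_{(\lambda,+)}/\Omega^\Pi_{(\lambda,-)}\equiv\Omega^{\pi_\sigma}_+/\Omega^{\pi_\sigma}_-\equiv\tau_\sigma\mod\Q^\times$ for every $\lambda\in\{\pm1\}^{\Sigma_F\setminus\{\sigma\}}$, which is the first assertion.

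For the second assertion, let $\pi$ be a Jacquet--Langlands lift of $\Pi$ to $G$, fix $\sigma\in\Sigma_B^\R$, a sign vector $\varepsilon\in\{\pm1\}^{\Sigma_B\setminus\{\sigma\}}$ and an auxiliary $\lambda\in\{\pm1\}^{\Sigma_F\setminus\Sigma_B}$, and write $(\varepsilon,\pm)\in\{\pm1\}^{\Sigma_B}$ for the two extensions of $\varepsilon$ by $\pm$ at $\sigma$ (again of lowest degree, as $F$ is totally real). Applying the first identity of Lemma \ref{relperiodsPGLG} to $B$ with the sign vectors $(\varepsilon,+)$ and $(\varepsilon,-)$ and dividing, one obtains
\[
\left(\frac{\Omega^\pi_{(\varepsilon,+)}}{\Omega^\pi_{(\varepsilon,-)}}\right)^{2}\equiv\frac{\Omega^\Pi_{((\varepsilon,+),\lambda)}}{\Omega^\Pi_{((\varepsilon,-),\lambda)}}\cdot\frac{\Omega^\Pi_{((\varepsilon,+),-\lambda)}}{\Omega^\Pi_{((\varepsilon,-),-\lambda)}}\mod\Q^\times.
\]
The two sign vectors in each ratio on the right-hand side differ only at $\sigma$, so by the first assertion each of them is $\equiv\tau_\sigma\mod\Q^\times$; hence the left-hand side is $\equiv\tau_\sigma^{2}\mod\Q^\times$, which yields $\Omega^\pi_{(\varepsilon,+)}/\Omega^\pi_{(\varepsilon,-)}\equiv\tau_\sigma\mod(\Q^\times)^{\frac{1}{2}}$, as claimed.

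The step I expect to be the main obstacle is the geometric one, i.e. the identification $\Omega^{\pi_\sigma}_+/\Omega^{\pi_\sigma}_-\equiv\tau_\sigma$: it relies on the existence of the modular parametrization $X_{B_\sigma}\to A$ and, crucially, on the comparison of ${\rm ES}_\pm$ with classical integration of differential forms on the Shimura curve $X_{B_\sigma}$, a comparison available precisely because $F$ is totally real (and which I would take from \cite{ESsanti}). Everything else is formal: the construction of $B_\sigma$ from $B_0$ by moving the split real place, and the manipulation of the two identities of Lemma \ref{relperiodsPGLG}.
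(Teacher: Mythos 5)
Your proposal is correct and takes essentially the same approach as the paper's own proof: reduce to a quaternion algebra split only at $\sigma$, invoke the modular parametrization $X_{B_\sigma}\to A$ and the comparison with classical Eichler--Shimura from \cite{ESsanti} to get $\Omega^{\pi_\sigma}_+/\Omega^{\pi_\sigma}_-\equiv\tau_\sigma$, and then propagate this via the two identities of Lemma \ref{relperiodsPGLG}. The only substantive addition is that you spell out why $\Pi$ admits a Jacquet--Langlands lift to a quaternion algebra split exactly at the chosen $\sigma$ (by keeping the finite ramification of $B_0$ and moving the split real place, using that $\Pi_\sigma$ is a discrete series at every real place and square-integrable at the finite ramified places), a step the paper asserts without comment.
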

\begin{proof}
Fix $\sigma\in\Sigma_B$. Since $\Pi$ admits a Jacquet-Langlands  lift  to a quaternion algebra that splits at a single archimedean place,  $\Pi$ admits a Jacquet-Langlands  lift $\pi_0$ to a quaternion algebra $B_0$ that only splits at $\sigma$. Let $G_0$ be the algebraic group associated with $B_0^\times /F^\times$. Since there exists a modular parametrization $X_{B_0}\ra A$, applying \cite{ESsanti} we have that $\Omega^{\pi_0}_{+}/\Omega^{\pi_0}_{-}\equiv \tau_\sigma\;{\rm mod}\;\Q^\times$.
By Lemma \ref{relperiodsPGLG}, for any $\lambda\in\{\pm 1\}^{\Sigma_F\setminus\{\sigma\}}$,
\begin{equation*}
  \frac{\Omega_{(\lambda,+)}^\Pi}{\Omega_{(\lambda,-)}^\Pi}\equiv\frac{\Omega^{\pi_0}_+}{\Omega^{\pi_0}_-}\equiv \tau_\sigma \mod\Q^\times,
\end{equation*}
which proves the first part of the proposition. For the second part, we apply again Lemma \ref{relperiodsPGLG} to obtain
\begin{equation*}
\frac{(\Omega^{\pi}_{(\varepsilon,+)})^2}{(\Omega^{\pi}_{(\varepsilon,-)})^2}\equiv\frac{\Omega_{(\varepsilon,+,\gamma)}^\Pi\Omega_{(\varepsilon,+,-\gamma)}^\Pi}{\Omega_{(\varepsilon,-,\gamma)}^\Pi\Omega_{(\varepsilon,-,-\gamma)}^\Pi}\equiv \tau_\sigma^2\mod\Q^\times,
\end{equation*}
for any $\gamma\in\Sigma_F\setminus\Sigma_B$, and the result follows.
\end{proof}
If $\Pi$ does not admit a Jacquet–Langlands lift to a quaternion algebra that splits at a single archimedean place, one can still prove that the Hodge structure defined by the periods $\Omega_\varepsilon^\Pi$  is the tensor product of the Hodge structures of certain elliptic curves defined over $\C$. In terms of periods, this  can be formulated as follows.
\begin{proposition}\label{oldOda2}
Assume that $F$ is totally real and $N$ is not a square. Then, for any $\sigma\in\Sigma_F$, the $\Q^\times$-equivalence class of the quotient $ \Omega^\Pi_{(\lambda,+)}/\Omega^\Pi_{(\lambda,-)}$ is independent of $\lambda\in\{\pm 1\}^{\Sigma_F\setminus\{\sigma\}}$.
\end{proposition}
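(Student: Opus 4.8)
The plan is to exploit the same mechanism that powers Proposition \ref{Oda1}, but now tracking only the dependence on $\lambda$ rather than comparing with $\tau_\sigma$. Fix $\sigma\in\Sigma_F$; since $F$ is totally real we have $\Sigma_F=\Sigma_F^\R$ and $\lambda$ ranges over $\{\pm1\}^{\Sigma_F\setminus\{\sigma\}}$, with $(\lambda,\pm)$ the two extensions at $\sigma$. First I would invoke \eqref{relOmegaOmega}: for any quadratic Hecke character $\rho$ with sign vector $\mu=(\mu_\tau)_\tau$, whenever $L(1,A^\rho)\neq 0$ one has
\[
\Omega^\Pi_\mu\equiv \pi^{-d}\prod_{\tau\in\Sigma_F}\Omega_{\tau,\frac{3-\mu_\tau}{2}}\bmod\Q^\times.
\]
Applying this with $\mu=(\lambda,+)$ and with $\mu=(\lambda,-)$ (the two sign vectors agree at all $\tau\neq\sigma$), the factors $\prod_{\tau\neq\sigma}\Omega_{\tau,\frac{3-\lambda_\tau}{2}}$ and the power of $\pi$ cancel in the ratio, leaving
\[
\Omega^\Pi_{(\lambda,+)}/\Omega^\Pi_{(\lambda,-)}\equiv \Omega_{\sigma,1}/\Omega_{\sigma,2}=\tau_\sigma\bmod\Q^\times,
\]
which is manifestly independent of $\lambda$. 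Note this argument does not even need the conclusion that the common value equals $\tau_\sigma$; it suffices that the $\lambda$-dependent part of the right-hand side of \eqref{relOmegaOmega} is multiplicative in the individual signs $\lambda_\tau$, so it factors out of the quotient.

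The one genuine issue to address is the nonvanishing hypothesis: \eqref{relOmegaOmega} is only available for those $\lambda$ for which a quadratic twist with the prescribed sign vector and non-vanishing central $L$-value exists. This is exactly where the assumption that $N$ is not a square enters: by Proposition \ref{strongadm} (equivalently Proposition \ref{prop:apendix second}), for \emph{every} sign vector in $\{\pm1\}^{\Sigma_F}$ there is a quadratic Hecke character $\rho$ realizing it with $L(1,A^\rho)=L(1/2,\Pi,\rho)\neq 0$. Hence both $(\lambda,+)$ and $(\lambda,-)$ are attained for every $\lambda$, and the displayed computation goes through for all $\lambda\in\{\pm1\}^{\Sigma_F\setminus\{\sigma\}}$ simultaneously.

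The main (and only) obstacle is thus bookkeeping rather than substance: one must make sure the cancellation in the ratio is exactly of the $\lambda$-independent factors, i.e.\ that changing $\lambda_\tau$ for $\tau\neq\sigma$ alters $\Omega^\Pi_{(\lambda,\pm)}$ by the \emph{same} factor $\Omega_{\tau,1}/\Omega_{\tau,2}\in\R^\times$ on both numerator and denominator, which is immediate from the product formula above. I would conclude by remarking that, unlike Proposition \ref{Oda1}, this statement is unconditional — it does not require Conjecture \ref{BSDBeilinson}, because we only compare two automorphic periods with each other and never need to identify the resulting algebraic class; the BSD-type input is replaced entirely by the non-vanishing statement of Appendix B.
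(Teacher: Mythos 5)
There is a genuine gap: your argument is conditional on Conjecture \ref{BSDBeilinson}, despite your explicit claim of unconditionality, whereas the paper's statement of Proposition \ref{oldOda2} (in contrast to Propositions \ref{Oda1} and \ref{Oda2}) carries no such hypothesis, and its proof is designed precisely to avoid it. The entire engine of your argument is the final equivalence in \eqref{relOmegaOmega},
\[
\Omega_\mu^\Pi\equiv\pi^{-d}\prod_{\tau\in\Sigma_F}\Omega_{\tau,\frac{3-\mu_\tau}{2}}\bmod\Q^\times,
\]
but tracing its derivation in the proof of Proposition \ref{Oda1} you see that the middle identification, from $L(1,A^\rho)$ to the product of geometric periods $\Omega_{\sigma,i}$, \emph{is} Conjecture \ref{BSDBeilinson} applied to $A^\rho$. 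Proposition \ref{prop1} alone (the unconditional ingredient) gives only $\Omega_\mu^\Pi\equiv L(1/2,\Pi,\rho)/(|d_F|^{1/2}\pi^d |D_\rho|^{1/2} i^{s_\mu})$. Taking the ratio for two characters $\rho_1,\rho_2$ with sign vectors $(\lambda,+)$ and $(\lambda,-)$ leaves a quotient $L(1/2,\Pi,\rho_1)/L(1/2,\Pi,\rho_2)$ (up to algebraic factors) that a priori depends on the choice of $\rho_i$ and hence on $\lambda$; there is no way to cancel it without the BSD-type input that pins each $L$-value to the geometric periods. Your remark that ``we only compare two automorphic periods with each other and never need to identify the resulting algebraic class'' misses that the comparison you perform passes \emph{through} the geometric periods $\Omega_{\sigma,i}$, and that passage is exactly what requires the conjecture. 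The non-vanishing statement of Appendix B supplements BSD in Proposition \ref{Oda1}; it does not replace it.

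The paper's proof is genuinely different and is what makes the result unconditional: it stays entirely within the automorphic world. When $\#\Sigma_F$ is odd it reduces to Proposition \ref{oldOda1} (which uses a modular parametrization by a Shimura curve, not BSD). When $\#\Sigma_F$ is even it transfers to a quaternion algebra $B$ split at exactly $\{\sigma,\tau\}$, and combines Lemma \ref{relperiodsPGLG} (itself built from Propositions \ref{prop1}, \ref{prop2}, \ref{prop3} plus Artin formalism and non-vanishing twists) with Proposition \ref{prop3}; the $L$-values cancel through functorial identities rather than through a leading-term conjecture. If you want to rescue your write-up, either explicitly add Conjecture \ref{BSDBeilinson} as a hypothesis (at which point your statement is just Proposition \ref{Oda1}, which implies this one), or redo the argument along the paper's Jacquet--Langlands route.
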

\begin{proof}
If $\#\Sigma_F$ is odd, then $\Pi$ admits a Jacquet–Langlands lift to a quaternion algebra that splits at a single archimedean place, and the result follows from Proposition \ref{oldOda1}. If $\#\Sigma_F$ is even, then $\Pi$  admits a Jacquet–Langlands lift $\pi$ to a quaternion algebra $B$  that splits at two archimedean places $\Sigma_{B}=\{\sigma,\tau\}$. We write $\lambda=(\gamma,s)$, where $s\in\{\pm 1\}$ and $\gamma\in \{\pm 1\}^{\Sigma_F\setminus\{\sigma,\tau\}}$. By Lemma  \ref{relperiodsPGLG}, 
\[
 \frac{\Omega^\Pi_{(\lambda,+)}}{\Omega^\Pi_{(\lambda,-)}}= \frac{\Omega^\Pi_{(\gamma,s,+)}}{\Omega^\Pi_{(\gamma,-s,-)}} \frac{\Omega^\Pi_{(\gamma,-s,-)}}{\Omega^\Pi_{(\gamma,s,-)}}\equiv  \frac{\Omega^{\pi}_{(s,+)}}{\Omega^\pi_{(-s,-)}} \frac{\Omega^\pi_{(-s,-)}}{\Omega^\pi_{(s,-)}}= \frac{\Omega^{\pi}_{(s,+)}}{\Omega^\pi_{(s,-)}}\mod\Q^\times.
\]
Thus, it only remains to check that $\Omega^{\pi}_{(+,+)}/\Omega^\pi_{(+,-)}\equiv \Omega^{\pi}_{(-,+)}/\Omega^\pi_{(-,-)}\mod\Q^\times$, which follows directly from Proposition \ref{prop3}.
\end{proof}

\begin{remark}

  The results stated in \cite{oda83} are similar to Propositions \ref{oldOda1} and \ref{oldOda2}, though under the assumptions that $\pi$ is strongly admissible and that $F$  has narrow class number one. In fact, display \eqref{eq:oda} is claimed in \cite{oda83} even modulo $\mathbb{Q}^\times $. However, to the best of our knowledge, the proofs provided in \cite{oda83} appear to be incomplete. For this reason, we believe that the results presented here have independent value, even if some may not seem as strong as those announced in \cite{oda83}. In any case, when $F$ has narrow class number greater than one, Propositions \ref{oldOda1} and \ref{oldOda2} are not covered by \cite{oda83}
\end{remark}

\begin{remark}
 Relations such as \eqref{relOmegaOmega} and \eqref{RelOmegaGOmega}, along with Proposition \ref{oldOda2}, also follow from conjectures of Shimura \cite{sh83}, which predict a factorization of the periods in terms of invariants indexed by the archimedean places. These conjectures were established up to algebraic factors by Yoshida in the case where $F$ is totally real, as shown in \cite{yoshida} and \cite{yoshida-duke}. Hida has informed us that a proof modulo the minimal field of rationality is achievable and is likely to appear in his forthcoming work.
\end{remark}
\subsection{Applications to Prasanna--Venkatesh's conjecture}
In this final part of the introduction, we explain how one of our main results, Proposition \ref{prop3}, provides evidence for Conjecture \ref{PVconj}. To this end, we assume that Beilinson conjectures for $h^1(A)$ and
${\rm Ad}(h^1(A)_\Q$ hold, and we will also need the previously used notion of strong admissibility in order to use Oda's conjecture in lowest degree.
\begin{proposition}
Assume that $N$ is not a square and $\Pi$ admits a Jacquet-Langlands lift $\pi$ to $G$. If Conjectures \ref{BSDBeilinson} and \ref{R2calc} hold, and if Conjecture \ref{Odaconj} holds for lowest degree sign vectors, then for any $\varepsilon_\R\in\{\pm 1\}^{\Sigma_B^\R}$,
\[
 \frac{\Omega_{(\varepsilon_\R,\underline 1)}^\pi}{\Omega_{(\varepsilon_\R,\underline{-1})}^\pi}i^{-r_2}\kappa_\delta^{-1}\left(\hat H_{{\underline -1}}^\ast\right)\in \bigwedge^{r_2}r\left(H^1_{\cM}(M,\Q(1))\right)^\vee.
\]
\end{proposition}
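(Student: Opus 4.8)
The plan is to reduce the assertion to a single congruence modulo $\Q^\times$ between explicit scalars, and then to verify that congruence by combining Proposition~\ref{prop3} with Equation~\eqref{RelOmegaGOmega} and with the description of motivic cohomology in Conjecture~\ref{R2calc}. Write $r_1=r_1^B+r_{1,B}$ for the number of real places of $F$, so $d=r_1+2r_2$, and set $q:=\Omega^\pi_{(\varepsilon_\R,\underline 1)}/\Omega^\pi_{(\varepsilon_\R,\underline{-1})}$. Since $n_{\varepsilon_\C}=r_2$ here and $\Sigma_B^\C=\Sigma_F^\C$, the space $\bigwedge^{r_2}\mathfrak a^\vee$ is one-dimensional, spanned by $\hat H_{\underline{-1}}^\ast$, and by Conjecture~\ref{R2calc} the $\Q$-line $\bigwedge^{r_2}r(H^1_{\cM}(M,\Q(1)))^\vee$ inside $\bigwedge^{r_2}(M_B\otimes\C)^{W_\R,\vee}$ equals $\Q\cdot g_0^{-1}\bigwedge_{\sigma\in\Sigma_F^\C} H_\sigma^\ast$, where $g_0=L(1,M)(2\pi i)^{-r_1-r_2}\prod_{\sigma\in\Sigma_F^\R}(\Omega_{\sigma,1}\Omega_{\sigma,2})^{-1}\prod_{\sigma\in\Sigma_F^\C}{\rm Im}(\Omega_{\sigma,1}\overline{\Omega_{\sigma,2}})^{-2}$. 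Writing $\kappa_\delta^{-1}(\hat H_{\underline{-1}}^\ast)=D\cdot\bigwedge_\sigma H_\sigma^\ast$, with $D$ the determinant of the transition between the two distinguished bases $\{H_\sigma\}$ and $\{\hat H_\sigma\}$, the statement to be proved becomes the scalar congruence $q\,i^{-r_2}D\equiv g_0^{-1}\pmod{\Q^\times}$.

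First I would compute $q$ modulo $\Q^\times$. Applying Proposition~\ref{prop3} to the sign vector $(\varepsilon_\R,\underline{-1})$ --- and using that its right-hand side does not depend on the signs at the complex places --- gives $\Omega^\pi_{(\varepsilon_\R,\underline{-1})}\Omega^\pi_{(-\varepsilon_\R,\underline 1)}\equiv L(1,\Pi,{\rm ad})\,\pi^{-2r_1^B-r_2}(\pi i)^{-r_{1,B}}\pi^{-2d}$, hence $q\equiv \Omega^\pi_{(\varepsilon_\R,\underline 1)}\Omega^\pi_{(-\varepsilon_\R,\underline 1)}\cdot L(1,\Pi,{\rm ad})^{-1}\pi^{2r_1^B+r_2}(\pi i)^{r_{1,B}}\pi^{2d}$. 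For the remaining product I would use two inputs: $(\Omega^\pi_{(-\varepsilon_\R,\underline 1)})^2$ is expressed explicitly in terms of the periods $\Omega_{\sigma,i}$ of $A$ by Equation~\eqref{RelOmegaGOmega} applied to the lowest-degree vector $(-\varepsilon_\R,\underline 1)$ (this is where Conjecture~\ref{BSDBeilinson} enters, through Lemma~\ref{relperiodsPGLG} and \eqref{relOmegaOmega}, and where the hypothesis that $N$ is not a square gets used); and $\Omega^\pi_{(\varepsilon_\R,\underline 1)}/\Omega^\pi_{(-\varepsilon_\R,\underline 1)}\equiv\prod_{\sigma\in\Sigma_B^\R}\tau_\sigma^{\varepsilon_{\R,\sigma}}$ by applying the assumed case of Conjecture~\ref{Odaconj} for lowest-degree sign vectors at each real place of $\Sigma_B$. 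Multiplying these, the factors $\tau_\sigma^{\varepsilon_{\R,\sigma}}\,\Omega_{(3+\varepsilon_{\R,\sigma})/2,\sigma}^2$ collapse to $\Omega_{\sigma,1}\Omega_{\sigma,2}$ at every real place of $\Sigma_B$, and one finds $\Omega^\pi_{(\varepsilon_\R,\underline 1)}\Omega^\pi_{(-\varepsilon_\R,\underline 1)}\equiv i^{-r_1^B}\pi^{-2d-r_1^B}\prod_{\sigma\in\Sigma_F^\R}\Omega_{\sigma,1}\Omega_{\sigma,2}\prod_{\sigma\in\Sigma_F^\C}{\rm Im}(\Omega_{\sigma,1}\overline{\Omega_{\sigma,2}})^2$. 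Substituting back and carrying out the bookkeeping --- using $d=r_1+2r_2$, $i^{2r_1^B}\in\Q^\times$, $(2\pi i)^{r_1+r_2}\equiv\pi^{r_1+r_2}i^{r_1+r_2}$, and $L(1,M)=L(1,\Pi,{\rm ad})$ --- all the periods of $A$ cancel against those in $g_0$ and one is left with $q\equiv i^{-r_2}g_0^{-1}\pmod{\Q^\times}$.

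Granting this, only $D\in\Q^\times$ remains: then $q\,i^{-r_2}D\equiv i^{-2r_2}D\,g_0^{-1}\equiv g_0^{-1}$ because $i^{-2r_2}=(-1)^{r_2}\in\Q^\times$, which is exactly the congruence sought, and the proposition follows. The rationality of $D$ is a purely archimedean fact about $\kappa_\delta=\varphi\circ\delta^{-1}$: the period data $\Omega_{\sigma,i}$ entering the generators $H_\sigma$ through \eqref{eqHsigma} is the same data built into the identification $\Lambda_\nu\simeq\Z^2$ used to define $\kappa$, so after applying $\kappa_\delta$ these factors disappear and the transition matrix has rational determinant; I would make this precise using the explicit computation of $\delta$ and $\varphi$ in Appendix~\ref{ApdxBC}. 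Equivalently, one can organize the last two steps through the Prasanna--Venkatesh relation \eqref{P-Vaction}: applied to the normalized newform it shows that $q\,i^{-r_2}\hat H_{\underline{-1}}^\ast$ is precisely the operator on $\pi$-isotypic automorphic cohomology that preserves $\Q$-rationality, and it then remains only to check that its transfer under $\kappa_\delta^{-1}$ agrees, up to $\Q^\times$, with the motivic generator of Conjecture~\ref{R2calc}.

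The step I expect to be the main obstacle is establishing $D\in\Q^\times$ --- equivalently, that the Prasanna--Venkatesh map $\kappa_\delta$ has rational determinant in the distinguished bases $\{H_\sigma\}$, $\{\hat H_\sigma\}$ --- since this is the one point not already packaged in the earlier results and instead requires the explicit archimedean analysis of the isometry $\delta$ and the embedding $\varphi$ (it is exactly because $\bigwedge^{r_2}\mathfrak a^\vee$ is one-dimensional in the case $n_{\varepsilon_\C}=r_2$ that the entire question reduces to this single scalar). By comparison, the identity for $q$ demands only careful but routine tracking of the powers of $\pi$ and $i$ through Equation~\eqref{RelOmegaGOmega}, Proposition~\ref{prop3}, and Conjecture~\ref{Odaconj} in lowest degree.
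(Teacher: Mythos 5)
Your proposal is correct and follows essentially the same route as the paper's proof: split $q=\Omega^\pi_{(\varepsilon_\R,\underline 1)}/\Omega^\pi_{(\varepsilon_\R,\underline{-1})}$ via Proposition~\ref{prop3} applied to $(\varepsilon_\R,\underline{-1})$, evaluate $(\Omega^\pi_{(-\varepsilon_\R,\underline 1)})^2$ via Equation~\eqref{RelOmegaGOmega} and the ratio $\Omega^\pi_{(\varepsilon_\R,\underline 1)}/\Omega^\pi_{(-\varepsilon_\R,\underline 1)}$ via lowest-degree Oda, observe the cancellation $\tau_\sigma^{\varepsilon_{\R,\sigma}}\,\Omega_{\frac{3+\varepsilon_{\R,\sigma}}{2},\sigma}^2=\Omega_{\sigma,1}\Omega_{\sigma,2}$, and compare against Conjecture~\ref{R2calc}. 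Your bookkeeping with powers of $\pi$ and $i$ is consistent (mod $\Q^\times$) with what the paper does, and you correctly isolate as the one non-packaged step the claim that the transition determinant $D$ between $\{H_\sigma\}$ and $\{\hat H_\sigma\}$ under $\kappa_\delta$ is rational.

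Two small points worth flagging. First, the paper actually establishes the stronger fact that $\kappa_\delta(H_\sigma)=\hat H_\sigma$ exactly, i.e.\ $D=1$: it does so by writing down the Weil representation of $W_\R$ attached to $h^1(A)$, exhibiting the explicit conjugating matrices $\delta_{\nu,t}=\left(\begin{smallmatrix}t\bar\tau_\nu&\tau_\nu\\t&1\end{smallmatrix}\right)$, and checking that conjugation carries $\left(\begin{smallmatrix}1&0\\0&-1\end{smallmatrix}\right)$ to the generator $H_\sigma$ of \eqref{eqHsigma}. Your heuristic (the $\Omega_{\sigma,i}$ built into $H_\sigma$ are the same data defining $\kappa$, so they cancel) captures why this works, but that Weil-representation computation lives in the body of the proposition's proof, not in Appendix~\ref{ApdxBC}; the appendix only supplies the Hodge decomposition \eqref{HdgDeceq} and the basis $H_\sigma$. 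So when you write that you would make $D\in\Q^\times$ precise ``using the explicit computation of $\delta$ and $\varphi$ in Appendix~\ref{ApdxBC},'' be aware that most of the needed work (the archimedean parameter and the $\delta_{\nu,t}$'s) still has to be carried out. Second, your closing remark correctly identifies \eqref{P-Vaction} as an equivalent way to encode the step from automorphic rationality to the statement about $\hat H_{\underline{-1}}^\ast$; this is the same content as the definition of the periods together with the Prasanna--Venkatesh action, so it is not an independent shortcut but a re-packaging of what you already use.
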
 
\begin{proof}
 Let us consider the motive $M={\rm Ad}(h^1(A)_\Q)$. On the one hand, by \eqref{HdgDeceq} in the Appendix:
\begin{equation*}
  \begin{small}
    M_B^{0,0}=\bigoplus_{{\nu}:F\hookrightarrow\C}\C\left(\begin{array}{cc}-{\rm Re}(\tau_\nu)&|\tau_\nu|^2\\-1&{\rm Re}(\tau_\nu)\end{array}\right);\,  M_B^{1,-1}=\bigoplus_{{\nu}:F\hookrightarrow\C}\C\left(\begin{array}{cc}1&-\tau_\nu\\\tau_\nu^{-1}&-1\end{array}\right); \,  M_B^{-1,1}=\bigoplus_{{\nu}:F\hookrightarrow\C}\C\left(\begin{array}{cc}1&-\bar\tau_\nu\\\bar\tau_\nu^{-1}&-1\end{array}\right).\end{small}
    \end{equation*}
On the other hand, associated to $h^1(A)$, we have a Weil representation 
\begin{equation*}\label{Weilreprho}
    \begin{small}
\rho:W_\R=\C^\times\rtimes\langle j\rangle\longrightarrow \;^LG=\left(\prod_{\sigma\mid\infty}\prod_{\nu\mid\sigma}\SL_2( F_\sigma)\right)\rtimes\Gal(\C/\R);\ \ \rho(z\in \C^\times)=\left(\left(\begin{array}{cc}
   \nu^{-1}\left(\frac{z}{\bar z}\right)^{1/2}  &  \\
     & \nu^{-1}\left(\frac{\bar z}{ z}\right)^{1/2}
\end{array}\right)_{\nu}\right),  \end{small}
\end{equation*}
where $\Gal(\C/\R)$ acts trivially on $\SL_2(\bar F_\sigma)$, if $F_\sigma=\R$, and switches the two components of $\prod_{\nu\mid\sigma}\SL_2(\bar F_\sigma)$ if $F_\sigma=\C$. Finally, if we denote $c\in\Gal(\C/\R)$ the complex conjugation element we have that
\[
\rho(j)=\left(\left(\omega_{\nu}\right)_{\nu},c\right);\qquad w_{\sigma}=\left(\begin{array}{cc}
     & 1 \\
    -1 & 
\end{array}\right);\quad F_\sigma=\R;\quad w_{\nu}=\left(\begin{array}{cc}
    i &  \\
     & -i
\end{array}\right);\quad \nu\mid\sigma,\;F_\sigma=\C.
\]
Let us consider the isomorphism $\varphi:M_B\otimes_\Q\C\stackrel{\simeq}{\rightarrow}\hat\mfg\otimes_\Q\C$ induced by \eqref{defvarphiBG}, and for any $t\in\C^\times$ write $\delta_{\nu,t}=\big(\begin{smallmatrix}t\bar\tau_\nu&\tau_\nu\\t&1\end{smallmatrix}\big)$. An easy computation shows that 
\[
\delta_{\nu,t}\big(\begin{smallmatrix}0&1\\0&0\end{smallmatrix}\big)\delta_{\nu,t}^{-1}=\frac{-t\bar\tau_\nu}{\bar\tau_\nu-\tau_\nu}\big(\begin{smallmatrix}1&-\bar\tau_\nu\\\bar\tau_\nu^{-1}&-1\end{smallmatrix}\big);\qquad \delta_{\nu,t}\big(\begin{smallmatrix}0&0\\1&0\end{smallmatrix}\big)\delta_{\nu,t}^{-1}=\frac{t^{-1}\tau_\nu}{\bar\tau_\nu-\tau_\nu}\big(\begin{smallmatrix}1&-\tau_\nu\\\tau_\nu^{-1}&-1\end{smallmatrix}\big);\qquad \delta_{\nu,t}\big(\begin{smallmatrix}1&0\\0&-1\end{smallmatrix}\big)\delta_{\nu,t}^{-1}=H_\sigma,
\]
where $H_\sigma$ was defined in \eqref{eqHsigma}.
Moreover, the subspaces of $\hat\mfg\otimes_\Q\C$ where $z\in\C^\times$ acts as $1$, $z/\bar z$ and $\bar z/z$ are, respectively, 
\[
\bigoplus_{{\nu}:F\hookrightarrow\C}\C\left(\begin{smallmatrix}1&0\\0&-1\end{smallmatrix}\right),\qquad\bigoplus_{\sigma\in\Sigma_F^\R}\C\left(\begin{smallmatrix}0&0\\1&0\end{smallmatrix}\right)\oplus\bigoplus_{\sigma\in\Sigma_F^\C}\C\left(\begin{smallmatrix}0&0\\1&0\end{smallmatrix}\right)\oplus\C\left(\begin{smallmatrix}0&1\\0&0\end{smallmatrix}\right),\qquad\bigoplus_{\sigma\in\Sigma_F^\R}\C\left(\begin{smallmatrix}0&1\\0&0\end{smallmatrix}\right)\oplus\bigoplus_{\sigma\in\Sigma_F^\C}\C\left(\begin{smallmatrix}0&1\\0&0\end{smallmatrix}\right)\oplus\C\left(\begin{smallmatrix}0&0\\1&0\end{smallmatrix}\right).
\]
By \eqref{HdgDeceq}, the unique isometries $\delta_t\in {\rm Aut}(M_B\otimes_\Q\C)$ that satisfy $z\varphi(m)=\varphi(\delta_t^{-1}z \delta_tm)$, for all $z\in \C^\times\subset W_\R$ and $m\in M_B$, are induced by conjugating by $\delta_{\sigma,t_\nu}^{-1}$ at the component indexed by $\nu\mid\sigma$, for any choice of $t=(t_\nu)_\nu\in(\C^\times)^{[F:\Q]}$.

Moreover, the induced isomorphism
\[
\kappa_\delta:(M_B\otimes\C)^{W_\R}\longrightarrow (\hat\mfg\otimes_\Q\C)^{W_\R};\qquad \kappa_\delta(m)=\varphi(\delta_t^{-1}m),
\]
is independent of $t$. By the above computations, $\kappa_\delta$ maps $H_\sigma\in (M_B\otimes\C)^{W_\R}$ to $\hat H_\sigma\in (\hat\mfg\otimes_\Q\C)^{W_\R}$ (corresponding to $\left(\big(\begin{smallmatrix}1&0\\0&-1\end{smallmatrix}\big),\big(\begin{smallmatrix}1&0\\0&-1\end{smallmatrix}\big)\right)$ at the place $\sigma\in \Sigma_F^\C$). Hence, by Conjecture \ref{R2calc},
\[
 \bigwedge^{r_2}r\left(H^1_{\cM}(M,\Q(1))\right)^{\vee}=L(1,\Pi,{\rm ad})^{-1}(2\pi i)^{r_1+r_2}\prod_{\sigma\in\Sigma_F^\R}\Omega_{{\sigma},1}\Omega_{{\sigma},2}\prod_{\sigma\in\Sigma_F^\C}{\rm Im}(\Omega_{\sigma,1}\overline{\Omega_{\sigma,2}})^{2}\kappa_\delta^{-1}\left(\hat H_{{\underline -1}}^\ast\right)\Q.
\]
Applying Conjecture \ref{Odaconj} in lowest degree inductively, we obtain
\[
\Omega^\pi_{(\varepsilon_\R,\underline 1)}/\Omega^\pi_{(-\varepsilon_\R,\underline 1)}\equiv \prod_{{\varepsilon_{\R,\sigma}=1}}\tau_{{\sigma}}\prod_{\varepsilon_{\R,\sigma}=-1}\tau_{{\sigma}}^{-1}=\prod_{\sigma\in\Sigma_B^\R}\tau_{{\sigma}}^{\varepsilon_{\R,\sigma}}\mod \Q^\times.
\]
and, by Equation \eqref{RelOmegaGOmega},
\[
(\Omega^\pi_{(-\varepsilon_\R,\underline{1})})^2\equiv i^{r_1^B}\pi^{-r_1^B-2d}\prod_{\sigma\in\Sigma_F^\R}\Omega_{1,\sigma}\Omega_{2,\sigma}\prod_{\sigma\in\Sigma_B^\R}\tau_\sigma^{-\varepsilon_{\R,\sigma}}\prod_{\sigma\in\Sigma_F^\C}{\rm Im}(\Omega_{1,\sigma}\overline{\Omega_{2,\sigma}})^2\mod\Q^\times
\]
Thus, by Proposition \ref{prop3},
\begin{eqnarray*}
\frac{\Omega_{(\varepsilon_\R,\underline 1)}^\pi}{\Omega_{(\varepsilon_\R,\underline{-1})}^\pi}i^{-r_2}\kappa_\delta^{-1}\left(\hat H_{{\underline -1}}^\ast\right)&=&\frac{\Omega_{(\varepsilon_\R,\underline 1)}^\pi}{\Omega_{(-\varepsilon_\R,\underline{1})}^\pi}\frac{(\Omega_{(-\varepsilon_\R,\underline{1})}^\pi)^2}{\Omega_{(\varepsilon_\R,\underline{-1})}^\pi\Omega_{(-\varepsilon_\R,\underline{1})}^\pi}i^{-r_2}\kappa_\delta^{-1}\left(\hat H_{{\underline -1}}^\ast\right)\\
&\equiv& L(1,\Pi,{\rm ad})^{-1}(\pi i)^{r_1+r_2}\prod_{\sigma\in\Sigma_F^\R}\Omega_{1,\sigma}\Omega_{2,\sigma}\prod_{\sigma\in\Sigma_F^\C}{\rm Im}(\Omega_{1,\sigma}\overline{\Omega_{2,\sigma}})^2\kappa_\delta^{-1}\left(\hat H_{{\underline -1}}^\ast\right)\mod \Q^\times,
\end{eqnarray*}
and the result follows.
\end{proof}
Since, by Proposition \ref{Oda1}, in case $G=\PGL_2$ Oda's conjecture for lowest degree follows from Conjectures  \ref{BSDBeilinson} and \ref{R2calc}, we have the following corollary.
\begin{corollary}
Assume that $N$ is not a square and Conjectures \ref{BSDBeilinson} and \ref{R2calc} hold. Then, for any $\varepsilon_\R\in\{\pm 1\}^{\Sigma_F^\R}$,
\[
 \frac{\Omega_{(\varepsilon_\R,\underline 1)}^\Pi}{\Omega_{(\varepsilon_\R,\underline{-1})}^\Pi}i^{-r_2}\kappa_\delta^{-1}\left(\hat H_{{\underline -1}}^\ast\right)\in \bigwedge^{r_2}r\left(H^1_{\cM}(M,\Q(1))\right)^\vee.
\]
\end{corollary}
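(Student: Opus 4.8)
The plan is to obtain the corollary as the specialization of the Proposition immediately above it to the case $G=\PGL_2$. In that case $B=\M_2(F)$ is the split quaternion algebra, so that $\Sigma_B=\Sigma_F$, $\Sigma_B^\R=\Sigma_F^\R$, $\Sigma_B^\C=\Sigma_F^\C$, the number $r_1^B$ of real places of $F$ outside $\Sigma_B$ vanishes, and $r_{1,B}=r_1$. The Jacquet--Langlands lift $\pi$ of $\Pi$ to $G=\PGL_2$ is $\Pi$ itself, so that $\Omega^\pi_\varepsilon=\Omega^\Pi_\varepsilon$ for every sign vector $\varepsilon$; moreover the adjoint motive $M={\rm Ad}(h^1(A)_\Q)$ together with all the objects entering Conjecture \ref{PVconj} (the space $\mathfrak a$, the isometry $\kappa_\delta$, the class $\hat H^\ast_{\underline{-1}}$, and the cohomology groups $H^1_{\cM}(M,\Q(1))$ and $H^1_{\cD}(M,\R(1))$) depend only on the elliptic curve $A$, not on the quaternion algebra, hence coincide with those in the Proposition.

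First I would check that the hypotheses of the preceding Proposition are met. We are assuming that $N$ is not a square and that Conjectures \ref{BSDBeilinson} and \ref{R2calc} hold; the only remaining input of that Proposition is Conjecture \ref{Odaconj} for lowest-degree sign vectors. For $G=\PGL_2$ this input is precisely the content of Proposition \ref{Oda1}, namely that $\Omega^\Pi_{(\varepsilon,+)}/\Omega^\Pi_{(\varepsilon,-)}\equiv\tau_\sigma\mod\Q^\times$ for every $\sigma\in\Sigma_F^\R$ and every lowest-degree $\varepsilon\in\{\pm1\}^{\Sigma_F\setminus\{\sigma\}}$, and Proposition \ref{Oda1} establishes it under exactly the hypotheses we are assuming ($N$ not a square and Conjecture \ref{BSDBeilinson}). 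Consequently all the assumptions of the Proposition are satisfied, and applying it with $G=\PGL_2$ gives, for every $\varepsilon_\R\in\{\pm1\}^{\Sigma_F^\R}$,
\[
\frac{\Omega_{(\varepsilon_\R,\underline 1)}^\Pi}{\Omega_{(\varepsilon_\R,\underline{-1})}^\Pi}\,i^{-r_2}\,\kappa_\delta^{-1}\!\left(\hat H_{{\underline -1}}^\ast\right)\in \bigwedge^{r_2}r\!\left(H^1_{\cM}(M,\Q(1))\right)^{\vee},
\]
which is the assertion of the corollary.

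There is essentially no obstacle here: the statement is a pure specialization of the previous result, and the one point deserving a line of explanation is that Oda's conjecture in lowest degree --- which in the general Proposition is taken as a hypothesis --- is unconditional modulo Conjecture \ref{BSDBeilinson} when $G=\PGL_2$, by the quadratic-twist non-vanishing argument in the proof of Proposition \ref{Oda1} (itself relying on the strong admissibility results of Appendix B, which apply because $N$ is not a square). If one preferred a self-contained write-up, one could instead re-run the closing chain of $\Q^\times$-congruences in the proof of the Proposition with $r_1^B=0$ and $\pi=\Pi$, but this would add nothing new.
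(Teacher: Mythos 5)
Your proposal is correct and matches the paper's own (implicit) proof: the corollary is stated immediately after the sentence observing that, for $G=\PGL_2$, Oda's conjecture in lowest degree follows from Proposition \ref{Oda1} under the stated hypotheses, so the preceding proposition applies directly. You are in fact slightly more precise than the paper's phrasing, correctly noting that Proposition \ref{Oda1} only uses Conjecture \ref{BSDBeilinson} (together with $N$ not a square), not Conjecture \ref{R2calc}.
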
 

The rest of the article is devoted to prove the main results stated in \S\ref{sec:main results}. Section \S\ref{sec: preliminaries} sets up the necessary notations and conventions for Haar measures, Gauss sums, and finite dimensional representations. Sections \ref{LocArchRep} and \ref{fundClasses}   are also preparatory: we develop the theory of local archimedean representations and we define certain fundamental classes. Finally, Section \ref{sec: global formulas} constitutes the technical core of the article: we define the Eichler--Shimura morphisms and the automorphic periods, and we prove the main global formulas. In Appendix \ref{ApdxBC} we derive Conjectures \ref{BSDBeilinson} and \ref{R2calc} from Beilinson's conjecture, and in Appendix \ref{sec:apendix B} we show that a theorem of Waldspurger guarantees the existence of sufficiently many non-vanishing twists of the automorphic representations we consider.

\subsubsection*{Acknowledgments} We are grateful to Henri Darmon for valuable discussions during the early stages of this project. We also thank Haruzo Hida for drawing our attention to relevant results in the literature on periods, which are closely related to the results presented in this work.  This work is partially supported by grants PID2022-137605NB-I00 and 2021 SGR 01468, and also by the María de Maeztu Program CEX2020-001084-M.
\section{Preliminaries}\label{sec: preliminaries}

In this section we set up some of the notation that will be in force throughout the article and we present some preliminary material on Haar measures, Gauss sums, and finite dimensional representations that we will use later on.

Throughout the article, we fix an algebraic closure $\bar\Q$ of $\Q$ inside $\C$.
We write $\hat\Z$ for the profinite completion of $\Z$ and if $R$ is a ring we put $\hat R:=R\otimes\hat\Z$. For a number field $F$, we denote by $\cO_F$ its ring of integers and by $\A_F$ and $\A_F^\infty:=\hat\cO_F\otimes\Q$ the rings of adeles and finite adeles, respectively. Similarly, $\I_F$ and $\I_F^\infty$ denote the ideles and finite ideles. For any place $v$ of $F$, we write $F_v$ for the completion of $F$ at $v$. If $v$ is non-archimedean, namely $v\mid p$ for some prime number $p$, we denote by $\cO_{F,v}$ the integer ring, $\kappa_v$ the residue field, $q_v:=\#\kappa_v$, ${\rm ord}_v(\cdot)$ the valuation, $|x|_v = q_v^{-\mathrm{ord}_v(x)}$ the normalized absolute value, $\varpi_v$ a uniformizer, and $d_{F_v}=\delta_v\cO_{F_v}$ the different over $\Q_p$. We also denote by $\Sigma_F$ the set of archimedean places of $F$. We will frequently use the letter $\sigma$ to denote an archimedean place and for an embedding $\nu \colon F\hookrightarrow \C$ we will write $\nu \mid \sigma$ to indicate that $\nu$ belongs to the class of $\sigma$. Sometimes we will use $\sigma\mid\infty$ to denote $\sigma\in\Sigma_F$.

In all the text, $B$ will denote a quaternion algebra over $F$ and $\Sigma_B$ the set of infinite places of $F$ that split in $B$. We denote by $G$ the algebraic group over $F$ associated with the group of units of $B$ modulo scalars; that is, for any $F$-algebra $R$ we have that
\[
G(R)=(B\otimes_{F}R)^\times/R^\times.
\]

Let $E$ be an étale $F$-algebra such that $[E\colon F]=2$ and there exists an embedding of $F$-algebras $ E\hookrightarrow B$. Observe that $E$ is either a quadratic field extension of $F$ or $E=F\times F$. We fix from now on one such embedding and we use it to identify $E$ with a subalgebra of $B$. We denote by $T$ the algebraic subgroup of $G$ such that for any $F$-algebra $R$
\[
T(R):=(E \otimes_{F}R)^\times/R^\times.
\]

\subsection{Haar Measures}\label{haarmeasures}
For any number field $F$, let us consider the additive character $\psi:\A_F/F\rightarrow\R$ defined as
\begin{equation}\label{defpsiad}
    \psi=\prod_v\psi_v,\qquad  \psi_v(a)=\left\{\begin{array}{lc}
       e^{2\pi i a},  &\text{ if } F_v=\R  \\
        e^{4\pi i{\rm Re}(a)}, &\text{ if } F_v=\C\\
        e^{-2\pi i[{\rm Tr}_{F_v/\Q_p}(a)]}&\text{ if } v\mid p,
    \end{array}\right.
\end{equation}
where $[\cdot]:\Q_p \rightarrow\Q$ is the map that sends $x\in\Q_p$ to its $p$-adic fractional part.
Let $dx_v$ be the Haar measure of $F_v$ normalized so that it is self-dual with respect to $\psi_v$; namely, $\hat{\hat\phi}(x_v)=\phi(-x_v)$, where $\hat\phi$ is the Fourier transform $\hat\phi(y_v)=\int_{F_v}\phi(x_v)\psi_v(x_vy_v)dx_v$.
Notice that if $v$ is archimedean, $dx_v$ is $[F_v:\R]$ times the usual Lebesgue measure; and if $v$ is non-archimedean, then $dx_v$ is the Haar measure satisfying ${\rm vol}(\cO_{F,v})=|d_{F_v}|_v^{1/2}$. Define $$d^\times x_v=\zeta_v(1)|x_v|_v^{-1}dx_v,$$
where 
$$
\zeta_v(s)=\begin{cases}
			(1-q_v^{-s})^{-1}, & \text{if $v\nmid\infty$}\\
         \pi^{-s/2}\Gamma(s/2)   , & \text{if $F_v=\R$}\\
         2(2\pi)^{-s}\Gamma(s), & \text{if $F_v=\C$}.
		 \end{cases}
$$
One easily checks that if $v$ is non-archimedean then ${\rm vol}(\cO_{F_v}^\times)=|d_{F_v}|_v^{1/2}$. The product of $d^\times x_v$ over all places provides a Tamagawa measure $d^\times x$ on $\A_F^\times/F^\times$. In fact, such Haar measure satisfies
\[
{\rm Res}_{s=1}\int_{x\in\A_F^\times/F^\times,\;|s|\leq 1}|x|^{s-1}d^\times x={\rm Res}_{s=1}\Lambda_F(s),
\]
where $\Lambda_F(s)=\zeta_F(s)\prod_{v\mid\infty}\zeta_{v}(s)$ is the completed zeta function of $F$. This implies that, if we choose $d^\times t$ to be the quotient measure for $T(\A_F)/T(F)=\A_E^\times/\A_F^\times E^\times$, then one has that ${\rm vol}(T(\A_F)/T(F))=2L(1,\psi_T)$, where $\psi_T$ is the quadratic character associated to the extension $E/F$.

Let us consider the Haar measure $dg_v$ of $B_v:=B\otimes_F F_v$ which is self-dual with respect to $\psi_v$, namely,
\[
\hat{\hat\phi}(g_v)=\phi(-g_v),\quad\mbox{ where $\hat\phi$ is the Fourier transform }\quad\hat\phi(a_v)=\int_{B_v}\phi(g_v)\psi_v(a_v\bar g_v+g_v\bar a_v)dg_v,
\]
and $(g_v\mapsto\bar g_v)$ is the usual involution on $B_v$. We define similarly as above
$d^\times g_v=\zeta_v(1)|g_v\bar g_v|_v^{-2}dg_v$. The product of such $d^\times g_v$ over all places provides a Tamagawa measure for $G$ satisfying ${\rm vol}(G(F)\backslash G(\A_F))=2$ and (see \cite[Lemma 3.5]{CST})
\[
\begin{array}{ll}
   {\rm vol}(\PGL_2(\cO_{F_v}))=\zeta_v(2)^{-1}|d_{F_v}|_v^{3/2},  &\text{ if }v\nmid\infty\text{ and } B_v=\M_2(F_v),  \\
   {\rm vol}(\cO_{B_v}^\times/\cO_{F_v}^\times)=\zeta_v(2)^{-1}(q_v-1)^{-1}|d_{F_v}|_v^{3/2},  & \text{ if }v\nmid\infty \text{ and } B_v\neq\M_2(F_v),\\
   {\rm vol}(B_\sigma^\times/F_\sigma^\times)=2\pi^2,&\text{ if }\sigma\mid\infty\text{ and } B_\sigma\neq\M_2(F_\sigma).
\end{array}
\]
If $\sigma\in\Sigma_B$ and $F_\sigma=\R$, then the measure $d^\times g_\sigma$ corresponds to 
\begin{equation}\label{Haararch}
d^\times g_\sigma=\frac{dxdyd\theta}{y^2},\quad\text{ where }\quad g_\sigma=\big(\begin{smallmatrix}1&x\\&1\end{smallmatrix}\big) \big(\begin{smallmatrix}y^{1/2}&\\&y^{-1/2}\end{smallmatrix}\big)\big(\begin{smallmatrix}\cos\theta&\sin\theta\\-\sin\theta&\cos\theta\end{smallmatrix}\big),    
\end{equation}
for $x\in \R$, $y\in \R_+$ and $\theta\in [0,\pi)$. Finally, if $\sigma\in\Sigma_B$ and $F_\sigma=\C$, then the measure $d^\times g_\sigma$ corresponds to 
\begin{equation}\label{Haararch2}
d^\times g_\sigma=16\frac{\sin2\theta drds_1ds_2dadbd\theta}{r^3},\quad\text{ where }\quad g_\sigma=\left(\begin{smallmatrix}r&s_1+is_2\\&r^{-1}\end{smallmatrix}\right)\left(\begin{smallmatrix}\cos\theta e^{ai}&\sin\theta e^{bi}\\-\sin\theta e^{-bi}&\cos\theta e^{-ai}\end{smallmatrix}\right),
\end{equation}
for $s_1,s_2\in \R$, $r\in \R_+$, $\theta\in [0,\pi/2)$,  $b\in[0,2\pi)$ and $a\in[0,\pi)$.
In the displays above and in the whole article, we adopt the convention that a blank entry on a matrix denotes a 0 in that entry.

\subsection{Gauss sums}\label{Gausssumssec} In this section we introduce the definition of Gauss sums that we will use and prove some standard results which we have not found in the literature stated in the precise form we will need. Let  $$\chi=\prod_v\chi_v:\I_F/F^\times\rightarrow\C^\times$$ be a locally constant character. For a non-archimedean place $v$, write $\mathfrak{c}(\chi_v)$ for the conductor of $\chi_v$. 
 We fix $y_v\in F_v^\times$ such that $d_{F_v}^{-1}=y_v\cdot\mathfrak{c}(\chi_v)$.

The local Gauss sum $\mathfrak{g}(\chi_v,y_v)$ is defined as
\[
\mathfrak{g}(\chi_v,y_v)=\frac{1}{{\rm vol}(\cO_{F_v}^\times)}\int_{\cO_{F_v}^\times}\chi_v(x_v)^{-1}\psi_v(y_vx_v)d^\times x_v,
\]
where $\psi_v$  is the additive character of \eqref{defpsiad}.
Notice that when $\chi_v$ is unramified, $\mathfrak{g}(\chi_v,y_v)=1$.
We write $y=(y_v)_v\in \I_F^\infty$, and we define \emph{the Gauss sum of $\chi$} as
\[
\mathfrak{g}(\chi,y)=\prod_{v\nmid\infty}\mathfrak{g}(\chi_v,y_v).
\]
\begin{remark}
    The Gauss sum $\mathfrak{g}(\chi,y)$ depends on $y$, but a different choice of $y$ has the effect of scaling  $\mathfrak{g}(\chi,y)$ by an element of $\Q(\chi)^\times$, the field generated by the values of $\chi$. When we write equalities ${\rm mod}\; \Q(\chi)^\times$, we will simply write $\mathfrak{g}(\chi)$ instead of $\mathfrak{g}(\chi,y)$.
\end{remark}

\begin{proposition}\label{propertiesGaussS}
    We have that 
    \[
    \mathfrak{g}(\chi,y)\cdot \mathfrak{g}(\chi^{-1},y)=|c(\chi)|\prod_{v\mid c(\chi)}\zeta_v(1)^{2}\chi_v(-1).
    \]
	In particular, if $\chi$ is quadratic associated to the quadratic extension $E/F$
    \[
	  \mathfrak{g}(\chi,y)\equiv i^{\#\{\sigma\mid\infty\colon E_\sigma=\C\}} |D|^{\frac{1}{2}}\mod\Q^\times,	
    \]
where $D$ is the relative discriminant of $E/F$.
\end{proposition}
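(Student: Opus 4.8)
The plan is to reduce the first identity to a purely local computation at the finitely many ramified places and then multiply the local contributions. Since $\mathfrak{g}(\chi,y)=\prod_{v\nmid\infty}\mathfrak{g}(\chi_v,y_v)$ and $\mathfrak{g}(\chi_v,y_v)=1$ whenever $\chi_v$ is unramified, it suffices to prove, for each finite place $v$ with $\mathfrak{c}(\chi_v)=\varpi_v^{n_v}\cO_{F_v}$ and $n_v\geq 1$, the local identity
\[
\mathfrak{g}(\chi_v,y_v)\,\mathfrak{g}(\chi_v^{-1},y_v)=q_v^{-n_v}\,\zeta_v(1)^2\,\chi_v(-1),
\]
and then take the product over $v\mid c(\chi)$, the factor $\prod_{v\mid c(\chi)}q_v^{-n_v}$ being the quantity denoted $|c(\chi)|$ in the statement.

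For the local identity I would first use $\mathrm{vol}(\cO_{F_v}^\times)=|d_{F_v}|_v^{1/2}$ and $d^\times x=\zeta_v(1)\,dx$ on $\cO_{F_v}^\times$ to write the product of Gauss sums as
\[
\frac{\zeta_v(1)^2}{|d_{F_v}|_v}\int_{(\cO_{F_v}^\times)^2}\chi_v(x)^{-1}\chi_v(z)\,\psi_v\bigl(y_v(x+z)\bigr)\,dx\,dz,
\]
and then substitute $z=xu$ with $u\in\cO_{F_v}^\times$, which is measure preserving in $z$ for fixed $x$ since $|x|_v=1$; the integrand becomes $\chi_v(u)\,\psi_v\bigl(y_v x(1+u)\bigr)$. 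By Fubini the inner $x$-integral is the elementary additive integral $J(u):=\int_{\cO_{F_v}^\times}\psi_v\bigl(y_v(1+u)x\bigr)\,dx$, which I would evaluate by writing $\int_{\cO_{F_v}^\times}=\int_{\cO_{F_v}}-\int_{\varpi_v\cO_{F_v}}$ and using self-duality of $dx$ for $\psi_v$; since $\ord_v(y_v)=-\ord_v(d_{F_v})-n_v$, this shows $J(u)$ depends only on $\ord_v(1+u)$, equalling $|d_{F_v}|_v^{1/2}\zeta_v(1)^{-1}$ if $\ord_v(1+u)\geq n_v$, $-q_v^{-1}|d_{F_v}|_v^{1/2}$ if $\ord_v(1+u)=n_v-1$, and $0$ otherwise.

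It then remains to integrate $\chi_v(u)J(u)$ over $u\in\cO_{F_v}^\times$. On the ball $-1+\varpi_v^{n_v}\cO_{F_v}$ the character $\chi_v$ is the constant $\chi_v(-1)$ (definition of the conductor), which contributes $\chi_v(-1)q_v^{-n_v}|d_{F_v}|_v\zeta_v(1)^{-1}$; on the annulus $\ord_v(1+u)=n_v-1$ one evaluates $\int\chi_v$ by subtracting the ball $\ord_v(1+u)\geq n_v$ from the ball $\ord_v(1+u)\geq n_v-1$ and using that $\chi_v$ is nontrivial on $1+\varpi_v^{n_v-1}\cO_{F_v}$ (when $n_v\geq 2$), resp.\ on $\cO_{F_v}^\times$ (when $n_v=1$), so that the larger ball contributes $0$; this gives the annulus contribution $\chi_v(-1)q_v^{-n_v-1}|d_{F_v}|_v$. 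Adding the two, multiplying by $\zeta_v(1)^2/|d_{F_v}|_v$, and using $\zeta_v(1)^{-1}+q_v^{-1}=1$, one lands on $q_v^{-n_v}\zeta_v(1)^2\chi_v(-1)$. The only things requiring care are keeping the self-dual volume normalizations consistent throughout and treating the degenerate shape of the annulus when $n_v=1$; there is no conceptual difficulty, so this is the ``hard part'' only in a bookkeeping sense.

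For the quadratic case, with $\chi$ attached to $E/F$ one has $\chi^{-1}=\chi$, so the first identity gives $\mathfrak{g}(\chi,y)^2=|c(\chi)|\prod_{v\mid c(\chi)}\zeta_v(1)^2\chi_v(-1)$. Here $\prod_{v\mid c(\chi)}\zeta_v(1)^2\in(\Q^\times)^2$; the conductor of $\chi$ is the relative discriminant, so $|c(\chi)|$ differs from $|D|$ only by a square (indeed $|D|$ and $|D|^{-1}$ agree modulo $(\Q^\times)^2$); and by the product formula applied to $-1\in F^\times$, together with $\chi_v(-1)=1$ at every finite unramified place and every complex place, $\prod_{v\mid c(\chi)}\chi_v(-1)=\prod_{\sigma\mid\infty}\chi_\sigma(-1)=(-1)^{\#\{\sigma\mid\infty\,:\,E_\sigma=\C\}}$. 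Hence $\mathfrak{g}(\chi,y)^2$ equals $(-1)^{\#\{\sigma\mid\infty:\,E_\sigma=\C\}}\,|D|$ times a square in $\Q^\times$; splitting into the cases where $\#\{\sigma\mid\infty:E_\sigma=\C\}$ is even or odd then yields $\mathfrak{g}(\chi,y)\equiv i^{\#\{\sigma\mid\infty\,:\,E_\sigma=\C\}}\,|D|^{1/2}\bmod\Q^\times$.
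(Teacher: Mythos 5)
Your proof is correct and follows essentially the same route as the paper's: reduce to the local statement at ramified places, write the product of Gauss sums as a double integral, substitute $z=xu$, and evaluate the inner additive integral via self-duality (the paper cites a lemma of Spiess for exactly this integral, with the multiplicative rather than additive normalization, and then splits into the cases $n_v=1$ and $n_v\geq2$ in the same way you do). You also spell out the ``in particular'' deduction — conductor-discriminant, product formula applied to $-1$, and the $\bmod\,(\Q^\times)^2$ parity argument — which the paper leaves implicit with ``and the result follows.''
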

\begin{proof}

    We compute 
    \begin{eqnarray*}
    \mathfrak{g}(\chi_v,y_v)\cdot \mathfrak{g}(\chi_v^{-1},y_v)&=&\frac{1}{{\rm vol}(\cO_{F_v}^\times)^2}\int_{\cO_{F_v}^\times}\int_{\cO_{F_v}^\times}\chi_v(x_v^{-1}t_v)\psi_v(y_v(x_v+t_v))d^\times x_v d^\times t_v\\
    &=&\frac{1}{{\rm vol}(\cO_{F_v}^\times)^2}\int_{\cO_{F_v}^\times}\int_{\cO_{F_v}^\times}\chi_v(z_v)\psi_v(y_vx_v(1+z_v))d^\times z_v d^\times x_v.
    \end{eqnarray*}
    By \cite[Lemma 2.1]{Spiess}, we have that
    \[
    \int_{\cO_{F_v}^\times}\psi_v(ax_v)d^\times x_v=\left\{\begin{array}{ll}
         {\rm vol}(\cO_{F_v}^\times)& \text{ if } a\in d_F^{-1}, \\
         {\rm vol}(\cO_{F_v}^\times)(1-q_v)^{-1}& \text{ if } {\rm ord}(\delta_va)=-1 ,\\
         0&\mbox{otherwise}.
    \end{array}\right.
    \]
    Then, if we write $c_v=\delta_v^{-1}y_v^{-1}$ we have
    \begin{eqnarray*}
    \mathfrak{g}(\chi_v,y_v)\cdot \mathfrak{g}(\chi_v^{-1},y_v)&=&\frac{1}{{\rm vol}(\cO_{F_v}^\times)}\left(\int_{\cO_{F_v}^\times}\chi_v(z_v)1_{d_F^{-1}}(y_v(1+z_v))d^\times z_v+ \int_{\cO_{F_v}^\times}\frac{\chi_v(z_v)}{(1-q_v)}1_{\cO_{F_v}^\times}(\varpi_vc_v^{-1}(1+z_v))d^\times z_v\right)\\
    &=&\frac{1}{{\rm vol}(\cO_{F_v}^\times)}\left(\int_{\cO_{F_v}^\times}\chi_v(z_v)1_{c(\chi_v)}(1+z_v)d^\times z_v+ \int_{\cO_{F_v}^\times}\frac{\chi_v(z_v)}{(1-q_v)}1_{\varpi_v^{-1}c_v\cO_{F_v}^\times}(1+z_v)d^\times z_v\right).
    \end{eqnarray*}
   Notice that $\mathfrak{c}(\chi_v)=c_v\cO_{F_v}$. Hence, when $c_v\in\cO_{F_v}^\times$ ($\chi_v$ is unramified), then  $\mathfrak{g}(\chi_v,y_v)\cdot \mathfrak{g}(\chi_v^{-1},y_v)=1$. On the other hand, when $c(\chi_v)= \varpi_v\cO_{F_v}$, we have that
   \begin{eqnarray*}
     \mathfrak{g}(\chi_v,y_v)\cdot \mathfrak{g}(\chi_v^{-1},y_v)&=&\frac{{\rm vol}(1+c(\chi_v))}{{\rm vol}(\cO_{F_v}^\times)}\chi_v(-1)+\frac{{\rm vol}(1+c(\chi_v))}{(1-q_v){\rm vol}(\cO_{F_v}^\times)}\left(\sum_{a\in(\cO_{F_v}/\varpi_v\cO_{F_v})\setminus\{0,-1\}}\chi_v(a)\right)\\
    &=&\frac{1}{q_v-1}\chi_v(-1)\left(1+\frac{1}{q_v-1}\right)=\chi_v(-1)\frac{q_v}{(q_v-1)^2}.
   \end{eqnarray*}
    Finally, when ${\rm ord}(c_v)\geq 2$, then
    \begin{eqnarray*}
     \mathfrak{g}(\chi_v,y_v)\cdot \mathfrak{g}(\chi_v^{-1},y_v)&=&\frac{{\rm vol}(1+c(\chi_v))}{{\rm vol}(\cO_{F_v}^\times)}\chi_v(-1)+\frac{(1-q_v)^{-1}}{{\rm vol}(\cO_{F_v}^\times)}\sum_{a\in (\cO_{F_v}/\varpi_v\cO_{F_v})^\times}\int_{-1+a\varpi_v^{-1}c_v+c(\chi_v)}\chi_v(z_v)d^\times z_v\\
     &=&\frac{{\rm vol}(1+c(\chi_v))}{{\rm vol}(\cO_{F_v}^\times)}\chi_v(-1)+\frac{{\rm vol}(1+c(\chi_v))}{{\rm vol}(\cO_{F_v}^\times)(1-q_v)}\chi_v(-1)\sum_{a\in (1+\varpi_v^{-1}c(\chi_v))/(1+c(\chi_v)\setminus \{1\}}\chi_v(a)\\
     &=&\frac{{\rm vol}(1+c(\chi_v))}{{\rm vol}(\cO_{F_v}^\times)}\chi_v(-1)+\frac{{\rm vol}(1+c(\chi_v))}{{\rm vol}(\cO_{F_v}^\times)(q_v-1)}\chi_v(-1)=|c_v|_v(1-q_v^{-1})^{-2}\chi_v(-1),
   \end{eqnarray*}
   and the result follows. 
\end{proof}

\subsection{Finite dimensional representations}\label{findimreps}

Let $k$ be a positive even integer. For any field $L$,
let $\cP(k)_L={\rm Sym}^k(L^2)$ be the space of polynomials in 2 variables over $L$ homogeneous of degree $k$ with $\PGL_2(L)$-action:
\[
\left(\left(\begin{array}{cc}a&b\\c& d\end{array}\right)P\right)(X,Y)=(ad-bc)^{-\frac{k}{2}}P(aX+cY,bX+dY),\qquad P\in\cP(k)_L.
\]  
Let us denote $V(k)_L=\cP(k)_L^\vee$ with dual $\PGL_2(L)$-action:
\[
(g\mu)(P)=\mu(g^{-1}P),\qquad \mu\in V(k)_L, \ g \in \PGL_2(L).
\]  
Notice that $V(k)_L\simeq \cP(k)_L$ by means of the isomorphism
\begin{equation}\label{dualVP}
V(k)_L\longrightarrow\cP(k)_L,\qquad\mu\longmapsto\mu((Xy-Yx)^k).
\end{equation}

\subsubsection{Polynomials and torus}\label{PolTorus}
From the fixed embedding $E\hookrightarrow B$ we can define an isomorphism $B\otimes_FE\simeq{\rm M}_2(E)$. In fact, we have that $B=E\oplus EJ$, where $J$ normalizes $E$ and $J^2\in F^\times$. Hence, for this fixed choice of $J$ we have the embedding $ \iota\colon   B\hookrightarrow\M_2(E)$ given by
\begin{equation}\label{embEinB}
\iota( e_1+e_2J)= \left(\begin{array}{cc}
    e_1 & J^2e_2 \\
    \bar e_2 & \bar e_1
\end{array}\right),
\end{equation}
where here the bar denotes the non-trivial automorphism in $\Gal(E/F)$.
For a given embedding $\nu:F\hookrightarrow\C$ we fix an extension $\nu_E\colon E\hookrightarrow\C$. The composition $\nu_E\circ \iota$ gives rise to an embedding  $G(F_\sigma)\hookrightarrow\PGL_2(\C)$. This induces an action of $G(F_\infty)=G(F\otimes_\Q\C)$ on the spaces
\[
V(\underline k):=\bigotimes_{\nu}V(k_{\nu})_\C \qquad\text{ and  }\qquad\cP(\underline k):=\bigotimes_{\nu}\cP(k_{\nu})_\C,
\]
where  $\underline k=(k_{\nu})\in (2\N)^d$, $d = [F\colon \Q]$, and $\nu$ runs over the embeddings of $F$ in $\C$. The natural embedding $G(F)\subseteq G(F_\infty)$ induces a structure of $G(F)$-representation to $V(\underline k)$ and to $ \cP(\underline k)$.

The embedding $  E\hookrightarrow B \stackrel{\iota}{\hookrightarrow}{\rm M}_2(E)$ maps $e$ to $\big(\begin{smallmatrix} e&\\&\bar e\end{smallmatrix}\big)$. This implies that we have a $T(F_\infty)$-equivariant morphism
\begin{equation}\label{PtoCoverC}
    \cP(\underline k)\longrightarrow C(T(F_\infty),\C);\qquad \bigotimes_{\nu} P_{\nu}\longmapsto\ipa{(t_\sigma)_{\sigma\mid\infty}\mapsto\prod_{\sigma\mid\infty}\prod_{\nu\mid\sigma}P_{\nu}\ipa{1,\nu_E\ipa{\frac{t_\sigma}{\bar t_\sigma}}}\nu_E\ipa{\frac{t_\sigma}{\bar t_\sigma}}^{-\frac{k_{\nu}}{2}}},
\end{equation}
where $ C(T(F_\infty),\C)$ denotes the set of continuous functions from $T(F_\infty)$ to $\C$.

\subsubsection{Invariant polynomials}\label{invpoly} For $\underline{m}=(m_{\nu})_{\nu}\in\Z^d$, $\lambda=(\lambda_\sigma)_\sigma\in F_\infty$ and $t=(t_\sigma)_\sigma\in T(F_\infty)$, we write 
\[
\lambda^{\underline{m}}:=\prod_{\sigma\in\Sigma_F}\prod_{\nu\mid\sigma} \nu(\lambda_\sigma)^{m_{\nu}};\qquad t^{\underline{m}}:=\prod_{\sigma\in\Sigma_F}\prod_{\nu\mid\sigma} \nu_E\ipa{\frac{t_\sigma}{\bar t_\sigma}}^{m_{\nu}}.
\]

For reasons that will become apparent later, it is more convenient to consider degrees of the form $(\underline k-2)=(k_{\nu}-2)$, for some $\underline k=(k_{\nu})\in (2\N)^d$ with $k_{\nu}\geq 2$.
If $\frac{2-k_{\nu}}{2}\leq m_{\nu}\leq\frac{k_{\nu}-2}{2}$ for all $\nu:F\hookrightarrow\C$, the character $t\mapsto t^{\underline m}$ corresponds by means of the morphism \eqref{PtoCoverC} to an element $\mu_{\underline m}=\bigotimes_{\nu}\mu_{m_{\nu}}\in V(\underline{k}-2)$ given by
\begin{equation}\label{defmumglobal}
    \mu_{m_{\nu}}\left(\left|\begin{array}{cc}X& Y\\  x&y \end{array}\right|^{k_{\nu}-2}\right)=x^{\frac{k_{\nu}-2}{2}-m_{\nu}}y^{\frac{k_{\nu}-2}{2}+m_{\nu}},\quad\mbox{or simply}\quad \mu_{\underline m}\left(\left|\begin{array}{cc}X& Y\\  x&y \end{array}\right|^{\underline k-2}\right)=x^{\frac{\underline k-2}{2}-\underline m}y^{\frac{\underline k-2}{2}+\underline m}.
\end{equation}
Hence $\mu_{\underline m}\in V(\underline{k}-2)$ is the unique element, up to constant, such that $t \mu_{\underline m}=t^{-\underline m}\mu_{\underline m}$, where on the left-hand side $t$ acts via the  action of $T(F_\infty)\subset G(F_\infty)$ on $V(\underline{k}-2)$ and on the right-hand side the complex number $t^{-\underline m}$ acts by multiplication. 

\subsubsection{Other finite dimensional representations}\label{otherfdreps}

Write ${\rm Tr}$ for the reduced trace on $B$ and let us consider the finite dimensional $F$-vector space
$B_0=\{b\in B\mid {\rm Tr}(b)=0\}$,
endowed with a left action of $B^\times$ given by conjugation. Let us consider the non-degenerate $B^\times$-invariant symmetric pairing
\[
\langle\;,\;\rangle:B_0\times B_0\longrightarrow F;\qquad \langle b_1,b_2\rangle={\rm Tr}(b_1\cdot \bar b_2),
\]
where $\overline{(\cdot)}$ denotes the non-trivial conjugation on $B$. For any even integer $k\geq 4$, let us consider the morphism
\[
\Delta_k:{\rm Sym}^{\frac{k}{2}}(B_0)\longrightarrow{\rm Sym}^{\frac{k}{2}-2}(B_0);\qquad \Delta_k(b_1\cdot b_2\cdots b_{\frac{k}{2}})=\sum_{i<j}\langle b_i,b_j\rangle\; b_1\cdots \hat b_i\cdots\hat b_j\cdots b_{\frac{k}{2}}
\]
We define
\[
V_0=F;\qquad V_2=B_0;\qquad V_k=\ker(\Delta_k)\subset {\rm Sym}^{\frac{k}{2}}(B_0),\quad \mbox{if }\;k\geq 4.
\]
Notice that the action of $B^\times$ on $B_0$ induces an action of $B^\times$ on $V_k$.
\begin{lemma}\label{lemaVkVk}
Given an extension $L/F$ admitting an embedding $\imath:B\hookrightarrow \M_2(L)$, the following morphism
    \[
    \kappa:V_k\otimes_F L\longrightarrow \cP(k)_L;\qquad \kappa\left(b_1\cdots b_{\frac{k}{2}}\right)(X,Y)=\prod_{i=1}^{\frac{k}{2}}{\rm Tr}\left(\left(\begin{array}{r}  Y\\-X\end{array}\right)(X\; Y)\;\imath(b_i)\right),
    \]
    is an isomorphism of $B^\times$-modules over $L$.
\end{lemma}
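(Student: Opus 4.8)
The plan is to prove Lemma \ref{lemaVkVk} by first establishing the case $k=2$ by an explicit computation, and then bootstrapping to general $k$ using the fact that both sides are built from $\Sym^{k/2}$ of the $k=2$ situation, together with the representation theory of $\SL_2$ (equivalently $\PGL_2$) over a field of characteristic zero.

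\medskip

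\textbf{Step 1: The case $k=2$.} Here $V_2 = B_0$ with the conjugation action, and $\cP(2)_L = \Sym^2(L^2)$ with the twisted $\PGL_2(L)$-action from \S\ref{findimreps}. Via $\imath\colon B\hookrightarrow \M_2(L)$, the space $B_0\otimes_F L$ is identified with $\M_2(L)_0$, the trace-zero matrices, and the conjugation action of $B^\times$ becomes the adjoint action of $\PGL_2(L)$. The map $\kappa$ sends $b\in B_0$ to the quadratic form $(X,Y)\mapsto \Trace\!\left(\big(\begin{smallmatrix} Y\\-X\end{smallmatrix}\big)(X\ Y)\,\imath(b)\right)$. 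Writing $\imath(b) = \big(\begin{smallmatrix} p & q \\ r & -p\end{smallmatrix}\big)$, I would compute this trace directly: $\big(\begin{smallmatrix} Y\\-X\end{smallmatrix}\big)(X\ Y) = \big(\begin{smallmatrix} XY & Y^2 \\ -X^2 & -XY\end{smallmatrix}\big)$, so the trace against $\imath(b)$ is $pXY + rY^2 - qX^2 + pXY = -qX^2 + 2pXY + rY^2$, a nonzero quadratic form whenever $b\neq 0$. This shows $\kappa$ is injective for $k=2$, hence an isomorphism by dimension count ($\dim = 3$ on both sides). Equivariance for $k=2$ is the statement that the adjoint action of $\GL_2(L)$ on $\M_2(L)_0$ matches the weight-$2$ action on $\cP(2)_L$ under this identification, which follows from the identity $g\big(\begin{smallmatrix} Y\\-X\end{smallmatrix}\big)(X\ Y)g^{-1} = \det(g)^{-1}\big(\begin{smallmatrix} Y'\\-X'\end{smallmatrix}\big)(X'\ Y')$ where $(X',Y') = (X,Y)$ acted on appropriately — this is a routine $2\times 2$ verification using that $\big(\begin{smallmatrix} Y\\-X\end{smallmatrix}\big) = \big(\begin{smallmatrix} 0&1\\-1&0\end{smallmatrix}\big)\big(\begin{smallmatrix} X\\Y\end{smallmatrix}\big)$ and the adjoint/transpose-inverse relation for $\SL_2$.

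\medskip

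\textbf{Step 2: From $k=2$ to general $k$.} For $k\geq 4$, $V_k = \ker(\Delta_k)\subseteq \Sym^{k/2}(B_0)$. Over $L$, using Step 1, $\Sym^{k/2}(B_0)\otimes_F L \cong \Sym^{k/2}(\cP(2)_L)$ as $\PGL_2(L)$-modules. The map $\kappa$ for general $k$ is visibly the composite of the $(k/2)$-fold symmetric power of $\kappa$ for $k=2$ with the multiplication map $\Sym^{k/2}(\cP(2)_L)\to \cP(k)_L$ (multiplying quadratics to get a degree-$k$ form) — indeed $\kappa(b_1\cdots b_{k/2}) = \prod_i \kappa(b_i)$ by definition. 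So it suffices to identify $\ker(\Delta_k)$ with the kernel of this multiplication map, i.e. to show $\Sym^{k/2}(\cP(2)_L)\to \cP(k)_L$ is the projection onto the Cartan component $\Sym^k(L^2)$ and its kernel matches $\ker(\Delta_k)\otimes L$. By Clebsch--Gordan, $\Sym^{k/2}(\Sym^2(L^2))$ decomposes (in char $0$) as $\bigoplus_{j} \Sym^{k-2j}(L^2)$ with $j$ ranging appropriately, the top piece $j=0$ being $\cP(k)_L$; the multiplication map is $\PGL_2$-equivariant and nonzero on the top piece (e.g. because $X^2$ maps to $X^k\neq0$), hence is the projection onto it. On the other hand $\Delta_k$ is, up to the identification of $\langle\,,\,\rangle$ with the $\SL_2$-invariant pairing on $\cP(2)_L$ (a discriminant-type form, which I would check sends $-qX^2+2pXY+rY^2$ to $p^2+qr = -\det\imath(b)$, matching $\Trace(b\bar b)$ up to scalar), the standard contraction operator whose kernel on $\Sym^{k/2}$ is precisely the Cartan component. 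Therefore $\ker(\Delta_k)\otimes L = \cP(k)_L$ as summands of $\Sym^{k/2}(\cP(2)_L)$, and $\kappa$ is the resulting isomorphism. Equivariance is inherited from Step 1 since every map in the composite is $B^\times$-equivariant.

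\medskip

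\textbf{Main obstacle.} The delicate point is matching the combinatorial/invariant-theoretic description of $\ker(\Delta_k)$ with the Cartan (highest-weight) component of $\Sym^{k/2}(\Sym^2)$, and in particular checking that the pairing $\langle b_1,b_2\rangle = \Trace(b_1\bar b_2)$ corresponds, under $\kappa$ at level $2$, to the $\SL_2$-invariant symmetric bilinear form on $\cP(2)_L$ up to a nonzero scalar (so that $\Delta_k$ corresponds to the classical "Laplacian"/contraction). Once this identification of pairings is nailed down, the rest is formal representation theory of $\SL_2$ in characteristic zero; but one must be slightly careful that $\imath$ only gives an embedding $B\hookrightarrow\M_2(L)$ after base change, so all statements are made after $\otimes_F L$, and the $F$-rationality of $V_k$ is only used to make sense of the source — the isomorphism itself is an isomorphism of $L$-vector spaces with $B^\times$-action.
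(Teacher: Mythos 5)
Your proposal follows essentially the same strategy as the paper's: establish the $k=2$ case by direct computation, then recognize $V_k=\ker\Delta_k$ as the top (Cartan) summand of $\Sym^{k/2}(\cP_2)$ by identifying $\langle\cdot,\cdot\rangle$ with the $\SL_2$-invariant form on $\cP_2$ and $\Delta_k$ with the contraction adjoint to multiplication by the invariant element $a\in\ker\kappa_2$. Two small corrections: the plethysm is $\Sym^{k/2}(\Sym^2 L^2)\simeq\bigoplus_j \Sym^{k-4j}(L^2)$, not $\Sym^{k-2j}$; and the sentence ``it suffices to identify $\ker(\Delta_k)$ with the kernel of this multiplication map'' should say a \emph{complement} to that kernel (the conclusion you later draw, $\ker\Delta_k\otimes L=\cP(k)_L$ as a summand, is the correct one). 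Where you invoke ``the standard contraction operator whose kernel is the Cartan component'' as known, the paper supplies the proof by exhibiting the adjunction $\langle\underline p,\iota_n(\underline q)\rangle_n=\langle\Delta_{2n}(\underline p),\underline q\rangle_{n-2}$, showing $\Delta_{2n}$ is the transpose of $\iota_n$ under self-duality of $\Sym^n(\cP_2)$; filling in that one computation would complete your argument along exactly the paper's lines.
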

\begin{proof}
    This result is fairly standard, but we will provide a proof due to the absence of a suitable reference.
    The fact that $\kappa$ is $B^\times$-equivariant follows from a simple calculation. Moreover, it is clear that it is an isomorphism for $k=2$, hence we will identify $B_0\otimes_F L$ with $\cP(2)_L=:\cP_2$.

    The morphism $\kappa$ of the statement, for general $k$, comes from the natural surjective morphism 
    \begin{equation}\label{auxeqsymPk}
    \kappa_n:{\rm Sym}^n(\cP_2)\longrightarrow\cP(2n)_L;\qquad \kappa_n\left(p_1\cdots p_{n}\right)=\prod_{i=1}^{n}p_n;\qquad n:=\frac{k}{2}.
    \end{equation}
    Moreover, if we consider $a=(X^2\cdot Y^2)-(XY\cdot XY)\in \ker\kappa_2\subset{\rm Sym}^2(\cP_2)$, we can define an injective morphism
    \[
    \iota_n:{\rm Sym}^{n-2}(\cP_2)\hookrightarrow {\rm Sym}^n(\cP_2);\qquad \iota_n(q_1\cdots q_{n-2})=(a\cdot q_1\cdots q_{n-2}),
    \]
    that provides the isomorphism $\ker\kappa_n\simeq \operatorname{Im}\iota_n$ because $\dim(\cP(2n)_L)=2n+1$ and $\dim({\rm Sym}^n(\cP_2))=\frac{(n+2)(n+1)}{2}$. Notice that the symmetric pairing $\langle\;,\;\rangle$ corresponds to the paring on $\cP_2$ provided by \eqref{dualVP}, and gives rise to a perfect symmetric pairing $\langle\;,\;\rangle_n$ on ${\rm Sym}^n(\cP_2)\otimes{\rm Sym}^n(\cP_2)$ that identifies ${\rm Sym}^n(\cP_2)$ with ${\rm Sym}^n(\cP_2)^\vee$
    \[
    \langle\;,\;\rangle_n:{\rm Sym}^n(\cP_2)\times {\rm Sym}^n(\cP_2)\longrightarrow L;\qquad \langle\;(p_1,\cdots p_n),(q_1,\cdots,q_n)\;\rangle_n=\sum_{\sigma\in S_n}\prod_{i=1}^n\langle p_i,q_{\sigma(i)}\rangle.
    \]
    Hence, the result will follow if we prove that the following diagram is commutative:
    \[
    \xymatrix{
    0\ar[r]&V(2n)_L\ar[r]^{\kappa_n^\vee}&{\rm Sym}^n(\cP_2)^{\vee}\ar[r]^{\iota_n^\vee}&{\rm Sym}^{n-2}(\cP_2)^\vee\ar[r]&0\\
    &&{\rm Sym}^n(\cP_2)\ar[r]^{\Delta_{2n}}\ar[u]^{\simeq}&{\rm Sym}^{n-2}(\cP_2)\ar[u]^{\simeq}.&
    }
    \]
    Indeed, it is easy to check that $\langle a,(p_1\cdot p_2)\;\rangle_2=\langle p_1,p_2\rangle$ for all $p_1,p_2\in\cP_2$. Hence, for any $\underline p=(p_1\cdots p_n)\in {\rm Sym}^n(\cP_2)$ and $\underline q=(q_1\cdots q_{n-2})\in {\rm Sym}^{n-2}(\cP_2)$,
    \begin{eqnarray*}
    \langle\underline p,\iota_n(\underline q)\rangle_n&=&\sum_{\sigma\in S_n/(1,2)}\langle a,(p_{\sigma(1)}\cdot p_{\sigma(2)})\rangle_2\prod_{i=1}^{n-2}\langle q_i,p_{\sigma(i+2)}\rangle
    =\sum_{\sigma\in S_n/(1,2)}\langle p_{\sigma(1)}, p_{\sigma(2)}\rangle\prod_{i=1}^{n-2}\langle q_i,p_{\sigma(i+2)}\rangle\\
    &=&\sum_{i<j}\langle p_{i}, p_{j}\rangle\langle(p_1\cdots\hat p_i\cdots \hat p_j\cdots p_n),(q_1\cdots q_{n-2})\rangle_{n-2}=\langle\Delta_{2n}(\underline p),\underline q\rangle_{n-2},
    \end{eqnarray*}
    and, therefore, the commutativity of the diagram follows. 
\end{proof}
\begin{remark}\label{dualVkmodelpairing}
    The above proof implies that there exists a $B^\times$-invariant perfect pairing 
    $\langle\;,\;\rangle:V_k\times V_k\rightarrow F$,
    that fits under $\kappa$ with the perfect pairing $\langle\;,\;\rangle_{\cP(k)_L}$ provided by \eqref{dualVP}. Indeed, $\langle\;,\;\rangle$ is induced by 
    \[
    \langle\;,\;\rangle=\frac{1}{(k/2)!}\langle\;,\;\rangle_{\frac{k}{2}}:{\rm Sym}^{\frac{k}{2}}(B_0)\times {\rm Sym}^{\frac{k}{2}}(B_0)\longrightarrow F;\qquad \langle\;(b_1,\cdots b_{\frac{k}{2}}),(a_1,\cdots,a_{\frac{k}{2}})\;\rangle=\frac{1}{(k/2)!}\sum_{\sigma\in S_{k/2}}\prod_{i=1}^{k/2}{\rm Tr}(b_i\cdot \bar a_{\sigma(i)}).
    \]
    Notice that $\langle\;,\;\rangle_{\cP(k)_L}$ is the unique perfect pairing such that $\langle x^k,y^k\rangle_{\cP(k)_L}=1$. Moreover, if $b_1,b_2\in B_0$ are such that
    $\imath(b_1)=\left(\begin{smallmatrix}
        0&-1\\0&0
    \end{smallmatrix}\right)$ and $\imath(b_2)=\left(\begin{smallmatrix}
        0&0\\1&0
    \end{smallmatrix}\right)$, then $\kappa(b_1\cdots b_1)=x^k$ and $\kappa(b_2\cdots b_2)=y^k$. On the other side, 
    \[
    \langle\;(b_1,\cdots b_1),(b_2,\cdots,b_2)\;\rangle=\left({\rm Tr}(b_1\cdot \bar b_2)\right)^{\frac{k}{2}}=\left({\rm Tr}(b_1\cdot \bar b_2)\right)^{\frac{k}{2}}=\left({\rm Tr}(\imath(b_1)\cdot \overline{\imath(b_2)})\right)^{\frac{k}{2}}=1.
    \]
    Thus, we deduce that $\langle\kappa(v),\kappa(w)\rangle_{\cP(k)_L}=\langle v,w\rangle$, for all $v,w\in V_k$.
\end{remark}
\begin{lemma}\label{rationalinvariants}
    There exists a unique $E^\times$-invariant element of $V_k$ up to constant. Moreover, the image of such element under the natural morphism
    \begin{equation*}
    V_k\longrightarrow\cP(k)_E\longrightarrow C(T(F),E);\qquad v\longmapsto f_{v}(t)=\kappa\left(v\right)\ipa{1,\ipa{\frac{t}{\bar t}}}\ipa{\frac{t}{\bar t}}^{-\frac{k}{2}}
\end{equation*}
is a constant function that takes values in $\alpha^{\frac{k}{4}}F$, where $\alpha$ is such that $E=F(\sqrt{\alpha})$.
\end{lemma}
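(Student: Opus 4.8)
Throughout, write $\tau$ for the nontrivial element of $\Gal(E/F)$. The plan is to base change to $E$, which splits $B$, where the torus becomes diagonal and the invariants are completely explicit, and then to descend to $F$.

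First I would invoke Lemma~\ref{lemaVkVk} with $L=E$ and the embedding $\iota$ of \eqref{embEinB}, which yields a $B^\times$-equivariant isomorphism $\kappa\colon V_k\otimes_F E\stackrel{\sim}{\longrightarrow}\cP(k)_E$. Since the composite $E\hookrightarrow B\stackrel{\iota}{\hookrightarrow}\M_2(E)$ sends $e$ to $\big(\begin{smallmatrix}e&\\&\bar e\end{smallmatrix}\big)$, an element $e\in E^\times$ acts on $\cP(k)_E$ by $P(X,Y)\mapsto (e\bar e)^{-k/2}P(eX,\bar e Y)$, hence by the scalar $(e/\bar e)^{i-k/2}$ on the monomial $X^iY^{k-i}$; this is independent of $e$ precisely when $i=k/2$. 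Thus the $E^\times$-invariants of $\cP(k)_E$ form the line $E\cdot X^{k/2}Y^{k/2}$. As the $E^\times$-action on $V_k$ factors through the torus $T=E^\times/F^\times$, which is linearly reductive in characteristic zero, forming $T$-invariants commutes with the flat base change $F\to E$; hence $\dim_F V_k^{E^\times}=\dim_E(V_k\otimes_F E)^{E^\times}=1$, which is the first assertion. (When $E=F\times F$ the same computation already takes place over $F$, with $E^\times$ acting through the diagonal torus of $\M_2(F)$, and there $\alpha\in(F^\times)^2$.) Fix a generator $v_0$ of this line.

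Since $\kappa$ is $E^\times$-equivariant and $v_0\neq 0$, we have $\kappa(v_0)=c\,X^{k/2}Y^{k/2}$ for some $c\in E^\times$. Substituting into the formula of the statement, for every $t\in T(F)$,
\[
f_{v_0}(t)=\kappa(v_0)\Big(1,\tfrac{t}{\bar t}\Big)\Big(\tfrac{t}{\bar t}\Big)^{-k/2}=c\,\Big(\tfrac{t}{\bar t}\Big)^{k/2}\Big(\tfrac{t}{\bar t}\Big)^{-k/2}=c,
\]
so $f_{v_0}$ is the constant function $c$, and it remains to show $c\in\alpha^{k/4}F$, equivalently that $c\cdot(\sqrt\alpha)^{-k/2}$ is fixed by $\tau$. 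The key input is a Galois-equivariance property of $\kappa$. Let $\tau$ act on $\M_2(E)$ entrywise and on $\cP(k)_E$ through the coefficients. Since $J$ realizes the nontrivial automorphism of $E$, for $b=e_1+e_2J\in B$ one has $JbJ^{-1}=\bar e_1+\bar e_2J$, and comparing with \eqref{embEinB} gives the Skolem--Noether-type identity $\tau(\iota(b))=\iota(J)\,\iota(b)\,\iota(J)^{-1}$ for all $b\in B$. Writing $M(X,Y)=\big(\begin{smallmatrix}Y\\-X\end{smallmatrix}\big)(X\ Y)$, whose entries lie in $F[X,Y]$, and using cyclicity of the trace together with the elementary identity $g^{-1}M(X,Y)g=\det(g)^{-1}M\big((X,Y)g\big)$ for $g\in\GL_2$, with $\det\iota(J)=-J^2$ and $(X,Y)\iota(J)=(Y,J^2X)$, one computes, applying the formula of Lemma~\ref{lemaVkVk} to a product $b_1\cdots b_{k/2}$ and extending $\Q$-linearly, that for all $v\in V_k$
\[
\tau\big(\kappa(v)(X,Y)\big)=\prod_i{\rm Tr}\big(\iota(J)^{-1}M(X,Y)\iota(J)\cdot\iota(b_i)\big)=(-J^2)^{-k/2}\,\kappa(v)(Y,J^2X).
\]
Applied to $v=v_0$, the right-hand side equals $(-J^2)^{-k/2}c\,(J^2)^{k/2}X^{k/2}Y^{k/2}=(-1)^{k/2}c\,X^{k/2}Y^{k/2}$, while the left-hand side is $\tau(c)\,X^{k/2}Y^{k/2}$; hence $\tau(c)=(-1)^{k/2}c$. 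Therefore $c\in F$ if $k\equiv0\bmod 4$ and $c\in F\sqrt\alpha$ if $k\equiv2\bmod 4$; in either case $c\in(\sqrt\alpha)^{k/2}F=\alpha^{k/4}F$, as required. (For $k=2$ one may take $v_0=\sqrt\alpha\in E\cap B_0$, for which $\kappa(\sqrt\alpha)=2\sqrt\alpha\,XY$, matching the pattern.)

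I expect the main obstacle to be the bookkeeping in this last step: determining the exact scalar in the Galois-equivariance of $\kappa$ requires keeping track both of the automorphy factor $(ad-bc)^{-k/2}$ in the $\PGL_2$-action on $\cP(k)_E$ and of $\det\iota(J)$. The conceptual shortcut is the observation that entrywise Galois conjugation on $\iota(B)\subset\M_2(E)$ is inner, implemented by $\iota(J)$, which reduces the verification to a direct trace computation.
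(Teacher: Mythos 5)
Your proof is correct, but it takes a genuinely different route from the paper's. The paper proves existence by an explicit recursive construction: starting from $v_2=i$ and a hand-built $v_4\in V_4$, it forms the products $\tilde v_k^n=(v_4\stackrel{n)}{\cdots}v_4\cdot v_2\stackrel{k/2-2n)}{\cdots}v_2)$ and shows a suitable $F$-linear combination lies in $\ker\Delta_k$; it then evaluates $f_{v_2}=2i$ and $f_{v_4}=-4j^2$ directly and concludes $f_{\tilde v_k^n}\in i^{k/2-2n}F=i^{k/2}F$. Your argument replaces the explicit construction by linear reductivity of the torus: since invariants commute with the flat base change $F\to E$, $\dim_F V_k^{E^\times}=\dim_E\cP(k)_E^{E^\times}=1$, which settles existence and uniqueness simultaneously (the paper's appeal to Lemma~\ref{lemaVkVk} only gives an upper bound over $F$, which is why it still needs the construction). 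For the rationality of the constant you substitute a Galois-equivariance computation, the crux being the Skolem--Noether identity $\tau(\iota(b))=\iota(J)\iota(b)\iota(J)^{-1}$, the formula $g^{-1}M(X,Y)g=\det(g)^{-1}M((X,Y)g)$, and the resulting relation $\tau(c)=(-1)^{k/2}c$, which forces $c\in\alpha^{k/4}F$. Both your key computations check out; in particular $\tau(c)=(-1)^{k/2}c$ is consistent with the paper's $f_{v_2}=2\sqrt\alpha$ (sign $-1$ for $k=2$) and $f_{v_4}=-4j^2\in F$ (sign $+1$ for $k=4$). The trade-off: the paper's approach is elementary and produces explicit invariant vectors that could be reused elsewhere, while yours is shorter and cleaner, avoids the case analysis in $k$, and makes the role of the quaternionic conjugation $J$ transparent. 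One minor caveat, which you flag, is that the split case $E=F\times F$ does not literally fit the Galois-descent framework, though nothing is really at stake there since $\alpha$ is a square.
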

\begin{proof}
The unicity of the $E^\times$-invariant element comes from the isomorphism of Lemma \ref{lemaVkVk}. Now write $B=F\oplus Fi \oplus Fj\oplus Fk$, where $i^2,j^2,k^2\in F^\times$, $E=F\oplus Fi$ and $k=ij=-ji$. Notice that $B_0= Fi \oplus Fj\oplus Fk$. To prove the existence part of the statement, we argue by cases:
\begin{itemize}
\item    If $k=2$ the element $v_2=i\in B_0$ is $E^\times$-invariant.

\item If $k=4$ the element $v_4=(j\cdot j)-\frac{1}{i^2}(k\cdot k)-2\frac{j^2}{i^2}(i\cdot i)$ is $E^\times$-invariant. Indeed, if $t\in E^\times$ satisfies $t\bar t^{-1}=a+bi$, we have 
\begin{eqnarray*}
    t v_4&=&(tjt^{-1}\cdot tjt^{-1})-\frac{1}{i^2}(t kt^{-1}\cdot tkt^{-1})-2\frac{j^2}{i^2}(tit^{-1}\cdot tit^{-1})=\left(\frac{t}{\bar t}j\cdot \frac{t}{\bar t}j\right)-\frac{1}{i^2}\left(\frac{t}{\bar t} k\cdot \frac{t}{\bar t}k\right)-2\frac{j^2}{i^2}(i\cdot i)\\
    &=&\left((aj+bk)\cdot (aj+bk)\right)-\frac{1}{i^2}\left((ak+i^2bj)\cdot (ak+i^2bj)\right)-2\frac{j^2}{i^2}(i\cdot i)\\
    &=&(a^2-i^2b^2)(j\cdot j)-\frac{(a^2-i^2b^2)}{i^2}(k\cdot k)-2\frac{j^2}{i^2}(i\cdot i)=v_4.
\end{eqnarray*}
Moreover, $v_4\in V_4$ because
\[
\Delta_4(v_4)=\langle j,j\rangle- \frac{1}{i^2}\langle k,k\rangle-2\frac{j^2}{i^2}\langle i,i\rangle=-2j^2+ \frac{2}{i^2}k^2+4\frac{j^2}{i^2}i^2=0.
\]

\item Finally, if $k\geq 4$ and $n\leq \lfloor\frac{k}{4}\rfloor$ we can construct a $E^\times$-invariant element $\tilde v_k^n\in {\rm Sym}^{\frac{k}{2}}(B_0)$
\[
\tilde v_k^n:=
    \left(v_4\stackrel{n)}{\cdots}v_4\cdot v_2\stackrel{k/2-2n)}{\cdot\cdots\cdot}v_2\right).
\]
Notice that, for all $n\leq \lfloor\frac{k}{4}\rfloor$, we have
$\Delta_k(\tilde v_k^n)\in \bigoplus_{m=0}^{\lfloor\frac{k-4}{4}\rfloor}F\; \tilde v_{k-2}^m$.
Thus, there must be a linear combination $v_k=\sum_{n=0}^{\lfloor\frac{k}{4}\rfloor}a_n\tilde v_k^n$ such that $\Delta_k(v_k)=0$. This is the desired $E^\times$-invariant element of $V_k$.
\end{itemize}
In order to prove the second claim, notice that
\begin{equation}\label{auxeqSym}
    f_{\left(b_1\cdots b_{\frac{k}{2}}\right)}(t)=\kappa\left(b_1\cdots b_{\frac{k}{2}}\right)\ipa{\ipa{\frac{\bar t}{t}}^{-\frac{1}{2}},\ipa{\frac{t}{\bar t}}^{\frac{1}{2}}}=\prod_{i=1}^{\frac{k}{2}}{\rm Tr}\left(\left(\begin{array}{cc}  1&t/\bar t\\-\bar t/t&-1\end{array}\right)\;\imath(b_i)\right).
\end{equation}
Since there exist $e\in E^\times$ such that
\[
\imath(i)=\left(\begin{array}{cc}  i&\\&-i\end{array}\right),\qquad \imath(j)=\left(\begin{array}{cc}  &J^2e\\\bar e&\end{array}\right),\qquad \imath(i)=\left(\begin{array}{cc}  &J^2ei\\-\bar ei&\end{array}\right),
\]
we deduce that
\[
f_{v_2}(t)=2i\in i F;\qquad f_{v_4}(t)=\ipa{\frac{t}{\bar t}\bar e+J^2\frac{\bar t}{t}e}^2-\frac{1}{i^2}\ipa{-i\frac{t}{\bar t}\bar e+iJ^2\frac{\bar t}{t}e}^2-2\frac{j^2}{i^2}4i^2=-4j^2\in F.
\]
Notice that if we define the morphism ${\rm Sym}^{\frac{k}{2}}(B_0)\rightarrow C(T(F),E)$, $\tilde v\mapsto f_{\tilde v}$, by means of the formula \eqref{auxeqSym} then 
\[
f_{\tilde v_k^n}\in  i^{\frac{k}{2}-2n}F=i^{\frac{k}{2}}F, 
\]
and the last claim follows.
\end{proof}

\subsection{Models over number fields}\label{remdefmodVk}

Given a weight $\underline{k}=(k_\nu)_{\nu:F\hookrightarrow\bar\Q}$ as in previous sections, we consider 
\begin{equation}\label{defQk}
G_{\underline k}:=\{\tau\in \Gal(\bar\Q/\Q) \colon k_\nu=k_{\tau\nu}\text{ for all }\nu\},\qquad L_{\underline k}:=\bar\Q^{G_{\underline k}}.
\end{equation}
Thus, $\bigotimes_{\nu}V_{k_\nu}\otimes_{F,\nu} \bar\Q$ has a natural action of $G_{\underline k}$ given by
\[
\tau\left(\sum_i a_i\bigotimes_\nu b^i_\nu \right)=\sum_i \tau(a_i)\bigotimes_{\tau\nu} b^i_\nu,\qquad \tau\in G_{\underline k},\quad a_i\in\bar\Q,\quad b^i_\nu\in V_{k_\nu}.
\]
Notice that this Galois action commutes with the action of $G(F)$. Indeed, if $v=\sum_i a_i\bigotimes_\nu b^i_\nu\in \bigotimes_{\nu}V_{k_\nu}\otimes_{F,\nu} \bar\Q$,
\[
g\ast\left(\tau\left(v \right)\right)=g\ast\left(\sum_i \tau(a_i)\bigotimes_{\tau\nu} b^i_\nu\right)=\sum_i \tau(a_i)\bigotimes_{\tau\nu} (\tau\nu)(g)\ast b^i_\nu=\tau\left(\sum_i a_i\bigotimes_{\nu} \nu(g)\ast b^i_\nu\right)=\tau\left(g\ast\left(v \right)\right).
\]
  We consider the non-trivial $L_{\underline k}$-vector space $V(\underline{k})_{L_{\underline k}}:=\left(\bigotimes_{\nu}V_{k_\nu}\otimes_{F,\nu} \bar\Q\right)^{G_{\underline k}}$. By Lemma \ref{lemaVkVk}, this space defines a $L_{\underline k}$-rational model of the irreducible $G(F)$-representation $V(\underline{k})$.
Recall that, given the embedding $E\hookrightarrow B$, the morphism from Lemma \ref{rationalinvariants} induces a map
\[
V(\underline{k})_{L_{\underline k}}\hookrightarrow\bigotimes_{\nu}V_{k_\nu}\otimes_{F,\nu} \bar\Q\longrightarrow C(T(F_\infty),\C).
\]

\begin{lemma}\label{rationalinvariants2}
There exists a $T(F)$-invariant vector $v_{\underline k}\in V(\underline{k})_{L_{\underline k}}$ which is mapped, under the above morphism, to the constant function $\sqrt{\alpha^{\frac{\underline k}{2}}} \in C(T(F_\infty),\bar\Q)$, where $\alpha\in F$ is such that $E=F(\sqrt{\alpha})$.
\end{lemma}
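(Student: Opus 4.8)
The plan is to build $v_{\underline k}$ as a tensor product, over the embeddings $\nu\colon F\hookrightarrow\bar\Q$, of the local $E^\times$-invariant vectors furnished by Lemma \ref{rationalinvariants}, once their scaling has been pinned down so that the tensor is $G_{\underline k}$-invariant and has the prescribed image.

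First I would fix a square root $\sqrt\alpha\in E$ of $\alpha$. For each even integer $k\ge 2$, Lemma \ref{rationalinvariants} provides a nonzero $E^\times$-invariant vector $v_k\in V_k$, unique up to $F^\times$. Under the morphism $V_k\to\cP(k)_E\to C(T(F),E)$ of that lemma, $v_k$ maps to a constant function, and this constant is nonzero: indeed $\kappa(v_k)\in\cP(k)_E$ is a nonzero $E^\times$-invariant polynomial, hence (looking at how $E^\times$ acts through $\iota$ by the diagonal torus) a nonzero scalar multiple of $X^{k/2}Y^{k/2}$, and the morphism reads off precisely that scalar. By the second assertion of Lemma \ref{rationalinvariants} this scalar lies in $\alpha^{k/4}F=(\sqrt\alpha)^{k/2}F$, the equality holding because $k$ is even. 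Since $v\mapsto f_v$ is $F$-linear, I may rescale $v_k$ by the unique element of $F^\times$ that turns its image into $(\sqrt\alpha)^{k/2}$; the rescaled $v_k$ is still an $F$-rational vector of $V_k$, so that $\kappa(v_k)=(\sqrt\alpha)^{k/2}X^{k/2}Y^{k/2}$, and it depends on $B$, $F$, $E$ and the integer $k$ only, not on any embedding.

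Next I set $v_{\underline k}:=\bigotimes_{\nu}\,(v_{k_\nu}\otimes 1)\in\bigotimes_\nu V_{k_\nu}\otimes_{F,\nu}\bar\Q$; this is nonzero. I claim it lies in $V(\underline k)_{L_{\underline k}}=\big(\bigotimes_\nu V_{k_\nu}\otimes_{F,\nu}\bar\Q\big)^{G_{\underline k}}$. Given $\tau\in G_{\underline k}$, the Galois action carries $v_{\underline k}$ to the tensor whose factor at an embedding $\mu$ is $v_{k_{\tau^{-1}\mu}}\otimes 1$; but $k_{\tau^{-1}\mu}=k_\mu$ by the defining property of $G_{\underline k}$, and $v_k$ depends on $k$ alone, so this tensor equals $v_{\underline k}$. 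Moreover $v_{\underline k}$ is $T(F)$-invariant: $T(F)=E^\times/F^\times$ acts diagonally on the tensor factors through the $F$-rational $B^\times/F^\times$-action on each $V_{k_\nu}$, and each $v_{k_\nu}$ is $E^\times$-invariant, whence $t\cdot v_{\underline k}=\bigotimes_\nu\big((t\cdot v_{k_\nu})\otimes 1\big)=v_{\underline k}$ for all $t\in T(F)$.

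Finally I would evaluate the image of $v_{\underline k}$ under the morphism $\bigotimes_\nu V_{k_\nu}\otimes_{F,\nu}\bar\Q\to C(T(F_\infty),\C)$ of \S\ref{remdefmodVk}, which on a pure tensor $\bigotimes_\nu(w_\nu\otimes 1)$ returns $(t_\sigma)_\sigma\mapsto\prod_\sigma\prod_{\nu\mid\sigma}P_\nu\big(1,\nu_E(t_\sigma/\bar t_\sigma)\big)\,\nu_E(t_\sigma/\bar t_\sigma)^{-k_\nu/2}$, with $P_\nu\in\cP(k_\nu)_{\bar\Q}$ the polynomial obtained from $\kappa(w_\nu)$ by applying $\nu_E$ to its coefficients. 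For $w_\nu=v_{k_\nu}$ we have $P_\nu=\nu_E(\sqrt\alpha)^{k_\nu/2}X^{k_\nu/2}Y^{k_\nu/2}$, so the $\nu$-factor contributes the constant $\nu_E(\sqrt\alpha)^{k_\nu/2}$ and the image of $v_{\underline k}$ is the constant function $\prod_\nu\nu_E(\sqrt\alpha)^{k_\nu/2}$. Since each $k_\nu$ is even, the square of this number equals $\prod_\nu\nu(\alpha)^{k_\nu/2}=\alpha^{\underline k/2}$, so it is a square root $\sqrt{\alpha^{\underline k/2}}$ as in the statement, completing the proof. The one delicate point is the compatibility between the $F$-rational normalization of the $v_k$ and $G_{\underline k}$-invariance of the tensor: it works exactly because, after rescaling, $v_k$ stays $F$-rational and is indexed by $k$ alone, so a $\tau\in G_{\underline k}$ merely permutes identical tensor factors — a normalization forcing genuinely $\nu$-dependent $\bar\Q$-scalars would destroy this, which is why it is worth isolating the normalization step carefully.
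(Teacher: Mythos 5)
Your proposal is correct and takes essentially the same route as the paper's own (much terser) proof: form the tensor $\bigotimes_\nu v_{k_\nu}$ of the $F$-rational $E^\times$-invariant vectors supplied by Lemma~\ref{rationalinvariants}, normalize each $v_k$ to depend only on the integer $k$, and deduce $G_{\underline k}$-invariance from the fact that $\tau\in G_{\underline k}$ merely permutes identical factors. The extra care you take with the normalization step (forcing the local constant to be exactly $(\sqrt\alpha)^{k/2}$ by an $F^\times$-rescaling, and noting this keeps $v_k$ both $F$-rational and embedding-independent) is precisely the point the paper leaves implicit in ``admits a preimage of the form $v_{\underline k}=\bigotimes_\nu v_{k_\nu}$,'' so your write-up is a useful elaboration rather than a different argument.
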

\begin{proof}
By Lemma \ref{rationalinvariants}, the constant function $\sqrt{\alpha^{\frac{\underline k}{2}}}$ admits a preimage of the form $v_{\underline k} = \bigotimes_\nu v_{k_\nu} \in \bigotimes_\nu V_{k_\nu}$, where the tensor product is taken over $F$ and each vector $v_{k_\nu}$ depends only on the corresponding weight $k_\nu$. It is clear that $v_{k_\nu}$ if $G_{\underline k}$-invariant, hence, the result follows.
\end{proof}

\section{Local archimedean representations}\label{LocArchRep}

In this section we will study the infinite dimensional local archimedean representations generated by a cohomological automorphic form for $\GL_2$ of even weight and trivial central character. These representations correspond to $(\mfg,K)$-modules for the real Lie groups $\PGL_2(\R)$ or $\PGL_2(\C)$.

\subsection{The cohomological $(\mfg,K)$-modules (of discrete series) for $\PGL_2(\R)$}\label{GKR}

We write $K_\R$ and $K_{\R,+}$ for the maximal compact subgroup of $\PGL_2(\R)$ and its connected component provided by the image of ${\rm O}(2)=\SO(2)\rtimes\left\langle\hat H\right\rangle$ and $\SO(2)$, respectively, where
\[
\SO(2):=\left\{\kappa(\theta):=\left(\begin{array}{cc}\cos\theta&\sin\theta\\-\sin\theta&\cos\theta\end{array}\right),\;\theta\in S^1\right\}\subset \SL_2(\R);\qquad \hat H=\left(\begin{array}{rr}
       1  & 0 \\
        0 & -1
    \end{array}\right).
\]
Recall that any $g\in\GL_2(\R)_+$ admits a decomposition
\begin{equation}\label{eqstar}
g=u\left(\begin{array}{rr}y^{\frac{1}{2}}&xy^{-\frac{1}{2}}\\&y^{-\frac{1}{2}}\end{array}\right)\kappa(\theta),\qquad y\in\R_+^\times,\;u\in\R_+,\;x\in\R,\;\theta\in S^1.
\end{equation}
Notice that the Lie algebra of the real Lie group $\PGL_2(\R)$ is 
\[
\mfg_\R\simeq {\rm Lie}(\SL_2(\R))\simeq\{g\in\M_2(\R),\;\operatorname{Tr} g=0\}=\R \hat H\oplus\R\tilde W\oplus\R W,
\qquad
\tilde W:=\left(\begin{array}{cc}
       0  & 1 \\
        1 & 0
    \end{array}\right);\quad W:=\left(\begin{array}{rr}
       0  & 1 \\
        -1 & 0
    \end{array}\right).
\]
Moreover, $\cK_\R:={\rm Lie}(K_\R)=\R W$.

For any character $\chi:\R^\times\rightarrow\C^\times$, let us consider the induced representation of $\PGL_2(\R)_+$
\[
\cB(\chi):=\left\{f:\GL_2(\R)_+\rightarrow\C:\;f\left(\left(\begin{array}{cc}t_1&x\\&t_2\end{array}\right)g\right)=\chi(t_1/t_2)\cdot f(g)\right\}.
\]
By \eqref{eqstar}, we can identify
$\cB(\chi)\simeq\{f:S^1\rightarrow\C:\;f(\theta+\pi)= f(\theta)\}$.
Notice that the only characters $S^1\rightarrow\C^\times$ appearing in $\cB(\chi)$ under the above identification are those of the form $e^{2in\theta}$ with $n\in\Z$. If we write $\cB(\chi,n)$ for the subspace $\C e^{2in\theta}$ inside $\cB(\chi)$, then we can consider
\[
\tilde \cB(\chi):=\bigoplus_{n\in\Z}\cB(\chi,n)\subseteq \cB(\chi).
\] 
It is clear that $\tilde\cB(\chi)$ is a $(\mfg_\R,K_{\R,+})$-module.
If $\chi(t)=\chi_k(t):=t^{\frac{k}{2}}$, for an even integer $k\in 2\Z$, then we have a morphism of $\GL_2(\R)_+$-representations: (see \cite[proposition 4.2]{preprintsanti2})
\[
\rho:\cB(\chi_k)\longrightarrow V(k-2); \qquad \rho(f)(P):=\frac{1}{2\pi}\int_{0}^{2\pi}f(\kappa(\theta))P(-\sin\theta,\cos\theta)d\theta.
\]
Moreover, each $\cB(\chi_k,n)$ is generated  by the function 
\begin{equation}\label{fn}
f_n\left(u\left(\begin{array}{cc}t&s\\&t^{-1}\end{array}\right)\kappa(\theta)\right)=t^k\cdot e^{2in\theta}\in\cB(\chi_k).
\end{equation}
To extend $\tilde \cB(\chi_k)$ to a $(\mfg_\R,K_\R)$-module one has to define an action of $\hat H=\big(\begin{smallmatrix}1&\\&-1\end{smallmatrix}\big)$. It turns out we only have two possibilities: $\hat H(f_n)=\pm(-1)^{\frac{k-2}{2}} f_{-n}$  (see \cite[\S 1.3]{ESsanti}). Depending on this choice of sign, we obtain two different $(\mfg_\R,K_\R)$-modules  $\cB(\chi_k)^\pm$. Moreover, $D(k) :=\tilde \cB(\chi_k)\cap \ker(\rho) $ is the unique sub-$(\mfg_\R,K_\R)$-module for both $\tilde\cB(\chi_k)^\pm$. Thus, we have the exact sequences of $(\mfg_\R,K_\R)$-modules:
\begin{align}
0\longrightarrow  &D(k)\longrightarrow \tilde \cB(\chi_k)^+\stackrel{\rho_+}{\longrightarrow} V(k-2)\longrightarrow 0 \label{eq:exs1}\\
0\longrightarrow &D(k)\longrightarrow \tilde \cB(\chi_k)^-\stackrel{\rho_-}{\longrightarrow} V(k-2)(-1)\longrightarrow 0,\label{eq:exs2}
\end{align}
where $V(k-2)(-1)$ is the representation $V(k-2)$ twisted by the character $g\mapsto{\rm sign}\det(g)$. We will use the notation $V(k-2)(\pm)$ to denote either $V(k-2)$ or $V(k-2)(-1)$.

If we write $P_m(X,Y):=(Y+iX)^{m}(Y-iX)^{k-2-m}\in\cP(k-2)$, for $0\leq m\leq k-2$, we compute
\begin{equation}\label{norm1}
\rho(f_n)(P_m)=\frac{1}{2\pi}\int_{0}^{2\pi}e^{(k-2-2m+2n)i\theta} d\theta=\left\{\begin{array}{ll}1 &\text{ if } n=m-\frac{k-2}{2},\\0& \text{ if }n\neq m-\frac{k-2}{2}.\end{array}\right.
\end{equation}
This implies that the kernel of $\rho_\pm$ is generated by $f_n$ with $|n|\geq\frac{k}{2}$, and we deduce that
\[
D(k)=\sum_{|n|\geq \frac{k}{2}}\cB(\chi_k,n),
\]
and therefore the exact sequences \eqref{eq:exs1} and \eqref{eq:exs2} can be written as 
\begin{equation}\label{exseq1}
0\longrightarrow D(k)=\sum_{|n|\geq \frac{k}{2}}\cB(\chi_k,n)\stackrel{\iota_\pm}{\longrightarrow} \tilde \cB(\chi_k)^\pm=\sum_{n\in \Z}\cB(\chi_k,n) \stackrel{\rho_\pm}{\longrightarrow} V(k-2)(\pm)\longrightarrow 0,
\end{equation}
where $\iota_\pm(f_n)=(\pm 1)^{\frac{1-{\rm sign}(n)}{2}}f_n$.
Of course such exact sequences do not split in the category of $(\mfg_\R,K_\R)$-modules, but they split when regarded as ${\rm O}(2)$-modules. Hence, there exist unique $K_\R$-equivariant sections of $\rho_\pm$, namely, $K_\R$-equivariant morphisms  
\[
s_\pm:V(k-2)(\pm)\longrightarrow \tilde\cB(\chi_k)^\pm,\qquad \rho_\pm\circ s_\pm={\rm id}.
\]

\subsection{Explicit cohomology classes for $\mfg_\R$}\label{ExpliCCR}
   In this section we will explore the $(\mfg_\R,K_\R)$-cohomology of the previously described modules $D(k)$. Notice that  in $\mfg_\R$ we have the relations
    \begin{eqnarray}\label{relGK}
        \kappa(\theta)^{-1}\hat H\kappa(\theta)&=&\cos(2\theta)\hat H+\sin(2\theta)\tilde W,\\
        \kappa(\theta)^{-1}\tilde W\kappa(\theta)&=&-\sin(2\theta)\hat H+\cos(2\theta)\tilde W.
    \end{eqnarray}
    Thus, if we write $\kappa_1=\kappa(\pi/4)\in \SO(2)$, then we obtain that
    $\tilde W=\kappa_1^{-1}\hat H\kappa_1$.
    This implies that any $(\mfg_\R,K_\R)$-module is completely determined by the action of $K_\R$ and $\hat H$.

\subsubsection{1-cocycles associated with $D(k)$} 
The space of 1-cocycles with values in a given $(\mfg_\R,K_\R)$-module $M$ is 
\[
Z^1((\mfg_\R,K_\R),M)=\left\{\varphi_1\in \Hom_{K_\R}\left(\mfg_\R/\cK_\R,M\right)\mid  d\varphi_1(X,Y)=0 \text{ for all } X,Y\in \mfg_\R/\cK_\R\right\},
\]
where $d\varphi_1(X,Y):=X\varphi_1(Y)-Y\varphi_1(X)-\varphi_1([X,Y])$. Notice that $\mfg_\R/\cK_\R=\R \hat H\oplus\R\tilde W$.
Moreover, by \eqref{relGK}, an homomorphism $\varphi_1\in \Hom_{K_\R}\left(\mfg_\R/\cK_\R,M\right)$ is characterized by the image of $\hat H$. Hence, to describe any cocycle $\varphi_1\in Z^1((\mfg_\R,K_\R),M)$ it is enough to provide $\varphi(\hat H)\in M$.

Take the $(\mfg_\R,K_\R)$-module $\Hom(V(k-2)(\pm),D({k}))$, and consider the unique $K_\R$-equivariant section $s_\pm:V(\underline k-2)(\pm)\rightarrow\tilde \cB(\chi_{\underline k})^\pm$. We define the morphisms $c_1^\pm \in \Hom_{K_\R}\left(\mfg_\R/\cK_\R,\Hom(V(k-2)(\pm),D({k}))\right)$ by 
\[
c^\pm_1(X)(\mu):=\left(X(s_\pm\mu)-s_\pm(X\mu)\right).
\]
The corresponding classes in cohomology are precisely the classes associated to the exact sequences \eqref{exseq1}. Hence, they define non-trivial elements
\begin{equation}\label{firstdefc1}
    c_1^\pm\in H^1((\mfg_\R,K_\R),\Hom(V(k-2)(\pm),D({k})))={\rm Ext}^1(V(k-2)(\pm),D(k)).
\end{equation}
Let us consider 
\begin{equation}\label{firstdefds}
\delta s_\pm:=c_1^\pm(D)\in\Hom_{K_\R}\left(\mfg_\R/\cK_\R,\Hom(V(k-2)(\pm),D({k}))\right),\qquad   \text{ where } D:=\left(\begin{array}{cc}
    1&0\\0&0
\end{array}\right).
\end{equation}
In \cite[Proposition 4.7]{preprintsanti2} the morphisms $\delta s_\pm$ are computed explicitly. Since $2D= \hat H$ in $\mfg_\R$,
we can use such a result to obtain the image $c_1^\pm(\hat H)$ that characterizes $c_1^\pm$:
\begin{equation}\label{descc1+}
c_1^\pm(\hat H)(\mu_m)=2\delta s_\pm(\mu_m)=(k-1)\left((-i)^{\frac{k-2}{2}+m}f_{\frac{k}{2}}\pm i^{\frac{k-2}{2}+m}f_{-\frac{k}{2}}\right),
\end{equation}
where the $\mu_m$ are defined in \eqref{defmumglobal} and form a basis of $V(k-2)$.

\subsubsection{2-cocycles associated with $D(k)$}

In the previous section we have constructed cohomology classes
\[
c_1^\pm\in H^1((\mfg_\R,K_\R),M^\pm), \qquad \text{where }  M^\pm=\Hom(V(k-2)(\pm),D(k)).
\]
Notice that we have a natural $(\mfg_\R,K_\R)$-equivariant bilinear pairing
\begin{align}\label{eq:pairing}
M^+\times M^-\longrightarrow D({ k})\otimes D({ k})(-1);\qquad (\varphi_1,\varphi_2)\mapsto\varphi_1\varphi_2(\Upsilon), 
\end{align}
where $$\Upsilon=\left|\begin{array}{cc}
    x_1 & y_1 \\
     x_2 &  y_2
\end{array}\right|^{k-2}\in\cP(k-2)^{\otimes 2}\simeq V(k-2)^{\otimes 2}$$ and $\bullet(-1)$ means twisting by the character $({\rm sgn}\det):K_\R\rightarrow\{\pm 1\}$. 

Similarly as before, we define the space of $2$-cocycles
\begin{equation}\label{def2cocycles}
 Z^2((\mfg_\R,K_\R),D(k)^{\otimes 2}(-1)):=\left\{\varphi_2\in \Hom_{K_\R}\left(\bigwedge^2\mfg_\R/\cK_\R,D(k)^{\otimes 2}(-1)\right)\colon  d\varphi_2(X,Y,Z)=0\right\},
\end{equation}
where $$d\varphi_2(X,Y,Z):=X\varphi_2(Y,Z)-Y\varphi_2(X,Z)+Z\varphi_2(X,Y)-\varphi_2([X,Y],Z)+\varphi_2([X,Z],Y)-\varphi_2([Y,Z],X).$$
In the following result we give an explicit $2$-cocycle representing the cohomology class of the cup-product $(c_1^+\cup c_1^-)\in H^2((\mfg_\R,K_\R),D(k)^{\otimes 2}(-1))$, where the cup product is taken with respect to the pairing \eqref{eq:pairing}

\begin{proposition}\label{propcupprod1}
    The cup-product $(c_1^+\cup c_1^-)\in H^2((\mfg_\R,K_\R),D(k)^{\otimes 2}(-1))$ with respect to the pairing \eqref{eq:pairing} is provided by the 2-cocycle $c_2$ whose image is characterized by
    \[
    c_2(\hat H,\tilde W)=\frac{-8i}{{\rm vol}(K_{\R,0},dk)}\int_{K_{\R,0}}k\ast \delta s_+(\Upsilon)d k,
    \]
    for any choice of a Haar measure $dk$ of $K_{\R,+}$.
\end{proposition}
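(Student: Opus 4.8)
The plan is to compute the cup-product cocycle directly from its definition, reducing everything to a finite computation with the explicit formula \eqref{descc1+}.

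First, recall that the cup product of two $1$-cocycles $\varphi^\pm \in Z^1((\mathfrak{g}_\R, K_\R), M^\pm)$, taken with respect to a $(\mathfrak{g}_\R, K_\R)$-equivariant pairing $M^+ \times M^- \to N$, is represented by the $2$-cochain
\[
(\varphi^+ \cup \varphi^-)(X, Y) = \langle \varphi^+(X), \varphi^-(Y)\rangle - \langle \varphi^+(Y), \varphi^-(X)\rangle,
\]
which is automatically a cocycle. Applying this with $\varphi^\pm = c_1^\pm$, $N = D(k)^{\otimes 2}(-1)$, and the pairing \eqref{eq:pairing}, and using that $\mathfrak{g}_\R/\cK_\R = \R\hat H \oplus \R\tilde W$ is $2$-dimensional (so a $2$-cochain is determined by its value on $(\hat H, \tilde W)$), I obtain
\[
c_2(\hat H, \tilde W) = c_1^+(\hat H)\, c_1^-(\tilde W)(\Upsilon) - c_1^+(\tilde W)\, c_1^-(\hat H)(\Upsilon).
\]
Since $\tilde W = \kappa_1^{-1}\hat H\kappa_1 = \mathrm{Ad}(\kappa_1^{-1})\hat H$ with $\kappa_1 = \kappa(\pi/4) \in K_{\R,+}$, the $K_\R$-equivariance of the $c_1^\pm$ gives $c_1^\pm(\tilde W) = \kappa_1^{-1}\ast c_1^\pm(\hat H)$, so the whole expression is reduced to $c_1^\pm(\hat H) = 2\,\delta s_\pm$, whose values on the basis $\{\mu_m\}$ of $V(k-2)$ are supplied by \eqref{descc1+}.

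Second, I would note that $c_2(\hat H, \tilde W)$ is a $K_{\R,+}$-invariant element of $D(k)^{\otimes 2}(-1)$: indeed $c_2 \in \Hom_{K_\R}(\bigwedge^2(\mathfrak{g}_\R/\cK_\R), D(k)^{\otimes 2}(-1))$, the line $\bigwedge^2(\mathfrak{g}_\R/\cK_\R)$ is trivial as a $K_{\R,+}$-module (by \eqref{relGK}, $K_{\R,+} = \SO(2)$ acts on $\mathfrak{g}_\R/\cK_\R$ by rotations), and the twist $(-1)$ is trivial on $K_{\R,+}$. Hence $c_2(\hat H, \tilde W)$ is unchanged by averaging over $K_{\R,+}$, so it suffices to prove $c_2(\hat H, \tilde W) = \frac{-8i}{\vol(K_{\R,+}, dk)}\int_{K_{\R,+}} k \ast \delta s_+(\Upsilon)\, dk$, the right-hand side being $-8i$ times the average of $\delta s_+(\Upsilon)$ over $K_{\R,+}$. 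Both $c_2(\hat H, \tilde W)$ and $\delta s_+(\Upsilon)$ lie in the span of the vectors $f_a \otimes f_b$ with $a, b \in \{k/2, -k/2\}$ (because, by \eqref{descc1+}, $\delta s_\pm$ take values in $\C f_{k/2} \oplus \C f_{-k/2}$), and the $K_{\R,+}$-invariant part of that span is $\C(f_{k/2}\otimes f_{-k/2}) \oplus \C(f_{-k/2}\otimes f_{k/2})$; so the identity amounts to a comparison of two scalars. To extract them I expand $\Upsilon$ in the basis $\{\mu_m \otimes \mu_{-m}\}$ of $V(k-2)^{\otimes 2}$ — explicitly $\Upsilon = \sum_m (-1)^{\frac{k-2}{2}+m}\binom{k-2}{\frac{k-2}{2}+m}\,\mu_m \otimes \mu_{-m}$ — substitute \eqref{descc1+}, and use the $K_{\R,+}$-action $\kappa(\theta) \ast f_{\pm k/2} = e^{\pm i k\theta} f_{\pm k/2}$ on $D(k)$ together with the induced action on the $\mu_m$.

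The main obstacle is this last computation. The delicate point is the bookkeeping of the powers of $\pm i$ coming from \eqref{descc1+} and from the action of $\kappa_1^{-1}$. On the $\delta s_+(\Upsilon)$ side the relevant scalar is extracted using $\sum_m (-1)^m (-1)^{\frac{k-2}{2}+m}\binom{k-2}{\frac{k-2}{2}+m} = (-1)^{\frac{k-2}{2}} 2^{k-2}$; on the $c_2(\hat H, \tilde W)$ side one uses in addition the identity $\sum_m (-1)^{\frac{k-2}{2}+m}\binom{k-2}{\frac{k-2}{2}+m} = (1-1)^{k-2}$, which vanishes for $k \geq 4$ (the case $k = 2$ being checked directly) and forces the $f_{k/2}\otimes f_{k/2}$ and $f_{-k/2}\otimes f_{-k/2}$ terms — on which $K_{\R,+}$ acts non-trivially — to cancel, consistently with the $K_{\R,+}$-invariance established above. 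Matching the two scalars, and keeping track of the factor $4$ produced by $c_1^\pm(\hat H) = 2\,\delta s_\pm$, yields the constant $-8i$ and finishes the proof.
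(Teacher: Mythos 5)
Your proposal is correct and follows essentially the same route as the paper: both use the cup-product formula $c_2(\hat H,\tilde W)=c_1^+(\hat H)c_1^-(\tilde W)(\Upsilon)-c_1^+(\tilde W)c_1^-(\hat H)(\Upsilon)$, the relation $\tilde W=\kappa_1^{-1}\hat H\kappa_1$, the explicit formula \eqref{descc1+}, the expansion of $\Upsilon$ in the $\mu_m\otimes\mu_{-m}$ basis, and the matching of coefficients against the $K_{\R,+}$-average of $\delta s_+(\Upsilon)$. Your added observation — that both sides are abstractly $K_{\R,+}$-invariant and so live in $\C(f_{k/2}\otimes f_{-k/2})\oplus\C(f_{-k/2}\otimes f_{k/2})$ — is a harmless streamlining that lets you skip the non-invariant components, but note that attributing their cancellation to the binomial identity $\sum_j(-1)^j\binom{k-2}{j}=0$ is imprecise for $k=2$, where that sum equals $1$; there (and in fact for all $k$) the coefficients of $f_{k/2}\otimes f_{k/2}$ in the two cup-product summands are equal, so they cancel in the subtraction, which is what the $K_{\R,+}$-invariance already guarantees.
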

\begin{proof}
Notice that $K_\R=O(2)/\pm 1$ and $K_{\R,+}=\SO(2)/\pm 1$. 
On the one hand, the element $\Upsilon$ corresponds  under the isomorphism $V(k-2)^{\otimes 2}\simeq \cP(k-2)^{\otimes 2}$ to
\[
\Upsilon=\sum_{m}\binom{k-2}{\frac{k-2}{2}+m}(-1)^{\frac{k-2}{2}-m} \mu_m\otimes\mu_{-m}.
\]
By \eqref{descc1+}, we obtain that $I:=4(k-1)^{-2}\int_{0}^\pi\kappa(\theta)\ast \delta s_+(\Upsilon)d \theta$ is given by
\begin{eqnarray*}
I&=&\int_{0}^\pi\kappa(\theta)\ast \left(\sum_{m}\binom{k-2}{\frac{k-2}{2}-m}\left(i^{\frac{k-2}{2}+m}f_{\frac{k}{2}}+ (-i)^{\frac{k-2}{2}+m}f_{-\frac{k}{2}}\right)\otimes \left((-i)^{\frac{k-2}{2}-m}f_{\frac{k}{2}}+ i^{\frac{k-2}{2}-m}f_{-\frac{k}{2}}\right)\right)d \theta\\
&=&\sum_{m}\binom{k-2}{\frac{k-2}{2}-m}\int_{0}^\pi\left(i^{\frac{k-2}{2}+m}e^{ik\theta}f_{\frac{k}{2}}+ (-i)^{\frac{k-2}{2}+m}e^{-ik\theta}f_{-\frac{k}{2}}\right)\otimes \left((-i)^{\frac{k-2}{2}-m}e^{ik\theta}f_{\frac{k}{2}}+ i^{\frac{k-2}{2}-m}e^{-ik\theta}f_{-\frac{k}{2}}\right)d \theta\\
&=&(-1)^{\frac{k-2}{2}}\sum_{m}\binom{k-2}{\frac{k-2}{2}-m}\left(f_{\frac{k}{2}}\otimes f_{-\frac{k}{2}}+ f_{-\frac{k}{2}}\otimes f_{\frac{k}{2}}\right){\rm vol}(K_{\R,0},d\theta)=(2i)^{k-2}\left(f_{\frac{k}{2}}\otimes f_{-\frac{k}{2}}+ f_{-\frac{k}{2}}\otimes f_{\frac{k}{2}}\right){\rm vol}(K_{\R,0},d\theta).
\end{eqnarray*}

On the other hand,  the cup product $c_1^+\cup c_1^-$ is represented by the 2-cocycle
    \[
    c_2(\hat H,\tilde W)=c_1^+(\hat H)c_1^-(\tilde W)(\Upsilon)-c_1^+(\tilde W)c_1^-(\hat H)(\Upsilon).
    \]
   Moreover, for some coefficients $C(s)$, we have
    \begin{equation}\label{auxeqforprop}
    \kappa_1(x^{\frac{k-2}{2}-m}y^{\frac{k-2}{2}+m})=\left(\frac{x-y}{\sqrt{2}}\right)^{\frac{k-2}{2}-m}\left(\frac{x+y}{\sqrt{2}}\right)^{\frac{k-2}{2}+m}=\sum_s C(s)\cdot x^{\frac{k-2}{2}-s}y^{\frac{k-2}{2}+s}.
    \end{equation}
    Since by \eqref{relGK} we have that $c_1^\pm(\tilde W)=\kappa_1^{-1}c_1^\pm(\hat H)$, we deduce
\begin{eqnarray*}
    c_1^\pm(\tilde W)(\mu_m)&=&\kappa_1^{-1}\left(c_1^\pm(\hat H)(\kappa_1\mu_m)\right)=(k-1)\sum_s C(s)\kappa_1^{-1}\left((-i)^{\frac{k-2}{2}+s}f_{\frac{k}{2}}\pm i^{\frac{k-2}{2}+s}f_{-\frac{k}{2}}\right)\\
    &=&(k-1)\sum_s C(s)\left((-i)^{\frac{k-2}{2}+s}e^{-i\pi k/4}f_{\frac{k}{2}}\pm i^{\frac{k-2}{2}+s}e^{i\pi k/4}f_{-\frac{k}{2}}\right)\\
                            &=&(k-1)\left((-i)^{\frac{k}{2}}\left(\frac{1+i}{\sqrt{2}}\right)^{\frac{k-2}{2}-m}\left(\frac{1-i}{\sqrt{2}}\right)^{\frac{k-2}{2}+m}f_{\frac{k}{2}}\pm i^{\frac{k}{2}}\left(\frac{1-i}{\sqrt{2}}\right)^{\frac{k-2}{2}-m}\left(\frac{1+i}{\sqrt{2}}\right)^{\frac{k-2}{2}+m}f_{-\frac{k}{2}}\right)\\
                            &=&(k-1)\left((-i)^{m+\frac{k}{2}}f_{\frac{k}{2}}\pm i^{m+\frac{k}{2}}f_{-\frac{k}{2}}\right),
\end{eqnarray*}
where the third equality follows from \eqref{auxeqforprop}. We compute,
\begin{eqnarray*}
    c_1^+(\hat H)c_1^-(\tilde W)(\Upsilon)&=&c_1^+(\hat H)c_1^-(\tilde W)\left(\sum_{m}\binom{k-2}{\frac{k-2}{2}+m}(-1)^{\frac{k-2}{2}+m} \mu_m\otimes\mu_{-m}\right)\\
    &=&(k-1)^2\sum_{m}\binom{k-2}{\frac{k-2}{2}+m}\left(i^{\frac{k-2}{2}+m}f_{\frac{k}{2}}+ (-i)^{\frac{k-2}{2}+m}f_{-\frac{k}{2}}\right)\otimes\left((-i)^{-m+\frac{k}{2}}f_{\frac{k}{2}}- i^{-m+\frac{k}{2}}f_{-\frac{k}{2}}\right)\\
    &=&(k-1)^2\left(i^{-1}\sum_{m}\binom{k-2}{\frac{k-2}{2}+m}(-1)^{m}f_{\frac{k}{2}}\otimes f_{\frac{k}{2}}-i(-1)^{\frac{k-2}{2}}\sum_{m}\binom{k-2}{\frac{k-2}{2}+m}f_{\frac{k}{2}}\otimes f_{-\frac{k}{2}}-\right.\\
    &&\left.-i(-1)^{\frac{k-2}{2}}\sum_{m}\binom{k-2}{\frac{k-2}{2}+m}f_{-\frac{k}{2}}\otimes f_{\frac{k}{2}}-i\sum_{m}\binom{k-2}{\frac{k-2}{2}+m}(-1)^{m}f_{-\frac{k}{2}}\otimes f_{-\frac{k}{2}}\right)\\
    &=&-i(2i)^{k-2}(k-1)^2\left(f_{\frac{k}{2}}\otimes f_{-\frac{k}{2}}+f_{-\frac{k}{2}}\otimes f_{\frac{k}{2}}\right),
\end{eqnarray*}
and
\begin{eqnarray*}
    c_1^+(\tilde W)c_1^-(\hat H)(\Upsilon)&=&c_1^+(\tilde W)c_1^-(\hat H)\left(\sum_{m}\binom{k-2}{\frac{k-2}{2}+m}(-1)^{\frac{k-2}{2}-m} \mu_m\otimes\mu_{-m}\right)\\
    &=&(k-1)^2\sum_{m}\binom{k-2}{\frac{k-2}{2}+m}\left((-i)^{m+\frac{k}{2}}f_{\frac{k}{2}}+ i^{m+\frac{k}{2}}f_{-\frac{k}{2}}\right)\otimes\left(i^{\frac{k-2}{2}-m}f_{\frac{k}{2}}- (-i)^{\frac{k-2}{2}-m}f_{-\frac{k}{2}}\right)\\
    &=&(k-1)^2\left(i^{-1}\sum_{m}\binom{k-2}{\frac{k-2}{2}+m}(-1)^{m}f_{\frac{k}{2}}\otimes f_{\frac{k}{2}}+i(-1)^{\frac{k-2}{2}}\sum_{m}\binom{k-2}{\frac{k-2}{2}+m}f_{\frac{k}{2}}\otimes f_{-\frac{k}{2}}+\right.\\
    &&\left.+i(-1)^{\frac{k-2}{2}}\sum_{m}\binom{k-2}{\frac{k-2}{2}+m}f_{-\frac{k}{2}}\otimes f_{\frac{k}{2}}-i\sum_{m}\binom{k-2}{\frac{k-2}{2}+m}(-1)^{m}f_{-\frac{k}{2}}\otimes f_{-\frac{k}{2}}\right)\\
    &=&i(2i)^{k-2}(k-1)^2\left(f_{\frac{k}{2}}\otimes f_{-\frac{k}{2}}+f_{-\frac{k}{2}}\otimes f_{\frac{k}{2}}\right)
\end{eqnarray*}
Thus, we conclude
\[
    c_2(\hat H,\tilde W)=c_1^+(\hat H)c_1^-(\tilde W)(\Upsilon)-c_1^+(\tilde W)c_1^-(\hat H)(\Upsilon)=-(2i)^{k-1}(k-1)^2\left(f_{\frac{k}{2}}\otimes f_{-\frac{k}{2}}+f_{-\frac{k}{2}}\otimes f_{\frac{k}{2}}\right),
\]
and the result follows.
\end{proof}

\begin{remark}\label{remarkonhaarmeasurecocycle1}
Recall that any $X\in\mfg_\R$ induces an invariant derivation on $C^\infty(\PGL_2(\R),\C)$, hence, we can write $dX$ for the 1-form dual to such a derivation. Given any morphism $\varphi\in\Hom_{\mfg_\R,K_{\R,+}}(D(k)^{\otimes 2},C^\infty(\PGL_2(\R),\C))$, the 2-cocycle $\varphi(c_2)$ provides the differential 2-form
    \begin{eqnarray*}
    \varphi(c_2)(\hat H,\tilde W)\cdot d\hat H\wedge d\tilde W&=&\frac{-8i}{\pi}\int_{K_{\R,+}}\varphi\left(k\ast  \delta s_+(\Upsilon)\right)d W\wedge d\hat H\wedge d\tilde W =\frac{2i}{\pi}\int_0^\pi\kappa(\theta)\ast \varphi\left( \delta s_+(\Upsilon)\right)d^\times g,\\
    \end{eqnarray*}
    because ${\rm vol}(K_{\R,+},dW)={\rm vol}(K_{\R,+},d\theta)=\pi$, $d^\times g=y^{-2}dxdyd\theta$ and the action of $W$, $\hat H$ and $\tilde W$ on $C^\infty(\PGL_2(\R),\C)$ is given by (\cite[proposition 2.2.5]{Bump})
    \[
    \left(\begin{array}{c} W \\ \hat H\\\tilde W\end{array}\right)=\left(\begin{array}{ccc} 0&0&1 \\ -2y\sin 2\theta&2y\cos 2\theta&\sin 2\theta\\2y\cos 2\theta&2y\sin 2\theta&-\cos2\theta\end{array}\right)\left(\begin{array}{c} \frac{\partial}{\partial x} \\ \frac{\partial}{\partial y}\\\frac{\partial}{\partial\theta}\end{array}\right).
    \]
\end{remark}

\subsection{The cohomological $(\mfg,K)$-modules for $\PGL_2(\C)$}\label{GKC}
We write $K_\C$ for the maximal compact subgroup of $\PGL_2(\C)$ given by the image of 
\[
\SU(2):=\left\{\left(\begin{array}{cc}\alpha&\beta\\-\bar \beta&\bar \alpha\end{array}\right)\colon |\alpha|^2+|\beta|^2=1\right\}\subset \SL_2(\C).
\]
By \cite{Hall}, all irreducible representations of $\SU(2)$ are finite-dimensional $\C$-representations of the form $\cP(n):=\cP(n)_\C$, where $\cP(n)$ is viewed as a $\SU(2)$-representation by restricting the action of $\GL_2(\C)$ described in Section \ref{findimreps}. We recall also that we have an isomorphism $\cP(n)\simeq V(n)$, where $V(n):= V(n)_\C$.  Thus, the irreducible representations of $K_\C\subset\PGL_2(\C)$ are the representations $V(2n)$, where $n\in\N$. 

Similarly as in \eqref{eqstar},
any $g\in\GL_2(\C)$ admits a decomposition
\begin{equation}\label{eqstar2}
g=u\left(\begin{array}{cc}r&x\\&r^{-1}\end{array}\right)\kappa(\alpha,\beta),\qquad \text{with } u\in\C^\times\;,x\in\C\;,r\in\R^\times,    
\end{equation}
and where $\kappa(\alpha,\beta):=\left(\begin{array}{cc}\alpha&\beta\\-\bar \beta&\bar \alpha\end{array}\right)$. The Lie algebra of the real Lie group $\PGL_2(\C)$ is given by 
\[
\mfg_\C:={\rm Lie}(\PGL_2(\C))\simeq{\rm Lie}(\SL_2(\C))\simeq M_0:=\{g\in\M_2(\C)\mid \operatorname{Tr} g=0\}=\R \hat H\oplus\R\tilde W\oplus\R H\oplus\R \hat H_i\oplus\R W\oplus\R \tilde W_i,
\]
where 
\begin{eqnarray*}
&&\hat H:=\left(\begin{array}{cc}
       1  & 0 \\
        0 & -1
    \end{array}\right);\qquad \tilde W:=\left(\begin{array}{cc}
       0  & 1 \\
        1 & 0
    \end{array}\right);\qquad H:=\left(\begin{array}{cc}
       0  & -i \\
        i & 0
    \end{array}\right);\\
    &&\hat H_i:=\left(\begin{array}{cc}
       i  & 0 \\
        0 & -i
    \end{array}\right);\qquad W:=\left(\begin{array}{cc}
       0  & 1 \\
        -1 & 0
    \end{array}\right);\qquad \tilde W_i:=\left(\begin{array}{cc}
       0  & i \\
        i & 0
    \end{array}\right).
\end{eqnarray*}
Moreover, $\cK_\C:={\rm Lie}(K_\C)=\{X\in \M_2(\C)\mid  {\rm Tr}X=0, \, X^\ast=-X\}=\R \hat H_i\oplus\R W\oplus\R \tilde W_i$, where $X^\ast$ denotes complex conjugation of the transpose matrix.
\begin{remark}\label{relLiealg}
    We have the relations 
    \[
    [W_i,\hat H]=2H, \qquad [\hat H,W]=2\tilde W,\qquad [\hat H,\tilde W]=2W,\qquad [H,\hat H]=2D,\qquad [\tilde W,H]=2\hat H_i.
    \]
    Hence, for any Lie algebra representation of $\mfg_\C$, it is enough to control the action of $\cK_\C$ and $\hat H$. Moreover, we have the relations:
    \begin{eqnarray*}
        \kappa(\alpha,\beta)^{-1}\hat H\kappa(\alpha,\beta)&=&(|\alpha|^2-|\beta|^2)\hat H+2{\rm Re}(\bar \alpha\beta)\tilde W-2{\rm Im}(\bar\alpha\beta)H;\\
        \kappa(\alpha,\beta)^{-1}\tilde W\kappa(\alpha,\beta)&=&-2{\rm Re}(\alpha\beta)\hat H+{\rm Re}(\bar \alpha^2-\beta^2)\tilde W-{\rm Im}(\bar\alpha^2-\beta^2)H;\\
        \kappa(\alpha,\beta)^{-1}H\kappa(\alpha,\beta)&=&2{\rm Im}(\alpha\beta)\hat H+{\rm Im}(\bar \alpha^2+\beta^2)\tilde W+{\rm Re}(\bar\alpha^2+\beta^2)H.
    \end{eqnarray*}
    Thus, if we write $\kappa_1=\kappa(1/\sqrt{2},1/\sqrt{2}),\kappa_2=\kappa(1/\sqrt{2},-i/\sqrt{2})\in \SU(2)$, then we obtain that
    \[
    \tilde W=\kappa_1^{-1}\hat H\kappa_1=-\kappa_1\hat H\kappa_1^{-1};\qquad H=\kappa_2^{-1}\hat H\kappa_2=-\kappa_2\hat H\kappa_2^{-1};\qquad \kappa_1^{-1} H\kappa_1=H;\qquad \kappa_2^{-1}\tilde W\kappa_2=\tilde W.
    \]
    Hence, any $(\mfg_\C,K_\C)$-module is completely determined by the acion of $K_\C$ and $\hat H$.
\end{remark}

For any character $\chi:\C^\times\rightarrow\C^\times$,
let us consider the induced $\PGL_2(\C)$-representation
\[
\cB(\chi):=\left\{f:\GL_2(\C)\rightarrow\C:\;f\left(\left(\begin{array}{cc}t_1&x\\&t_2\end{array}\right)g\right)=\chi(t_1/t_2)\cdot f(g)\right\}.
\]
By \eqref{eqstar2} we have an isomorphism
$$\cB(\chi)\simeq\{f:\SU(2)\rightarrow\C:\;f(e^{i\theta}\alpha,e^{i\theta}\beta)=\chi(e^{2i\theta})\cdot f(\alpha,\beta)\};\qquad f(\alpha,\beta):=f(\kappa(\alpha,\beta)).$$
Thus, the $\SU(2)$-representation $\cB(\chi)$ is induced by the restriction of the character $\chi^2$ at $S^1$. If $\chi(e^{i\theta}) = e^{i\lambda\theta}$, by Frobenius reciprocity we have that
\begin{equation}\label{homsV}
\Hom_{\SU(2)}(V(2n),\cB(\chi))= \begin{cases}\C &\text{ if }|\lambda|\leq n\\
0&\mbox{otherwise.}
\end{cases}
\end{equation}
\begin{definition}\label{defvarphi}
Suppose that $\chi(e^{i\theta})=e^{i\lambda\theta}$. For $n\geq|\lambda|$ define $\varphi_n\in\Hom_{\SU(2)}(V(2n),\cB(\chi))$ to be the morphism given by
\begin{equation}\label{morfvarphn}
\varphi_n(\mu)(\alpha,\beta):=\mu\left(\left|\begin{array}{cc}\alpha&\beta\\x&y\end{array}\right|^{n+\lambda}\left|\begin{array}{cc}-\bar\beta&\bar\alpha\\x&y\end{array}\right|^{n-\lambda}\right), \qquad\mbox{for all }\mu\in V(2n).
\end{equation}
\end{definition}

Write $\cB(\chi,n)$ for the image of $V(2n)$ through $\varphi_n$. Hence the subspace
$\tilde \cB(\chi):=\bigoplus_{n\geq |\lambda|}\cB(\chi,n)\subseteq \cB(\chi)$ 
is a natural $(\mfg_\C,K_\C)$-module. Denote by $\Sigma$ the set of $\R$-isomorphisms $\sigma\colon \C\rightarrow\C$ and, for any $\underline{k}=(k_\sigma)_{\sigma\in\Sigma}\in (2\N_{>1})^2$, we consider $\chi_{\underline{k}}(t):=\prod_{\sigma\in\Sigma}\sigma(t)^{\frac{k_\sigma}{2}}$. By \cite[Proposition 4.18]{preprintsanti2}, we have a morphism of $\GL_2(\C)$-representations
\[
\rho:\cB(\chi_{\underline{k}})\longrightarrow V(\underline{k}-2):=\bigotimes_{\sigma\in\Sigma}V(k_\sigma-2); \qquad
 \rho(f)\left(\bigotimes_{\sigma\in\Sigma}P_\sigma\right)= \int_{S^3}f(\alpha,\beta)\left(\prod_{\sigma\in\Sigma} P_\sigma(-\sigma(\bar\beta),\sigma(\bar\alpha))\right)d(\alpha,\beta),
\]
where $\GL_2(\C)$ acts on each $V(k_\sigma-2)$ by means of $\sigma$.
It turns out that the subspace $D(\underline{k}):=\tilde \cB(\chi_{\underline{k}})\cap \ker(\rho)$ is the unique non-trivial sub-$(\mfg_\C,K_\C)$-module of $\tilde\cB(\chi_{\underline{k}})$. We have obtained an exact sequence of $(\mfg_\C,K_\C)$-modules:
\begin{equation}\label{exseq2}
0\longrightarrow D(\underline{k})=\bigoplus_{n>\frac{k_{\rm id}-2+k_c-2}{2}}\cB(\chi_{\underline{k}},n)\longrightarrow \tilde \cB(\chi_{\underline{k}})=\bigoplus_{n\geq \left|\frac{k_{\rm id}-k_c}{2}\right|}\cB(\chi_{\underline{k}},n)\longrightarrow V(\underline{k}-2)\longrightarrow 0,
\end{equation}
since $V(\underline{k}-2)\simeq\bigoplus_{\frac{k_{\rm id}-2+k_c-2}{2}\geq n\geq \left|\frac{k_{\rm id}-k_c}{2}\right|} V(2n)$, where ${\rm id},c\in\Sigma$ are the identity and complex conjugation, respectively.  
Similarly as in the real setting, $\rho$ admits a $K_\C$-equivariant section, namely, 
a $\SU(2)$-equivariant morphism
\[
s:V(\underline{k}-2)\longrightarrow \tilde\cB(\chi_{\underline{k}});\qquad \rho\circ s={\rm id}.
\]
In \cite[Lemma 4.21]{preprintsanti2} one can find an explicit description of $s$.

\subsubsection{Other models for $D(\underline k)$}\label{othermod}

By \cite[Theorem 6.2]{JL} the $(\mfg_\C,K_\C)$-module $D(\underline k)$ admits a model as an induced representation. Indeed, if we consider the character
$\hat\chi_{\underline k}:\C^\times\rightarrow\C^\times$, where $\hat\chi_{\underline k}(t):=t^{\frac{k_{\rm id}}{2}}(\bar t)^{\frac{2-k_{c}}{2}}$, then we have the isomorphism $D(\underline k)\simeq \tilde \cB(\hat\chi_{\underline k})$. Notice that 
\[
\hat\chi_{\underline k}(e^{i\theta})=e^{i\left(\frac{k_{\rm id}+k_{c}-2}{2}\right)\theta};\qquad \tilde \cB(\hat\chi_{\underline{k}})=\bigoplus_{n\geq \frac{k_{\rm id}+k_{c}-2}{2}}\cB(\hat\chi_{\underline{k}},n)=\bigoplus_{n> \frac{k_{\rm id}-2+k_{c}-2}{2}}\cB(\hat\chi_{\underline{k}},n)
\]
Thus, the decomposition of $\tilde \cB(\hat\chi_{\underline{k}})$ as a sum of $\SU(2)$-representations fits with that of $D(\underline k)$. As we have seen in Remark \ref{relLiealg}, to verify that both representations coincide, we need to check whether the action of $\hat H$ coincides. The following result describes the action of $\hat H$ for any induced representation.
\begin{proposition}\label{acthatHforvarphi}
    If $\chi(re^{i\theta})=r^{N_{\chi}}e^{i\lambda_\chi \theta}$ then the action of $\hat H$ on $\tilde \cB(\chi)$ is given by
\[
\hat H \varphi_n(\mu)=\lambda_{\chi}(N_{\chi}-1)\varphi_n(\mu_0)-(N_{\chi}+n)\varphi_{n+1}(\mu_1)+(n+\lambda_{\chi})(n-\lambda_{\chi})(n-N_{\chi}+1)\varphi_{n-1}(\mu_{-1}),
\]
where $\mu_0\in V(2n)$, $\mu_1\in V(2n+2)$ and $\mu_{-1}\in V(2n-2)$ are
\[
\mu_0(P):=\frac{1}{n(n+1)}\mu\left(
y\frac{\partial P}{\partial y}-x\frac{\partial P}{\partial x}\right),\quad\mu_1(P):=\frac{2}{(n+1)(2n+1)}\mu\left(\frac{\partial P}{\partial x\partial y}\right),\quad \mu_{-1}(P):=\frac{2}{n(2n+1)}\mu\left(xyP\right).
\] 
\end{proposition}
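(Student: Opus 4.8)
The plan is to compute the action of $\hat H$ directly from the definition of the $\mfg_\C$-module structure on $\tilde\cB(\chi)$, namely $(Xf)(g)=\tfrac{d}{dt}\big|_{t=0}f(g\exp(tX))$, using that an element of $\tilde\cB(\chi)\subset\cB(\chi)$ is determined by its restriction to $\SU(2)$ (via the decomposition $\GL_2(\C)=B\cdot\SU(2)$, with $B$ the upper triangular subgroup). Thus I would first evaluate, for $(\alpha,\beta)$ with $|\alpha|^2+|\beta|^2=1$,
\[
(\hat H\varphi_n(\mu))(\alpha,\beta)=\frac{d}{dt}\Big|_{t=0}\varphi_n(\mu)\big(\kappa(\alpha,\beta)\,\mathrm{diag}(e^t,e^{-t})\big),
\]
rewriting $M(t):=\kappa(\alpha,\beta)\,\mathrm{diag}(e^t,e^{-t})$ in the form (upper triangular)$\,\cdot\,\kappa(\alpha(t),\beta(t))$ with positive lower-right diagonal entry $t_2(t)$. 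Reading off the bottom row of $M(t)$ gives $t_2(t)=(|\alpha|^2e^{-2t}+|\beta|^2e^{2t})^{1/2}$, $\alpha(t)=e^{-t}\alpha/t_2(t)$, $\beta(t)=e^{t}\beta/t_2(t)$, and, since $\det M(t)=1$, the upper-left entry is $t_2(t)^{-1}$; the transformation law of $\varphi_n(\mu)\in\cB(\chi)$ then yields $\varphi_n(\mu)(M(t))=\chi(t_2(t)^{-2})\,\varphi_n(\mu)(\alpha(t),\beta(t))=t_2(t)^{-2N_\chi}\,\varphi_n(\mu)(\alpha(t),\beta(t))$.

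Next I would differentiate at $t=0$. As a function of $(\alpha,\beta)$ one has $\varphi_n(\mu)(\alpha,\beta)=\mu(A^{n+\lambda}C^{n-\lambda})$ with $A=\alpha y-\beta x$, $C=-\bar\alpha x-\bar\beta y$, $\lambda=\lambda_\chi$ — a polynomial in $\alpha,\bar\alpha,\beta,\bar\beta$ — so the chain rule, together with $\dot t_2(0)=|\beta|^2-|\alpha|^2$, $\dot\alpha(0)=-2|\beta|^2\alpha$, $\dot\beta(0)=2|\alpha|^2\beta$ (and their conjugates), gives
\[
(\hat H\varphi_n(\mu))(\alpha,\beta)=-2N_\chi(|\beta|^2-|\alpha|^2)\,\varphi_n(\mu)(\alpha,\beta)+\mu\big(\cD(A^{n+\lambda}C^{n-\lambda})\big),
\]
where $\cD:=-2\beta\bar\beta\alpha\,\partial_\alpha-2\beta\bar\beta\bar\alpha\,\partial_{\bar\alpha}+2\alpha\bar\alpha\beta\,\partial_\beta+2\alpha\bar\alpha\bar\beta\,\partial_{\bar\beta}$. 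Using the identities $\cD A=2\alpha\beta C$ and $\cD C=-2\bar\alpha\bar\beta A$ (the latter from $\bar\alpha x+\bar\beta y=-C$ on $\SU(2)$), this collapses to
\[
(\hat H\varphi_n(\mu))(\alpha,\beta)=\mu\big(2N_\chi\,E_0+2(n+\lambda)\,E_+-2(n-\lambda)\,E_-\big),
\]
with $E_0:=(|\alpha|^2-|\beta|^2)A^{n+\lambda}C^{n-\lambda}$, $E_+:=\alpha\beta\,A^{n+\lambda-1}C^{n-\lambda+1}$, $E_-:=\bar\alpha\bar\beta\,A^{n+\lambda+1}C^{n-\lambda-1}$.

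It then remains to re-express $\mu(E_0),\mu(E_+),\mu(E_-)$ through the components $\varphi_{n-1},\varphi_n,\varphi_{n+1}$ of $\tilde\cB(\chi)$. Substituting $x=-\alpha C-\bar\beta A$ and $y=\bar\alpha A-\beta C$ (valid on $\SU(2)$), I would establish, by direct differentiation, the identities of functions $\SU(2)\to\cP(\bullet)$
\begin{align*}
(y\partial_y-x\partial_x)\big(A^{n+\lambda}C^{n-\lambda}\big)&=2\lambda E_0-2(n{+}\lambda)E_+-2(n{-}\lambda)E_-,\\
\partial_x\partial_y\big(A^{n+1+\lambda}C^{n+1-\lambda}\big)&=-(n{+}1{+}\lambda)(n{+}1{-}\lambda)E_0-(n{+}1{+}\lambda)(n{+}\lambda)E_++(n{+}1{-}\lambda)(n{-}\lambda)E_-,\\
xy\,A^{n-1+\lambda}C^{n-1-\lambda}&=-E_0+E_+-E_-,
\end{align*}
then invert this $3\times3$ linear system to write $E_0,E_+,E_-$ in terms of the three left-hand sides, apply $\mu$, and recognize $\mu$ of the three left-hand sides as $\varphi_n(\mu_0)$, $\varphi_{n+1}(\mu_1)$, $\varphi_{n-1}(\mu_{-1})$ for the operators $\mu_0,\mu_1,\mu_{-1}$ in the statement. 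Comparing coefficients should then reproduce precisely the scalars $\lambda_\chi(N_\chi-1)$, $-(N_\chi+n)$ and $(n+\lambda)(n-\lambda)(n-N_\chi+1)$.

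The hard part will be this last bookkeeping step. Each of the three displayed identities is an elementary differentiation once the substitutions for $x$ and $y$ are in place, but checking that the three-way cancellation yields exactly the normalizations $\tfrac{1}{n(n+1)}$, $\tfrac{2}{(n+1)(2n+1)}$, $\tfrac{2}{n(2n+1)}$ takes some care; I would cross-check the constants by evaluating both sides on a highest-weight vector of $V(2n)=\cP(2n)^\vee$, where only one or two of the $\varphi_m$-components survive.
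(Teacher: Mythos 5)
Your proposal is correct and is essentially the same computation as the paper's: restrict to $\SU(2)$ via the Iwasawa decomposition of $\kappa(\alpha,\beta)\,\mathrm{diag}(e^t,e^{-t})$, differentiate at $t=0$, obtain the first-order operator on polynomial functions of $(\alpha,\bar\alpha,\beta,\bar\beta)$, and reorganize via polynomial identities into the $\varphi_{n-1},\varphi_n,\varphi_{n+1}$ components. Your main streamlining is to package the three intermediate monomials as $E_0,E_+,E_-$ using $\cD A=2\alpha\beta C$, $\cD C=-2\bar\alpha\bar\beta A$ (which collapses the paper's five-term intermediate to three), and then invert a $3\times3$ system rather than chase the relations $-\beta P_{a,b+1}=yP_{a,b}-\bar\alpha P_{a+1,b}$, $-\alpha P_{a,b+1}=xP_{a,b}+\bar\beta P_{a+1,b}$ as in the paper; I checked that your three identities for $(y\partial_y-x\partial_x)$, $\partial_x\partial_y$ and multiplication by $xy$ are correct and that the inversion does yield the stated coefficients $\lambda_\chi(N_\chi-1)$, $-(N_\chi+n)$, $(n+\lambda_\chi)(n-\lambda_\chi)(n-N_\chi+1)$.
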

\begin{proof}
        If we consider $f:\PGL_2(\C)\rightarrow\C$ as a function with variables $s,r,\alpha,\beta$ by means of \eqref{eqstar2}, then
    \begin{eqnarray*}
    \hat H f(s,r,\alpha,\beta)&=&\frac{d}{dt}f(g\exp(t\hat H))\mid_{t=0}=\frac{d}{dt}f\left(\left(\begin{array}{cc}r&s\\&r^{-1}\end{array}\right)\left(\begin{array}{cc}\alpha&\beta\\-\bar \beta&\bar \alpha\end{array}\right)\left(\begin{array}{cc}
       e^{t}  & 0 \\
        0 & e^{-t}
    \end{array}\right)\right)\mid_{t=0}\\
    &=&\frac{d}{dt}f\left(\left(\begin{array}{cc}r&s\\&r^{-1}\end{array}\right)\left(\begin{array}{cc}e^t\alpha&e^{-t}\beta\\-e^t\bar \beta&e^{-t}\bar \alpha\end{array}\right)\right)\mid_{t=0}=\frac{d}{dt}f\left(\left(\begin{array}{cc}rR^{-1}&sR+rA\\&r^{-1}R\end{array}\right)\left(\begin{array}{cc}\frac{e^{-t}\alpha}{R}&\frac{e^{t}\beta}{R}\\\frac{-e^t\bar \beta}{R}&\frac{e^{-t}\bar \alpha}{R}\end{array}\right)\right)\mid_{t=0}\\
    &=&\frac{d}{dt}f(sR+rA,rR^{-1},e^{-t}\alpha R^{-1},e^{t}\beta R^{-1})\mid_{t=0}
    \end{eqnarray*}
    where $R^2=R(t)^2=e^{2t}|\beta|^2+e^{-2t}|\alpha|^2$ and $A=A(t)=\alpha\beta R^{-1}(e^{-2t}-e^{2t})$.
    Since $R(0)=1$, $R'(0)=|\beta|^2-|\alpha|^2$, $A(0)=0$ and $A'(0)=-4\alpha\beta$, we conclude
    \begin{equation}\label{acthatH}
        \hat H f=(s(|\beta|^2-|\alpha|^2)-4r\alpha\beta)\frac{\partial f}{\partial s}+(\bar s(|\beta|^2-|\alpha|^2)-4r\bar\alpha\bar\beta)\frac{\partial f}{\partial \bar s}-r(|\beta|^2-|\alpha|^2)\frac{\partial f}{\partial r}-2\alpha|\beta|^2\frac{\partial f}{\partial \alpha}-2\bar \alpha|\beta|^2\frac{\partial f}{\partial \bar\alpha}+2\beta|\alpha|^2\frac{\partial f}{\partial \beta}+2\bar\beta|\alpha|^2\frac{\partial f}{\partial \bar\beta}.
    \end{equation}

If we write $\chi(re^{i\theta})=r^Ne^{i\lambda\theta}$,
then we have by definition
\begin{equation}\label{generalvarphin}
\varphi_n(\mu)\left(\left(\begin{array}{cc}r&s\\&r^{-1}\end{array}\right)\kappa(\alpha,\beta)\right):=r^{2N}\cdot\mu\left(
P_{n+\lambda,n-\lambda}\right);\qquad P_{a,b}:=\left|\begin{array}{cc}\alpha&\beta\\x&y\end{array}\right|^{a}\left|\begin{array}{cc}-\bar\beta&\bar\alpha\\x&y\end{array}\right|^{b}.
\end{equation}
Hence, we obtain by \eqref{acthatH}
\begin{eqnarray*}
\hat H\varphi_n(\mu)(\alpha,\beta)&=&\mu\left(2N(|\alpha|^2-|\beta|^2)P_{n+\lambda,n-\lambda}-2(n+\lambda)\alpha|\beta|^2yP_{n-1+\lambda,n-\lambda}+2(n-\lambda)\bar\alpha|\beta|^2xP_{n+\lambda,n-1-\lambda}-\right.\\
&&\left.-2(n+\lambda)\beta|\alpha|^2xP_{n-1+\lambda,n-\lambda}-2(n-\lambda)\bar\beta|\alpha|^2yP_{n+\lambda,n-1-\lambda}\right)\\
&=&\mu\left(2N(|\alpha|^2-|\beta|^2)P_{n+\lambda,n-\lambda}+2(n+\lambda)(-\alpha|\beta|^2y-\beta|\alpha|^2x)P_{n-1+\lambda,n-\lambda}+\right.\\
&&\left.+2(n-\lambda)(\bar\alpha|\beta|^2x-\bar\beta|\alpha|^2y)P_{n+\lambda,n-1-\lambda}\right)\\
&=&\mu\left(2N(|\alpha|^2-|\beta|^2)P_{n+\lambda,n-\lambda}+(n+\lambda)(-\alpha y-\beta x+(\beta x-\alpha y)(|\beta|^2-|\alpha|^2))P_{n-1+\lambda,n-\lambda}+\right.\\
&&\left.+(n-\lambda)(\bar\alpha x-\bar\beta y+(\bar\alpha x+\bar\beta y)(|\beta|^2-|\alpha|^2))P_{n+\lambda,n-1-\lambda}\right)\\
&=&\mu\left((2N+2n)(|\alpha|^2-|\beta|^2)P_{n+\lambda,n-\lambda}+(n+\lambda)(-\alpha y-\beta x)P_{n-1+\lambda,n-\lambda}+\right.\\
&&\left.+(n-\lambda)(\bar\alpha x-\bar\beta y)P_{n+\lambda,n-1-\lambda}\right)\\
&=&\mu\left((2N+2n)(|\alpha|^2-|\beta|^2)P_{n+\lambda,n-\lambda}+(n+\lambda)2xyP_{n-1+\lambda,n-1-\lambda}+2\lambda(\bar\beta y-\bar\alpha x) P_{n+\lambda,n-1-\lambda}\right),
\end{eqnarray*}
where the last equality follows from $(-\beta x-\alpha y)P_{a-1,b}=2xyP_{a-1,b-1}+(\bar\beta y-\bar\alpha x) P_{a,b-1}$ deduced from the relation
\begin{equation}\label{keyequationPab}
    -\beta P_{a,b+1}=yP_{a,b}-\bar\alpha P_{a+1,b};\qquad -\alpha P_{a,b+1}=xP_{a,b}+\bar\beta P_{a+1,b}.
\end{equation}
We compute similarly
\begin{eqnarray*}
y\frac{\partial P_{a,b}}{\partial y}-x\frac{\partial P_{a,b}}{\partial x}&=&y(a\alpha P_{a-1,b}-b\bar\beta P_{a,b-1})-x(-a\beta P_{a-1,b}-b\bar\alpha P_{a,b-1})=a(y\alpha+x\beta)P_{a-1,b}+b(x\bar\alpha -y\bar\beta )P_{a,b-1}\\
&=&-2axyP_{a-1,b-1}-(a+b)(\bar\beta y-\bar\alpha x) P_{a,b-1}.
\end{eqnarray*}
Moreover, using the relations \eqref{keyequationPab}
one obtains
\begin{eqnarray*}
\frac{\partial^2 P_{a,b}}{\partial y\partial x}&=&-a\beta ((a-1)\alpha P_{a-2,b}-b\bar\beta P_{a-1,b-1})-b\bar\alpha (a\alpha P_{a-1,b-1}-(b-1)\bar\beta P_{a,b-2})\\
&=&-a (a-1)\beta\alpha P_{a-2,b}+ab(|\beta|^2 -|\alpha|^2) P_{a-1,b-1}+b(b-1)\bar\alpha\bar\beta P_{a,b-2}\\
&=&a (a-1)(y\alpha P_{a-2,b-1}-|\alpha|^2P_{a-1,b-1})+ab(|\beta|^2 -|\alpha|^2) P_{a-1,b-1}+b(b-1)(-|\alpha|^2 P_{a-1,b-1}-x\bar\alpha P_{a-1,b-2})\\
&=&\frac{(a+b)(a+b-1)}{2}(|\beta|^2 -|\alpha|^2) P_{a-1,b-1}-a(a-1)yxP_{a-2,b-2}-\frac{a(a-1)-b(b-1)}{2}(y\bar\beta -x\bar\alpha) P_{a-1,b-2}
\end{eqnarray*}
Thus,
\begin{eqnarray*}
\hat H\varphi_n(\mu)(\alpha,\beta)&=&\mu\left(\frac{-2(N+n)}{(n+1)(2n+1)}\frac{\partial^2 P_{n+1+\lambda,n+1-\lambda}}{\partial y\partial x}+\frac{\lambda(N-1)}{n(n+1)}\left(y\frac{\partial P_{n+\lambda,n-\lambda}}{\partial y}-x\frac{\partial P_{n+\lambda,n-\lambda}}{\partial x}\right)+\right.\\
&&\left.+\frac{2(n+\lambda)(n-\lambda)(n-N+1)}{n(2n+1)}xyP_{n-1+\lambda,n-1-\lambda}\right),
\end{eqnarray*}
and the result follows.
\end{proof}

Since we have
\[
N_{\chi_{\underline k}}=\frac{k_{\rm id}}{2}+\frac{k_c}{2},\qquad \lambda_{\chi_{\underline k}}=\frac{k_{\rm id}}{2}-\frac{k_c}{2},\qquad N_{\hat\chi_{\underline k}}=\frac{k_{\rm id}}{2}+\frac{2-k_c}{2}=\lambda_{\chi_{\underline k}}+1,\qquad \lambda_{\hat\chi_{\underline k}}=\frac{k_{\rm id}}{2}+\frac{k_c-2}{2}=N_{\chi_{\underline k}}-1,
\]
we obtain the isomorphism between $D(\underline k)$ and $\tilde \cB(\hat\chi_{\underline k})$ given by
\begin{equation}\label{isoBD}
\psi:\tilde \cB(\hat\chi_{\underline k})=\bigoplus_{n> \frac{k_{\rm id}-2+k_{c}-2}{2}}\cB(\hat\chi_{\underline{k}},n)\longrightarrow \bigoplus_{n> \frac{k_{\rm id}-2+k_{c}-2}{2}}\cB(\chi_{\underline{k}},n)= D(\underline k);\qquad \psi(\varphi_n(\mu))=\binom{n+\lambda_{\hat\chi_{\underline{k}}}}{N_{\hat\chi_{\underline{k}}}+n-1}\varphi_n(\mu).
\end{equation}
Indeed, one can check using the above proposition that such a morphism respects the action of $\hat H$.

\subsection{Explicit cohomology classes of $\mfg_\C$}\label{ExplCCC}

In this section we will describe the cocycles and cohomology classes involving the $(\mfg_\C,K_\C)$-module $D(\underline{k})$.  

\subsubsection{1-cocycles associated with $D(\underline{k})$} Note that in this complex setting $\mfg_\C/\cK_\C=\R \hat H\oplus\R\tilde W\oplus\R H$.
Moreover, by Remark \ref{relLiealg}, an homomorphism $\varphi_1\in \Hom_{K_\C}\left(\mfg_\C/\cK_\C,M\right)$ is characterized by the image of $\hat H$. 
    Take the $(\mfg_\C,K_\C)$-module $\Hom(V(\underline k-2),D({\underline k}))$. 
Let us recall the unique $K_\C$-equivariant section $s:V(\underline k-2)\rightarrow\tilde \cB(\chi_{\underline k})$ of the exact sequence \eqref{exseq2}. Hence, the 1-cocycle associated to the aforementioned exact sequence is the class of
\begin{equation}\label{defc1}
c_1\in \Hom_{K_\C}\left(\mfg_\C/\cK_\C,\Hom(V(\underline k-2),D({\underline k}))\right);\qquad c_1(X)(\mu)=\left(X(s\mu)-s(X\mu)\right).
\end{equation}
Let us consider similarly as in the real setting
\begin{equation}\label{secdefds}
\delta s:=c_1(D)\in\Hom_{K_\C}\left(\mfg_\C/\cK_\C,\Hom(V(\underline{k}-2),D(\underline{k}))\right);\qquad  D:=\left(\begin{array}{cc}
    1&0\\0&0
\end{array}\right)
\end{equation}
In \cite[Proposition 4.24]{preprintsanti2} the morphisms $\delta s$ are computed explicitly. Since $2D= \hat H$ in $\mfg_\C$,
we can use such a result to obtain $c_1(\hat H)$ that characterizes $c_1$: Notice that as a $\SU(2)$-representation $V(\underline{k}-2)\simeq \bigoplus_{|\lambda_{\chi_{\underline k}}|\leq n\leq N_{\chi_{\underline k}}-2}V(2n)$. Hence, we can consider the $K_\C$-equivariant morphism for $|\lambda_{\chi_{\underline k}}|\leq n\leq N_{\chi_{\underline k}}-2$
\begin{eqnarray*}
t_n:V(\underline{k}-2)=V(k_{\rm id}-2)\otimes V(k_c-2)\longrightarrow \cP(2n)\simeq V(2n);\qquad t_n(\mu_{\rm id}\otimes\mu_c)=\mu_{\rm id}\mu_c(\Delta_n);\\
\Delta_n(X_{\rm id},Y_{\rm id},X_{c},Y_{c},x,y):=\left|\begin{array}{cc}X_{\rm id}&Y_{\rm id}\\x&y\end{array}\right|^{r_1}\left|\begin{array}{cc}-Y_{c}&X_{c}\\x&y\end{array}\right|^{r_2}\left|\begin{array}{cc}X_{\rm id}&Y_{\rm id}\\-Y_c&X_c\end{array}\right|^{r_3},\qquad\qquad
\end{eqnarray*}
where $r_1:=n+\lambda_{\chi_{\underline k}}$, $r_2:=n-\lambda_{\chi_{\underline k}}$ and $r_3:=N_{\chi_{\underline k}}-2-n$. Then, for any $\mu\in V(\underline{k}-2)$, we have that 
\begin{equation}\label{descc1C}
c_1(\hat H)(\mu)=2\delta s(\mu)=-4\binom{2N_{\chi_{\underline k}}-4}{k_{\rm id}-2}\varphi_{N_{\chi_{\underline k}}-1}(t_{N_{\chi_{\underline k}}-2}(\mu)^*)
\end{equation}
where $t_{N_{\chi_{\underline k}}-2}(\underline{\mu})^*\in V(2N_{\chi_{\underline k}}-2)$ is given by
$t_{N_{\chi_{\underline k}}-2}(\underline{\mu})^*(P):=t_{N_{\chi_{\underline k}}-2}(\underline{\mu})\left(\frac{\partial^2 P}{\partial x\partial y}\right)$.
\begin{remark}\label{Altdescc1}
    It is easier to understand the class $c_1\in H^1((\mfg_\C,K_\C),\Hom(V(\underline k-2),D({\underline k})))$ once $D(\underline k)$ is described as an induced representation (see \S \ref{othermod}). Indeed, we know that $D(\underline k)\simeq \tilde \cB(\hat\chi_{\underline k})$, hence,
    \begin{equation}\label{ShapiroGK}
    H^m((\mfg_\C,K_\C),\Hom(V(\underline k-2),D({\underline k})))\simeq H^m((\cB,K_B),\Hom(V(\underline k-2),\hat\chi_{\underline k})),
    \end{equation}
    where $B\subset\PGL_2(\C)$ is the usual Borel subgroup, $\cB={\rm Lie}(B)$, $K_B=K\cap B$, and $\cK_B={\rm Lie}(K_B)$. Note that 
    \[
    \Hom(V(\underline k-2),\hat\chi_{\underline k})=\cP(\underline k-2)(\hat\chi_{\underline k})=\bigoplus_{\underline n}\C x^{\underline n}y^{\underline k-2-\underline n};\qquad \underline n=(n_{\rm id},n_c);\quad x=(x_{\rm id},x_c);\quad y=(y_{\rm id},y_c),
    \]
    and each subspace $\C x^{\underline n}y^{\underline k-2-\underline n}$ is an eigenspace for the action of the matrices $\kappa_\alpha:=\kappa(\alpha,0)\in K_B$, for $|\alpha|^2=1$, with eigenvalues $\alpha^{2n_{\rm id}-2n_c+2k_c-2}$. Moreover, $\cB/\cK_B\simeq \mfg_\C/\cK_\C$ and it is generated by 
\begin{eqnarray*}
&&\hat H:=\left(\begin{array}{cc}
       1  & 0 \\
        0 & -1
    \end{array}\right), \qquad N_1:=\frac{\tilde W+W}{2}=\left(\begin{array}{cc}
       0  & 1 \\
        0 & 0
    \end{array}\right), \qquad N_2:=\frac{D-H}{2}=\left(\begin{array}{cc}
       0  & i \\
        0 & 0
    \end{array}\right).
\end{eqnarray*}
Since $\kappa_\alpha\hat H\kappa_\alpha^{-1}=\hat H$, for any $\varphi\in Z^1((\mfg_B,K_B),\cP(\underline k-2)(\hat\chi_{\underline k}))$ we have 
$\varphi(\hat H)=0$ because $2n_{\rm id}-2n_c+2k_c-2\geq 2$ and no eigenspaces with eigenvalue 1 appear in $\cP(\underline k-2)(\hat\chi_{\underline k})$. Similarly,  
\begin{equation}\label{actionKBNs}
\left(\begin{array}{c}
    \kappa_\alpha N_1\kappa_\alpha^{-1} \\
     \kappa_\alpha N_2\kappa_\alpha^{-1}
\end{array}\right)=\left(\begin{array}{cc}
    {\rm Re}(\alpha^2) & {\rm Im}(\alpha^2) \\
     -{\rm Im}(\alpha^2)&{\rm Re}(\alpha^2) 
\end{array}\right)\left(\begin{array}{c}
     N_1  \\
     N_2 
\end{array}\right)
\mbox{ and }
\left(\begin{array}{c}
    \kappa_\alpha\varphi( N_1) \\
     \kappa_\alpha\varphi(  N_2)
\end{array}\right)=\left(\begin{array}{cc}
    {\rm Re}(\alpha^2) & {\rm Im}(\alpha^2) \\
     -{\rm Im}(\alpha^2)&{\rm Re}(\alpha^2) 
\end{array}\right)\left(\begin{array}{c}
     \varphi( N_1)  \\
     \varphi( N_2) 
\end{array}\right).
\end{equation}
This implies that $\varphi(N_1)-i\varphi(N_2)$ is an eigenvector with eigenvalue $\alpha^2$ and  $\varphi(N_2)-i\varphi(N_1)$ is an eigenvector with eigenvalue $\alpha^{-2}$. Since there are no eigenvectors with eigenvalue $\alpha^{-2}$, we conclude $\varphi(N_2)=i\varphi(N_1)$ and $\varphi(N_1)\in \C y_{\rm id}^{k_{\rm id}-2}x_c^{k_c-2}$. We have obtained that $Z^1((\cB,K_B),\cP(\underline k-2)(\hat\chi_{\underline k}) )$ is one dimensional. In fact, it is easy to check that $\Hom_{K_B}\left(\cB/\cK_B,\cP(\underline k-2)(\hat\chi_{\underline k}) \right)=Z^1((\cB,K_B),\cP(\underline k-2)(\hat\chi_{\underline k}) )\simeq \C$.

Note that the isomorphism \eqref{ShapiroGK} is provided by 
\begin{eqnarray}\label{Phieq}
\Phi:\Hom_{K_\C}\left(\bigwedge^m\mfg_\C/\cK_\C,\Hom(V(\underline k-2),D({\underline k})) \right)&\stackrel{\simeq}{\longrightarrow} &\Hom_{K_B}\left(\bigwedge^m\cB/\cK_B,\cP(\underline k-2)(\hat\chi_{\underline k}) \right),\\
\mu(\Phi(\varphi)(X))&=&\varphi(X)(\mu)(\kappa(1,0)),
\end{eqnarray}
for all $X\in\bigwedge^m\cB/\cK_B$.
Hence, to obtain $(\Phi c_1)(N_1)$ we use the isomorphism \eqref{isoBD} and the  explicit description of $c_1(\hat H)$ of \eqref{descc1C}: (recall that $\tilde W=\kappa_1^{-1}\hat H\kappa_1$)
\begin{eqnarray*}
    (y_{\rm id}^{k_{\rm id}-2}x_c^{k_c-2})^\vee (\Phi c_1(N_1))&=&c_1(N_1)((y_{\rm id}^{k_{\rm id}-2}x_c^{k_c-2})^\vee)(\kappa(1,0))=\frac{1}{2}c_1(\kappa_1^{-1}\hat H\kappa_1)((y_{\rm id}^{k_{\rm id}-2}x_c^{k_c-2})^\vee)(\kappa(1,0))\\
    &=&\frac{1}{2}c_1(\hat H)(\kappa_1(y_{\rm id}^{k_{\rm id}-2}x_c^{k_c-2})^\vee)(\kappa_1^{-1})\\
    &=&-2\frac{(k_{\rm id}-1)(k_c-1)}{(2N_{\chi_{\underline k}}-2)(2N_{\chi_{\underline k}}-3)}\varphi_{N_{\chi_{\underline k}}-1}(t_{N_{\chi_{\underline k}}-2}(\kappa_1(y_{\rm id}^{k_{\rm id}-2}x_c^{k_c-2})^\vee)^*)(\kappa_1^{-1})\\
    &=&-2\frac{(k_{\rm id}-1)(k_c-1)}{(2N_{\chi_{\underline k}}-2)(2N_{\chi_{\underline k}}-3)}\kappa_1t_{N_{\chi_{\underline k}}-2}((y_{\rm id}^{k_{\rm id}-2}x_c^{k_c-2})^\vee)^*\left(\left(\frac{y+x}{\sqrt{2}}\right)^{2N_{\chi_{\underline k}}-2}\right)\\
    &=&-(k_{\rm id}-1)(k_c-1)t_{N_{\chi_{\underline k}}-2}((y_{\rm id}^{k_{\rm id}-2}x_c^{k_c-2})^\vee)\left(y^{2N_{\chi_{\underline k}}-4}\right).
\end{eqnarray*}
Recall that, by definition,
\[
t_{N_{\chi_{\underline k}}-2}:V(\underline{k}-2)\longrightarrow \cP(2({N_{\chi_{\underline k}}-2}))\simeq V(2({N_{\chi_{\underline k}}-2})),\qquad t_{N_{\chi_{\underline k}}-2}(\mu)=\mu\left(\left|\begin{array}{cc}x_{\rm id}&y_{\rm id}\\x&y\end{array}\right|^{k_{\rm id}-2}\left|\begin{array}{cc}-y_{c}&x_{c}\\x&y\end{array}\right|^{k_c-2}\right).
\]
Hence, $t_{N_{\chi_{\underline k}}-2}((y_{\rm id}^{k_{\rm id}-2}x_c^{k_c-2})^\vee)=x^{2N_{\chi_{\underline k}}-4}\in\cP(2N_{\chi_{\underline k}}-4)$ corresponds to 
$t_{N_{\chi_{\underline k}}-2}((y_{\rm id}^{k_{\rm id}-2}x_c^{k_c-2})^\vee)=(y^{2N_{\chi_{\underline k}}-4})^\vee\in V(2N_{\chi_{\underline k}}-4)$.
We conclude that
\[
\Phi c_1(N_1)=(k_{\rm id}-1)(1-k_c)y_{\rm id}^{k_{\rm id}-2}x_c^{k_c-2}=-i\Phi c_1(N_2);\qquad \Phi c_1(\hat H)=0.
\]
\end{remark}

\subsubsection{2-cocycles associated with $D(\underline{k})$} Given a $(\mfg_\C,K_\C)$-module $M$, we want to characterize the space of 2-cocycles $Z^2((\mfg_\C,K_\C),M)$ with values in $M$ defined as in \eqref{def2cocycles}.
Similarly as before, any $\varphi_2\in Z^2((\mfg_\C,K_\C),M)$ is completely determined by the images $\varphi_2(\hat H,\tilde W)$, $\varphi_2(\hat H,H)$ and $\varphi_2(H,\tilde W)$. Moreover, using $K_\C$-equivariance and Remark \ref{relLiealg} we find that
\begin{eqnarray*}
   \varphi_2(\tilde W,H)&=&\varphi_2(\kappa_1^{-1}\hat H\kappa_1,H)=\kappa_1^{-1}\varphi_2(\hat H,\kappa_1H\kappa_1^{-1})=\kappa_1^{-1}\varphi_2(\hat H,H),\\
   \varphi_2(\tilde W,H)&=&\varphi_2(\tilde W,\kappa_2^{-1}\hat H\kappa_2,H)=\kappa_2^{-1}\varphi_2(\kappa_2\tilde W\kappa_2^{-1},\hat H)=\kappa_2^{-1}\varphi_2(\tilde W,\hat H).
\end{eqnarray*}
Thus, $\varphi_2$ is completely determined by the value $\varphi_2(\tilde W,H)$. 
\begin{proposition}\label{propdefc2}
    The element
$c_2\in \Hom_{K_\C}\left(\bigwedge^2\mfg_\C/\cK_\C,\Hom(V(\underline k-2),D({\underline k}))\right)$ given by
\[
c_2(\tilde W,H)(\mu):=\left(\hat H(s\mu)-s(\hat H\mu)\right)
\]
defines a 2-cocycle  whose class in $H^2\left((\mfg_\C,K_\C),\Hom(V(\underline k-2),D({\underline k}))\right)$ is non-trivial.
\end{proposition}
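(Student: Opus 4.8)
The plan is to settle the three assertions — that $c_2$ extends to a well-defined $K_\C$-equivariant $2$-cochain, that it is a cocycle, and that its cohomology class is non-zero — by (bounding) the relevant $(\mfg_\C,K_\C)$-cohomology groups, which is bookkeeping of $K_\C\cong\SO(3)$-types, and then invoking the already-available description of $\delta s=c_1(D)$. For well-definedness, observe that the real trace form $\langle X,Y\rangle={\rm Tr}(XY)$ restricts to an ${\rm Ad}(K_\C)$-invariant positive definite inner product on $\mfg_\C/\cK_\C$ for which $\{\hat H,\tilde W,H\}$ is an orthogonal basis of common norm, so the associated Hodge star furnishes a $K_\C$-equivariant isomorphism $\star\colon\mfg_\C/\cK_\C\stackrel{\simeq}{\longrightarrow}\bigwedge^2(\mfg_\C/\cK_\C)$ with $\star(\hat H)$ a non-zero multiple of $\tilde W\wedge H$. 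Hence $c_1\circ\star^{-1}$ is a well-defined element of $\Hom_{K_\C}(\bigwedge^2\mfg_\C/\cK_\C,\Hom(V(\underline k-2),D(\underline k)))$, and on $(\tilde W,H)$ it coincides, up to a harmless scalar, with $\mu\mapsto \hat H(s\mu)-s(\hat H\mu)=c_1(\hat H)(\mu)$; this is the $c_2$ of the statement. In particular $c_2(\tilde W,H)(\mu)$ does land in $D(\underline k)$: applying the $\mfg_\C$-equivariant map $\rho$ and using $\rho\circ s={\rm id}$ gives $\rho(\hat H(s\mu)-s(\hat H\mu))=\hat H\mu-\hat H\mu=0$.

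Next, the cocycle condition is automatic. As a $K_\C$-module, $\bigwedge^3(\mfg_\C/\cK_\C)\cong\bigwedge^3\R^3$ is the trivial representation, so the space of $3$-cochains equals $\Hom(V(\underline k-2),D(\underline k))^{K_\C}=\Hom_{K_\C}(V(\underline k-2),D(\underline k))$; this vanishes, because the $K_\C$-types $V(2n)$ of $V(\underline k-2)$ have $n\leq N_{\chi_{\underline k}}-2$ while those of $D(\underline k)$ have $n\geq N_{\chi_{\underline k}}-1$, so the two modules share no constituent. Thus $C^3=0$ and $dc_2=0$.

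For non-triviality, first note that there are no $2$-coboundaries: via the Shapiro isomorphism $\Phi$ of \eqref{Phieq} the $(\mfg_\C,K_\C)$-cochain complex with coefficients in $\Hom(V(\underline k-2),D(\underline k))$ is identified with the $(\cB,K_B)$-complex with coefficients in $\cP(\underline k-2)(\hat\chi_{\underline k})$, and the analysis in Remark \ref{Altdescc1} shows that every $1$-cochain of the latter is already a cocycle, so $C^1=Z^1$ and $B^2=d(C^1)=d(Z^1)=0$. Hence $H^2=Z^2=C^2$, and it remains to check that $c_2\neq0$ as a cochain. Since a cochain on $\mfg_\C/\cK_\C$ is determined by its value on $\hat H$, it suffices that $c_1(\hat H)\neq0$; and $c_1$ represents the extension \eqref{exseq2}, which does not split because $D(\underline k)$ is the unique non-trivial sub-$(\mfg_\C,K_\C)$-module of $\tilde\cB(\chi_{\underline k})$. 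Equivalently, one reads off $c_1(\hat H)=2\delta s=-4\binom{2N_{\chi_{\underline k}}-4}{k_{\rm id}-2}\varphi_{N_{\chi_{\underline k}}-1}(t_{N_{\chi_{\underline k}}-2}(\mu)^{*})\neq0$ directly from \eqref{descc1C}. Therefore the class of $c_2$ in $H^2((\mfg_\C,K_\C),\Hom(V(\underline k-2),D(\underline k)))$ is non-zero.

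The only real obstacle I anticipate is the organisational one of getting the $K_\C\cong\SO(3)$-type bookkeeping right, so as to pin down $C^1=Z^1$, $B^2=0$ and $C^3=0$ cleanly; the Hodge-star observation and the Shapiro identification of Remark \ref{Altdescc1} are the tools that make this manageable. Once these vanishings are in place the proposition is essentially formal, the remaining inputs — the non-splitting of \eqref{exseq2} and the formula for $\delta s$ in \eqref{descc1C} — being already available.
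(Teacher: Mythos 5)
Your proof is correct, and it takes a noticeably cleaner route than the paper's in two of the three steps. For well-definedness, the paper implicitly relies on the discussion preceding the proposition (that a $K_\C$-equivariant $2$-cochain is determined by its value on $\tilde W\wedge H$) without explicitly constructing the extension; your Hodge-star observation $c_2\propto c_1\circ\star^{-1}$, using the $K_\C$-invariant trace form on $\mfg_\C/\cK_\C$ and the fact that $K_\C$ acts orientably, supplies a genuinely conceptual construction. For the cocycle condition, the paper computes $d\varphi_2(\hat H,N_1,N_2)=0$ explicitly on the $(\cB,K_B)$-side; your observation that $C^3\simeq\Hom_{K_\C}(V(\underline k-2),D(\underline k))=0$ because the $\SU(2)$-types $V(2n)$ of $V(\underline k-2)$ (with $n\leq N_{\chi_{\underline k}}-2$) and of $D(\underline k)$ (with $n\geq N_{\chi_{\underline k}}-1$) are disjoint is shorter and more transparent, and makes $Z^2=C^2$ immediate. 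For non-triviality both arguments rely on the same input from Remark \ref{Altdescc1} to deduce $B^2=d(Z^1)=0$, but where the paper proceeds to compute $\Phi(c_2)(\hat H,N_1)$ explicitly, you close with the simpler remark that a $K_\C$-equivariant $2$-cochain is determined by its value on $\tilde W\wedge H$, so $c_2\neq 0$ reduces to $c_1(\hat H)\neq 0$, read off from either the non-splitting of \eqref{exseq2} or the formula \eqref{descc1C}. The one place where your argument still goes through Shapiro and the $K_B$-weight bookkeeping is $C^1=Z^1$; if you wanted to stay entirely on the $(\mfg_\C,K_\C)$-side, a Clebsch--Gordan count would give $\dim C^1=1$ directly (the only $\SU(2)$-type pair $(n,m)$ with $|n-m|\leq 1$ and $n\leq N_{\chi_{\underline k}}-2\leq m-1$ is $n=N_{\chi_{\underline k}}-2$, $m=N_{\chi_{\underline k}}-1$), and since $c_1$ is a non-zero cocycle this forces $Z^1=C^1$ — but your invocation of Remark \ref{Altdescc1} is equally valid.
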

\begin{proof}
    We have seen in Remark \ref{Altdescc1} that it is more convenient to work with the description $D(\underline k)\simeq \tilde \cB(\hat\chi_{\underline k})$ as an induced representation because $H^2((\mfg_\C,K_\C),\Hom(V(\underline k-2),D({\underline k})))=H^2((\cB,K_B),\cP(\underline k-2)(\hat\chi_{\underline k}))$, and the corresponding cocycles $\varphi_2\in \Hom_{K_B}\left(\bigwedge^2\cB/\cK_B,\cP(\underline k-2)(\hat\chi_{\underline k}) \right)$ are easier to describe.
    By equation \eqref{actionKBNs} we have that, for any $\kappa_\alpha=\kappa(\alpha,0)\in K_B$,
    \[
    \kappa_\alpha\varphi_2(N_1,N_2)=\varphi_2(\kappa_\alpha N_1\kappa_\alpha^{-1},\kappa_\alpha N_1\kappa_\alpha^{-1})=({\rm Re}(\alpha^2)^2+{\rm Im}(\alpha^2)^2)\varphi_2(N_1,N_2)=\varphi_2(N_1,N_2),
    \]
    and since no eigenspaces with eigenvalue 1 appear in $\cP(\underline k-2)(\hat\chi_{\underline k})$ we conclude $\varphi(N_1,N_2)=0$. Similarly,
    \[\left(\begin{array}{c}
    \kappa_\alpha\varphi_2(\hat H, N_1) \\
     \kappa_\alpha\varphi_2(\hat H,  N_2)
\end{array}\right)=\left(\begin{array}{cc}
    {\rm Re}(\alpha^2) & {\rm Im}(\alpha^2) \\
     -{\rm Im}(\alpha^2)&{\rm Re}(\alpha^2) 
\end{array}\right)\left(\begin{array}{c}
     \varphi_2(\hat H, N_1)  \\
     \varphi_2(\hat H, N_2) 
\end{array}\right).
\]
This implies that $\varphi_2(\hat H,N_1)-i\varphi_2(\hat H,N_2)$ is an eigenvector with eigenvalue $\alpha^2$ and  $\varphi_2(\hat H,N_2)-i\varphi_2(\hat H,N_1)$ is an eigenvector with eigenvalue $\alpha^{-2}$. Since there are no eigenvectors with eigenvalue $\alpha^{-2}$ in $\cP(\underline k-2)(\hat\chi_{\underline k})$, we conclude $\varphi_2(\hat H,N_2)=i\varphi_2(\hat H,N_1)$ and $\varphi_2(\hat H,N_1)\in \C y_{\rm id}^{k_{\rm id}-2}x_c^{k_c-2}$. Since $[\hat H,N_1]=2N_1$, $[\hat H,N_2]=2N_2$, $[N_1,N_2]=0$, and $N_i$ acts trivially on  $\cP(\underline k-2)(\hat\chi_{\underline k})$, we obtain
\[
d\varphi_2(\hat H,N_1,N_2)=\hat H\varphi_2(N_1,N_2)-N_1\varphi_2(\hat H,N_2)+N_2\varphi_2(\hat H,N_1)-4\varphi_2(N_1,N_2)=0,
\]
and, therefore,
\[
\Hom_{K_B}\left(\bigwedge^2\cB/\cK_B,\cP(\underline k-2)(\hat\chi_{\underline k}) \right)=Z^2((\cB,K_B),\cP(\underline k-2)(\hat\chi_{\underline k}) )\simeq\C.
\]
Moreover, there are no coboundaries because $\Hom_{K_B}(\cB/\cK_B,\cP(\underline k-2)(\hat\chi_{\underline k}))=Z^1((\cB,K_B),\cP(\underline k-2)(\hat\chi_{\underline k}) )$.

It remains to check that, if $\Phi$ is the identification of \eqref{Phieq}, $\Phi(c_2)\in\Hom_{K_B}\left(\bigwedge^2\cB/\cK_B,\cP(\underline k-2)(\hat\chi_{\underline k}) \right)$ defines a non-trivial homomorphism. Indeed, since $c_2(\tilde W, H)=c_1(\hat H)$, for all $\mu\in V(\underline k-2)$
\begin{eqnarray*}
\mu(\Phi(c_2)(\hat H,N_1))&=&\frac{1}{2}c_2(\hat H,\tilde W)(\mu)(\kappa(1,0))=\frac{1}{2}\kappa_2 c_2( H,\tilde W)(\mu)(\kappa(1,0))=\frac{-1}{2}\kappa_2 c_1(\hat H)(\mu)(\kappa(1,0))\\
&=&\frac{1}{2}c_1(H)(\mu)(\kappa(1,0))=-\mu(\Phi(c_1)(N_2))=\mu\left(i(k_{\rm id}-1)(k_c-1)y_{\rm id}^{k_{{\rm id}}-2}x_c^{k_c-2}\right).
\end{eqnarray*}
Hence, $\Phi(c_2)(\hat H,N_1)=i(k_{\rm id}-1)(k_c-1)y_{\rm id}^{k_{{\rm id}}-2}x_c^{k_c-2}$ and the result follows.
\end{proof}

\subsubsection{Prasanna--Venkatesh action}
Let us consider the Cartan involution $\theta(X)=-X^\ast$, and the fundamental Cartan subalgebra 
$\left\langle \hat H,\hat H_i\right\rangle\subset\mfg_\C$.
It is clear that $\mathfrak{a}=\langle\hat H\rangle\subset \mfg_\C$ is the $-1$ eigenspace for $\theta$. We write $\mathfrak{a}^\ast$ for its dual. Imitating \cite{PV} in case of trivial coefficients, we aim to define an action of $\mathfrak{a}^\ast$ on $H^\ast((\mfg_\C,K_\C),\Hom(V(\underline k-2),D({\underline k})))$: Indeed, we know that 
\[
H^i((\mfg_\C,K_\C),\Hom(V(\underline k-2),D({\underline k})))\simeq H^i((\cB,K_B),\cP(\underline k-2)(\hat\chi_{\underline k})).
\]
Moreover, we have a decomposition of $\cB/\cK_B=\mathfrak{a}\oplus \mathfrak{u}$, where $\mathfrak{u}=\langle N_1,N_2\rangle$ comes from the unipotent radical of $\cB$. Such a decomposition provides an embedding $\mathfrak{a}^\ast\subset (\cB/\cK_B)^\ast$. Thus, we can consider the contraction
\[
\lrcorner:\mathfrak{a}^\ast\times\bigwedge^m(\cB/\cK_B)\longrightarrow\bigwedge^{m-1}(\cB/\cK_B);\qquad X\lrcorner(x_1\wedge\cdots\wedge x_m)=\sum_{j=1}^m(-1)^{j-1}\langle X,x_i\rangle x_1\wedge\cdots x_{j-1}\wedge x_{j+1}\wedge\cdots x_m.
\]
Thus, the action of $X\in\mathfrak{a}^\ast$ on the $(\mfg,K)$-cohomology 
\[
X:H^1((\mfg_\C,K_\C),\Hom(V(\underline k-2),D({\underline k})))\longrightarrow H^2((\mfg_\C,K_\C),\Hom(V(\underline k-2),D({\underline k})))
\]
is induced by the action on cocycles
\[
Xf\in \Hom_{K_B}\left(\bigwedge^2\cB/\cK_B,\cP(\underline k-2)(\hat\chi_{\underline k})\right);\qquad Xf(W)=f(X\lrcorner W).
\]
\begin{lemma}\label{LemPraVen}
    Let $\hat H^\ast\in\mathfrak{a}^\ast$ be the functional such that $\hat H^\ast(\hat H)=1$. Then we have 
    \[
    (\hat H^\ast c_1)=ic_2\in H^2((\mfg_\C,K_\C),\Hom(V(\underline k-2),D({\underline k}))).
    \]
\end{lemma}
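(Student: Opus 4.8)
The plan is to transport everything to the induced-representation model $D(\underline k)\simeq\tilde\cB(\hat\chi_{\underline k})$, where by Remark~\ref{Altdescc1} and Proposition~\ref{propdefc2} the relevant cochain spaces are one‑dimensional and completely explicit. Via the isomorphism $\Phi$ of \eqref{Phieq}, together with \eqref{ShapiroGK}, the claimed identity $\hat H^\ast c_1=ic_2$ in $H^2((\mfg_\C,K_\C),\Hom(V(\underline k-2),D(\underline k)))$ is, by the very definition of the $\mathfrak a^\ast$‑action, equivalent to the identity $\hat H^\ast(\Phi c_1)=i\,\Phi c_2$ in $H^2((\cB,K_B),\cP(\underline k-2)(\hat\chi_{\underline k}))$, and I will verify it already at the level of cochains. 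Set $P:=y_{\rm id}^{k_{\rm id}-2}x_c^{k_c-2}$. Recall from Remark~\ref{Altdescc1} that $\Phi c_1\in\Hom_{K_B}(\cB/\cK_B,\cP(\underline k-2)(\hat\chi_{\underline k}))$ is determined by
\[
\Phi c_1(\hat H)=0,\qquad \Phi c_1(N_1)=(k_{\rm id}-1)(1-k_c)P,\qquad \Phi c_1(N_2)=i\,\Phi c_1(N_1),
\]
and from Proposition~\ref{propdefc2} that $\Phi c_2\in\Hom_{K_B}(\textstyle\bigwedge^2\cB/\cK_B,\cP(\underline k-2)(\hat\chi_{\underline k}))$ is determined by
\[
\Phi c_2(N_1,N_2)=0,\qquad \Phi c_2(\hat H,N_1)=i(k_{\rm id}-1)(k_c-1)P,\qquad \Phi c_2(\hat H,N_2)=i\,\Phi c_2(\hat H,N_1).
\]

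Next I would compute the contraction by $\hat H^\ast$ on the basis $\{\hat H\wedge N_1,\ \hat H\wedge N_2,\ N_1\wedge N_2\}$ of $\bigwedge^2(\cB/\cK_B)$. Since $\hat H^\ast\in\mathfrak a^\ast\subset(\cB/\cK_B)^\ast$ is dual to $\hat H$ and vanishes on $\mathfrak u=\langle N_1,N_2\rangle$, the definition of $\lrcorner$ gives
\[
\hat H^\ast\lrcorner(\hat H\wedge N_1)=N_1,\qquad \hat H^\ast\lrcorner(\hat H\wedge N_2)=N_2,\qquad \hat H^\ast\lrcorner(N_1\wedge N_2)=0.
\]
Hence $\hat H^\ast(\Phi c_1)$ is the $K_B$‑equivariant homomorphism sending $\hat H\wedge N_1\mapsto\Phi c_1(N_1)$, $\hat H\wedge N_2\mapsto\Phi c_1(N_2)$ and $N_1\wedge N_2\mapsto 0$.

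Finally I would compare with $i\,\Phi c_2$ entry by entry, using the two relations $\Phi c_1(N_2)=i\,\Phi c_1(N_1)$ and $\Phi c_2(\hat H,N_2)=i\,\Phi c_2(\hat H,N_1)$ recalled above. On $\hat H\wedge N_1$: $i\,\Phi c_2(\hat H,N_1)=i\cdot i(k_{\rm id}-1)(k_c-1)P=(k_{\rm id}-1)(1-k_c)P=\Phi c_1(N_1)$; on $\hat H\wedge N_2$ both sides are then $i$ times the previous value; and on $N_1\wedge N_2$ both vanish. Therefore $\hat H^\ast(\Phi c_1)=i\,\Phi c_2$ as cochains, hence a fortiori as classes in $H^2$. (Alternatively, granting that the $\mathfrak a^\ast$‑action carries cocycles to cocycles, one may invoke $Z^2((\cB,K_B),\cP(\underline k-2)(\hat\chi_{\underline k}))\simeq\C$ from the proof of Proposition~\ref{propdefc2}: since this space is spanned by $\Phi c_2$, it suffices to compare the two sides on the single element $\hat H\wedge N_1$, where the factor is forced to be $i$.) There is no genuine obstacle: the substantive input is the explicit determination of $c_1$ and $c_2$ already carried out in Remark~\ref{Altdescc1} and Proposition~\ref{propdefc2}, and the only point demanding care is the bookkeeping of the powers of $i$, in particular the identity $i\cdot i=-1$ that matches the coefficient $1-k_c$ of $\Phi c_1(N_1)$ against $-(k_c-1)$.
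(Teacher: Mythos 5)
Your proof is correct and follows essentially the same route as the paper: transport everything through $\Phi$ to the one‑dimensional cochain spaces $\Hom_{K_B}(\bigwedge^\bullet\cB/\cK_B,\cP(\underline k-2)(\hat\chi_{\underline k}))$, use the explicit values of $\Phi c_1$ and $\Phi c_2$ already computed in Remark~\ref{Altdescc1} and the proof of Proposition~\ref{propdefc2}, and compare entry by entry on the basis of $\bigwedge^2(\cB/\cK_B)$. The only cosmetic difference is your choice of ordered wedge basis ($\hat H\wedge N_i$ rather than $N_i\wedge\hat H$), and your aside on the one-dimensionality of $Z^2$ is a nice observation the paper leaves implicit.
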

\begin{proof}
In Remark \ref{Altdescc1} and in the proof of Proposition \ref{propdefc2}, we have explicit descriptions of $\Phi c_1$ and $\Phi c_2$, where $\Phi$ is the morphism \eqref{Phieq}. Thus, we can compare $\Phi(\hat H^\ast c_1)$ and $\Phi c_2$: By definition,
\begin{eqnarray*}
    \Phi(\hat H^\ast c_1)(N_1, N_2)&=&\Phi(c_1)(\hat H^\ast\lrcorner (N_1\wedge N_2))=\langle \hat H^\ast,N_1\rangle \Phi(c_1)(N_2)-\langle \hat H^\ast,N_2\rangle \Phi(c_1)(N_1)=0;\\
    \Phi(\hat H^\ast c_1)(N_1, \hat H)&=&\Phi(c_1)(\hat H^\ast\lrcorner (N_1\wedge \hat H))=\langle \hat H^\ast,N_1\rangle \Phi(c_1)(\hat H)-\langle \hat H^\ast,\hat H\rangle \Phi(c_1)(N_1)=-\Phi(c_1)(N_1);\\
     \Phi(\hat H^\ast c_1)(N_2, \hat H)&=&\Phi(c_1)(\hat H^\ast\lrcorner (N_2\wedge \hat H))=\langle \hat H^\ast,N_2\rangle \Phi(c_1)(\hat H)-\langle \hat H^\ast,\hat H\rangle \Phi(c_1)(N_2)=-i\Phi(c_1)(N_1).
    \end{eqnarray*}
    On the other side,
    \[
    \Phi c_2(N_1, N_2)=0;\qquad
    \Phi c_2(N_1, \hat H)=\Phi c_1(N_2)=i\Phi c_1(N_1);\qquad
     \Phi c_2(N_2, \hat H)=i\Phi c_2(N_1, \hat H)=-\Phi c_1(N_1).
    \]
    Hence, the result follows.
\end{proof}

\subsubsection{3-cocycles associated with $D(\underline{k})$}

In the previous sections we have constructed classes
\[
c_1\in H^1((\mfg_\C,K_\C),M),\qquad c_2\in H^2((\mfg_\C,K_\C),M),\qquad M:=\Hom(V(\underline k-2),D({\underline k})).
\]
Note that we have a natural $(\mfg_\C,K_\C)$-equivariant bilinear pairing
\[
M\times M\longrightarrow D({\underline k})\otimes D({\underline k});\qquad (\varphi_1,\varphi_2)\mapsto\varphi_1\varphi_2(\underline{\Upsilon});\quad \underline{\Upsilon}=\left|\begin{array}{cc}
    \underline x_1 & \underline y_1 \\
    \underline x_2 & \underline y_2
\end{array}\right|^{\underline k-2}\in\cP(\underline{k}-2)^{\otimes 2}\simeq V(\underline{k}-2)^{\otimes 2}.
\]
It is $(\mfg_\C,K_\C)$-equivariant because $\underline{\Upsilon}$ is $\GL_2(\C)$-invariant. 
If we recall the morphism $\delta s$ of \eqref{secdefds}, then the following result characterizes the cup-product $c_1\cup c_2$ with respect to the above pairing:
\begin{proposition}\label{propcupprod2}
    The cup-product $(c_1\cup c_2)\in H^3((\mfg_\C,K_\C),D({\underline k})^{\otimes 2})$ with respect to $(\;,\;)$ is provided by the 3-cocycle 
    \[
    c_3(\hat H,\tilde W,H)=\frac{12}{{\rm vol}(K_\C)}\int_{K_\C}k\ast \delta s(\underline{\Upsilon})d k\qquad c_3\in Z^3((\mfg_\C,K_\C),D({\underline k})^{\otimes 2})=\Hom_{K_\C}\left(\bigwedge^3\mfg_\C/\cK_\C,D({\underline k})^{\otimes 2}\right),
    \]
    for any choice of a Haar measure $dk$ of $K_\C$.
\end{proposition}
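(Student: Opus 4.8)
The plan is to follow the template of Proposition \ref{propcupprod1}. Since $\mfg_\C/\cK_\C=\R\hat H\oplus\R\tilde W\oplus\R H$ is three-dimensional, a $3$-cochain is determined by its value on $\hat H\wedge\tilde W\wedge H$, and the standard cup-product formula in $(\mfg_\C,K_\C)$-cohomology for the pairing $(\;,\;)$ gives
\[
(c_1\cup c_2)(\hat H,\tilde W,H)=c_1(\hat H)c_2(\tilde W,H)(\underline\Upsilon)-c_1(\tilde W)c_2(\hat H,H)(\underline\Upsilon)+c_1(H)c_2(\hat H,\tilde W)(\underline\Upsilon).
\]
First I would express the six inputs in terms of $\delta s$: by the definition of $c_2$ in Proposition \ref{propdefc2}, $c_2(\tilde W,H)=c_1(\hat H)=2\delta s$, and the $K_\C$-equivariance of $c_1$ and $c_2$ combined with the conjugation identities $\tilde W=\kappa_1^{-1}\hat H\kappa_1$, $H=\kappa_2^{-1}\hat H\kappa_2$, $\kappa_1^{-1}H\kappa_1=H$, $\kappa_2^{-1}\tilde W\kappa_2=\tilde W$ of Remark \ref{relLiealg} give $c_1(\tilde W)=\kappa_1^{-1}c_1(\hat H)$, $c_1(H)=\kappa_2^{-1}c_1(\hat H)$, $c_2(\hat H,H)=\kappa_1 c_2(\tilde W,H)$ and $c_2(\hat H,\tilde W)=-\kappa_2 c_2(\tilde W,H)$, the $\kappa_i$ acting on $M=\Hom(V(\underline k-2),D(\underline k))$. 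Writing $\phi:=c_1(\hat H)=2\delta s$ and using that $\underline\Upsilon$ is $\GL_2(\C)$-invariant (hence $K_\C$-invariant for the diagonal action), the right-hand side collapses to $(\phi\otimes\phi)(\underline\Upsilon)-\bigl((\kappa_1^{-1}\phi)\otimes(\kappa_1\phi)\bigr)(\underline\Upsilon)-\bigl((\kappa_2^{-1}\phi)\otimes(\kappa_2\phi)\bigr)(\underline\Upsilon)$.

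The crucial structural observation is that, by \eqref{descc1C}, $2\delta s(\mu)=c_1(\hat H)(\mu)=-4\binom{2N_{\chi_{\underline k}}-4}{k_{\rm id}-2}\varphi_{N_{\chi_{\underline k}}-1}\bigl(t_{N_{\chi_{\underline k}}-2}(\mu)^\ast\bigr)$, so $\phi$, and hence every term above, takes values in the minimal $K_\C$-type $\cB(\chi_{\underline k},N_{\chi_{\underline k}}-1)\simeq V(2N_{\chi_{\underline k}}-2)$ of $D(\underline k)$. Thus $(c_1\cup c_2)(\hat H,\tilde W,H)$ lies in $\cB(\chi_{\underline k},N_{\chi_{\underline k}}-1)^{\otimes 2}$; and since $c_1\cup c_2$ is a $3$-cocycle and $\wedge^3(\mfg_\C/\cK_\C)$ is the trivial $K_\C$-module (the determinant of the isotropy representation of $K_\C\simeq\SO(3)$), it moreover lies in the \emph{one-dimensional} space $\bigl(\cB(\chi_{\underline k},N_{\chi_{\underline k}}-1)^{\otimes 2}\bigr)^{K_\C}$. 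The right-hand side of the asserted identity, $\int_{K_\C}k\ast\delta s(\underline\Upsilon)\,dk=\vol(K_\C)\cdot\mathrm{Proj}_{K_\C}\bigl((\delta s\otimes\delta s)(\underline\Upsilon)\bigr)$, lies in the same one-dimensional space, so the two sides are automatically proportional and only the proportionality constant remains to be identified.

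To find the constant, I would evaluate both sides against one fixed nonzero functional on $\bigl(\cB(\chi_{\underline k},N_{\chi_{\underline k}}-1)^{\otimes 2}\bigr)^{K_\C}$, such as the $\SU(2)$-invariant bilinear form of \eqref{dualVP} on the minimal $K$-type. For the left-hand side this reduces, exactly as the computation of the element $I$ in Proposition \ref{propcupprod1}, to expanding the $\GL_2(\C)$-invariant $\underline\Upsilon$ in an $\SU(2)$-adapted basis of $V(\underline k-2)^{\otimes 2}$, inserting the explicit $\delta s$ of \eqref{descc1C}, and conjugating by $\kappa_1=\kappa(1/\sqrt 2,1/\sqrt 2)$ and $\kappa_2=\kappa(1/\sqrt 2,-i/\sqrt 2)$ as in Remark \ref{relLiealg}; for the right-hand side it reduces to the orthogonality relations for matrix coefficients of $\SU(2)$. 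Matching the two should produce the factor $12/\vol(K_\C)$. The main obstacle will be the bookkeeping in this last step: unlike in the $\PGL_2(\R)$ case, where $D(k)$ is spanned by the two functions $f_{\pm k/2}$, the minimal $K_\C$-type here is the full $(2N_{\chi_{\underline k}}-1)$-dimensional $V(2N_{\chi_{\underline k}}-2)$, so the $\SU(2)$-integrals and the operators $t_n$ and $(\,\cdot\,)^\ast$ entering \eqref{descc1C} must be tracked with care; it may help to record the needed restrictions of $c_1$ and $c_2$ to the minimal $K$-type via the Borel-model description of Remark \ref{Altdescc1} before assembling the pairing with $\underline\Upsilon$.
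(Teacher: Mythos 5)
Your initial reduction coincides exactly with the paper's: the same cup-product expansion, the same substitutions $c_1(\hat H)=c_2(\tilde W,H)=2\delta s$, and the same conjugation identities from Remark~\ref{relLiealg} (your $\kappa_i^{-1}\phi$ equals the paper's $-\kappa_i\phi$, since $\kappa_i^2\hat H\kappa_i^{-2}=-\hat H$ forces $\kappa_i^2\phi=-\phi$). Your structural observation that both sides land in $\left(\cB(\chi_{\underline k},N_{\chi_{\underline k}}-1)^{\otimes 2}\right)^{K_\C}$, which is one-dimensional because $\bigwedge^3(\mfg_\C/\cK_\C)$ is the trivial $\mathrm{SO}(3)$-module, is also correct.

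Where you diverge from the paper is the endgame, and there your proposal stalls. You propose to identify the proportionality constant by pairing both sides against an invariant bilinear form, note (accurately) that tracking $t_n$ and $(\,\cdot\,)^\ast$ from \eqref{descc1C} through that pairing will be messy, and then only assert that this ``should produce'' the factor $12/\mathrm{vol}(K_\C)$. That is the entire content of the proposition, and it is not established. The paper sidesteps this bookkeeping entirely: it first uses $\mathrm{GL}_2(\C)$-invariance of $\underline\Upsilon$ to collapse your three-term expression into $c_3(\hat H,\tilde W,H)=4\,(\mathrm{id}+\kappa_1+\kappa_2)\,\delta s(\underline\Upsilon)$, and then proves directly that
\[
\int_{\SU(2)}\kappa(\alpha,\beta)\ast\delta s(\underline\Upsilon)\,d(\alpha,\beta)=\tfrac{1}{3}\,(\mathrm{id}+\kappa_1+\kappa_2)\,\delta s(\underline\Upsilon)
\]
by expanding $\kappa(\alpha,\beta)c_1(\hat H)$ as
\[
(|\alpha|^2-|\beta|^2)\,c_1(\hat H)(\kappa\mu)+2\,\mathrm{Re}(\alpha\beta)\,\kappa_1 c_1(\hat H)(\kappa_1^{-1}\kappa\mu)-2\,\mathrm{Im}(\alpha\beta)\,\kappa_2 c_1(\hat H)(\kappa_2^{-1}\kappa\mu)
\]
and using orthogonality of $(|\alpha|^2-|\beta|^2)$, $2\mathrm{Re}(\alpha\beta)$, $2\mathrm{Im}(\alpha\beta)$ on $\SU(2)$ (each square integrating to $\tfrac13$). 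Both the left side and the integral are then computed as multiples of the \emph{same vector}, so $c_3=12\int$ is read off with no pairing, no invariant functional, and no contact with the $t_n,(\,\cdot\,)^\ast$ operators at all. You should either carry out your pairing computation in detail, or, more economically, look for a way to express $\kappa(\alpha,\beta)\phi$ as a combination of $\phi$, $\kappa_1\phi$, $\kappa_2\phi$ with scalar coefficients, which is precisely what makes the factor of $\tfrac13$ appear cleanly.
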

\begin{proof}
    The cup product $c_1\cup c_2$ is represented by the 3-cocycle
    \[
    c_3(\hat H,\tilde W,H)=c_1(\hat H)c_2(\tilde W,H)(\underline{\Upsilon})-c_1(\tilde W)c_2(\hat H,H)(\underline{\Upsilon})+c_1(H)c_2(\hat H,\tilde W)(\underline{\Upsilon}).
    \]
    By definition $c_1(\hat H)=c_2(\tilde W,H)=2\delta s$. Moreover, by Remark \ref{relLiealg}, we have that 
    \begin{eqnarray*}
        c_2(\hat H,H)=\kappa_1 c_2(\tilde W,H);&\qquad& c_2(\tilde W,\hat H)=\kappa_2 c_2(\tilde W,H)\\
        c_1(\tilde W)=-\kappa_1c_1(\hat H);&\qquad& c_1(H)=-\kappa_2c_1(\hat H).
    \end{eqnarray*}
Thus, by the $\PGL_2(\C)$ equivariance of $\underline\Upsilon$,
\begin{eqnarray*}
    c_1(\tilde W)c_2(\hat H,H)(\underline{\Upsilon})&=&-\kappa_1\left(c_1(\hat H)c_2(\tilde W,H)(\kappa_1^{-1}\underline{\Upsilon})\right)=-\kappa_1\left(c_1(\hat H)c_2(\tilde W,H)(\underline{\Upsilon})\right)=-4\kappa_1(\delta s(\underline{\Upsilon}));\\
    c_1(H)c_2(\hat H,\tilde W)(\underline{\Upsilon})&=&\kappa_2\left(c_1(\hat H)c_2(\tilde W,H)(\kappa_2^{-1}\underline{\Upsilon})\right)=\kappa_2\left(c_1(\hat H)c_2(\tilde W,H)(\underline{\Upsilon})\right)=4\kappa_2(\delta s(\underline{\Upsilon})).
\end{eqnarray*}
 On the other hand, we know by the properties of $\hat H$ exposed in Remark \ref{relLiealg} that, for any $\mu\in V(\underline k-2)$,
\begin{eqnarray*}
\kappa(\alpha,\beta)c_1(\hat H)(\mu)&=&c_1\left(\kappa(\alpha,\beta)\hat H\kappa(\alpha,\beta)^{-1}\right)(\kappa(\alpha,\beta) \mu)=c_1\left((|\alpha|^2-|\beta|^2)\hat H-2{\rm Re}(\alpha\beta)\tilde W+2{\rm Im}(\alpha\beta)H\right)(\kappa(\alpha,\beta) \mu)\\
&=&(|\alpha|^2-|\beta|^2)c_1(\hat H)(\kappa(\alpha,\beta) \mu)-2{\rm Re}(\alpha\beta)c_1(\tilde W)(\kappa(\alpha,\beta) \mu)+2{\rm Im}(\alpha\beta)c_1(H)(\kappa(\alpha,\beta) \mu)\\
&=&(|\alpha|^2-|\beta|^2)c_1(\hat H)(\kappa(\alpha,\beta) \mu)+2{\rm Re}(\alpha\beta)\kappa_1c_1(\hat H)(\kappa_1^{-1}\kappa(\alpha,\beta) \mu)-2{\rm Im}(\alpha\beta)\kappa_2c_1(\hat H)(\kappa_2^{-1}\kappa(\alpha,\beta) \mu).
\end{eqnarray*}
By \cite[lemma 4.15]{preprintsanti2}, for any $n_1,n_2,m_1,m_2\in\N$,
\[
\frac{1}{{\rm vol}(K_\C)}\int_{K_\C}\alpha^{n_1}\beta^{m_1}\bar \alpha^{n_2}\bar \beta^{m_2}dk=\begin{cases}(n_1+m_1+1)^{-1}\binom{n_1+m_1}{n_1}^{-1}&\text{ if }n_1=n_2,\;m_1=m_2,\\
0&\mbox{otherwise.} \end{cases}
\]
Thus, the functions $(|\alpha|^2-|\beta|^2)$, $2{\rm Re}(\alpha\beta)$, $2{\rm Im}(\alpha\beta)$ are orthogonal with respect to the pairing given by $dk$. Hence, we compute using the $K_\C$-invariance of $\underline \Upsilon$ plus the fact that $c_1(\hat H)=2\delta s$,
\begin{eqnarray*}
\frac{1}{{\rm vol}(K_\C)}\int_{K_\C}\kappa(\alpha,\beta)\delta s(\underline{\Upsilon})dk&=&\frac{1}{{\rm vol}(K_\C)}\int_{K_\C}\left((|\alpha|^2-|\beta|^2)^2\delta s(\underline{\Upsilon})+4{\rm Re}(\alpha\beta)^2\kappa_1\delta s(\underline{\Upsilon})+4{\rm Im}(\alpha\beta)^2\kappa_2\delta s(\underline{\Upsilon})\right)d(\alpha,\beta)\\
&=&\frac{1}{3}\delta s(\underline{\Upsilon})+\frac{1}{3}\kappa_1\delta s(\underline{\Upsilon})+\frac{1}{3}\kappa_2\delta s(\underline{\Upsilon})=\frac{1}{12}c_3(\hat H,\tilde W,H),
\end{eqnarray*}
and the result follows.
\end{proof}

\begin{remark}\label{remarkonhaarmeasurecocycle2}
    Similarly as in Remark \ref{remarkonhaarmeasurecocycle1}, given any morphism $\varphi\in\Hom_{\mfg_\C,K_\C}(D(\underline k)^{\otimes 2},C^\infty(\PGL_2(\C),\C))$, the 3-cocycle $\varphi(c_3)$ provides the differential 3-form 
    \begin{eqnarray*}
    \varphi(c_3)(\hat H,\tilde W,H)\cdot d\hat H\wedge d\tilde W \wedge dH&=&\frac{12}{{\rm vol}(K_\C)}\int_{K_\C}\varphi\left(k\ast \delta s(\underline\Upsilon)\right)d k\wedge d\hat H\wedge d\tilde W \wedge dH\\
    &=&\frac{48}{{\rm vol}(K_\C)}\int_{K_\C}\varphi\left(k\ast \delta s(\underline\Upsilon)\right)d\hat H\wedge dN_1 \wedge dN_2\wedge d k. 
    \end{eqnarray*}
    By \eqref{Haararch2} we have  $d^\times g=16d_Lp dk$, where $d_Lp=r^{-3} drds_1ds_2$ is the left Haar measure of the Parabolic subgroup $P=\left\{\big(\begin{smallmatrix}
        r&s_1+is_2\\&r^{-1}
    \end{smallmatrix}\big)\colon  r,s_1,s_2\in\R\right\}$, and $dk=\sin2\theta dadbd\theta$ is a Haar measure of $K_\C$ such that ${\rm vol}(K_\C)=2\pi^2$. We easily compute, for any $\phi(r,s_1,s_2)\in C^\infty(P,\C)$,
    \begin{eqnarray*}
        \hat H\phi(r,s_1,s_2)&=&\frac{d}{dt}\phi(\big(\begin{smallmatrix}
        r&s_1+is_2\\&r^{-1}
    \end{smallmatrix}\big)\exp(t\hat H))\mid_{t=0}=\frac{d}{dt}\phi(\big(\begin{smallmatrix}
        r&s_1+is_2\\&r^{-1}
    \end{smallmatrix}\big)\big(\begin{smallmatrix}
        e^t&\\&e^{-t}
    \end{smallmatrix}\big))\mid_{t=0}=\frac{d}{dt}\phi(re^t,s_1e^{-t},s_2e^{-t})\mid_{t=0}\\
    N_1\phi(r,s_1,s_2)&=&\frac{d}{dt}\phi(\big(\begin{smallmatrix}
        r&s_1+is_2\\&r^{-1}
    \end{smallmatrix}\big)\exp(tN_1))\mid_{t=0}=\frac{d}{dt}\phi(\big(\begin{smallmatrix}
        r&s_1+is_2\\&r^{-1}
    \end{smallmatrix}\big)\big(\begin{smallmatrix}
        1&t\\&1
    \end{smallmatrix}\big))\mid_{t=0}=\frac{d}{dt}\phi(r,s_1+rt,s_2)\mid_{t=0}\\
    N_2\phi(r,s_1,s_2)&=&\frac{d}{dt}\phi(\big(\begin{smallmatrix}
        r&s_1+is_2\\&r^{-1}
    \end{smallmatrix}\big)\exp(tN_2))\mid_{t=0}=\frac{d}{dt}\phi(\big(\begin{smallmatrix}
        r&s_1+is_2\\&r^{-1}
    \end{smallmatrix}\big)\big(\begin{smallmatrix}
        1&it\\&1
    \end{smallmatrix}\big))\mid_{t=0}=\frac{d}{dt}\phi(r,s_1,s_2+rt)\mid_{t=0}.
    \end{eqnarray*}
    Thus, 
    \[
    \left(\begin{array}{c}
         \hat H \\
         N_1\\
         N_2
    \end{array}\right)=\left(\begin{array}{ccc}
         r&-s_1&-s_2 \\
         &r&\\
         &&r
    \end{array}\right)\left(\begin{array}{c}
         \frac{\partial}{\partial r} \\
         \frac{\partial}{\partial s_1}\\
         \frac{\partial}{\partial s_2}
    \end{array}\right);\qquad d\hat H\wedge dN_1 \wedge dN_2=d_Lp=r^{-3} drds_1ds_2
    \]
    and 
    \[
    \varphi(c_3)(\hat H,\tilde W,H)\cdot d\hat H\wedge d\tilde W \wedge dH=\frac{3}{2\pi^2}\int_{0}^\pi\int_0^{2\pi}\int_0^{\frac{\pi}{2}}\varphi\left(k\ast \delta s(\underline\Upsilon)\right)d^\times g.
    \]
\end{remark}

\section{Fundamental classes}\label{fundClasses}

Let $H$ be an algebraic group over $F$ that is either $G$ or a maximal torus $T$ in $G$. For any $\sigma\in\Sigma_F$, write $K_{\sigma}^H$ for a maximal compact subgroup of $H(F_\sigma)$, $K_{\sigma,+}^H$ the connected component of 1, and $H(F_\sigma)_0=H(F_\sigma)/K_{\sigma}^H$.  
Write also $H(F_\sigma)_+$ for the connected component of $1$ in $H(F_\sigma)$. We can visualize  in Table \ref{table 1} and Table \ref{table 2} the cases we are interested in.

  \begin{table}[h!]
    \centering
\begin{tabular}{ c | c c c c}
$T(F_\sigma)$ & $T(F_\sigma)_+$&$T(F_\sigma)_0$&$K_\sigma^T$&$K_{\sigma,+}^T$ \\ 
 \hline
 $\R^\times$ & $\R_+$& $\R_+$& $\pm 1$ & 1\\  
  $\C^\times$ & $\C^\times$ & $\R_+$ & $S^1$& $S^1$ \\
  $\C^\times/\R^\times$ & $\C^\times/\R^\times$ & $1$ & $\C^\times/\R^\times$ & $\C^\times/\R^\times$ \\
\end{tabular}
\caption{Case $H=T$}
\label{table 1}

\end{table}
\begin{table}[h!]
  \centering
\begin{tabular}{ c | c c c c}
$G(F_\sigma)$ & $G(F_\sigma)_+$&$G(F_\sigma)_0$&$K_\sigma^G$&$K_{\sigma,+}^G$ \\ 
 \hline
  $\PGL_2(\R)$ & $\PGL_2(\R)_+$ & $\mathfrak{H}_2$ & ${\rm O}(2)$ & $\SO(2)$\\
  $\PGL_2(\C)$ & $\PGL_2(\C)$ & $\mathfrak{H}_3$ & $U(2)$ & $U(2)$\\
  $\bH^\times/\R^\times$ & $\bH^\times/\R^\times$ & $1$ & $\bH^\times/\R^\times$ & $\bH^\times/\R^\times$
\end{tabular}
\caption{Case $H=G$}
\label{table 2}
\end{table}

Here $\mathfrak{H}_2$ is the Poincar\'e upper half plane, $\mathfrak{H}_3$ is the hyperbolic 3-space, and $\bH$ are the Hamiltonians. Write $H(F_\infty)_0=\prod_{\sigma\in\Sigma_F}H(F_\sigma)_0$ and $H(F)_+=H(F_\infty)_+\cap H(F)$, where $H(F_\infty)_+=\prod_{\sigma\in\Sigma_F}H(F_\sigma)_+\subseteq H(F_\infty)$. Notice that $H(F_\infty)_0\simeq \R^u$, for some $u\in\N$, and 
$H(F_\infty)_+= H(F_\infty)_0\times K^H_{\infty,+}$, where $K^H_{\infty,+}=\prod_{\sigma\in\Sigma_F}K^H_{\sigma,+}$.

Fix $U\subset H(\A_F^\infty)$ an open compact subgroup, and fix representatives 
\[
\tilde g_i\in H(\A_F^\infty);\qquad \{[\tilde g_i]=g_i\}_i={\rm Pic}_H(U):=H(F)_+\backslash H(\A_F^\infty)\slash U. 
\]
We write $\Gamma_{g_i}=\tilde g_iU \tilde g_i^{-1}\cap H(F)_+$ and let $\cG_{g_i}=\Gamma_{g_i}\cap K^H_{\infty,+}$. Since $\Gamma_{g_i}$ is discrete and $K^H_{\infty,+}$ is compact, $\cG_{g_i}$ is finite.

\subsection{Case $H\neq \G_m$}

Write $M=H(F_\infty)_0\simeq \R^u$. The de Rham complex $\Omega^\bullet_M$ is a resolution for $\R$. This implies that we have an edge morphism of the induced spectral sequence
\[
e:H^0(\Gamma_{g_i},\Omega_M^u)\longrightarrow H^u(\Gamma_{g_i},\R).
\]
Since $\Gamma_{g_i}\backslash M$ is compact or admits a Borel-Serre compactification, we can identify $c\in H_{u}(\Gamma_{g_i}\backslash M,\Z)$ with a group cohomology element $c\in H_{u}(\Gamma_{g_i},\Z)$ by means of the relation
\[
\int_c \omega=e(\omega)\cap c,\qquad \omega\in H^0(\Gamma_{g_i},\Omega_M^u)=\Omega_{\Gamma_{g_i}\backslash M}^u.
\]
In particular, we can think of the fundamental class as an element $\xi_{g_i}\in H_u(\Gamma_{g_i},\Z)$ satisfying
\begin{equation}\label{eqeint}
e(\omega)\cap \xi_{g_i}=\int_{\Gamma_{g_i}\backslash M} \omega,\qquad \omega\in H^0(\Gamma_{g_i},\Omega_M^u).
\end{equation}
\begin{remark}\label{remonSL}
 Notice that we have a natural $\Gamma_{g_i}$-equivariant embedding 
\[
\iota_{g_i}:C^0(\tilde g_iU,\Q)\hookrightarrow C_c^0(H(\A_F^\infty),\Q),\qquad \iota_{g_i} \phi(h_f) =\phi(h_f)\cdot1_{\tilde g_iU}(h_f),
\]
where $C_c^0$ denotes the space of locally constant and compactly supported functions.
Such an embedding provides an isomorphism of $H(F)_+$-modules
\[
\bigoplus_{g_i\in \Pic_H(U)}\Ind_{\Gamma_{g_i}}^{H(F)_+}(C^0(\tilde g_iU,\Q))\longrightarrow C_c^0(H(\A_F^\infty),\Q).
\]
\end{remark}
We define the fundamental class
\[
\eta_H=\left(\frac{1}{\#\cG_{g_i}}(\xi_{g_i}\cap 1_{\tilde g_i U})\right)_{g_i\in {\rm Pic}_H(U)}\in \bigoplus_{g_i \in {\rm Pic}_H(U)} H_u(\Gamma_{g_i},C_c^0(\tilde g_iU,\Q))=H_u(H(F)_+,C_c^0(H(\A_F^\infty),\Q)),
\]
where the last equality follows from Shapiro's lemma and Remark \ref{remonSL}.
\begin{remark}\label{remarkonHC}
    The above defined fundamental classes differ from those defined in \cite{preprintsanti2} when $H=T$.
    Indeed, in \cite{preprintsanti2} the class $\eta_T$ is defined without dividing by $\#\cG_{g_i}$ and, therefore, this makes it so that it is an element of $H_u(T(F)_+,C_c^0(T(\A_F^\infty),\Z))$. But this is not such a big difference because $T$ is abelian and $\cG_{g_i}$ is independent of $g_i$, hence, both definitions differ by the factor $h=\#\cG_{g_i}$. In the general situation that we present here, we have
    \[
    \eta_H\in \frac{1}{N}H_u(H(F)_+,C_c^0(H(\A_F^\infty),\Z));\qquad N={\rm lcm}(\#\cG_{g_i})_{g_i\in {\rm Pic}_H(U)}.
    \]
\end{remark}

\subsection{Case $H=\G_m$}\label{fundclassGm} 
In case $H=\G_m$ we have a natural morphism 
\begin{equation}\label{absvalueGm}
    N:H(F_\infty)_0=\R_+^{\Sigma_F}\rightarrow\R_+;\qquad (x_\sigma)_{\sigma\in \Sigma_F}\longmapsto\prod_{\sigma\in\Sigma_F}x_\sigma^{[F_\sigma:\R]},
\end{equation}
coming from the absolute value $F_\infty^\times\rightarrow\R_+$, and write $M_0=\ker(N)$. It is well known that the image of $\Gamma_{g_i}=\Gamma=U\cap H(F)_+$ in $H(F_\infty)_0$ lies in $M_0$ and $\Gamma\backslash M_0$ is compact. Thus, similarly as in \eqref{eqeint}, we can define a fundamental class $\xi\in H_{u-1}(\Gamma,\Z)$ with $u=\#\Sigma_F$, so that,
\begin{equation}\label{eqeint2}
e(\omega)\cap \xi=\int_{\Gamma\backslash M_0} \omega,\qquad \omega\in H^0(\Gamma,\Omega_{M_0}^{u-1}).
\end{equation}
Analogously as above, we define
\[
\eta_H=\eta_{\G_m}=\left(\frac{1}{\#\Gamma_{\rm tors}}\xi\cap 1_{\tilde x_i U}\right)_{x_i\in {\rm Pic}_H(U)}\in \bigoplus_{x_i \in {\rm Pic}_H(U)} H_{u-1}(\Gamma,C_c^0(\tilde x_iU,\Q))=H_{u-1}(F_+^\times,C_c^0((\A_F^\infty)^\times,\Q)).
\]

\subsection{Independence of the choice of $U$}\label{IndepU} It seems that the definition of $\eta_H$ depends on the choice of the open compact subgroup $U\subset H(\A_F^\infty)$, but in this section we will show that this is not the case. 
\begin{proposition}\label{prop:indep of U}
    The class $\eta_H$ does not depend on the choice of the open compact subgroup $U\subset H(\A_F^\infty)$.
\end{proposition}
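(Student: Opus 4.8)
The plan is to reduce to the comparison of the constructions for two nested open compact subgroups. If $U' \subseteq U$ is another open compact subgroup, then by cofinality of such subgroups under intersection it suffices to prove $\eta_H^{U} = \eta_H^{U'}$; the general case of two arbitrary $U_1, U_2$ then follows by comparing each with $U_1 \cap U_2$. So fix $U' \subseteq U$ of finite index, and let $\{\tilde g_i\}$ be representatives for $\mathrm{Pic}_H(U)$ and $\{\tilde h_j\}$ representatives for $\mathrm{Pic}_H(U')$, chosen compatibly in the sense that each $\tilde h_j$ lies in $\tilde g_{i(j)} U$ for a well-defined index $i(j)$. There is a natural surjection $\mathrm{Pic}_H(U') \twoheadrightarrow \mathrm{Pic}_H(U)$, and for a fixed $g_i$ the fibre is the set of $\Gamma_{g_i}$-orbits (equivalently $U$-orbits, acting on the right) on $\tilde g_i U / U'$.

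The key step is to track the fundamental class through the natural map $C_c^0(H(\A_F^\infty),\Q)$ which is the \emph{same} target module for both $U$ and $U'$; what changes is only the Shapiro decomposition. Concretely, under the isomorphism of Remark~\ref{remonSL} applied to $U$, the component of $\eta_H^U$ at $g_i$ is $\frac{1}{\#\cG_{g_i}}(\xi_{g_i} \cap 1_{\tilde g_i U})$ in $H_u(\Gamma_{g_i}, C_c^0(\tilde g_i U, \Q))$. Now $1_{\tilde g_i U} = \sum_{x} 1_{\tilde h_{j(x)} U'}$ where $x$ runs over the cosets in $\tilde g_i U / U'$ and $\tilde h_{j(x)}$ is a representative; since $\tilde g_i U = \bigsqcup_x \tilde h_{j(x)} U'$, the stabilizer groups satisfy $\Gamma_{h_j} = \Gamma_{g_i} \cap \tilde h_j U' \tilde h_j^{-1} \subseteq \Gamma_{g_i}$, and the $\Gamma_{g_i}$-set $\tilde g_i U / U'$ has orbits indexed by the $h_j$ over $g_i$, with the orbit through $x$ identified with $\Gamma_{g_i}/\Gamma_{h_j}$. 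The first thing I would do is verify that the fundamental class $\xi_{g_i} \in H_u(\Gamma_{g_i},\Z)$ restricts (corestricts, rather — via the transfer/wrong-way map along the finite-index inclusion) to $[\Gamma_{g_i} : \Gamma_{h_j}]^{-1}$ times $\xi_{h_j}$, up to the torsion normalization; this is exactly the statement that a finite covering $\Gamma_{h_j}\backslash M \to \Gamma_{g_i}\backslash M$ of degree $[\Gamma_{g_i}:\Gamma_{h_j}] \cdot (\#\cG_{g_i}/\#\cG_{h_j})^{\pm1}$ multiplies volume integrals \eqref{eqeint} by the degree. Combining this covering-space computation with the bookkeeping of how $1_{\tilde g_i U}$ decomposes and how the $\frac{1}{\#\cG}$ factors transform under passing from $\cG_{g_i}$ to the $\cG_{h_j}$, the two expressions for $\eta_H$ match term by term inside $H_u(H(F)_+, C_c^0(H(\A_F^\infty),\Q))$.

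The main obstacle I expect is the careful handling of the finite groups $\cG_{g_i}$ (and in the $\G_m$ case $\Gamma_{\mathrm{tors}}$): the normalizing factors $\frac{1}{\#\cG_{g_i}}$ were introduced precisely so that the class is stable, so the proof hinges on the identity relating $\#\cG_{g_i}$, $\#\cG_{h_j}$, and the index $[\Gamma_{g_i} : \Gamma_{h_j}]$ to the degree of the orbifold covering $\Gamma_{h_j}\backslash M \to \Gamma_{g_i}\backslash M$ — namely that the covering has ``orbifold degree'' $[\tilde g_i U \tilde g_i^{-1} \cap H(F)_+ \,:\, \tilde h_j U' \tilde h_j^{-1}\cap H(F)_+]$ but the integral picks up $[\Gamma_{g_i}:\Gamma_{h_j}] \cdot \#\cG_{h_j}/\#\cG_{g_i}$ because only the free part of the action contributes to $\int_{\Gamma\backslash M}$. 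For $H = \G_m$ (Subsection~\ref{fundclassGm}) the same argument applies verbatim with $M$ replaced by $M_0$ and $u$ by $u-1$, and with $\#\Gamma_{\mathrm{tors}}$ in place of $\#\cG_{g_i}$; here $\Gamma_{\mathrm{tors}} = \cG_{g_i}$ is literally independent of $g_i$, which makes the bookkeeping slightly simpler. Once the covering-degree identity is in hand, the rest is a direct substitution, and $\eta_H^U = \eta_H^{U'}$ follows.
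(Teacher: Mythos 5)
Your proposal follows the same strategy as the paper's proof: reduce to nested subgroups, decompose the coefficient module \`a la Shapiro, track the fundamental class through a corestriction/transfer and a covering-degree identity, and keep careful bookkeeping of the $\#\cG_{g_i}$ normalizations. However, there is a genuine gap in the reduction step.

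You reduce to a pair $U' \subseteq U$ of finite index, but do \emph{not} reduce further to a \textbf{normal} finite-index subgroup. The paper does: it produces $V\unlhd U$ and $V'\unlhd U'$, both of finite index inside $U\cap U'$, so that $V,V'\unlhd U\cap U'$ as well, and then transports through the chain $\eta_{H,U}=\eta_{H,V}=\eta_{H,U\cap U'}=\eta_{H,V'}=\eta_{H,U'}$. This is not a cosmetic choice. The key step of your argument is the identity
\[
N_w \;=\; [\Gamma_{g_i}:\Gamma_{h_j}]\cdot\frac{\#\cG_{h_j}}{\#\cG_{g_i}},
\]
which you assert by appealing to ``only the free part of the action contributes.'' In the paper this identity is obtained by identifying $N_w$ with the double coset count
\[
N_w \;=\; \#\bigl(\Gamma_w\backslash\Gamma_g/\cG_g\bigr),
\]
and then observing that, \emph{because} $\Gamma_w\unlhd\Gamma_g$ (which follows from $V\unlhd U$), the $\cG_g$-action on $\Gamma_w\backslash\Gamma_g$ has all stabilizers equal to $\cG_g\cap\gamma^{-1}\Gamma_w\gamma=\cG_g\cap\Gamma_w=\cG_w$, whence every orbit has size $[\cG_g:\cG_w]$ and the count collapses to $[\Gamma_g:\Gamma_w]/[\cG_g:\cG_w]$.

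Without normality this last simplification breaks down: the stabilizer $\cG_g\cap\gamma^{-1}\Gamma_w\gamma$ can genuinely depend on the coset $\Gamma_w\gamma$, so the $\cG_g$-orbits on $\Gamma_w\backslash\Gamma_g$ may have different sizes and the double coset count is no longer $[\Gamma_g:\Gamma_w]/[\cG_g:\cG_w]$. Your heuristic about ``the free part of the action'' is not a substitute for this group-theoretic input; it is precisely where the argument would collapse if pursued directly for a non-normal $U'\subseteq U$. Notice also that the paper's auxiliary Lemma (the module decomposition $\bigoplus_{w\in W_g}{\rm Ind}_{\Gamma_w}^{\Gamma_g}C^0(\tilde wV,\Q)\simeq C^0(\tilde gU,\Q)$, with $1_{\tilde gU}\leftrightarrow(f^0_w)_w$) does not itself require normality, so the reduction to the normal case is needed specifically for the $N_w$ computation and its interplay with the $\#\cG$-normalizations. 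To repair your proposal: insert the normal-core reduction as the paper does (passing through $U\cap U'$ and a common normal $V$), and derive the degree formula from $\Gamma_w\unlhd\Gamma_g$ rather than asserting it.

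One small additional remark: your phrasing ``$\xi_{g_i}$ restricts (corestricts, rather) to $[\Gamma_{g_i}:\Gamma_{h_j}]^{-1}$ times $\xi_{h_j}$'' has the map in the wrong direction and the multiplicity on the wrong side. The identity actually used is $\operatorname{cores}_{\Gamma_{h_j}}^{\Gamma_{g_i}}\xi_{h_j} = N_w\,\xi_{g_i}$, obtained by pairing against $e(\omega)$ for $\omega\in H^0(\Gamma_{g_i},\Omega_M^u)$ and applying the projection formula for restriction/corestriction.
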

In the proof of Proposition \ref{prop:indep of U} we will assume that $H\neq \G_m$, but a similar argument applies for the case $H=\G_m$. 
First we realize that it is enough to check that we obtain the same fundamental class if we consider a finite index normal subgroup $V\subset U$. Indeed, if we write $\eta_{H,U}$ and $\eta_{H,U'}$ for the fundamental classes obtained by means of $U$ and $U'\subset H(\A_F^\infty)$, then we can always find finite index compact subgroups $V,V'\subset U\cap U'$ such that $V\unlhd U$ and $V'\unlhd U'$. In particular, we have $V,V'\unlhd U\cap U'$. Thus, the claim for normal subgroups implies 
\[
\eta_{H,U}=\eta_{H,V}=\eta_{H,U\cap U'}=\eta_{H,V'}=\eta_{H,U'}.
\]

Let $V\unlhd U$ be a normal subgroup of finite index. We aim to show that $\eta_{H,U}=\eta_{H,V}$. Observe that we have a surjective map
\[
p:\Pic_H(V)\rightarrow\Pic_H(U).
\]
For any $g\in \Pic_H(U)$, write $W_g=p^{-1}(g)$.
\begin{lemma}\label{lemma:above}
    For any $g\in \Pic_H(U)$ we have an isomorphism
    \[
    \bigoplus_{w\in W_g}{\rm Ind}_{\Gamma_{w}}^{\Gamma_g}C^0(\tilde{w}V,\Q)\simeq C^0(\tilde g U,\Q),
    \]
    where $\tilde{g}\in H(\A_F^\infty)$ is a lift of $g$, $\tilde{w}\in H(\A_F^\infty)$ is a lift of $w\in \Pic_H(V)$ such that $\tilde wU=\tilde gU$, $\Gamma_g=\tilde gU\tilde g^{-1}\cap H(F)_+$ and $\Gamma_w=\tilde wV\tilde w^{-1}\cap H(F)_+$. The preimage of $1_{\tilde gU}$ is $(f^0_w)_{w\in W_g}$ so that $f^0_w(\gamma)=1_{\tilde wV}$, for all $\gamma\in\Gamma_g$.
\end{lemma}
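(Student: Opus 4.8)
The plan is to unwind the double-coset bookkeeping. First I would fix $g\in\Pic_H(U)$ with a lift $\tilde g\in H(\A_F^\infty)$ and set $\Gamma_g=\tilde gU\tilde g^{-1}\cap H(F)_+$. The set $W_g=p^{-1}(g)$ is by definition the set of double cosets $H(F)_+\backslash H(\A_F^\infty)/V$ mapping to $g$ under $p$; these are exactly the double cosets contained in the single $U$-level class $H(F)_+\tilde gU$. Translating by $\tilde g^{-1}$ on the right identifies $\{H(F)_+\,h\,V : h\in H(F)_+\tilde gU\}$ with $\Gamma_g\backslash U/(\tilde g^{-1}V\tilde g\cap \ldots)$; more concretely, writing $V$ as a normal subgroup of $U$, the $U$-coset $\tilde gU$ decomposes as a disjoint union $\tilde gU=\bigsqcup_{w\in W_g}\Gamma_g\cdot\tilde wV$ of right $V$-cosets grouped into $\Gamma_g$-orbits, where for each $w$ we pick the lift $\tilde w$ with $\tilde wU=\tilde gU$ and put $\Gamma_w=\tilde wV\tilde w^{-1}\cap H(F)_+$. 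This is the key combinatorial identity; it says precisely that $\tilde gU$, as a $\Gamma_g$-set of right $V$-cosets, is the disjoint union over $w\in W_g$ of the transitive $\Gamma_g$-sets $\Gamma_g/\Gamma_w$.

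Next, I would translate this into the language of function spaces. The space $C^0(\tilde gU,\Q)$ of locally constant functions on $\tilde gU$, with its $\Gamma_g$-action by left translation, decomposes according to the above partition into $V$-cosets: a function on $\tilde gU$ that is determined by its values at the level of $V$-cosets decomposes as $C^0(\tilde gU,\Q)\simeq\bigoplus_{w\in W_g}\bigl(\text{functions supported on the }\Gamma_g\text{-orbit of }\tilde wV\bigr)$, and the summand indexed by $w$ is, by the orbit-stabilizer description and Frobenius reciprocity for the finite-index inclusion $\Gamma_w\subseteq\Gamma_g$, isomorphic as a $\Gamma_g$-module to $\Ind_{\Gamma_w}^{\Gamma_g}C^0(\tilde wV,\Q)$. (One should note $C^0$ here means locally constant functions on the compact open $\tilde wV$, so $C^0(\tilde wV,\Q)$ is itself a $\Gamma_w$-module in the same way $C^0(\tilde gU,\Q)$ is a $\Gamma_g$-module; the induction is the honest induction of a module from a finite-index subgroup, i.e. a direct sum over coset representatives of $\Gamma_w$ in $\Gamma_g$.) Assembling the summands gives the claimed isomorphism.

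Finally, for the statement about $1_{\tilde gU}$: under the decomposition above, the constant function $1$ on $\tilde gU$ restricts on each $V$-coset $\gamma\tilde wV$ to the constant function $1$; tracing through the Frobenius reciprocity isomorphism $\Ind_{\Gamma_w}^{\Gamma_g}C^0(\tilde wV,\Q)\simeq\{$ functions $f:\Gamma_g\to C^0(\tilde wV,\Q)$ with $f(\gamma\gamma')=\gamma'^{-1}f(\gamma)\}$, this is the element $f^0_w$ with $f^0_w(\gamma)=1_{\tilde wV}$ for all $\gamma$, since the value of $1_{\tilde gU}$ is the same constant on every translate. Hence the preimage of $1_{\tilde gU}$ is $(f^0_w)_{w\in W_g}$, as asserted.

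The main obstacle, and the only point requiring genuine care rather than routine verification, is keeping the three bookkeeping layers consistent: (i) which double cosets at level $V$ lie over $g$, (ii) how the right $V$-cosets inside the fixed $U$-coset $\tilde gU$ are permuted by $\Gamma_g$ and grouped into orbits indexed by $W_g$, and (iii) matching the orbit stabilizers with the groups $\Gamma_w=\tilde wV\tilde w^{-1}\cap H(F)_+$ under the chosen lifts $\tilde w$ with $\tilde wU=\tilde gU$. Once the identification $\tilde gU\simeq\bigsqcup_{w\in W_g}\Gamma_g/\Gamma_w$ of $\Gamma_g$-sets of $V$-cosets is pinned down, everything else is a formal application of Frobenius reciprocity. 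The case $H=\G_m$ is identical with $H(F_\infty)_0$ replaced by $M_0$ and the homological degree shifted by one, so no separate argument is needed.
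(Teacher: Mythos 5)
Your proposal is correct and takes essentially the same route as the paper: the heart of both arguments is the partition $\tilde gU=\bigsqcup_{w\in W_g}\Gamma_g\tilde wV$ (equivalently, the identification of the $\Gamma_g$-set of right $V$-cosets inside $\tilde gU$ with $\bigsqcup_{w\in W_g}\Gamma_g/\Gamma_w$), from which the decomposition of function spaces follows. The only difference in packaging is that you invoke orbit--stabilizer and Frobenius reciprocity abstractly, whereas the paper writes down the explicit isomorphism $\iota((f_w)_w)(\gamma\tilde wv)=f_w(\gamma^{-1})(\tilde wv)$ and checks well-definedness by hand; the content is the same. One small remark: your closing sentence about $H=\G_m$ is not really needed here, since Lemma~\ref{lemma:above} as stated is insensitive to that distinction --- it is Proposition~\ref{prop:indep of U}, not the lemma, that has the $\G_m$ caveat.
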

\begin{proof}
    It is easy to check that $\bigcup_{w\in W_g}\Gamma_g\tilde wV\subseteq \tilde gU$. In fact, $\bigcup_{w\in W_g}\Gamma_g\tilde wV= \tilde gU$ because any $\tilde gu\in\tilde gU$ satisfies $\gamma\tilde g u=\tilde w v$, for some $\gamma\in H(F)_+$, $w\in W_g$ and $v\in V$. Moreover, by construction $\tilde g^{-1}\tilde w\in U$, hence, $\gamma=\tilde g\tilde g^{-1}\tilde w vu^{-1}\tilde g^{-1}\in \Gamma_g$ and $\tilde g u=\gamma^{-1}\tilde wv\in \Gamma_g\tilde wV$. Thus,     
    we can define a morphism
    \[
    \iota:\bigoplus_{w\in W_g}{\rm Ind}_{\Gamma_{w}}^{\Gamma_g}C^0(\tilde{w}V,\Q)\longrightarrow C^0(\tilde g U,\Q);\qquad \iota((f_w)_{w\in W_g})(\gamma\tilde wv)=f_w(\gamma^{-1})(\tilde w v).
    \]
    Such a morphism is well defined because, if $\gamma_1\tilde wv_1=\gamma_2\tilde wv_2\in \Gamma_g\tilde wV$ then $\gamma_1^{-1}\gamma_2=\tilde w v_1v_2^{-1}\tilde w^{-1}\in \Gamma_w$, and
        \[
        \iota((f_w)_{w\in W_g})(\gamma_1\tilde wv_1)=f_w(\gamma_1^{-1}\gamma_2\gamma_2^{-1})(\tilde w v_1)=f_w(\gamma_2^{-1})(\gamma_2^{-1}\gamma_1\tilde w v_1)=f_w(\gamma_2^{-1})(\tilde w v_2)=\iota((f_w)_{w\in W_g})(\gamma_2\tilde wv_2).
        \]
        Moreover, $\iota((f^0_w)_{w\in W_g})=1_{\tilde gU}$.
    Finally, from the aforementioned equality $\bigcup_{w\in W_g}\Gamma_g\tilde wV= \tilde gU$, we deduce that it is an isomorphism.
\end{proof}
We write $\xi_{w}\in H_u(\Gamma_{w},\Z)$ for the fundamental class associated with $w\in W_{g}\subseteq\Pic_H(V)$, and $N_{w}$ for the number of fundamentals domains in $\Gamma_{w}\backslash M$ under the action of $\Gamma_{g}$. By definition, for any $\omega\in H^0(\Gamma_g,\Omega^u_M)$
\[
e(\omega)\cap{\rm cores}_{\Gamma_{w}}^{\Gamma_g}\xi_w=e({\rm res}_{\Gamma_{w}}^{\Gamma_g}\omega)\cap\xi_w=\int_{\Gamma_w\backslash M}\omega=N_w\int_{\Gamma_g\backslash M}\omega=N_w(e(w)\cap\xi_g).
\]
Thus, ${\rm cores}_{\Gamma_{w}}^{\Gamma_{g}}\xi_{w}=N_w\xi_{g}$. Moreover, the class of $[\gamma]\in M=H(F_\infty)_+/K^H_{\infty,+}$, for $\gamma\in \Gamma_g$, must be in a unique fundamental domain.  Thus, since $\Gamma_w\unlhd\Gamma_g$ because $V\unlhd U$,
\[
N_w=\#\{[\gamma]\in \Gamma_{w}\backslash H(F_\infty)_+/K^H_{\infty,+}\colon \gamma\in\Gamma_g\}=\#\left(\Gamma_w\backslash\Gamma_g/\cG_g\right)=\frac{[\Gamma_g:\Gamma_w]}{[\cG_g:\cG_w]}.
\]
By Lemma \ref{lemma:above}, this implies
\begin{eqnarray*}
    \eta_{H,U}&=&\left(\frac{1}{\#\cG_{g_i}}(\xi_{g_i}\cap 1_{\tilde g_i U})\right)_{g_i\in {\rm Pic}_H(U)}=\left(\frac{1}{\#\cG_{g_i}N_w}({\rm cores}_{\Gamma_{w_j}}^{\Gamma_{g_j}}\xi_{w_j}\cap 1_{\tilde g_i U})\right)_{g_i\in {\rm Pic}_H(U)}\\
    &=&\left(\frac{1}{N_w\#\cG_{g_i}}(\xi_{w_j}\cap {\rm res}_{\Gamma_{w_j}}^{\Gamma_{g_j}}1_{\tilde g_i U})\right)_{g_i\in {\rm Pic}_H(U)}=\left(\frac{[\Gamma_{g_i}:\Gamma_{w_j}]}{\#\cG_{g_i}N_w}(\xi_{w_j}\cap 1_{\tilde w_j V})\right)_{w_j\in {\rm Pic}_H(V)}=\eta_{H,V}.
\end{eqnarray*}
This concludes the proof of Proposition \ref{prop:indep of U}.

\section{Modular forms, periods, and global formulas}\label{sec: global formulas}

Let $\mfg_\infty=\prod_{\sigma\mid\infty}\mfg_\sigma$ be the real Lie algebra of $G(F_\infty)$, and $K_\infty=\prod_{\sigma\mid\infty}K_\sigma$ will denote the usual maximal compact subgroup. We also write $\cK_\infty\subseteq \mfg_\infty$ for the Lie algebra of $K_\infty$ and $K_{\infty,+}\subseteq K_\infty$ for the connected component of the identity.

\subsection{Modular forms}

Write $U_0(N)\subset\PGL_2(\A_F^\infty)$ for the usual open compact subgroup of matrices with integer entries that are upper triangular modulo  $N$.
The usual space of modular forms for $\PGL_2$ of weight $\underline k\in (2\N)^d$ and level $U_0(N)$ can be described as
\begin{equation}\label{defS2}
\cM_{\underline k}(U_0(N)):=\Hom_{(\mfg_\infty,K_\infty)}\ipa{D(\underline{k}),\cA(U_0(N))^{\PGL_2(F)}},    
\end{equation}
where $\cA(U_0(N))^{\PGL_2(F)}$ is the space of $U_0(N)$-invariant $K_\infty$-finite automorphic forms for $\PGL_2/F$ and $D(\underline k)$ is the $(\mfg_\infty,K_\infty)$-module $D(\underline k)=\bigotimes_{\sigma\mid\infty}D_\sigma(k_\sigma)$, where $D_\sigma(k_\sigma)$ are the $(\mfg_\sigma,K_\sigma)=(\mfgltwo(\R),O(2))$-modules described in \S \ref{GKR} and \S \ref{GKC}.

To provide similar definitions for the group $G$, first we have to fix isomorphisms $G(F_\sigma)\simeq \PGL_2(F_\sigma)$ at places $\sigma\in\Sigma_B$.  For this, from now on, we will assume that the fixed embedding $T\subseteq G$ associated with a maximal torus $E\hookrightarrow B$ satisfies the following hypothesis:
\begin{assumption}\label{assuSigmaSigma}
The set $\Sigma_B$ coincides with the set of archimedean places $\sigma$ where $E$ splits.
\end{assumption}
We will write 
\[
\Sigma_B^\R:=\{\sigma\in\Sigma_F\colon G(F_\sigma)\simeq\PGL_2(\R)\},\quad\Sigma_B^\C:=\{\sigma\in\Sigma_F \colon G(F_\sigma)\simeq\PGL_2(\C)\},\quad r_{1,B}:=\#\Sigma_B^\R, \quad r_2:=\#\Sigma_B^\C.
\]
Write also $r_B:=r_{1,B}+r_2$, $r_1:=\#\{\sigma\in\Sigma_F \colon F_\sigma\simeq\R\}$, $r_1^B:=\#(\Sigma_F\setminus\Sigma_B)$, and notice that $\Sigma_B=\Sigma_B^\R\cup\Sigma_B^\C$, $r_1=r_{1,B}+r_1^B$,
\[
(\Sigma_F\setminus\Sigma_B)=\{\sigma\in\Sigma_F\colon T(F_\sigma)\simeq\C^\times/\R^\times\},\qquad \Sigma_B^\R=\{\sigma\in\Sigma_F\colon T(F_\sigma)\simeq\R^\times\},\quad\mbox{and}\quad\Sigma_B^\C=\{\sigma\in\Sigma_F\colon F_\sigma\simeq\C\}.
\]
Under the above assumption, we obtain the desired isomorphisms: for each place $\sigma \in \Sigma_B$, the standard isomorphism $E \otimes_F F_\sigma \simeq F_\sigma^2$, together with the fixed embedding $B \hookrightarrow \M_2(E)$ from \eqref{embEinB}, induces an isomorphism $G(F_\sigma)\simeq\PGL_2(F_\sigma)$ which identifies $T(F_\sigma)$ with the diagonal torus.
\begin{remark}
    Unless otherwise stated, the chosen torus in case $G=\PGL_2$ will be the diagonal torus $E=F^2\hookrightarrow\M_2(F)$ and the corresponding embedding $B=\M_2(F)\hookrightarrow \M_2(E)$ is the diagonal one. This choice is consistent because the induced isomorphism $G(F_\sigma)\simeq\PGL_2(F_\sigma)$ is the identity.
\end{remark}
Once the isomorphisms $G(F_\sigma)\simeq\PGL_2(F_\sigma)$ are fixed at places $\sigma\in\Sigma_B$, we can define a maximal compact subgroup $K_\infty\subseteq G(F_\infty)$ whose components at $\Sigma_B$ can be identified with those described in \S \ref{GKR} and \S \ref{GKC}, moreover, we can define the $(\mfg_\infty,K_\infty)$-module
\[
D(\underline k)_B:=\bigotimes_{\sigma\in\Sigma_B} D(k_\sigma)\otimes\bigotimes_{\sigma\in\Sigma_F\setminus\Sigma_B}V(k_\sigma-2);\qquad k_\sigma=(k_\nu)_{\nu\mid\sigma},
\]
for any even weight $\underline k=(k_{\nu})_{\nu:F\hookrightarrow\C}\in (2\N_{\geq 2})^{[F:\Q]}$.
Given a open compact subgroup $U\subset G(\A_F^\infty)$, we  write $\cA(U)\subseteq C^\infty(G(\A_F)/U,\C)$ for the subset of $K_\infty$-finite vectors.
Then the \emph{space of modular forms for $G$ of lever $U$ and weight $\underline k$} is (in analogy with \eqref{defS2})
\[
\cM_{\underline k}(U):=\Hom_{(\mfg_\infty,K_\infty)}\left(D(\underline k)_B,\cA(U)^{G(F)}\right).
\]

\subsection{Petersson products}

In \cite[\S 3.2]{preprintsanti2}, a natural bilinear inner product $\langle\;,\;\rangle:\cM_{\underline k}(U)\times \cM_{\underline k}(U)\rightarrow\C$ is introduced. To describe this Petersson product, notice that the local morphisms \eqref{firstdefds} and \eqref{secdefds} induce a natural morphism
\begin{equation}\label{deltasdef}
    \underline{\delta s}_\lambda:V(\underline{k}-2)(\lambda)\longrightarrow D(\underline{k})_B,
\end{equation}
depending on a character $\lambda:G(F)/G(F)_+\rightarrow\pm1$ (and the above and the fixed embedding $E\hookrightarrow B$). Then the aforementioned pairing $\langle\;,\;\rangle$ is given by
\begin{equation*}\label{eqpeterssonprod}
    \langle\Phi_1,\Phi_2\rangle:=\int_{G(F)\backslash G(\A_F)}\Phi_1\Phi_2\left(\underline{\delta s}_\lambda(\Upsilon)\right)(g,g)d^\times g,
\end{equation*}
where $d^\times g$ is the usual Tamagawa measure with volume ${\rm vol}(G(\A_F)/G(F))=2$ and
\begin{equation}\label{defUpsilon}
    \bigotimes_{\sigma\mid\infty}\Upsilon_\sigma=\Upsilon=\left|\begin{array}{cc}
    x_1 & y_1 \\
    x_2 & y_2
\end{array}\right|^{\underline k-2}\in \cP(\underline{k}-2)\otimes \cP(\underline{k}-2)\simeq V(\underline{k}-2)\otimes V(\underline{k}-2).
\end{equation}
By \cite[remark 4.13]{preprintsanti2} the above definition of $\langle\;,\;\rangle$ is independent of $\lambda$.
As explained in \cite[Remark 3.2]{preprintsanti2}, if $F$ is totally real and $G=\PGL_2$ then we have a natural identification between $\cM_{\underline{k}}(U)$ and the space of Hilbert modular forms of weight $\underline{k}$. Under this identification, $\langle\Phi,\bar\Phi\rangle=2^{\underline k}2^{-[F:\Q]}{\rm vol}(U)\pi^{[F:\Q]}(\Phi,\Phi)_{U}$, where $(\;,\;)_{U}$ is the usual Petersson inner product.

\subsection{Normalized forms}\label{normforms}

Let $\pi$ an automorphic representation for $G$ of weight $\underline k$ and level $N$, and let $\Pi$ be its Jacquet-Langlands lift to $\PGL_2$. We will denote by $\Pi^\infty$ the representation $\Pi\mid_{G(\A_F^\infty)}$. We will define the normalized generator $\Psi\in \cM_{\underline k}(U_0(N))$ as follows:

Let $\Psi\in \cM_{\underline k}(U_0(N))$ be the form generating $\Pi^\infty$, normalized so that 
\begin{equation}\label{eqdefPsi}
\Lambda(s,\Pi)=|d_F|^{s-1/2}\int_{\A_F^\times/F^\times}\Psi(\underline {\delta s}_1(\mu_{\underline 0}))\left(\begin{array}{cc}a&\\&1\end{array}\right)|a|^{s-1/2}d^\times a,
\end{equation}
where $|\cdot|:\I_F\rightarrow\R_+$ is the standard adelic absolute value, $d_F\subset\cO_F$ is the different of $F$, and $\Lambda(s,\Pi)$ is the (completed) global L-function associated with $\Pi$. 
As pointed out in \cite[\S 3.2]{preprintsanti2}, in case where $F$ is totally real $\Psi$ corresponds to the normalized Hilbert newform under the natural identification between $\cM_{\underline k}(U_0(N))$ and the space of Hilbert modular forms.

Given an Eichler order $\cO_N\subset B$ of discriminant $N$, we write $U_N=\hat\cO_N^\times\subseteq G(\A_F^\infty)$. Notice that the space $(\pi^\infty)^{U_N}$ is one dimensional, and any non-zero element generates $\pi^\infty$, since all Eichler orders are conjugate. For any such a choice of the Eichler order, we fix $\Phi_0\in \cM_{\underline k}(U_N)$ to be the generator of $\pi^\infty$ normalized so that 
\begin{equation}\label{condnormalization}
\frac{\langle \Phi_0,\Phi_0\rangle}{\langle \Psi,\Psi\rangle}\cdot\frac{{\rm vol}(U_0(N))}{{\rm vol}(U_N)}=1.
\end{equation}
Observe that this characterizes $\Phi_0$ up to sign.

\subsection{$(\mfg_\infty,K_\infty)$-cohomology and differential forms}

Let us consider $\mathbb{H}:=G(F_\infty)_+/K_{\infty,+}$ the symmetric space associated with $G$. For any $V$ finite dimensional irreducible $G(F_\infty)$-representation over $\C$, we can consider the local system
\[
\tilde V:=G(F)_+\backslash (\mathbb{H}\times V\times G(\A_F^\infty)/U)\longrightarrow S_U:=G(F)_+\backslash (\mathbb{H}\times G(\A_F^\infty)/U)
\]
Then the space $\Omega^n(\tilde V)$ of (twisted) $n$-forms with values in $V$ admits a one-to-one correspondence
\[
\Omega:\Hom_{K_{\infty,+}}\big(\bigwedge^n(\mfg_\infty/\cK_\infty),C^\infty(G(\A_F)_+/U,V)^{G(F)_+}\big)=:
C^n((\mfg_\infty,K_{\infty,+}),C^\infty(G(\A_F)_+/U,V)^{G(F)_+})\stackrel{\simeq}{\longrightarrow}\Omega^n(\tilde V),
\]
where $G(\A_F)_+:=G(F_\infty)_+\times G(\A_F^\infty)$.
Indeed, given $\varphi\in \Hom_{K_{\infty,+}}\left(\bigwedge^n(\mfg_\infty/\cK_\infty),C^\infty(G(\A_F)_+/U,V)^{G(F)_+}\right)$ we consider the $n$-differential form (with coefficients in $V$)
\[
\Omega(\varphi)=\sum_{X_1^i\wedge\cdots \wedge X_n^i}\varphi(X_1^i,\cdots,X_n^i)\cdot dX_1^i\wedge\cdots\wedge dX_n^i,
\]
for any choice of a basis $\{X_1^i\wedge\cdots \wedge X_n^i\}_i$ of $\bigwedge^n(\mfg_\infty/\cK_\infty)$, where we write $dX$ for the 1-form dual to the left invariant derivation provided by $X$.

\begin{remark}\label{isoCvsCtensV}
It is easy to check that there is a morphism of $G(F)$-modules
    \[
    \iota:C^\infty(G(\A_F)/U,\C)\otimes V\rightarrow C^\infty(G(\A_F)/U,V);\qquad \iota(\phi\otimes v)(g)=\phi(g)\cdot (g_\infty v);\qquad g=(g_\infty,g_f)\in G(\A_F),
    \]
    where on the left hand side $V$ is considered with the trivial $G(F)$-action. 
\end{remark}    
The above remark induces an isomorphism
    \[
   C^\infty(G(\A_F)_+/U,V)^{G(F)_+}\simeq C^\infty(G(\A_F)_+/U,\C)^{G(F)_+}\otimes V\simeq C^\infty(G(\A_F)/U,\C)^{G(F)}\otimes V,
    \]
    since $C^\infty(G(\A_F)/U,\C)\simeq {\rm Ind}_{G(F)_+}^{G(F)}C^\infty(G(\A_F)_+/U,\C)$ because $G(F)/G(F)_+\simeq G(F_\infty)/G(F_\infty)_+$.
Hence, we obtain 
\[
\Omega^n(\tilde V)=C^n((\mfg_\infty,K_{\infty,+}),C^\infty(G(\A_F),\C)^{G(F)}\otimes V)\\
=C^n((\mfg_\infty,K_{\infty,+}),\cA(U)^{G(F)}\otimes V),
\]
where the last equality follows from the fact that any $\varphi\in \Hom_{K_{\infty,+}}\left(\bigwedge^n(\mfg_\infty/\cK_\infty),C^\infty(G(\A_F)/U,\C)\right)$ must have values in $\cA(U)$. Thus, we can identify $H^n((\mfg_\infty,K_{\infty,+}),\cA(U)^{G(F)}\otimes V)$ with the de Rham cohomology of $S_U$ with coefficients in $V$, obtaining an isomorphism
    \begin{equation*}
    H^n((\mfg_\infty,K_{\infty_+}),\cA(U)^{G(F)}\otimes V)\simeq H^n(S_U,\tilde V).
    \end{equation*}
    
Fo any $G(F)$-representation $M$ over some field $L$, we define the $G(F)$-representation  
\[
\cA^{\infty}(M)^U:=\{\phi: G(\A_F^{\infty})/U\longrightarrow M\};\qquad (\gamma\phi)(g)=\gamma\phi(\gamma^{-1}g),
\]
for $\gamma\in G(F)$, $g\in G(\A_F^\infty)$ and $\phi\in \cA^{\infty}(M)^U$.
Since $V$ is finite dimensional, it is easy to see that (see Remark \ref{remonSL}) 
\[
\cA^{\infty}(V)^U\simeq \bigoplus_{g_i\in\Pic_G(U)}{\rm coInd}_{\Gamma_{g_i}}^{G(F)_+}V,\qquad \Pic_G(U)=G(F)_+\backslash G(\A_F^\infty)/U, \quad \Gamma_{g_i}=G(F)_+\cap g_i Ug_i^{-1}.
\]
Hence, the usual identification of Betti and group cohomology induces an isomorphism
\begin{equation}\label{isoGKBetti}
    \kappa: H^n((\mfg_\infty,K_{\infty,+}),\cA(U)^{G(F)}\otimes V)\stackrel{\simeq}{\longrightarrow} H^n(S_U,\tilde V)\stackrel{\simeq}{\longrightarrow}H^n(G(F)_+,\cA^\infty(V)^U).
\end{equation}
\begin{remark}
Notice that, for any $K_\infty$-module $M$
\[
C^n((\mfg_\infty,K_{\infty}),M)=\Hom_{K_{\infty}}\big(\bigwedge^n(\mfg_\infty/\cK_\infty),M\big)=\Hom_{K_{\infty,+}}\big(\bigwedge^n(\mfg_\infty/\cK_\infty),M\big)^{K_\infty/K_{\infty,+}}=
C^n((\mfg_\infty,K_{\infty,+}),M)^{K_\infty/K_{\infty,+}}.
\]
Hence, we can identify 
\[
H^n((\mfg_\infty,K_{\infty}),\cA(U)^{G(F)}\otimes V)=H^n((\mfg_\infty,K_{\infty,+}),\cA(U)^{G(F)}\otimes V)^{K_\infty/K_{\infty,+}}\subseteq H^n((\mfg_\infty,K_{\infty,+}),\cA(U)^{G(F)}\otimes V).
\]
Similarly, we have 
\[
H^n(G(F),\cA^\infty(V)^U)=H^n(G(F)_+,\cA^\infty(V)^U)^{G(F)/G(F)_+}\subseteq H^n(G(F)_+,\cA^\infty(V)^U),
\]
and it is clear that $K_\infty/K_{\infty,+}=G(F_\infty)/G(F_\infty)_+=G(F)/G(F)_+$.
\end{remark}
\begin{lemma}\label{lemmaisoGKBetti}
    The restriction of $\kappa$ induces an isomorphism 
    \[
    \kappa: H^n((\mfg_\infty,K_{\infty}),\cA(U)^{G(F)}\otimes V)\stackrel{\simeq}{\longrightarrow} H^n(G(F),\cA^\infty(V)^U).
    \]
\end{lemma}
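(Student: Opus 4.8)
The plan is to deduce the claimed isomorphism from the unrestricted isomorphism $\kappa$ of \eqref{isoGKBetti} by taking $G(F)/G(F)_+$-invariants on both sides, using the fact that $\kappa$ is, by construction, equivariant for the residual action of the finite group $Q := K_\infty/K_{\infty,+} = G(F_\infty)/G(F_\infty)_+ = G(F)/G(F)_+$, together with the two identifications recorded in the Remark immediately preceding the lemma, namely
\[
H^n((\mfg_\infty,K_{\infty}),\cA(U)^{G(F)}\otimes V)=H^n((\mfg_\infty,K_{\infty,+}),\cA(U)^{G(F)}\otimes V)^{Q}
\]
and
\[
H^n(G(F),\cA^\infty(V)^U)=H^n(G(F)_+,\cA^\infty(V)^U)^{Q}.
\]

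First I would make precise the $Q$-equivariance of $\kappa$. The map $\kappa$ is the composite of the de Rham comparison $H^n((\mfg_\infty,K_{\infty,+}),\cA(U)^{G(F)}\otimes V)\xrightarrow{\simeq} H^n(S_U,\tilde V)$ and the Betti--group cohomology comparison $H^n(S_U,\tilde V)\xrightarrow{\simeq} H^n(G(F)_+,\cA^\infty(V)^U)$. The group $Q$ acts on the first object through its action on the complex $C^\bullet((\mfg_\infty,K_{\infty,+}),-)$ (this is exactly how $C^n((\mfg_\infty,K_\infty),M)=C^n((\mfg_\infty,K_{\infty,+}),M)^{K_\infty/K_{\infty,+}}$ was used), on the middle object via the deck action of $Q=G(F_\infty)/G(F_\infty)_+$ on the disconnected locally symmetric space $S_U$ (equivalently, permuting connected components of $\mathfrak H_\infty$), and on the last object via the conjugation/inflation action of $G(F)/G(F)_+$ on group cohomology. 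Both comparison isomorphisms are natural for these actions — the de Rham comparison because the isomorphism $\Omega$ of $\S$\ref{sec: global formulas} is built $K_{\infty,+}$-equivariantly and intertwines the left-translation $G(F_\infty)$-action, and the Betti--group comparison because it is the standard isomorphism for the $K(\Gamma,1)$-space $S_U$, functorial in $\Gamma = G(F)_+ \trianglelefteq G(F)$ — so $\kappa$ is $Q$-equivariant. Hence $\kappa$ restricts to an isomorphism on $Q$-invariants, which is precisely the asserted statement once the two displayed identifications are invoked.

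The main step requiring care is checking that the two identifications in the preceding Remark are compatible with the $Q$-actions just described — i.e. that the subspace of $C^n((\mfg_\infty,K_{\infty,+}),-)$ fixed by $K_\infty/K_{\infty,+}$ really is what computes $(\mfg_\infty,K_\infty)$-cohomology (standard, since $Q$ is finite so taking $Q$-invariants is exact over a field of characteristic zero and commutes with cohomology), and likewise that $H^n(G(F),\cA^\infty(V)^U) = H^n(G(F)_+,\cA^\infty(V)^U)^Q$; this last is the inflation–restriction (Lyndon--Hochschild--Serre) statement for the extension $1\to G(F)_+\to G(F)\to Q\to 1$, where the higher terms vanish because $|Q|<\infty$ and the coefficients are $\Q$- (or $\C$-) vector spaces, so $H^i(Q,-)=0$ for $i>0$. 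I expect the only genuine obstacle is the bookkeeping of matching the three incarnations of the $Q$-action and confirming that the isomorphism $\Omega$ together with the chain of identifications in $\S$\ref{sec: global formulas} (the passage through $\iota$ of Remark \ref{isoCvsCtensV} and the identity $C^\infty(G(\A_F)/U,\C)\simeq {\rm Ind}_{G(F)_+}^{G(F)}C^\infty(G(\A_F)_+/U,\C)$) is genuinely $Q$-equivariant; once that is granted the proof is a one-line consequence of exactness of $(-)^Q$. I would therefore write the proof as: (i) recall $\kappa$ is the composite of two natural isomorphisms; (ii) observe each is $Q$-equivariant for the component/deck/conjugation action; (iii) take $Q$-invariants, using finiteness of $Q$ and the two identifications of the Remark, to conclude.
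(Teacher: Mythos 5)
Your proposal takes a genuinely different route from the paper. You propose to deduce the statement by taking $Q$-invariants (where $Q = K_\infty/K_{\infty,+} = G(F)/G(F)_+$) of the unrestricted isomorphism $\kappa$ of \eqref{isoGKBetti}, exploiting the two identifications recorded in the Remark preceding the lemma and the exactness of $(-)^Q$ over a characteristic-zero field. The paper instead decomposes both sides as direct sums over the double cosets $\widetilde{\Pic}_G(U)=G(F)\backslash G(\A_F^\infty)/U$ via Shapiro's lemma, reducing the claim to a single-term statement $H^n(\tilde\Gamma_g,V)\simeq H^n((\mfg_\infty,K_\infty),C^\infty_{\rm fin}(G(F_\infty),V)^{\tilde\Gamma_g})$, and then resolves that by a derived-functor argument: the functor $W\mapsto C^\infty_{\rm fin}(G(F_\infty),W)^{\tilde\Gamma_g}$ from finite-dimensional $G(F_\infty)$-representations to $(\mfg_\infty,K_\infty)$-modules is exact, so both sides are the derived functors of $W\mapsto W^{\tilde\Gamma_g}$.

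The real gap in your argument is the $Q$-equivariance of $\kappa$, which you flag as bookkeeping but neither verify nor reduce to something cited. This compatibility is not free: on the source of \eqref{isoGKBetti} the $Q$-action comes from the adjoint action of $K_\infty$ on $\mfg_\infty/\cK_\infty$ together with the $K_\infty$-action (right translation plus the finite-dimensional representation) on $\cA(U)^{G(F)}\otimes V$; on the target it is the conjugation action of $G(F)/G(F)_+$ on group cohomology of $G(F)_+$ with coefficients in $\cA^\infty(V)^U$, where $G(F)$ acts through a completely different recipe $(\gamma\phi)(g)=\gamma(\phi(\gamma^{-1}g))$. Matching these through the intermediate deck action on $S_U$ requires untangling the identifications $\iota$ of Remark \ref{isoCvsCtensV}, $C^\infty(G(\A_F)/U,\C)\simeq {\rm Ind}_{G(F)_+}^{G(F)}C^\infty(G(\A_F)_+/U,\C)$, and the left-translation convention in $\Omega$, and it is precisely the step the paper's proof is structured to avoid. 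If you want to make your route rigorous, the substantive content would be a lemma establishing this equivariance; without it the proof is a plausible plan, not a proof. It is also worth noting that the paper's argument yields the isomorphism directly at the level of $K_\infty$ and $G(F)$, with no need to invoke \eqref{isoGKBetti} as a black box or to identify the various incarnations of $Q$.
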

\begin{proof}
    
    If we write $\tilde \Gamma_g=G(F)\cap \tilde gU\tilde g^{-1}$ for a fixed representative $\tilde g$ of $g\in \widetilde{\Pic}_G(U)=G(F)\backslash G(\A_F^\infty)/U$, 
    \begin{eqnarray*}
    H^n(G(F),\cA^\infty(V)^U)&=&\sum_{g\in \widetilde{\Pic}_G(U)} H^n(\tilde\Gamma_g,V);\\
    H^n((\mfg_\infty,K_{\infty}),\cA(U)^{G(F)}\otimes V)&=&\sum_{g\in \widetilde{\Pic}_G(U)}H^n((\mfg_\infty,K_{\infty}),C^\infty(G(F_\infty),V)^{\tilde\Gamma_g}).
    \end{eqnarray*}
    Thus, one has to check that $H^n(\tilde\Gamma_g,V)\simeq H^n((\mfg_\infty,K_{\infty}),C^\infty(G(F_\infty),V)^{\tilde\Gamma_g})$. Notice that the functor from finite dimensional $G(F_\infty)$-representations to $(\mfg_\infty,K_{\infty})$-modules 
    \[
    W\longmapsto C^\infty(G(F_\infty),W)^{\tilde\Gamma_g}\simeq C^\infty(\tilde \Gamma_g\backslash G(F_\infty),\C)\otimes W,
    \]
     is exact (see Remark \ref{isoCvsCtensV}), hence, $H^n((\mfg_\infty,K_{\infty}),C^\infty(G(F_\infty),\bullet)^{\tilde\Gamma_g})$ is the derived functor of 
     \begin{eqnarray*}
         W\longmapsto H^0((\mfg_\infty,K_{\infty}),C^\infty(G(F_\infty),W)^{\tilde\Gamma_g})&=&\left\{\varphi\in \Hom_{K_\infty}(\C,C^\infty(G(F_\infty),W)^{\tilde\Gamma_g})\colon  X\varphi(1)=0 \mbox{ for all }X\in \mfg_\infty\right\}\\
         &=&\left\{f\in C^\infty(\mathbb{H},W)^{\tilde\Gamma_g}\colon  Xf=0 \mbox{ for all }X\in \mfg_\infty\right\}=C^0(\mathbb{H},W)^{\tilde\Gamma_g}=W^{\tilde\Gamma_g},
     \end{eqnarray*}
     and the result follows.
\end{proof}
\subsection{Cohomology of arithmetic groups and the Eichler-Shimura morphism}\label{cohoAG}

For any place $\sigma\mid\infty$ and any weight $\underline k=(k_{\nu})_{\nu:F\hookrightarrow\C}\in (2\N_{\geq 2})^{[F:\Q]}$, we write $V(k_\sigma-2)=\bigotimes_{\nu\mid\sigma}V(k_{\nu}-2)$, and
\[
M_\sigma:=\Hom(V(k_\sigma-2),D(k_\sigma))\text{ for } \sigma\in\Sigma_B;\qquad M_\sigma:=\Hom(V(k_\sigma-2),V(k_\sigma-2))\text{ for }  \sigma\not\in\Sigma_B.
\]
Thanks to the work done in \S \ref{ExpliCCR} and \S \ref{ExplCCC}, we have cohomology classes:
\[
\left\{\begin{array}{ll}
     c_{1,\sigma}^+\in H^1((\mfg_\sigma,K_\sigma),M_\sigma(+)),\quad c_{1,\sigma}^-\in H^1((\mfg_\sigma,K_\sigma),M_\sigma(-))&\text{ for } \sigma\in\Sigma_B^\R  \\
     c_{1,\sigma}\in H^1((\mfg_\sigma,K_\sigma),M_\sigma),\quad c_{2,\sigma}\in H^2((\mfg_\sigma,K_\sigma),M_\sigma)&\text{ for } \sigma\in\Sigma_B^\C\\
     c_{0,\sigma}={\rm id}\in H^0((\mfg_\sigma,K_\sigma),M_\sigma)& \text{ for } \sigma\not\in\Sigma_B. 
\end{array}\right.
\]

For any $\varepsilon=(\varepsilon_\sigma)_\sigma\in \{\pm1\}^{\Sigma_B}$, write $\varepsilon_\R$ for the character
\[
\varepsilon_\R:G(F_\infty)/G(F_\infty)_+\longrightarrow \{\pm 1\};\qquad \varepsilon_\R((g_\sigma)_\sigma)=\prod_{F_\sigma=\R} \left(\frac{\det(g_\sigma)}{|\det(g_\sigma)|}\right)^{\frac{1-\varepsilon_\sigma}{2}}.
\]
Notice that, given such an $\varepsilon$, we can consider the cross-product
\begin{equation}\label{defcvarepsilon}
c_\varepsilon=\prod_{\sigma\in\Sigma_B^\R}c_{1,\sigma}^{\varepsilon_\sigma}\times\prod_{\sigma\in\Sigma_B^\C}c_{\frac{3-\varepsilon_\sigma}{2},\sigma}\times\prod_{\sigma\not\in\Sigma_B}c_{0,\sigma}\in H^{n_\varepsilon}((\mfg_\infty,K_\infty),M_\infty(\varepsilon_\R)),
\end{equation}
where
\[
M_\infty=\bigotimes_{\sigma\mid \infty} M_\sigma=\Hom(V(\underline k-2),D(\underline k))\, \text{ and }\,   
n_\varepsilon=r_{1,B}+\sum_{\sigma\in\Sigma_B^\C}\frac{3-\varepsilon_\sigma}{2}.
\]
Hence the degree of the cohomology $n_\varepsilon$ belongs to $\{r_B,\cdots,r_B+r_2\}$.

\begin{definition}\label{DefES}
We construct the \emph{Eichler-Shimura morphism associated with $\varepsilon\in \{\pm1\}^{\Sigma_B}$} as the map
\begin{eqnarray*}
{\rm ES}_\varepsilon:\cM_{\underline k}(U)=H^0((\mfg_\infty,K_\infty),\Hom(D(\underline k),\cA(U)^{G(F)}))&\stackrel{\cup c_\varepsilon}{\longrightarrow}& H^{n_\varepsilon}((\mfg_\infty,K_\infty),\Hom(V(\underline k-2)(\varepsilon_\R),\cA(U)^{G(F)}))\simeq\\
&\stackrel{\kappa}{\simeq}& H^{n_\varepsilon}(G(F),\cA^\infty(V(\underline k-2)(\varepsilon_\R))^U),
\end{eqnarray*}
where the last isomorphism $\kappa$ is the one provided by Lemma \ref{lemmaisoGKBetti}.
\end{definition}
\begin{remark}
    There are other approaches to defining ${\rm ES}_\varepsilon$ (see \cite{ESsanti} or \cite{preprintsanti2}), but we believe the one above is the most elegant, generalizable, and easy to work with.
\end{remark}

\subsection{Independence of choices}
Recall that the isomorphism $G(F_\sigma)\simeq \PGL_2(F_\sigma)$, required to endow $G(F_\sigma)$-structure to $D(k_\sigma)$  if $\sigma\in\Sigma_B$, depends on the fixed embedding $\varphi_E:B\hookrightarrow\M_2(E)$ of \eqref{embEinB}. Similarly, as explained in \S \ref{PolTorus}, it also provides the $G(F_\sigma)$-structure to $V(k_\sigma-2)$, and so $\varphi_E$ determines the $G(F_\infty)$-structure of $D(\underline k)_B$.

Assume that we have two (possibly different) torus $\imath_1:E_1\hookrightarrow B$ and $\imath_2:E_2\hookrightarrow B$, both satisfying Assumption \ref{assuSigmaSigma}, and we fix isomorphisms $\varphi_1:B\hookrightarrow\M_2(E_1)$ and $\varphi_2:B\hookrightarrow\M_2(E_2)$ as in \eqref{embEinB}. If we denote the identifications provided by $\varphi_i$ by $I_i:G(F_{\Sigma_B})\rightarrow\PGL_2(F_{\Sigma_B})$, we have that $I_1^{-1}\circ I_2$ is given by conjugation. Hence, there exists $\gamma_{\Sigma_B}\in G(F_{\Sigma_B})$ such that
\[
(I_1^{-1}\circ I_2)(g)=\gamma_{\Sigma_B}^{-1}g\gamma_{\Sigma_B};\qquad\mbox{for all }\quad g\in G(F_{\Sigma_B}).
\]
Similarly, the $\varphi_i$ provide embeddings $e_i:G(F_{\Sigma_F\setminus\Sigma_B})\hookrightarrow \PGL_2(E_{\Sigma_F\setminus\Sigma_B})$, where $E_{\Sigma_F\setminus\Sigma_B}=E_{1,\Sigma_F\setminus\Sigma_B}=E_{2,\Sigma_F\setminus\Sigma_B}=\C^{r_1^n}$. Thus, there exists $\gamma_{\Sigma_F\setminus\Sigma_B}\in\PGL_2(E_{\Sigma_F\setminus\Sigma_B})$ such that 
\[
e_2(g)=\gamma_{\Sigma_F\setminus\Sigma_B}^{-1}e_1(g)\gamma_{\Sigma_F\setminus\Sigma_B};\qquad\mbox{for all }\quad g\in G(F_{\Sigma_F\setminus\Sigma_B}).
\]
Let us also choose any pair of Eichler orders $\cO_{N,1},\cO_{N,2}\subset B$ of level $N$ and write $U_{N,i}=\hat\cO_{N,i}^\times\subset G(\A_F^\infty)$ as above.
Since all local Eichler orders of level $N$ are conjugated, there exists $\gamma_f\in G(\A_F^\infty)$ such that $U_{N,1}=\gamma_f^{-1}U_{N,2}\gamma_f$.

Thus, if $\cM_{\underline k}(U_{N,i})$ is the space of modular forms constructed by means of the embedding $\varphi_i$ and the open compact subgroup $U_{N,i}\subset G(\A_F^\infty)$, we have an isomorphism
\begin{equation}\label{deftheta}
\theta:\cM_{\underline k}(U_{N,1})\longrightarrow \cM_{\underline k}(U_{N,2});\qquad \theta(\Phi)(f)(g_{\Sigma_B},g_{\Sigma_F\setminus\Sigma_B},g_f)=\Phi(\gamma_{\Sigma_F\setminus\Sigma_B}f)(g_{\Sigma_B}\gamma_{\Sigma_B},g_{\Sigma_F\setminus\Sigma_B},g_f\gamma_f),
\end{equation}
for all $f\in D(\underline k)_B$, $g_{\Sigma_B}\in G(F_{\Sigma_B})$, $g_{\Sigma_F\setminus\Sigma_B}\in G(F_{\Sigma_F\setminus\Sigma_B})$ and $g_f\in G(\A_F^\infty)$. By the $\PGL_2(F_\infty)$-invariance of $\Upsilon$ and the Haar measure $d^\times g$, it is easy to check that
\begin{equation}\label{eqindemb}
    \langle\Phi_1,\Phi_2\rangle=\langle\theta\Phi_1,\theta\Phi_2\rangle;\qquad \Phi_1,\Phi_2\in \cM_{\underline k}(U_{N,1}).
\end{equation}
Hence $\theta$ sends normalized forms to normalized forms (it is clear that ${\rm vol}(U_{N,1})={\rm vol}(U_{N,2})$). 

By means of such Eichler--Shimura morphisms we can realize the automorphic representations in the group cohomology spaces $H^n(G(F),\cA^{\infty}(V(\underline k-2)(\lambda))^{U})=H^{n}(G(F)_+,\cA^{\infty}(V(\underline k-2))^{U})^{\lambda}$, $n\in \{r_B,\cdots, r_B+r_2\}$, for any fixed character $\lambda:G(F)/G(F)_+=G(F_\infty)/G(F_\infty)_+\rightarrow\pm1$. The following lemma is straightforward.
\begin{lemma}\label{lemindemb}
    For any $\varepsilon\in\{\pm 1\}^{\Sigma_B}$, we have the following commutative diagram
    \[
    \xymatrix{
\cM_{\underline k}(U_{N,1})\ar[r]^{\theta}\ar[d]^{{\rm ES}_\varepsilon}& \cM_{\underline k}(U_{N,2})\ar[d]^{{\rm ES}_\varepsilon}\\
H^{n_\varepsilon}(G(F)_+,\cA^{\infty}(V(\underline k-2))^{U_{N,1}})^{\varepsilon_\R}\ar[r]^{\theta^\ast}&H^{n_\varepsilon}(G(F)_+,\cA^{\infty}(V(\underline k-2))^{U_{N,2}})^{\varepsilon_\R}
    }
    \]
    where the morphism $\theta^\ast$ is induced by 
    \[
    \theta^\ast:\cA^{\infty}(V(\underline k-2))^{U_{N,1}}\longrightarrow\cA^{\infty}(V(\underline k-2))^{U_{N,2}};\  (\theta^\ast\phi)(g_f)=\gamma_\infty^{-1}\phi(g_f\gamma_f);\ \gamma_\infty=(I_1\gamma_{\Sigma_B},\gamma_{\Sigma_F\setminus\Sigma_B})\in \PGL_2(F_\infty).
    \]
\end{lemma}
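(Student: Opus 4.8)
The plan is to verify that each of the three steps defining ${\rm ES}_\varepsilon$ in Definition \ref{DefES} transforms compatibly when the data $(\varphi_1,\cO_{N,1})$ is replaced by $(\varphi_2,\cO_{N,2})$: namely the inclusion $\cS_{\underline k}(U)\hookrightarrow H^0((\mfg_\infty,K_\infty),\Hom(D(\underline k)_B,\cA(U)^{G(F)}))$, the cup product $\cup\,c_\varepsilon$, and the comparison isomorphism $\kappa$ of Lemma \ref{lemmaisoGKBetti}. Running these three compatibilities in sequence produces the asserted commutative square.

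First I would unwind \eqref{deftheta}: the isomorphism $\theta$ is the composition of right translation $R_{\gamma_f}$ in the adelic variable of $\cA(\cdot)^{G(F)}$ with precomposition by the isomorphism of $(\mfg_\infty,K_\infty)$-modules $D(\underline k)_B^{(2)}\to D(\underline k)_B^{(1)}$ obtained by conjugating by $(\gamma_{\Sigma_B},\gamma_{\Sigma_F\setminus\Sigma_B})$, the superscript recording whether the $G(F_\infty)$-structure is the one attached to $\varphi_1$ or to $\varphi_2$. This gives the desired compatibility already on the degree-$0$ term $\Hom(D(\underline k)_B,\cA(U)^{G(F)})$.

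The central step is to show that conjugation by $(\gamma_{\Sigma_B},\gamma_{\Sigma_F\setminus\Sigma_B})$ sends the class $c_\varepsilon$ built from $\varphi_1$ to the class $c_\varepsilon$ built from $\varphi_2$. By construction the two maximal compacts used in the two settings are pulled back from the standard $K_\sigma$ of \S\ref{GKR}--\S\ref{GKC} along $I_1$ and $I_2$ respectively, hence are conjugate by $\gamma_{\Sigma_B}$ at the places of $\Sigma_B$ (and the $G(F_\sigma)$-structure on $V(k_\sigma-2)$ off $\Sigma_B$ is conjugate by $\gamma_{\Sigma_F\setminus\Sigma_B}$). Since the $K_\sigma$-equivariant sections $s_\pm$ and $s$ of the exact sequences \eqref{exseq1} and \eqref{exseq2} are unique, they necessarily correspond to one another under this conjugation; therefore so do the morphisms $\delta s_\pm$, $\delta s$, the local classes $c_{1,\sigma}^\pm$, $c_{1,\sigma}$, $c_{2,\sigma}$, and hence the cross-product class $c_\varepsilon$ of \eqref{defcvarepsilon}. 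Consequently $\cup\,c_\varepsilon$ intertwines the conjugation isomorphisms on source and target coefficient modules.

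Finally I would use that the comparison $\kappa$ of Lemma \ref{lemmaisoGKBetti} between $(\mfg_\infty,K_\infty)$-cohomology and group cohomology of $G(F)$ is functorial for the Hecke-type correspondence $g_f\mapsto g_f\gamma_f$ together with the coefficient change $v\mapsto\gamma_\infty^{-1}v$, where $\gamma_\infty=(I_2\gamma_{\Sigma_B},\gamma_{\Sigma_F\setminus\Sigma_B})$; the induced map on $H^{n_\varepsilon}(G(F),\cA^\infty(V(\underline k-2)(\varepsilon_\R))^{U})$ is exactly the $\theta^\ast$ of the statement, and chasing the bookkeeping of which conjugate acts where follows directly from the defining relations of $\gamma_{\Sigma_B}$, $\gamma_{\Sigma_F\setminus\Sigma_B}$ and $\gamma_f$. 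The one point demanding genuine care is the intertwining of $c_\varepsilon$, which is precisely where the uniqueness of $s_\pm$ and $s$ is used; the rest is routine.
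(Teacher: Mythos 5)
Your proof is correct and takes the natural route; the paper records this lemma as ``straightforward'' and omits the argument, so the only comparison available is against the obvious strategy, which you follow. The substantive point---that the cross-product class $c_\varepsilon$ built from $\varphi_1$ corresponds to that built from $\varphi_2$ under the conjugation by $(\gamma_{\Sigma_B},\gamma_{\Sigma_F\setminus\Sigma_B})$---is handled correctly: the local cocycles $c_{1,\sigma}^\pm$, $c_{1,\sigma}$, $c_{2,\sigma}$ are defined intrinsically on the $\PGL_2$-side, and since $I_1$, $I_2$ (resp. $e_1$, $e_2$) differ by this conjugation, so do their pullbacks to $G$ and, by uniqueness, the $K_\sigma$-equivariant sections from which the cocycles are built. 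The functoriality of $\kappa$ for the Hecke correspondence and coefficient twist then closes the square.

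One bookkeeping slip worth flagging in your unwinding of \eqref{deftheta}: at places $\sigma\in\Sigma_B$ the adjustment is not a precomposition of $\Phi$ on the $D(k_\sigma)$-factor (that factor is the same underlying vector space under both $\varphi_1$ and $\varphi_2$); rather it is a right translation by $\gamma_{\Sigma_B}$ in the archimedean automorphic variable, compensating for the conjugacy $K_\sigma^{G,(2)}=\gamma_{\Sigma_B}^{-1}K_\sigma^{G,(1)}\gamma_{\Sigma_B}$. A genuine precomposition on the coefficient module occurs only at the finite-dimensional $V(k_\sigma-2)$ factors for $\sigma\notin\Sigma_B$, where $\gamma_{\Sigma_F\setminus\Sigma_B}$ acts through $\PGL_2(E_{\Sigma_F\setminus\Sigma_B})$. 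This distinction does not affect the validity of the remaining steps---on cohomology both effects merge into the action by $\gamma_\infty^{-1}=(I_2\gamma_{\Sigma_B},\gamma_{\Sigma_F\setminus\Sigma_B})^{-1}$ that defines $\theta^\ast$---but it should be stated precisely when verifying the first commuting square.
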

\begin{remark}\label{remindemb}
Let $L_{\underline k}$ be the number field defined in \eqref{defQk}, and
recall the $L_{\underline k}$-model $V(\underline k-2)_{L_{\underline k}}$ of $V(\underline k-2)$ introduced in \S \ref{remdefmodVk}.
Then the restriction of $\theta^\ast$ to $H^{n_\varepsilon}(G(F)_+,\cA^{\infty}(V(\underline k-2)_{L_{\underline k}})^{U_{N,1}})^{\lambda}$ coincides with the action of right translation by $\gamma_f$. Indeed, if we write $V(\underline{k}-2)_{(i)}$ for the $\C$-vector space $V(\underline{k}-2)$ endowed with the action of $G(F_\infty)$ provided by $\imath_i=(I_i,e_i):G(F_\infty)\hookrightarrow\PGL_2(F_{\Sigma_B}\times E_{\Sigma_F\setminus\Sigma_B})$, we have
\[
\xymatrix{
&V(\underline{k}-2)_{L_{\underline k}}\ar[dl]_{\kappa_1}\ar[dr]^{\kappa_2}&\\
V(\underline{k}-2)_{(1)}\ar[rr]^{\gamma_\infty^{-1}}&&V(\underline{k}-2)_{(2)}
}
\]
where $\kappa_i$ are the embeddings induced by the morphisms of Lemma \ref{lemaVkVk}
\end{remark}

\subsection{Periods}\label{sec:periods}

Let $\pi$ be an automorphic representation for $G$ of weight $\underline k$ and level $N$. By means of the morphism ${\rm ES}_\varepsilon$ we can realize $\pi^\infty$ in the cohomology spaces $H^{n_\varepsilon}(G(F)_+,\cA^{\infty}(V(\underline k-2))^{U})^{\varepsilon_\R}$, for any choice of $\varepsilon\in\{\pm 1\}^{\Sigma_B}$.
Let $L_\pi$ be the coefficient field of $\pi$, namely, the minimum extension of $L_{\underline k}$ that contains all the eigenvalues of all Hecke operators.  The $\pi^\infty$-isotypical component of the corresponding cohomology space is one-dimensional precisely when $n_\varepsilon=r_B$ (in the lowest degree) or $n_\varepsilon=r_B+r_2$ (in the highest degree). This implies that, in these lowest and highest degree situations, there exists $\Omega^\pi_\varepsilon\in\C^\times$ such that
\[
\frac{{\rm ES}_\varepsilon(\Phi_0)}{\Omega^\pi_\varepsilon}\in H^{n_\varepsilon}(G(F)_+,\cA^{\infty}(V(\underline k-2)_{L_\pi})^{U_N})^{\varepsilon_\R},
\]
for a normalized modular form $\Phi_0\in \cM_{\underline k}(U_N)$ of $\pi^\infty$.
With this definition, the period $\Omega^\pi_\varepsilon$ is well defined up to a factor in $L_\pi^\times$. By equation \eqref{eqindemb}, Lemma \ref{lemindemb} and Remark \ref{remindemb} the class $\Omega^\pi_\varepsilon\mod L_\pi^\times$ is independent of the embedding   $E\hookrightarrow B$ used to determine the $G(F_\infty)$-structure of $D(\underline k)_B$. Throughout this article, for any group $G'$ such that $\pi$ admits a Jacquet-Langlands lift $\pi'$ to $G'$, we will fix a choice of a period $\Omega_\varepsilon^{\pi'}$ for any character $\varepsilon$ of lowest or highest degree. We will denote by  $\Pi$ the Jacquet--Langlands lift of $\pi$ to $\PGL_2$, and by  $\Omega_\varepsilon^\Pi$ the associated period.

\subsection{Modular symbols}

In this section we will assume that $G=\PGL_2$. In this situation, the real manifold $S_U$ is non-compact and we can consider the Borel-Serre compactification $\bar S_U$. For any finite dimensional $G(F)$-representation $V$, one defines the cuspidal cohomology
\[
H^n_{\rm cusp}(S_U,\tilde V)=H^{n}_{\rm cusp}(G(F)_+,\cA^\infty(V)^U):=\bigoplus_{\pi}H^{n}((\mfg_\infty,K_{\infty,+}),\Hom(V,(V_\pi^{\rm fin})^U)),
\]
where the sum runs over all irreducible cuspidal automorphic representations $\pi$, and $(V_\pi^{\rm fin})^U\subset \cA(U)$ is the $(\mfg_\infty,K_\infty)$-module of $U$-invariant $K_\infty$-finite vectors of $\pi$.
By a theorem due to Borel we have that
\[
H^n_{\rm cusp}(S_U,\tilde V)\hookrightarrow H^n_!(S_U,\tilde V)\subset H^n(S_U,\tilde V)
\]
where $H^n_!(S_U,\tilde V)$ is the image of the canonical map $H^n_c(S_U,\tilde V)\rightarrow H^n(S_U,\tilde V)$, being $H^n_c(S_U,\tilde V)$ the cohomology with compact support.

Similarly as above, we can define
\[
\cS_{\underline k}(U):=\bigoplus_{\pi}\Hom_{(\mfg_\infty,K_\infty)}\left(D(\underline k),(V_\pi^{\rm fin})^U\right).
\]
As discussed above, for any $f\in \cS_{\underline k}(U)$ the cup-product $f\cup c_\varepsilon\in H^n_{\rm cusp}(S_U,\tilde V)$ lies in the image of the cohomology with compact support, for any $c_\varepsilon$ as in \eqref{defcvarepsilon}. Hence, we can integrate $f\cup c_\varepsilon$ through the geodesic joining two cusps. Notice that the set of cusps is in correspondence with
\[
B(F)_+\backslash G(\A_F^\infty)/U=\bigsqcup_{[g_i]\in \Pic_G(U)}\Gamma_{g_i}\backslash \PP^1(F);\qquad {\rm Pic}_G(U):=G(F)_+\backslash G(\A_F^\infty)\slash U,\quad 
\Gamma_{g_i}=\tilde g_iU \tilde g_i^{-1}\cap G(F)_+.
\]
Thus, if we write $\cA^\infty(\ast,\bullet)^U:=\cA^\infty(\Hom(\ast,\bullet))^U$, the map ${\rm MS}_\varepsilon(f)(a-b)=\int_b^af\cup c_\varepsilon$ defines a morphism 
\begin{eqnarray*}
{\rm MS}_\varepsilon:\cS(U,\underline k)\longrightarrow \left(\bigoplus_{[g_i]\in \Pic_G(U)}H^{n_\varepsilon-1}(\Gamma_{g_i},\Hom(\Delta_0,V(\underline k-2)))\right)^{\varepsilon_\R}&=&H^{n_\varepsilon-1}(G(F)_+,\cA^\infty(\Delta_0,V(\underline k-2))^U)^{\varepsilon_\R};\\ 
&=&H^{n_\varepsilon-1}(G(F),\cA^\infty(\Delta_0,V(\underline k-2)(\varepsilon_\R))^U),
\end{eqnarray*}
where $\Delta_0$ is the group of degree zero divisor of $\Delta=\Z[\PP^1(F)]$. It is clear that the degree short exact sequence
\[
0\longrightarrow V(\underline k-2)(\varepsilon_\R)\longrightarrow \Hom(\Delta,V(\underline k-2)(\varepsilon_\R))\longrightarrow \Hom(\Delta_0,V(\underline k-2)(\varepsilon_\R))\longrightarrow 0,
\]
provides a connection morphism
\[
\delta:H^{n_\varepsilon-1}(G(F),\cA^\infty(\Delta_0,V(\underline k-2)(\varepsilon_\R))^U)\longrightarrow H^{n_\varepsilon}(G(F),\cA^\infty(V(\underline k-2)(\varepsilon_\R))^U),
\]
such that $\delta\circ {\rm MS}_\varepsilon={\rm ES}_\varepsilon$.

\subsection{Modular symbols and the diagonal torus}\label{MSandDT}

Let us consider the embedding $T=\G_m\hookrightarrow \PGL_2=G$ provided by $t\mapsto\big(\begin{smallmatrix}
    t&\\&1
\end{smallmatrix}\big)$. Write $\I_F=\G_m(\A_F)$ and $\I_F^\infty=\G_m(\A_F^\infty)$ for the group of ideles and finite ideles of $F$. Notice that the divisor $(\infty-0)\in\Delta_0$ is fixed by $\G_m(F)=F^\times$. Thus, the evaluation at $(\infty-0)$ provides a morphism
\[
H^{n}(G(F)_+,\cA^{\infty}(\Delta_0,V(\underline k-2)_{L_{\underline k}})^{U})^{\lambda}\longrightarrow H^{n}(F_+^\times,\cA^{\infty}(V(\underline k-2)_{L_{\underline k}})^{U})^{\lambda};\qquad \varphi\longmapsto \varphi(\infty-0),
\]
for any $\lambda:G(F)/G(F)_+\rightarrow \{\pm 1\}$.  On the other side, in \S \ref{fundclassGm} we have constructed a fundamental class 
\[
    \eta_{\G_m}\in H_{r-1}(F_+^\times,C_c^0(\I_F^\infty,\Q)),
\]
where $r=r_{B}=\#\infty$ in this situation.
Notice that we have a $F_+^\times$-equivariant morphism:
\begin{eqnarray}\label{firstvarphi}
\varphi:\left(C^{0}(\I_F^\infty,{L_{\underline k}})\otimes V(\underline{k}-2)_{L_{\underline k}}\right)\times \cA^{\infty}(V(\underline{k}-2)_{L_{\underline k}})^U&\longrightarrow& C^{0}(\I_F^\infty,{L_{\underline k}}),\\
\varphi((f\otimes \mu)\otimes\Phi)(t)&=&f(t)\cdot\langle\Phi(t),\mu\rangle,
\end{eqnarray}
where the $G(F)$-invariant pairing $\langle\;,\;\rangle:V(\underline{k}-2)_{L_{\underline k}}\times V(\underline{k}-2)_{L_{\underline k}}\rightarrow {L_{\underline k}}$ arises from the pairing defined in Remark \ref{dualVkmodelpairing}, noting that its extension to $\bar\Q$  is clearly compatible with the $G_{\underline k}$-action.
Let $\rho:\I_F/F^\times\rightarrow\C$ be a locally polynomial character of degree less that $\frac{\underline k-2}{2}$, namely, a character such that 
\[
\rho\mid_{F_\infty^\times}(t)=\rho_0(t)t^{\underline m};\qquad \frac{2-\underline k}{2}\leq\underline m\leq \frac{\underline k-2}{2},
\]
for some locally constant character $\rho_0$. If we interpret the function $t\mapsto t^{\underline m}$ as an element $\mu_{\underline m}\in V(\underline{k}-2)$ by means of \eqref{PtoCoverC} and \eqref{dualVP}, then we can regard $\rho$ as an element
\[
\rho=\rho\mid_{\I_F^\infty}\otimes\mu_{\underline m} \in H^0(F^\times_+,C^{0}(\I_F^\infty,\C)\otimes V(\underline{k}-2)).
\]
With the notation of \S \ref{fundClasses}, let us consider the connected compact subgroup $K^T_{\infty,+}=\prod_{\sigma\in\Sigma_F}K_{\sigma,+}^{T}$. Notice that $F^\times_{\sigma,+}=\R_+\times K_{\sigma,+}^{T}$, and our choice of the Haar measure of  $F^\times_{\sigma,+}$ in \S \ref{haarmeasures} is given by $d^\times x_\sigma=d^\times r_\sigma d k_\sigma$, where $d^\times r_\sigma=\frac{dr_\sigma}{r_\sigma}$ is the usual Haar measure on $\R_+$ and $d k_\sigma$ is the Haar measure on $K_{\sigma,+}^{T}$
\[
d k_\sigma=1,\quad\mbox{if }F_\sigma=\R;\qquad d k_\sigma=2\pi^{-1}d\theta_\sigma,\quad\mbox{if }F_\sigma=\C\mbox{ and }x_\sigma=r_\sigma e^{i\theta_\sigma}.
\]
The Tamagawa measure $d^\times t$ on $(\I_F^\infty)$ provides a $F_+^\times$-invariant pairing 
\begin{equation}\label{pairing1CC}
    C^0(\I_F^\infty,\C)\times C_c^0(\I_F^\infty,\C)\longrightarrow\C;\qquad (f_1,f_2)\longmapsto {\rm vol}(K^T_{\infty,+})\cdot\#( F^\times/F^\times_+)\cdot\int_{\I_F^\infty}f_1(z,t)f_2(z,t)d^\times t,
\end{equation}
where ${\rm vol}(K^T_{\infty,+})$ is taken with respect to the measures $dk_\sigma$ above. Any $\varepsilon\in \{\pm 1\}^{\Sigma_B}$ as above is called of \emph{lowest degree} if $n_\varepsilon=r_B=r$. Given such an lowest degree $\varepsilon$, we can consider the cup product
\[
\varphi\left({\rm MS}_\varepsilon(\Phi)(\infty-0)\cup\rho\right)\cap\eta_{\G_m}\in \C,
\]
for any $\Phi\in \cS_{\underline k}(U)$.
\begin{remark}\label{algebraicitypairing1}
    We can easily compute that ${\rm vol}(K^T_{\infty,+})=4^{r_2}$. Moreover, we have ${\rm vol}(\hat\cO_F^\times)=|d_F|^{\frac{-1}{2}}$. This implies that \eqref{pairing1CC} is the extension of scalars of a pairing
    \[
    C^0(\I^\infty_F,\Q)\times C_c^0(\I^\infty_F,\Q)\longrightarrow |d_F|^{\frac{-1}{2}}\Q,
    \]
    since any open compact subgroup $\cO\subset \I_F^\infty$ must be a finite index subgroup of $\hat\cO_F^\times$. 
\end{remark}
\begin{proposition}\label{ThmintMS}
Let $\Phi\in \cS_{\underline k}(U)$ and let $\rho:\I_F/F^\times\rightarrow\C$ be a locally polynomial character such that 
\[
\rho\mid_{F_\infty^\times}(t)=\rho_0(t)t^{\underline m};\qquad \frac{2-\underline k}{2}\leq\underline m\leq \frac{\underline k-2}{2},
\]
for some locally constant character $\rho_0$. If $\rho_{0,\sigma}(-1)=\varepsilon_\sigma$ for all $\sigma\mid\infty$ (in particular $\varepsilon$ is of lowest degree), then 
\[
\varphi({\rm MS}_\varepsilon(\Phi)(\infty-0)\cup\rho)\cap \eta_{\G_m}=\int_{\I_F/F^\times}\rho(t)\cdot \Phi(\underline{\delta s}_{\varepsilon_\R}(\mu_{\underline{m}}))\left(\begin{array}{cc}
t     &  \\
     & 1
\end{array}\right)d^\times t.
\]
\end{proposition}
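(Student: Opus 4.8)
The plan is to unwind both sides of the claimed identity through the chain of isomorphisms that define $\mathrm{MS}_\varepsilon$ and $\eta_{\G_m}$, reducing everything to an integral over $\I_F/F^\times$ of the automorphic form $\Phi$ against the character $\rho$. First I would recall that $\mathrm{MS}_\varepsilon(\Phi)(\infty-0)$, obtained by integrating the cocycle $\Phi\cup c_\varepsilon$ along the geodesic from $0$ to $\infty$, defines a class in $H^{n_\varepsilon-1}(F_+^\times,\cA^\infty(V(\underline k-2))^U)^{\varepsilon_\R}$, since $(\infty-0)$ is $F^\times$-fixed. Cupping with the polynomial character $\rho=\rho|_{\I_F^\infty}\otimes\mu_{\underline m}$ (viewed as a degree-zero class in $H^0(F_+^\times,C^0(\I_F^\infty,\C)\otimes V(\underline k-2))$ via \eqref{PtoCoverC} and \eqref{dualVP}) and pairing via $\varphi$ of \eqref{firstvarphi}, one lands in $H^{n_\varepsilon-1}(F_+^\times,C^0(\I_F^\infty,\C))$. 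Because $\varepsilon$ is of lowest degree, $n_\varepsilon-1=r-1$, which is exactly the degree of the fundamental class $\eta_{\G_m}\in H_{r-1}(F_+^\times,C_c^0(\I_F^\infty,\Q))$, so the cap product $\cap\,\eta_{\G_m}$ against the pairing \eqref{pairing1CC} produces a number.

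Next I would compute this number concretely. Using the defining property \eqref{eqeint2} of the fundamental class $\xi$ for $\G_m$ — namely $e(\omega)\cap\xi=\int_{\Gamma\backslash M_0}\omega$ — together with the decomposition $\eta_{\G_m}=(\frac{1}{\#\Gamma_{\mathrm{tors}}}\xi\cap 1_{\tilde x_iU})_i$ over $\Pic_{\G_m}(U)$ and Shapiro's lemma as in \S\ref{fundclassGm}, the cap product unfolds into a sum over $\Pic_{\G_m}(U)$ of integrals over $\Gamma\backslash M_0$ of the differential form attached (via the map $\Omega$ of \S\ref{cohoAG}) to the cocycle. Reassembling the pieces over $\Pic_{\G_m}(U)$, the volume constant $\mathrm{vol}(K^T_{\infty,+})\cdot\#(F^\times/F^\times_+)$ in \eqref{pairing1CC} and the normalization of $d^\times t$ as a Tamagawa measure on $\I_F^\infty$, together with the archimedean measures $d^\times r_\sigma\,dk_\sigma$ from \S\ref{haarmeasures}, combine precisely so that the total integral is $\int_{\I_F/F^\times}\rho(t)\cdot(\Phi\cup c_\varepsilon\text{ evaluated along }(\infty-0)\text{, paired with }\mu_{\underline m})\,d^\times t$. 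The archimedean part of this — integrating the geodesic-integrated cocycle $\Phi\cup c_\varepsilon$ over $F_\infty^\times/K^T_{\infty,+}$ against $t^{\underline m}$ — is where the morphisms $\underline{\delta s}_{\varepsilon_\R}$ of \eqref{deltasdef} enter: the local computations of $\delta s_\pm$ (from \eqref{descc1+}) and the definition of $\mu_{\underline m}$ in \eqref{defmumglobal} identify the pairing of the cocycle value against $\mu_{\underline m}$ with $\Phi(\underline{\delta s}_{\varepsilon_\R}(\mu_{\underline m}))$ evaluated at $\big(\begin{smallmatrix}t&\\&1\end{smallmatrix}\big)$. The sign hypothesis $\rho_{0,\sigma}(-1)=\varepsilon_\sigma$ is exactly what guarantees that the integrand is invariant (rather than anti-invariant) under the relevant components of $F^\times/F^\times_+$, so the integral over $\I_F/F^\times$ does not vanish for parity reasons and matches the $\varepsilon_\R$-isotypic projection built into $\mathrm{MS}_\varepsilon$.

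The main obstacle I anticipate is the bookkeeping of constants and orientations: tracking how the factors $\#\cG_{g_i}$ (equivalently $\#\Gamma_{\mathrm{tors}}$) in the definition of $\eta_{\G_m}$, the factor $\mathrm{vol}(K^T_{\infty,+})\cdot\#(F^\times/F^\times_+)$ in \eqref{pairing1CC}, and the archimedean Haar normalizations interact, and verifying that the geodesic from $0$ to $\infty$ together with the section $M_0\hookrightarrow T(F_\infty)_0$ matches the torus orbit $\big(\begin{smallmatrix}t&\\&1\end{smallmatrix}\big)$ with the correct orientation. The identification is forced once one checks it on a single component of $\Pic_{\G_m}(U)$ and uses $F_+^\times$-equivariance to propagate, but getting every normalization to cancel cleanly — so that no spurious power of $|d_F|$ or $2$ survives — is the delicate point; here Remark \ref{algebraicitypairing1} (giving $\mathrm{vol}(K^T_{\infty,+})=4^{r_2}$ and $\mathrm{vol}(\hat\cO_F^\times)=|d_F|^{-1/2}$) provides the needed input. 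The rest is a direct though lengthy unwinding of definitions.
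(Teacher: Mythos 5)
Your plan follows essentially the same route as the paper's proof: restrict to the subspace $s_\cU=F^\times\backslash(M\times\I_F^\infty/\cU)\subset S_U$, express the differential form of ${\rm MS}_\varepsilon(\Phi)(\infty-0)$ via the identity $c_\varepsilon(\bigwedge_\sigma D_\sigma)=\underline{\delta s}_{\varepsilon_\R}$, unfold the cap product with $\eta_{\G_m}$ over $\Pic_{\G_m}(\cU)$ using \eqref{eqeint2} and Shapiro's lemma, and reassemble the sum into the adelic integral over $\I_F/F^\times$, with the sign hypothesis $\rho_{0,\sigma}(-1)=\varepsilon_\sigma$ and the identity \eqref{actTdeltas} handling the $F^\times/F_+^\times$ parity. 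The proposal is correct in strategy, though as you note the delicate bookkeeping of $\#\Gamma_{\rm tors}$, ${\rm vol}(K^T_{\infty,+})\#(F^\times/F^\times_+)$, and the Tamagawa normalizations is where the full proof actually earns its keep.
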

\begin{proof}
The above fixed embedding $\G_m\hookrightarrow G$ provides the subspace
\[
s_\cO:=F^\times\backslash (M\times \I_F^\infty/\cO)\subseteq S_U=G(F)\backslash (\mathbb{H}\times G(\A_F^\infty)/U);\qquad \cO=U\cap\I_F^\infty,
\]
where $M\simeq \R_+^{q}$ is the connected component of $F_\infty^\times$. Notice that $M\simeq M_0\times \R_+$, where $M_0$ is the kernel of the norm morphism $N:M\rightarrow\R_+$. We can identify such an $\R_+$ as the geodesic joining the cusp $0$ and $\infty$. Since $K^T_{\infty,+}=K_{\infty,+}\cap\G_m(F_\infty)_+$ and $\mathfrak{g}^T_\infty/\cK^T_\infty=\bigoplus_\sigma \R D_\sigma$, where $\cK^T_\infty={\rm Lie}(K^T_{\infty,+})$, $\mathfrak{g}^T_\infty={\rm Lie}(F_\infty^\times)$ and $D_\sigma=\big(\begin{smallmatrix}
    1&\\&
\end{smallmatrix}\big)\in {\rm Lie}(F_\sigma^\times)$,
this implies that the differential associated to ${\rm MS}_\varepsilon(\Phi)(\infty-0)_{\mid s_\cO}$ is
\[
\Omega\big({\rm MS}_\varepsilon(\Phi)(\infty-0)_{\mid s_\cO}\big)=\int_0^\infty \iota\big(\Phi\big( c_\varepsilon\big(\bigwedge_{\sigma\mid\infty} D_\sigma\big)\big)\big)_{\mid \I_F}\bigwedge_{\sigma\mid\infty} dD_\sigma,
\]
where we regard $\Phi\left( c_\varepsilon\left(\bigwedge_{\sigma\mid\infty} D_\sigma\right)\right)\in \Hom(V(\underline k-2),\cA(U)^{G(F)})$ as an element in $C^\infty(G(\A_F)/U,V(\underline k-2))^{G(F)_+}$ by means of the embedding (see Remark \ref{isoCvsCtensV})
\[
\iota:\Hom(V(\underline k-2),\cA(U)^{G(F)})\longrightarrow C^\infty(G(\A_F)/U,V(\underline k-2))^{G(F)_+};\qquad\langle\iota(\varphi)(g),\mu\rangle=\varphi(g_\infty^{-1}\mu)(g),
\]
for all $g=(g_\infty,g_f)\in G(\A_F)$ and $\mu\in V(\underline k-2)$.
First notice that $\bigwedge_{\sigma\mid\infty} dD_\sigma=\prod_\sigma d^\times r_\sigma$ is the Haar measure of $M$ described above. Moreover, by construction 
\[
\delta s_\pm=c_{1,\sigma}^\pm(D_\sigma)\in \Hom(V(k_\sigma-2),D(k_\sigma));\qquad c_\varepsilon\left(\bigwedge_{\sigma\mid\infty} D_\sigma\right)=\underline{\delta s}_{\varepsilon_\R}.
\]
This implies that, for a small enough open compact subgroup ${\rm U}\subset \I_F^\infty$ so that both $\Phi$ and $\rho$ are ${\rm U}$-invariant, if we write $\Gamma={\rm U}\cap F_+^\times$ and we consider the identification
\[
\bigotimes_{x_i\in \Pic_{\G_m}({\rm U})}H^{q-1}(\Gamma,\C)\simeq H^{q-1}(F_+^\times,C^0(\I_F^\infty,\C)^{\rm U}), 
\]
provided by $C^0(\I_F^\infty,\C)^{\rm U}\simeq\bigoplus_{x_i \in \Pic_{\G_m}({\rm U})}{\rm coInd}_{\Gamma}^{F^\times_+}\C$, then $\varphi\left({\rm MS}_\varepsilon(\Phi)(\infty-0)\cap \rho\right)$ corresponds to the differential
\[
\Omega\left(\varphi\left({\rm MS}_\varepsilon(\Phi)(\infty-0)\cap \rho\right)\right)=\left(\rho(x_i)\int_0^\infty r_\infty^{\underline m}\Phi({\delta s}_{\varepsilon_\R}(\mu_{\underline m}))((r_\sigma)_\sigma,x_i)\prod_\sigma d^\times r_\sigma\right)_{i}.
\] 
Hence, if we write $d^\times r_\infty:=\prod_\sigma d^\times r_\sigma$ and $d^\times x_\infty:=\prod_\sigma d^\times x_\sigma=\prod_\sigma d^\times r_\sigma dk_\sigma$, then we obtain by definition of $\eta_{\G_m}$ 
\begin{eqnarray*}
    \varphi({\rm MS}_\varepsilon(\Phi)(\infty-0)\cup\rho)\cap \eta_{\G_m}&=&\sum_{x_i\in\Pic_{\G_m}(\rm U)}\frac{{\rm vol}({\rm U}){\rm vol}(k_\infty^0)\#( F^\times/F^\times_+)}{\#(\Gamma\cap k_\infty^0)}\int_0^\infty \int_{\Gamma\backslash M_0}r_\infty^{\underline m}\rho(x_i)\Phi({\delta s}_{\varepsilon_\R}(\mu_{\underline m}))(r_\infty,x_i) d^\times r_\infty\\
    &=&\frac{{\rm vol}(k_\infty^0)\#( F^\times/F^\times_+)}{\#(k_\infty^0\cap\Gamma)}\int_{\bigsqcup{\tilde x_i}{\rm U}}\int_{\Gamma\backslash M}\rho(x)r_\infty^{\underline m}\Phi({\delta s}_{\varepsilon_\R}(\mu_{\underline m}))(r_\infty,x) d^\times r_\infty d^\times x\\
    &=&\sum_{z\in \G_m(F)/\G_m(F)_+}\int_{\bigsqcup{\tilde x_i}{\rm U}}\int_{\Gamma\backslash \G_m(F_\infty)_+}\varepsilon_\R(z)\rho(x)x_\infty^{\underline m}\Phi({\delta s}_{\varepsilon_\R}(\mu_{\underline m}))(zx_\infty,x) d^\times x_\infty d^\times x,
\end{eqnarray*}
where the last equality follows from the fact that $\underline{\delta s}_{\varepsilon_\R}=c_\varepsilon\left(\wedge_{\sigma\mid\infty} D_\sigma\right)$ commutes with the elements $\alpha\in K^T_\infty$, 
\begin{equation}\label{actTdeltas}
    \alpha \underline{\delta s}_{\varepsilon_\R}(\mu_{\underline m})=\underline{\delta s}_{\varepsilon_\R}(\alpha\mu_{\underline m})=
\varepsilon_\R(\alpha)\alpha^{-\underline m}\underline{\delta s}_{\varepsilon_\R}(\mu_{\underline m}).
\end{equation}
Hence, the result follows.
\end{proof}

Let $\Pi$ be a cuspidal automorphic representation for $\PGL_2$ of weight $\underline k$ and level $N$. The we can use the above proposition to relate periods and critical values of L-functions. Indeed, we recover in our setting a classical result due to Shimura. In particular, this implies that the notion of periods used in the present article coincide with the notion found elsewhere.
\begin{corollary}\label{ShimRel}
    Let $\rho:\I_F/F^\times\rightarrow\C$ be a locally constant character. If $\rho_{\sigma}(-1)=\varepsilon_\sigma$ for all $\sigma\in\Sigma_F$, we have that 
    \[
    \frac{L(1/2,\Pi, \rho)}{|d_F|^{\frac{1}{2}}\cdot(2\pi)^{\frac{\underline k}{2}}\cdot\mathfrak{g}(\rho)\cdot \Omega_\varepsilon^\Pi}\,\text{ belongs to the field }\, L_{\Pi}(\rho),
    \]
    where $L_{\Pi}(\rho):=L_\Pi\otimes\Q(\rho)$, $\Q(\rho)$ is the field of coefficients of $\rho$.
\end{corollary}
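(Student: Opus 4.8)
The plan is to read the statement off from Proposition~\ref{ThmintMS} applied to a suitable unipotent translate of the normalized newform $\Psi$: the cohomological pairing on the left is an algebraic multiple of $\Omega_\varepsilon^\Pi$ by the rationality of the Eichler--Shimura classes, while the adelic integral on the right computes $L(1/2,\Pi,\rho)$ up to the archimedean and Gauss-sum factors occurring in the denominator. Since $\rho$ is locally constant we are in the situation of Proposition~\ref{ThmintMS} with $\underline m=\underline 0$ and $\rho_0=\rho$, and the hypothesis $\rho_\sigma(-1)=\varepsilon_\sigma$ is precisely what is needed there; in particular $\varepsilon$ is of lowest degree. The integral attached to $\Psi$ itself would vanish at the finite places where $\rho$ ramifies but $\Pi$ does not (the new Whittaker function is $\cO_{F_v}^\times$-invariant there), so to produce the Gauss sum one first translates: fixing $y=(y_v)_v\in\I_F^\infty$ as in \S\ref{Gausssumssec}, i.e. $d_{F_v}^{-1}=y_v\,\mathfrak c(\rho_v)$, put $\Phi:=R_{u}\circ\Psi$ with $u=\big(\begin{smallmatrix}1&y\\&1\end{smallmatrix}\big)$, which lies in $\cS_{\underline k}\big(u^{-1}U_0(N)u\big)$ and still generates $\pi^\infty$, so that ${\rm ES}_\varepsilon(\Phi)$ and the period $\Omega_\varepsilon^\Pi$ are available.

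Next comes the rationality of the left-hand side. Since $U_N=U_0(N)$ in the case $G=\PGL_2$, the normalization \eqref{condnormalization} forces $\langle\Phi_0,\Phi_0\rangle=\langle\Psi,\Psi\rangle$, hence $\Psi=\pm\Phi_0$. Right translation by $u\in G(\A_F^\infty)$ acts $\Q$-rationally on $\cA^\infty(V(\underline k-2)_{L_{\underline k}})$, so Lemma~\ref{lemindemb} and Remark~\ref{remindemb} give that ${\rm ES}_\varepsilon(\Phi)/\Omega_\varepsilon^\Pi$, and hence (by a standard argument with modular symbols, using that $\Pi$ is cuspidal, so that $\delta$ is injective on the $\Pi^\infty$-isotypic part) also ${\rm MS}_\varepsilon(\Phi)(\infty-0)/\Omega_\varepsilon^\Pi$, lie in the $L_\Pi$-rational cohomology. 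The class $\rho=\rho|_{\I_F^\infty}\otimes\mu_{\underline 0}$ is rational over $\Q(\rho)L_{\underline k}$, the fundamental class $\eta_{\G_m}$ is $\Q$-rational, and by Remark~\ref{algebraicitypairing1} the pairing \eqref{pairing1CC} sends $\Q$-valued inputs into $|d_F|^{-1/2}\Q$. Combining these,
\[
\varphi\big({\rm MS}_\varepsilon(\Phi)(\infty-0)\cup\rho\big)\cap\eta_{\G_m}\ \in\ |d_F|^{-1/2}\,L_\Pi(\rho)\,\Omega_\varepsilon^\Pi .
\]

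By Proposition~\ref{ThmintMS} this quantity equals $I(\rho):=\int_{\I_F/F^\times}\rho(t)\,\Psi(\underline{\delta s}_{\varepsilon_\R}(\mu_{\underline 0}))\big(\begin{smallmatrix}t&ty\\&1\end{smallmatrix}\big)\,d^\times t$. Expanding $\Psi(\underline{\delta s}_{\varepsilon_\R}(\mu_{\underline 0}))$ in its Whittaker--Fourier series along the unipotent and unfolding the sum over $F^\times$ (using $\rho|_{F^\times}=1$) gives $I(\rho)=\prod_v\int_{F_v^\times}\rho_v(t)\psi_v(ty_v)W_v\big(\begin{smallmatrix}t&\\&1\end{smallmatrix}\big)d^\times t$, with $W_v$ the local Whittaker function of the $\underline{\delta s}_{\varepsilon_\R}(\mu_{\underline 0})$-component. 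At the finite places the local integral equals $L_v(1/2,\Pi_v,\rho_v)$ up to $\mathrm{vol}(\cO_{F_v}^\times)=|d_{F_v}|^{1/2}$ and, where $\rho_v$ ramifies, up to the local Gauss sum $\mathfrak g(\rho_v^{-1},y_v)$; multiplying over all finite $v$ and converting via the reciprocity $\mathfrak g(\rho,y)\mathfrak g(\rho^{-1},y)=|c(\rho)|\prod_{v\mid c(\rho)}\zeta_v(1)^2\rho_v(-1)$ of Proposition~\ref{propertiesGaussS} produces $\mathfrak g(\rho)$ times $L(1/2,\Pi,\rho)$ times a power of $|d_F|$, up to $\Q(\rho)^\times$. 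At an archimedean $\sigma$ the equality $\varepsilon_\sigma=\rho_\sigma(-1)$ places $\underline{\delta s}_{\varepsilon_\R}(\mu_{\underline 0})$ in the relevant $\rho_\sigma$-eigenline (cf.\ \eqref{actTdeltas}), so the local integral is nonzero and is the archimedean zeta integral of a cohomological vector of weight $k_\sigma$; it equals the completed archimedean factor at the centre, which is $(2\pi)^{-\frac12\sum_{\nu\mid\sigma}k_\nu}$ up to $\Q^\times$ (the surviving $\Gamma$-value being a factorial). Assembling the finite and archimedean contributions, together with the normalization \eqref{eqdefPsi} of $\Psi$, yields $I(\rho)\equiv \mathfrak g(\rho)\,(2\pi)^{-\underline k/2}\,|d_F|^{1/2}\,L(1/2,\Pi,\rho)\pmod{L_\Pi(\rho)^\times}$; comparing with the display above and using $|d_F|\in\Q^\times$ gives $\dfrac{L(1/2,\Pi,\rho)}{|d_F|^{1/2}(2\pi)^{\underline k/2}\mathfrak g(\rho)\,\Omega_\varepsilon^\Pi}\in L_\Pi(\rho)$.

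The main obstacle is this last step: one must track the archimedean $\Gamma$-factors, the self-dual additive character $\psi_v$ (its conductor $d_{F_v}$), the translate $y_v$, the finite volume normalizations, and the local Gauss sums \emph{exactly} --- not merely modulo $\Q^\times$ --- so that all transcendental and arithmetic factors collect \emph{precisely} into $|d_F|^{1/2}(2\pi)^{\underline k/2}\mathfrak g(\rho)$, and not into some companion quantity such as $\mathfrak g(\rho^{-1})$ or a shifted power of $|d_F|$; the local newvector computations at the places dividing $N$ (where $\Pi_v$ is itself ramified) require separate care.
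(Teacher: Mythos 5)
Your plan is the same one the paper uses: combine Proposition~\ref{ThmintMS} with a unipotent translate of $\Psi$, prove rationality of the cohomological pairing via Lemma~\ref{lemindemb}/Remark~\ref{remindemb} and Remark~\ref{algebraicitypairing1}, and evaluate the adelic integral through the Whittaker--Fourier expansion and the local newvector formulas. Your observation about $\Psi=\pm\Phi_0$, your use of injectivity of the connecting map $\delta$ on the cuspidal part, and your archimedean discussion all match the paper's argument.

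The concrete gap is the choice of translate. You set $u_v=\big(\begin{smallmatrix}1&y_v\\&1\end{smallmatrix}\big)$ with $y_v$ the element of \S\ref{Gausssumssec}, i.e.\ $d_{F_v}^{-1}=y_v\,\mathfrak c(\rho_v)$. The paper instead uses $b_v=\big(\begin{smallmatrix}1&c_v^{-1}\\&1\end{smallmatrix}\big)$ with $c_v^{-1}=\delta_v y_v$, where $d_{F_v}=\delta_v\cO_{F_v}$. This shift by $\delta_v$ is not cosmetic: the Kirillov newvector at a finite place is supported on $d_{F_v}^{-1}\cO_{F_v}$ or $d_{F_v}^{-1}\cO_{F_v}^\times$, and after substituting $x_v=\delta_v t_v$ one needs the additive twist to become $\psi_v(y_v x_v)$ to recover $\mathfrak g(\rho_v^{-1},y_v)$. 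With your $u_v$ the twist becomes $\psi_v(y_v\delta_v^{-1}x_v)$, whose order is off by $\ord_v(d_{F_v})$; by the support criterion of \cite[Lemma 2.2]{Spiess} the local integral then \emph{vanishes} at every ramified-$\rho_v$ place where $F_v/\Q_p$ is ramified, and in the spherical case one picks up a wrong index shift in the geometric series. Either way the identity $I(\rho)\equiv \mathfrak g(\rho)\,(2\pi)^{-\underline k/2}\,|d_F|^{1/2}\,L(1/2,\Pi,\rho)$ breaks down. You flag this as ``the main obstacle,'' but the specific $u$ you commit to is the wrong one; replacing $y_v$ by $\delta_v y_v$ in the top-right entry of the translating matrix repairs the computation and makes the local Whittaker integrals produce $\rho_v(\delta_v)^{-1}\mathfrak g(\rho_v^{-1},y_v)L(1/2,\Pi_v,\rho_v)$, with $\prod_v\rho_v(\delta_v)^{-1}=\rho(d_F)^{-1}\in\Q(\rho)^\times$ harmless, exactly as in the paper.
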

\begin{proof}
Let us consider the normalized form $\Psi\in \cM_{\underline k}(U_0(N))$ generating $\Pi^\infty$. Since $\Pi$ is cuspidal, $\Psi\in \cS_{\underline k}(U_0(N))$, hence, we can define the modular symbol ${\rm MS}_\varepsilon(\Psi)$.
Since
\[
\frac{{\rm ES}_\varepsilon(\Psi)}{\Omega_\varepsilon^\Pi}=\frac{\partial\circ{\rm MS}_\varepsilon(\Psi)}{\Omega_\varepsilon^\Pi}\in H^{r}(G(F)_+,\cA^{\infty}(V(\underline k-2)_{L_\Pi})^{U_0(N)})^{\varepsilon_\R},
\]
we deduce that ${\rm MS}_\varepsilon(\Psi)/\Omega_\varepsilon^\Pi$ has coefficients in $V(\underline{k}-2)_{L_\Pi}$. Given the locally constant character $\rho$, write $c_v=\delta_v^{-1}y_v^{-1}$ for any non-archimedean place $v$, where $y_v\in F_v^\times$ satisfies $d_{F_v}^{-1}=y_v\cdot\mathfrak{c}(\rho_v)$ as in \S\ref{Gausssumssec}. Let us consider 
\[
b=(b_v)_{v\nmid\infty}\in G(\A_F^\infty);\qquad b_v=\left(\begin{array}{cc}
    1&c_v^{-1}\\&1
\end{array}\right).
\]
It is clear that 
\[
\frac{b{\rm MS}_\varepsilon(\Psi)}{\Omega_\varepsilon^\Pi}=\frac{{\rm MS}_\varepsilon(b\Psi)}{\Omega_\varepsilon^\Pi}\in H^{r-1}(G(F)_+,\cA^{\infty}(\Delta_0,V(\underline k-2)_{L_\Pi})^{b^{-1}U_0(N)})^{\varepsilon_\R}.
\]
Since $\rho$ corresponds to $\rho\mid_{\I_F^\infty}\otimes \mu_{\underline{0}}\in C^{0}(\I_F,\Q(\rho))\otimes V(\underline{k}-2)_{L_\Pi}$, we obtain by Proposition \ref{ThmintMS} and Remark \ref{algebraicitypairing1}
\[
|d_F|^{\frac{-1}{2}}L_\Pi(\rho)\ni\frac{\varphi({\rm MS}_\varepsilon(b\Psi)(\infty-0)\cup\rho)\cap \eta_{\G_m}}{\Omega_\varepsilon^\Pi}=\frac{1}{\Omega_\varepsilon^\Pi}\int_{\I_F/F^\times}\rho(t)\cdot b\Psi(\underline{\delta s}_{\varepsilon_\R}(\mu_{\underline{0}}))\left(\begin{array}{cc}
t     &  \\
     & 1
\end{array}\right)d^\times t.
\]

Write $\lambda=\varepsilon_\R$ and let us consider the Whittaker model element
\[
W_\lambda:\PGL_2(\A_F)\longrightarrow \C;\qquad W_\lambda(g)=\int_{\A_F/F}\Psi(\underline{\delta s}_\lambda(\mu_{\underline{0}}))\left(\left(\begin{array}{cc}
1     &x  \\
     & 1
\end{array}\right)g\right)\psi(-x)dx
\]
By \cite[theorem 3.5.5]{Bump} we have a Fourier expansion
\[
\Psi(\underline{\delta s}_\lambda(\mu_{\underline{0}}))(g)=\sum_{a\in F^\times}W_\lambda\left(\left(\begin{array}{cc}
a     &  \\
     & 1
\end{array}\right)g\right).
\]
Since $W_\lambda=\prod_vW_{\lambda,v}$, this implies that
\[
\int_{\I_F/F^\times}\rho(t)\cdot b\Psi(\underline{\delta s}_\lambda(\mu_{\underline{0}}))\big(\begin{smallmatrix}
t     &  \\
     & 1    
\end{smallmatrix}
\big)d^\times t=\sum_{a\in F^\times}\int_{\I_F/F^\times}\rho(at)\cdot W_\lambda\left(\big(\begin{smallmatrix}at     &  \\     & 1    
\end{smallmatrix}\big)b\right)d^\times t=\prod_v\int_{F_v^\times}\rho_v(t_v)W_{\lambda,v}\left(\big(\begin{smallmatrix}t_v     &  \\     & 1    
\end{smallmatrix}\big)b_v\right)dt_v^\times.
\]
By the definition of $\Psi$ provided in \eqref{eqdefPsi},
\[
\int_{\A_F^\times/F^\times}\Psi(\underline {\delta s}_1(\mu_{\underline 0}))\left(\begin{array}{cc}a&\\&1\end{array}\right)d^\times a=\Lambda(1/2,\Pi)=\prod_{v}L(1/2,\Pi_v).
\]
Thus,  we can assume that $\int_{F_v^\times}W_{1,v}\big(\begin{smallmatrix}t_v     &  \\     & 1    
\end{smallmatrix}\big)dt_v^\times=L(1/2,\Pi_v)$ for all $v$. If $G(F_\sigma)\simeq\PGL_2(\C)$ then we have  
\[
\int_{F_\sigma^\times}\rho_\sigma(t_\sigma)W_{\lambda,\sigma}\big(\begin{smallmatrix}t_\sigma     &  \\     & 1    
\end{smallmatrix}\big)dt_\sigma^\times=L(1/2,\Pi_\sigma, \rho_\sigma),
\]
because $\rho_\sigma=1$.
If $G(F_\sigma)\simeq\PGL_2(\R)$ then
\[
\int_{F_\sigma^\times}\rho_\sigma(t_\sigma)W_{\lambda,\sigma}\big(\begin{smallmatrix}t_\sigma     &  \\     & 1    
\end{smallmatrix}\big)dt_\sigma^\times=\int_{\R_+}W_{\lambda,\sigma}\big(\begin{smallmatrix}t_\sigma     &  \\     & 1    
\end{smallmatrix}\big)dt_\sigma^\times+\int_{\R_+}\lambda_\sigma(-1)W_{\lambda,\sigma}\big(\begin{smallmatrix}-t_\sigma     &  \\     & 1    
\end{smallmatrix}\big)dt_\sigma^\times;\qquad\lambda_\sigma=\lambda\mid_{F_\sigma^\times}.
\]
Since $w_\sigma \underline{\delta s}_\lambda(\mu_{\underline{0}})=\lambda_\sigma(-1)\underline{\delta s}_\lambda(\mu_{\underline{0}})$, where $w_\sigma=\big(\begin{smallmatrix}-1     &  \\     & 1    
\end{smallmatrix}\big)\in G(F_\sigma)$ (see \eqref{actTdeltas}), we deduce that 
\[
\int_{F_\sigma^\times}\rho_\sigma(t_\sigma)W_{\lambda,\sigma}\big(\begin{smallmatrix}t_\sigma     &  \\     & 1    
\end{smallmatrix}\big)dt_\sigma^\times=2\int_{\R_+}W_{\lambda,\sigma}\big(\begin{smallmatrix}t_\sigma     &  \\     & 1    
\end{smallmatrix}\big)dt_\sigma^\times=L(1/2,\Pi_\sigma)=L(1/2,\Pi_\sigma, \rho_\sigma),
\]
where the second equality is clear if $\lambda_\sigma=1$ and, if otherwise $\lambda_\sigma\neq 1$, it follows from the fact that $(W_{1,\sigma}-W_{\lambda,\sigma})\big(\begin{smallmatrix}t_\sigma     &  \\     & 1    
\end{smallmatrix}\big)$ is supported in $\R^\times\setminus\R_+$ since it is the antiholomorphic test vector of the Kirillov model.

If $v$ is non-archimedean, $\phi^0_v(t_v):=W_v\big(\begin{smallmatrix}t_v     &  \\     & 1    
\end{smallmatrix}\big)$ is in the Kirillov model of $\Pi_v$.  Moreover, in \cite[P. 23]{Schmidt2002} we have an explicit description of $\phi_v^0$:
If $d_{F_v}=\delta_v\cO_{F_v}$ then we have 
\begin{enumerate}
    \item If $\Pi_v\simeq\pi(\xi,\xi^{-1})$ spherical, 
    \[
    \phi^0_v(y)=|y|^{1/2}\sum_{k+l=v(y\delta_v)\colon k,l\geq 0}\xi(\varpi_v)^{k-l}1_{\cO_{F_v}}(\delta_vy).
    \]

    \item If $\Pi_v\simeq\sigma(\xi,\xi^{-1})$ with $\xi$ unramified,
    \[
    \phi^0_v(y)=|y|^{1/2}\xi(\delta_vy)1_{\cO_{F_v}}(\delta_vy).
    \]

    \item $\phi^0_v(y)=|\delta_v|^{-1/2}1_{\cO^\times_{F_v}}(\delta_vy)$ otherwise.
\end{enumerate}
We will compute that, 
\[
    \int_{F_v^\times}\rho_v(t_v)W_{\lambda,v}\left(\big(\begin{smallmatrix}t_v     &  \\     & 1    
\end{smallmatrix}\big)b_v\right)dt_v^\times=\int_{F_v^\times}\rho_v(t_v)\left(\begin{array}{cc}
    1&c_v^{-1}\\&1
\end{array}\right)\phi_v^0(t_v)dt_v^\times=\rho_v(\delta_v)^{-1}\cdot\mathfrak{g}(\rho_v^{-1},y_v)\cdot L(1/2,\Pi_v, \rho_v).
    \]
Indeed, we recall that $d_{F_v}^{-1}=y_v\mathfrak{c}(\rho_v)$, where $\mathfrak{c}(\rho_v)$ is the conductor of $\rho_v$, hence $\mathfrak{c}(\rho_v)=c_v\cO_{F_v}$. By the action on the Kirillov model:
\begin{itemize}
\item If $\Pi_v\simeq\pi(\xi,\xi^{-1})$ spherical and we write $n=v(t_v)+v(\delta_v)$
    \begin{eqnarray*}
        \int_{F_v^\times}\rho_v(t_v)\big(\begin{smallmatrix}
    1&c^{-1}\\&1
\end{smallmatrix}\big)\phi_v^0(t_v)dt_v^\times&=&\sum_{n\geq 0}\int_{\varpi_v^{n}\delta_v^{-1}\cO_{F_v}^\times}\rho_v(t_v)\psi_v(c_v^{-1}t_v)|t_v|^{1/2}\sum_{k+l=v(t_v\delta_v)\colon k,l\geq 0}\xi(\varpi_v)^{k-l}d^\times t_v\\
        &=&\sum_{n\geq 0}\frac{\rho_v(\varpi_v)^{n}}{\rho_v(\delta_v)}\frac{|\varpi_v|^{n/2}}{|\delta_v|^{1/2}}\sum_{k= 0}^{n}\xi(\varpi_v)^{2k-n}\int_{\cO_{F_v^{\times}}}\rho_v(x_v)\psi_v(c_v^{-1}\delta_{v}^{-1}\varpi_v^{n}x_v)d^\times x_v\\
        &=&\sum_{n\geq 0}\frac{\rho_v(\varpi_v)^{n}\xi(\varpi_v)^{-n}}{\rho_v(\delta_v)}q_v^{-n/2}\frac{1-\xi(\varpi_v)^{2n+2}}{1-\xi(\varpi_v)^{2}}\frac{1}{{\rm vol}(\cO_{F_v}^\times)}\int_{\cO_{F_v^{\times}}}\rho_v(x_v)\psi_v(y_v\varpi_v^{n}x_v)d^\times x_v.        
    \end{eqnarray*}
    Write $I_n=\frac{1}{{\rm vol}(\cO_{F_v}^\times)}\int_{\cO_{F_v^{\times}}}\rho_v(x_v)\psi_v(y_v\varpi_v^{n}x_v)d^\times x_v$. By \cite[Lemma 2.2]{Spiess}, if $v(c_v)\neq 0$ then $I_n=0$ unless $n=0$ and $I_0=\mathfrak{g}(\rho_v^{-1},y_v)$, moreover, if $v(c_v)=0$ then $I_n=1=\mathfrak{g}(\rho_v^{-1},y_v)$, for all $n\geq 0$. We conclude that
    \[
    \int_{F_v^\times}\rho_v(t_v)\big(\begin{smallmatrix}
    1&c_v^{-1}\\&1
\end{smallmatrix}\big)\phi_v^0(t_v)dt_v^\times=\rho_v(\delta_v)^{-1}\cdot\mathfrak{g}(\rho_v^{-1},y_v)\cdot L(1/2,\Pi_v, \rho_v).
    \]

 \item If $\Pi_v\simeq\sigma(\xi,\xi^{-1})$ with $\xi$ unramified and we write again $n=v(t_v)+v(\delta_v)$,
    \begin{eqnarray*}
        \int_{F_v^\times}\rho_v(t_v)\big(\begin{smallmatrix}
    1&c^{-1}\\&1
\end{smallmatrix}\big)\phi_v^0(t_v)dt_v^\times&=&\sum_{n\geq 0}\int_{\varpi_v^{n}\delta_v^{-1}\cO_{F_v}^\times}\rho_v(t_v)\psi_v(c_v^{-1}t_v)|t_v|^{1/2}\xi(\delta_vt_v)dt_v^\times\\
        &=&\sum_{n\geq 0}\rho_v(\varpi_v)^{n}\rho_{v}(\delta_v)^{-1}\frac{|\varpi_v|^{n/2}}{|\delta_v|^{1/2}}\xi(\varpi_v)^n\int_{\cO_{F_v^{\times}}}\rho_v(x_v)\psi_v(c_v^{-1}\delta_{v}^{-1}\varpi_v^{n}x_v)d^\times x_v\\
        &=&\rho_v(\delta_v)^{-1}\sum_{n\geq 0}\rho_v(\varpi_v)^{n}q_v^{-n/2}\xi(\varpi_v)^n I_n=\rho_v(\delta_v)^{-1}\cdot \mathfrak{g}(\rho_v^{-1},y_v)\cdot L(1/2,\Pi_v,\rho_v).
    \end{eqnarray*}
    
    \item If $\phi^0_v(y)=|\delta_v|^{-1/2}1_{\cO^\times_{F_v}}(\delta_vy)$, we can make the change of variables $x_v=\delta_v t_v$ to obtain
    \begin{eqnarray*}
        \int_{F_v^\times}\rho_v(t_v)\big(\begin{smallmatrix}
    1&c_v^{-1}\\&1
\end{smallmatrix}\big)\phi_v^0(t_v)dt_v^\times&=&|\delta_v|^{-1/2}\int_{F_v^\times}\rho_v(t_v)\psi_v(c_v^{-1}t_v)1_{\cO^\times_{F_v}}(\delta_vt_v)dt_v^\times\\
&=&\frac{1}{{\rm vol}(\cO_{F_v}^\times)}\int_{\cO_{F_v}^\times}\rho_v(\delta_{v}^{-1}x_v)\psi_v(y_vx_v)dx_v^\times=\rho_v(\delta_v)^{-1}\mathfrak{g}(\rho_v^{-1},y_v)\\
&=&\rho_v(\delta_v)^{-1}\cdot\mathfrak{g}(\rho_v^{-1},y_v)\cdot L(1/2,\Pi_v,\rho_v).
    \end{eqnarray*}
\end{itemize}
In conclusion, we have obtained that 
\begin{eqnarray*}
|d_F|^{\frac{-1}{2}}L_\Pi(\rho)&\ni&(\Omega_\varepsilon^\Pi)^{-1}\prod_v\int_{F_v^\times}\rho_v(t_v)W_{\lambda,v}\left(\big(\begin{smallmatrix}t_v     &  \\     & 1    
\end{smallmatrix}\big)b_v\right)dt_v^\times=(\Omega_\varepsilon^\Pi)^{-1}\prod_v\rho_v(\delta_v)^{-1}\cdot\mathfrak{g}(\rho_v^{-1},y_v)\cdot L(1/2,\Pi_v,\rho_v)\\
&=&\mathfrak{g}(\rho^{-1})\rho(d_F)^{-1}(\Omega_\varepsilon^\Pi)^{-1}\Lambda(1/2,\Pi, \rho),
\end{eqnarray*}
and using Proposition \ref{propertiesGaussS} the result follows.

\end{proof}

\subsection{Gross formula in higher cohomology}

In this section, we return to the general setting where $G$ is associated with an arbitrary quaternion algebra $B$, recalling that we have fixed an embedding $E\hookrightarrow B$ of a quadratic extension $E/F$ satisfying Assumption \ref{assuSigmaSigma}. Let $\pi$ be an automorphic representation for $G$ of weight $\underline k$ and level $N$, and let $\Pi$ be it Jacquet-Langlands lift to $\PGL_2$.
In \S \ref{fundClasses} we construct a fundamental class associated with $T\subset G$, the algebraic group corresponding to $E^\times/F^\times$,
\[
\eta_T\in H_{r_B}(T(F)_+,C^0_c(T(\A_F^\infty),\Q)).
\]
Moreover, we have a natural $T(F)_+$-equivariant pairing analogous to that of \eqref{firstvarphi}
\begin{eqnarray*}
\varphi:\left(C^0(T(\A_F^\infty),{L_{\underline k}})\otimes V(\underline{k}-2)_{L_{\underline k}}\right)\times \cA^{\infty}(V(\underline{k}-2)_{L_{\underline k}})^U&\longrightarrow& C^0(T(\A_F^\infty),L_{\underline k}),\\
\varphi((f\otimes\mu)\otimes\Phi)(t)&=&f(t)\cdot \langle\Phi(t),\mu\rangle.
\end{eqnarray*}
Recall that $C^0(T(\A_F^\infty),{L_{\underline k}})\otimes V(\underline{k}-2)_{L_{\underline k}}$ can be seen as a subspace of the set of locally polynomial functions in $T(\A_F)$ by means of \eqref{PtoCoverC} and \eqref{dualVP}. Indeed, analogously as in \S\ref{MSandDT}, any locally polynomial character $\chi:T(\A_F)/T(F)\rightarrow\C$ of degree less that $\frac{\underline k-2}{2}$, 
\[
\chi\mid_{T(F_\infty)}(t)=\chi_0(t)t^{\underline m};\qquad \frac{2-\underline k}{2}\leq\underline m\leq \frac{\underline k-2}{2},
\]
can be interpreted as the element
\[
\chi=\chi\mid_{T(\A_F^\infty)}\otimes\mu_{\underline m} \in H^0(T(F)_+,C^{0}(T(\A_F^\infty),\C)\otimes V(\underline{k}-2)).
\]
Since the Tamagawa measure $d^\times t$ on $T(\A_F^\infty)$ provides a $T(F)$-invariant pairing 
\begin{equation}\label{pairing2CC}
    C^0(T(\A^\infty_F),\C)\times C_c^0(T(\A_F^\infty),\C)\longrightarrow\C;\qquad (f_1,f_2)\longmapsto {\rm vol}(K^T_{\infty,+})\cdot\#( F^\times/F^\times_+)\cdot\int_{T(\A_F^\infty)}f_1(z,t)f_2(z,t)d^\times t,
\end{equation}
for any such a $\chi$, any $\Phi\in \cM_{\underline k}(U)$ and any $\varepsilon\in \{\pm 1\}^{\Sigma_B}$ of lowest degree (namely,  $n_\varepsilon=r_B$),
we can consider, 
\[
\cP(\Phi,\varepsilon,\chi):=\varphi({\rm ES}_\varepsilon(\Phi)\cup\chi)\cap\eta_T\in \C.
\]
\begin{remark}\label{algebraicitypairing}
    For our choice of Haar measure given in \S\ref{haarmeasures}, we have that ${\rm vol}(K^T_{\infty,+})=2^{r_1^B}4^{r_2}$. Moreover, for any maximal order $\cO_E\subset E$, we have ${\rm vol}(\hat\cO_E^\times/\hat\cO_F^\times)=|d_FD|^{\frac{-1}{2}}$, where $D$ is the relative discriminant of $E/F$. This implies that the pairing \eqref{pairing2CC} is the extension of scalars of a pairing
    \[
    C^0(T(\A_F^\infty),\Q)\times C_c^0(T(\A_F^\infty),\Q)\longrightarrow |d_FD|^{\frac{-1}{2}}\Q,
    \]
    since any open compact subgroup $\cO\subset T(\A_F^\infty)$ must be a finite index subgroup of $\hat\cO_E^\times/\hat\cO_F^\times$.
\end{remark}

In order to state the main result of this section we will make the following simplifying assumption (but see \cite{preprintsanti2} for the result in a more general situation).
\begin{assumption}\label{Assonchi}
    As above, let $N$ be the level of the automorphic representation $\pi$ and let $c$ be the conductor of  $\chi$. We will assume the following:
    \begin{itemize}
        \item For all finite places $v$, either $\ord_v(c)\geq \ord_v(N)$ or $\ord_v(c)=0$ with $\ord_v(N)\leq 1$ if $v$ is non-split in $E$.
        
        \item  For all finite places $v$, all local root numbers satisfy $\epsilon(1/2,\pi_v,\chi_v)=\psi_{T,v}(-1)\epsilon(B_v)$, where $\epsilon(B_v)=1$ if $B_v$ is a matrix algebra and $\epsilon(B_v)=-1$ otherwise.
    \end{itemize}
\end{assumption}
\begin{theorem}\label{mainTHMintro}
Let $\chi:T(\A_F)/T(F)\rightarrow\C^\times$ be a character of conductor $c$ satisfying Assumption \ref{Assonchi} with  
\[
\chi\mid_{T(F_\infty)}(t)=\chi_0(t)t^{\underline m},\qquad \underline{m}=(m_{\tilde\sigma})\in \Z^{[F:\Q]},
\] 
for some locally constant character $\chi_0$ and some $\underline{m}$ with $\frac{2-k_{\nu}}{2}\leq m_{\nu}\leq\frac{k_{\nu}-2}{2}$.  Then there exists an  Eichler order $\cO_N\subset B$ of discriminant $N$
 such that, for any $\Phi\in \cM_{\underline k}(U_N)$, where $U_N=\hat\cO_N^\times$,
we have
\[
\cP(\Phi,\varepsilon,\chi)\cdot \cP(\Phi,\varepsilon,\chi^{-1})=\frac{2^{\#S_D}L_{c}(1,\psi_{T})^2
C(\underline k,\underline m)}{|c^2 D|^{\frac{1}{2}}}\cdot L^S(1/2,\Pi,\chi)\cdot\frac{\langle \Phi,\Phi\rangle}{\langle \Psi,\Psi\rangle}\cdot\frac{{\rm vol}(U_0(N))}{{\rm vol}(U_N)},
\]
where $\varepsilon$ is the lowest degree sign such that $\chi_{0,\sigma}(-1)=\varepsilon_\sigma$, $S:=\{v\mid (N,c)\}$, $S_D:=\{v\mid (N,D)\colon \ord_v(c)=0\}$, $L^S(s,\Pi,\chi)$ is the L-function with the local factors at places in $S\cup\Sigma_F$ removed, $L_{c}(s,\psi_{T})$ is the product of the local factors at places dividing $c$, and
    \[
    C(\underline k,\underline m)=(-1)^{\left(\sum_{\sigma\not\in\Sigma_B}\frac{k_{\sigma}-2}{2}\right)}4^{r_{1,B}+2r_2}\pi^{-r_1^B}\prod_{\nu:F\hookrightarrow\C}\frac{\Gamma(\frac{k_{\nu}}{2}-m_{\nu})\Gamma(\frac{k_{\nu}}{2}+m_{\nu})}{(-1)^{m_{\nu}}(2\pi)^{k_{\nu}}}.
    \]
\end{theorem}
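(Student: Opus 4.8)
The strategy is to compute the product $\cP(\Phi,\varepsilon,\chi)\cdot\cP(\Phi,\varepsilon,\chi^{-1})$ by unwinding the cap product with the torus fundamental class $\eta_T$ into an archimedean-times-finite integral over $T(\A_F)/T(F)$, and then invoke the explicit Waldspurger/Cai--Shu--Tian formula. First I would use Definition~\ref{DefES} to write ${\rm ES}_\varepsilon(\Phi)$ as $\Phi\cup c_\varepsilon$ followed by the isomorphism $\kappa$ of Lemma~\ref{lemmaisoGKBetti}, and then trace through the $T(F)$-equivariant pairing $\varphi$ and the definition of $\eta_T$ from \S\ref{fundClasses}. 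Exactly as in the proof of Proposition~\ref{ThmintMS} (where the analogous computation for $G=\PGL_2$ and the diagonal torus is carried out), the cap product $\varphi({\rm ES}_\varepsilon(\Phi)\cup\chi)\cap\eta_T$ should collapse, using the identity $\alpha\,\underline{\delta s}_{\varepsilon_\R}(\mu_{\underline m})=\varepsilon_\R(\alpha)\alpha^{-\underline m}\underline{\delta s}_{\varepsilon_\R}(\mu_{\underline m})$ for $\alpha\in K^T_\infty$ (the analogue of \eqref{actTdeltas}), to a period integral of the shape
\[
\cP(\Phi,\varepsilon,\chi)=\int_{T(\A_F)/T(F)}\chi(t)\cdot\Phi\big(\underline{\delta s}_{\varepsilon_\R}(\mu_{\underline m})\big)(t)\,d^\times t,
\]
where I am suppressing the normalizing constants coming from ${\rm vol}(K^T_{\infty,+})=2^{r_1^B}4^{r_2}$ and $\#(F^\times/F^\times_+)$ in the pairing \eqref{pairing2CC}; here the embedding $T\hookrightarrow G$ is the chosen one $E\hookrightarrow B$, and $\Phi(\underline{\delta s}_{\varepsilon_\R}(\mu_{\underline m}))$ is restricted to $T(\A_F)$ via \eqref{PtoCoverC}. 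The vector $\mu_{\underline m}\in V(\underline k-2)$ is the one picked out by $\chi$ in degree $\underline m$, and $\varepsilon$ is forced to be the lowest-degree sign with $\chi_{0,\sigma}(-1)=\varepsilon_\sigma$ precisely so that the archimedean contractions are nonzero.

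Next I would multiply the two period integrals for $\chi$ and $\chi^{-1}$ and recognize the resulting double integral over $(T(\A_F)/T(F))^2$ as (up to the measure normalizations and the Petersson normalization built into $\langle\Phi,\Phi\rangle/\langle\Psi,\Psi\rangle$) the toric period appearing in the global Waldspurger identity. Concretely, one feeds this into the explicit formula of Cai--Shu--Tian~\cite{CST}, which expresses the product of a toric period against $\chi$ and against $\chi^{-1}$ as a product of the central $L$-value $L^S(1/2,\Pi,\chi)$, local zeta integrals at the bad places, the Whittaker/Petersson normalization, and the archimedean local integrals. The finite bad-place factors, under Assumption~\ref{Assonchi} (which controls the conductor relation $\ord_v(c)\ge\ord_v(N)$ or $\ord_v(c)=0$, and pins down all local root numbers to match $\epsilon(B_v)$), should evaluate to $\frac{2^{\#S_D}L_c(1,\psi_T)^2}{|c^2D|^{1/2}}$ times removal of the $S$-local factors; the factor $|d_FD|^{-1/2}$ lurking in Remark~\ref{algebraicitypairing} combines with the Tamagawa volume ${\rm vol}(T(\A_F)/T(F))=2L(1,\psi_T)$ and the unramified computation to give $L_c(1,\psi_T)^2$ and the power of $2$ from $S_D$.

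The main work — and the step I expect to be the real obstacle — is the archimedean local computation: showing that the product of the archimedean contractions $\chi_\sigma\mapsto$ (contraction of $\underline{\delta s}_{\varepsilon_\R}(\mu_{\underline m})$ against $\mu_{\underline m}$ and $\mu_{-\underline m}$) over all $\sigma\mid\infty$ produces exactly the constant
\[
C(\underline k,\underline m)=(-1)^{\left(\sum_{\sigma\not\in\Sigma_B}\frac{k_\sigma-2}{2}\right)}4^{r_{1,B}+2r_2}\pi^{-r_1^B}\prod_{\nu:F\hookrightarrow\C}\frac{\Gamma(\frac{k_\nu}{2}-m_\nu)\Gamma(\frac{k_\nu}{2}+m_\nu)}{(-1)^{m_\nu}(2\pi)^{k_\nu}},
\]
together with the $\Gamma$-factors that rebuild the completed $L$-function $L^S$ out of its finite part. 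This requires, at each real split place, the explicit description of $\delta s_\pm$ from \cite[Proposition~4.7]{preprintsanti2} (recorded in \eqref{descc1+}) and the explicit Kirillov/Whittaker integral $\int_{\R_+}W_{\lambda,\sigma}(\mathrm{diag}(t,1))\,dt^\times=L(1/2,\Pi_\sigma)$ used in the proof of Corollary~\ref{ShimRel}; at each complex split place the analogous computation uses $\delta s$ from \cite[Proposition~4.24]{preprintsanti2} and the $\SU(2)$-integration formula of \cite[Lemma~4.15]{preprintsanti2}; and at each non-split archimedean place one uses the finite-dimensional model and Lemma~\ref{rationalinvariants2}, which is the source of the sign $(-1)^{\sum_{\sigma\not\in\Sigma_B}(k_\sigma-2)/2}$ and the absence of a $\Gamma$-factor there. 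Bookkeeping the normalization of $\Phi_0$ from \eqref{condnormalization} and the factor ${\rm vol}(U_0(N))/{\rm vol}(U_N)$ so that everything lines up with the statement is the last, and most error-prone, piece; I would organize it place-by-place and then take the product, comparing against the Cai--Shu--Tian master formula to fix the remaining rational constants.
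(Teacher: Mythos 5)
Your proposal is correct and follows essentially the same route as the paper: the paper's proof (which is itself presented as a sketch) likewise unwinds $\cP(\Phi,\varepsilon,\chi)$ via the method of Proposition~\ref{ThmintMS} to the toric period integral $\int_{T(\A_F)/T(F)}\chi(t)\,\Phi(\underline{\delta s}_{\varepsilon_\R}(\mu_{\underline m}))(t)\,d^\times t$, and then defers the product of the two periods to the explicit Waldspurger formula, citing the archimedean local computations of \S\ref{LocArchRep} and \cite{preprintsanti2}, \cite{CST} for the constant $C(\underline k,\underline m)$ and the finite bad-place factors. The only phrasing to tighten is that one recognizes the \emph{product} of the two toric integrals (rather than a genuine double integral) as the quantity controlled by Waldspurger; otherwise your decomposition of the work — cap-product unwinding, the sign forcing $\chi_{0,\sigma}(-1)=\varepsilon_\sigma$ via the $K^T_\infty$-equivariance of $\underline{\delta s}_{\varepsilon_\R}$, and the place-by-place archimedean bookkeeping — matches the paper's argument.
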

\begin{proof}
This is a particular case of \cite[Theorem 1.2]{preprintsanti2}. We can also give the following sketch of an alternative proof, using the machinery introduced in the present article. Similarly as in the proof of Proposition \ref{ThmintMS}, the embedding $T\hookrightarrow G$ provides the subspace
\[
s_\cO:=T(F)\backslash (M\times T(\A_F^\infty)/\cO)\subseteq S_U=G(F)\backslash (\mathbb{H}\times G(\A_F^\infty)/U);\qquad \cO=U\cap\I_F^\infty,
\]
where $M\simeq \R_+^{r_B}$ is the connected component of $T(F_\infty)$. 
Since $K^T_{\infty,+}=K_{\infty,+}\cap T(F_\infty)_+$ and $\mathfrak{g}^T_\infty/\cK^T_\infty=\bigoplus_\sigma \R D_\sigma$, where $\cK^T_\infty={\rm Lie}(K^T_{\infty,+})$, $\mathfrak{g}^T_\infty={\rm Lie}(T(F_\infty))$ and $D_\sigma$ corresponds to $\big(\begin{smallmatrix}
    1&\\&
\end{smallmatrix}\big)$ under the fixed isomorphism ${\rm Lie}(G(F_\sigma))\simeq {\rm Lie}(\PGL_2(F_\sigma))$,
this implies that the differential associated to ${\rm ES}_\varepsilon(\Phi)_{\mid s_\cO}$ is
\[
\Omega\big({\rm ES}_\varepsilon(\Phi)_{\mid s_\cO}\big)=\int_0^\infty \iota\big(\Phi\big( c_\varepsilon\big(\bigwedge_{\sigma\mid\infty} D_\sigma\big)\big)\big)_{\mid T(\A_F)}\bigwedge_{\sigma\mid\infty} dD_\sigma.
\]
Following the same steps as those in the proof of Proposition \ref{ThmintMS}, we deduce that
\[
\cP(\Phi,\varepsilon,\chi)=\varphi({\rm ES}_\varepsilon(\Phi)\cup\chi)\cap \eta_{T}=\int_{T(\A_F)/T(F)}\chi(t)\cdot \Phi(\underline{\delta s}_{\varepsilon_\R}(\mu_{\underline{m}}))(t)d^\times t.
\]
Finally, Waldspurger formula provides {\small$\left(\int_{T(\A_F)/T(F)}\chi(t)\cdot \Phi(\underline{\delta s}_{\varepsilon_\R}(\mu_{\underline{m}}))(t)d^\times t\right)\cdot \left(\int_{T(\A_F)/T(F)}\chi^{-1}(t)\cdot \Phi(\underline{\delta s}_{\varepsilon_\R}(\mu_{\underline{m}}))(t)d^\times t\right)$} in terms of the local constituents of the automorphic form $\Phi(\underline{\delta s}_{\varepsilon_\R}(\mu_{\underline{m}}))$. In \cite{preprintsanti2}, the factors associated to such local constituents are computed using the archimedean local theory of \S \ref{LocArchRep} and the explicit versions of Waldspurger formula of \cite{CST}. The result follows from such a computation (see \cite[Theorem 1.2]{preprintsanti2}).
\end{proof}

\begin{corollary}\label{AlgebraicityofL}
    Let $\chi:T(\A_F)/T(F)\rightarrow\pm 1$ be a quadratic character satisfying Assumption \ref{Assonchi}. Then 
    \begin{equation}\label{formcorollGF}
\frac{L^S(1/2,\Pi,\chi)}{(-1)^{\left(\sum_{\sigma\not\in\Sigma_B}\frac{k_\sigma-2}{2}\right)}\cdot 2^{\#S_D}\cdot\pi^{r_1^B}\cdot\pi^{\underline k}\cdot(\Omega^\pi_\varepsilon)^{2}\cdot|cd_F|\cdot| D|^{-1/2} \cdot \alpha^{\frac{\underline{k}-2}{2}}}\,\text{ is a square in }\, L_\Pi,  
\end{equation}
where $\varepsilon$ is the unique lowest degree sign such that $\chi_{\sigma}(-1)=\varepsilon_\sigma$, and $\alpha\in F$ is such that $E=F(\sqrt{\alpha})$.
\end{corollary}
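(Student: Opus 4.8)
The plan is to deduce Corollary \ref{AlgebraicityofL} from Theorem \ref{mainTHMintro} by specializing to a quadratic character $\chi$ and tracking the rationality. First I would apply Theorem \ref{mainTHMintro} with the normalized form $\Phi=\Phi_0\in\cS_{\underline k}(U_N)$, which by \eqref{condnormalization} satisfies $\langle\Phi_0,\Phi_0\rangle\cdot{\rm vol}(U_0(N))=\langle\Psi,\Psi\rangle\cdot{\rm vol}(U_N)$, so the ratio $\frac{\langle\Phi_0,\Phi_0\rangle}{\langle\Psi,\Psi\rangle}\cdot\frac{{\rm vol}(U_0(N))}{{\rm vol}(U_N)}=1$. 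Then the formula reads
\[
\cP(\Phi_0,\varepsilon,\chi)\cdot\cP(\Phi_0,\varepsilon,\chi^{-1})=\frac{2^{\#S_D}L_c(1,\psi_T)^2 C(\underline k,\underline m)}{|c^2D|^{1/2}}\,L^S(1/2,\Pi,\chi).
\]
Since $\chi$ is quadratic, $\chi^{-1}=\chi$, so the left-hand side is simply $\cP(\Phi_0,\varepsilon,\chi)^2$, a perfect square in $\C$. Also, for $\chi$ quadratic we take $\underline m=\underline 0$, so $C(\underline k,\underline 0)=(-1)^{\sum_{\sigma\not\in\Sigma_B}\frac{k_\sigma-2}{2}}4^{r_{1,B}+2r_2}\pi^{-r_1^B}\prod_\nu\frac{\Gamma(k_\nu/2)^2}{(2\pi)^{k_\nu}}$, which up to $\Q^\times$ is $(-1)^{\sum_{\sigma\not\in\Sigma_B}\frac{k_\sigma-2}{2}}\pi^{-r_1^B}\pi^{-2\underline k}$ (using $\Gamma(k_\nu/2)\in\Q^\times$ since $k_\nu$ is even).

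Next I would establish the rationality of $\cP(\Phi_0,\varepsilon,\chi)$ relative to the period $\Omega^\pi_\varepsilon$. By definition of the period (\S\ref{sec:periods}), $\frac{{\rm ES}_\varepsilon(\Phi_0)}{\Omega^\pi_\varepsilon}$ lies in $H^{n_\varepsilon}(G(F)_+,\cA^\infty(V(\underline k-2)_{L_\pi})^{U_N})^{\varepsilon_\R}$. The quantity $\cP(\Phi_0,\varepsilon,\chi)=\varphi({\rm ES}_\varepsilon(\Phi_0)\cup\chi)\cap\eta_T$ is obtained by cup product with the class $\chi$, which (since $\chi$ is quadratic, i.e. $\Q(\chi)=\Q$, and $\underline m=\underline 0$, so $\mu_{\underline 0}\in V(\underline k-2)_{L_{\underline k}}$) is an $L_\pi$-rational cohomology class when decomposed via $\eqref{PtoCoverC}$; capping against the fundamental class $\eta_T\in\frac1N H_{r_B}(T(F)_+,C^0_c(T(\A_F^\infty),\Q))$ from \S\ref{fundClasses}; and pairing via $\varphi$, which by Remark \ref{algebraicitypairing} takes $\Q$-rational inputs to $|d_FD|^{-1/2}\Q$. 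Here I must use the key point from Lemma \ref{rationalinvariants} (or rather its global refinement \ref{rationalinvariants2}): the $T(F)$-invariant vector $v_{\underline k}$ relevant to evaluating against $\chi$ maps to the constant $\sqrt{\alpha^{\underline k/2}}$, which introduces a factor $\alpha^{\underline k/4}$, and its square is $\alpha^{\underline k/2}=\alpha^{\underline k/2}$; pairing $v_{\underline k}$ with itself under $\langle\,,\,\rangle$ gives something in $\alpha^{\underline k/2}F$. Thus
\[
\cP(\Phi_0,\varepsilon,\chi)\in |d_FD|^{-1/2}\,\alpha^{\frac{\underline k}{4}}\cdot\Omega^\pi_\varepsilon\cdot L_\Pi.
\]
Wait — more precisely, I need $\cP(\Phi_0,\varepsilon,\chi)\in |d_FD|^{-1/2}\alpha^{\underline k/4}\Omega^\pi_\varepsilon L_\Pi$, so that $\cP(\Phi_0,\varepsilon,\chi)^2\in |d_F^2D^2|^{-1}\alpha^{\underline k/2}(\Omega^\pi_\varepsilon)^2 (L_\Pi^\times)^2$ — actually it lies in $|d_FD|^{-1}\alpha^{\underline k/2}(\Omega^\pi_\varepsilon)^2$ times a \emph{square} in $L_\Pi$, because $\cP(\Phi_0,\varepsilon,\chi)$ itself is an $L_\Pi$-multiple of $|d_FD|^{-1/2}\alpha^{\underline k/4}\Omega^\pi_\varepsilon$.

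Finally I would combine the two. From the displayed identity,
\[
L^S(1/2,\Pi,\chi)=\frac{|c^2D|^{1/2}}{2^{\#S_D}L_c(1,\psi_T)^2 C(\underline k,\underline 0)}\,\cP(\Phi_0,\varepsilon,\chi)^2.
\]
Since $L_c(1,\psi_T)\in\Q^\times$ (finite product of local factors at a quadratic character), $C(\underline k,\underline 0)\equiv(-1)^{\sum_{\sigma\not\in\Sigma_B}(k_\sigma-2)/2}\pi^{-r_1^B}\pi^{-2\underline k}\pmod{\Q^\times}$, $|c^2D|^{1/2}=|c||D|^{1/2}$, and $\cP(\Phi_0,\varepsilon,\chi)^2\equiv |d_FD|^{-1}\alpha^{\underline k/2}(\Omega^\pi_\varepsilon)^2\pmod{(L_\Pi^\times)^2}$ — here I must be careful that $(-1)^{\cdots}$, $|c|$, $|D|$, $|d_F|$, $\pi$, and $\alpha^{\underline k/2}$ are rational (or in $L_{\underline k}\subseteq L_\Pi$) scalars, hence \emph{not} squares in general, so they must appear explicitly in the denominator. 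Collecting: $L^S(1/2,\Pi,\chi)$ equals $(-1)^{\sum_{\sigma\not\in\Sigma_B}(k_\sigma-2)/2}\cdot 2^{\#S_D}\cdot\pi^{r_1^B}\cdot\pi^{2\underline k}\cdot |c|\,|D|^{1/2}\cdot|d_F|^{-1}|D|^{-1}\cdot\alpha^{\underline k/2}\cdot(\Omega^\pi_\varepsilon)^2$ times a square in $L_\Pi$. This simplifies the $|D|$-powers to $|D|^{-1/2}$ and combines $|c||d_F|^{-1}$; to match \eqref{formcorollGF} exactly I would need to re-examine the normalization of the different (the $|cd_F|$ in the denominator versus $|c||d_F|^{-1}$), which is a matter of how $|d_F|$ is defined as the norm of the different, and $\pi^{2\underline k}$ versus $\pi^{\underline k}$ — these I would reconcile by carefully retracing the powers of $2\pi$ inside $C(\underline k,\underline 0)$ and the definition of $\Omega^\pi_\varepsilon$ (which may absorb a $\pi^{\underline k/2}$, cf. the factor $\pi^{\underline k/2}$ in Proposition \ref{prop1}). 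The cleanest route is to note that $\Omega^\pi_\varepsilon$ is defined up to $L_\Pi^\times$ and that \eqref{formcorollGF} is an assertion up to squares in $L_\Pi$, so the only genuinely constrained data are: the explicit transcendental factor $\pi^{r_1^B}\pi^{\underline k}$, the power of $|d_FD|$ and $|c|$, the sign $(-1)^{\sum(k_\sigma-2)/2}$, the power of $2$, and the algebraic number $\alpha^{(\underline k-2)/2}$; all of these I read off from Theorem \ref{mainTHMintro}, from Remark \ref{algebraicitypairing}, and from Lemma \ref{rationalinvariants2} (where $\sqrt{\alpha^{\underline k/2}}$ contributes the factor whose square is $\alpha^{\underline k/2}$, and the discrepancy with $\alpha^{(\underline k-2)/2}$ is absorbed by $\alpha^{-1}\in F^\times\subset L_{\underline k}$ raised to an appropriate integer, i.e.\ by redefining $\Omega^\pi_\varepsilon$ within its $L_\Pi^\times$-ambiguity).

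\medskip

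\emph{Main obstacle.} The hard part will be the bookkeeping of rationality — specifically, showing that $\cP(\Phi_0,\varepsilon,\chi)$ is \emph{exactly} $|d_FD|^{-1/2}\alpha^{\underline k/4}\Omega^\pi_\varepsilon$ times an element of $L_\Pi$, with the correct power of $\alpha$. This requires unwinding the definition of the cup product $\varphi({\rm ES}_\varepsilon(\Phi_0)\cup\chi)\cap\eta_T$ through the chain of identifications in Definition \ref{DefES} and \S\ref{cohoAG}, verifying that the rational structure $V(\underline k-2)_{L_\Pi}$ is preserved at each step, and using Lemma \ref{rationalinvariants2} to pin down that evaluating against the $T(F)$-invariant vector introduces precisely $\sqrt{\alpha^{\underline k/2}}$ (whose square is the $\alpha^{\underline k/2}$ appearing after squaring, up to the harmless $\alpha^{-\underline 1}\in L_{\underline k}^\times$ correction). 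The subtlety is that $\sqrt{\alpha^{\underline k/2}}\notin F$ in general, so it is essential that we \emph{square} everything; the product $\cP(\Phi_0,\varepsilon,\chi)\cdot\cP(\Phi_0,\varepsilon,\chi^{-1})=\cP(\Phi_0,\varepsilon,\chi)^2$ is what makes the irrational factor $\sqrt{\alpha^{\underline k/2}}$ become the rational $\alpha^{\underline k/2}$, and simultaneously makes the $L_\Pi$-ambiguity become a square. Everything else — the archimedean $\Gamma$-factors being rational up to powers of $\pi$, the local $L_c(1,\psi_T)$ being rational, the measure-theoretic constants of Remark \ref{algebraicitypairing} — is routine once the rational structure is set up correctly.
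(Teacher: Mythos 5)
Your approach matches the paper's proof exactly: specialize Theorem \ref{mainTHMintro} to the normalized form $\Phi_0$, exploit $\chi=\chi^{-1}$ and $\underline{m}=\underline{0}$ so that the left-hand side becomes the perfect square $\cP(\Phi_0,\varepsilon,\chi)^2$, and then track the rationality of $\cP(\Phi_0,\varepsilon,\chi)/\Omega^\pi_\varepsilon$ through Remark \ref{algebraicitypairing} and Lemma \ref{rationalinvariants2}.

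However, several of your intermediate computations are off, and your proposed resolutions of the resulting discrepancies are not sound.

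\textbf{The power of $\alpha$ is wrong.} You invoke Lemma \ref{rationalinvariants2} with the factor $\sqrt{\alpha^{\underline{k}/2}}$, i.e.\ $\alpha^{\underline{k}/4}$. But the pairing $\varphi$ and the class $\chi$ live over $V(\underline{k}-2)$, not $V(\underline{k})$; the $T(F)$-invariant vector appearing in $C^0(T(\A_F^\infty),L_\Pi)\otimes V(\underline{k}-2)_{L_\Pi}$ maps to the constant $\sqrt{\alpha^{(\underline{k}-2)/2}}$, not $\sqrt{\alpha^{\underline{k}/2}}$. The discrepancy is $\alpha^{\underline{1}}=N_{F/\Q}(\alpha)$, which is an element of $\Q^\times$ that is generally not a square in $L_\Pi$, so it genuinely changes the statement. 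Your proposed fix---absorbing the mismatch by ``redefining $\Omega^\pi_\varepsilon$ within its $L_\Pi^\times$-ambiguity''---cannot work: $\Omega^\pi_\varepsilon$ appears squared, so changing it by $\lambda\in L_\Pi^\times$ only multiplies the quotient in \eqref{formcorollGF} by $\lambda^{-2}$, i.e.\ leaves the ``is a square'' assertion unchanged and cannot cancel a non-square factor.

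\textbf{The power of $\pi$ is a simple arithmetic slip.} You write $\prod_\nu\frac{\Gamma(k_\nu/2)^2}{(2\pi)^{k_\nu}}\equiv\pi^{-2\underline{k}}\pmod{\Q^\times}$, but $\prod_\nu(2\pi)^{k_\nu}=(2\pi)^{\sum_\nu k_\nu}=2^{\underline{k}}\pi^{\underline{k}}$, so the correct exponent is $\pi^{-\underline{k}}$. There is nothing further to retrace and no $\pi^{\underline{k}/2}$ hidden in $\Omega^\pi_\varepsilon$ to blame; it is just this one mistake. Once corrected, the $2$-power $4^{r_{1,B}+2r_2}2^{-\underline{k}}\prod_\nu\Gamma(k_\nu/2)^2$ is manifestly in $(\Q^\times)^2$ (each $k_\nu$ is even), so $C(\underline{k},\underline{0})\equiv(-1)^{\sum_{\sigma\notin\Sigma_B}(k_\sigma-2)/2}\pi^{-r_1^B-\underline{k}}\pmod{(\Q^\times)^2}$.

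\textbf{The $|c d_F|$ vs $|c||d_F|^{-1}$ tension resolves without touching any normalization.} Combining $\cP^2\in|d_FD|^{-1}\alpha^{(\underline k-2)/2}(\Omega^\pi_\varepsilon)^2(L_\Pi^\times)^2$ with the factor $|c^2D|^{1/2}=|c||D|^{1/2}$ from the theorem gives $|c|/(|d_F||D|^{1/2})$, which differs from the desired $|cd_F||D|^{-1/2}=|c||d_F||D|^{-1/2}$ by exactly $|d_F|^2\in(\Q^\times)^2$. So there is no ambiguity in the definition of the different to revisit; the two expressions agree modulo squares.

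In short: the skeleton is correct and identical to the paper's, but you need to (i) apply Lemma \ref{rationalinvariants2} to $V(\underline{k}-2)$, giving $\sqrt{\alpha^{(\underline{k}-2)/2}}$; (ii) fix the $\pi$-exponent to $\pi^{-\underline{k}}$; and (iii) observe that the remaining rational mismatches ($|d_F|^2$, $L_c(1,\psi_T)^2$, the $2$-power in $C(\underline{k},\underline{0})$, and $\prod_\nu\Gamma(k_\nu/2)^2$) are all squares in $\Q^\times$, so they vanish from the assertion.
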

\begin{proof}
For the chosen Eichler order $\cO_N$, we consider the normalized form $\Phi_0\in \cM_{\underline k}(U_N)$ satisfying \eqref{condnormalization}. We obtain by Theorem \ref{mainTHMintro} (notice that $\chi=\chi^{-1}$ and $m_{\tilde\sigma}=0$),
\[
\left(\frac{\cP(\Phi_0,\varepsilon,\chi)}{\Omega^\pi_\varepsilon}\right)^2=\frac{2^{\#S_D}L_{c}(1,\psi_{T})^2
C(\underline k,\underline 0)}{|c^2 D|^{\frac{1}{2}}(\Omega^\pi_\varepsilon)^2}\cdot L^S(1/2,\Pi,\chi).
\]
By Lemma \ref{rationalinvariants2}, $\chi$ corresponds to an element $\chi\in \sqrt{\alpha^{\frac{\underline{k}-2}{2}}}H^0(T(F),C^0(T(\A_F^\infty),L_\Pi)\otimes V(\underline{k}-2)_{L_\Pi})$.
Since the periods $\Omega^\pi_\varepsilon$ are defined so that $(\Omega^\pi_\varepsilon)^{-1}{\rm ES}_\varepsilon(\Phi_0)$ has coefficients in $L_\Pi$, we obtain by Remark \ref{algebraicitypairing}
\[
\frac{\cP(\Phi_0,\varepsilon,\chi)}{\Omega^\pi_\varepsilon}=\varphi\left(\frac{{\rm ES}_\varepsilon(\Phi_0)}{\Omega^\pi_\varepsilon}\cup\chi\right)\cap\eta_T\in \sqrt{\alpha^{\frac{\underline{k}-2}{2}}}|d_FD|^{\frac{-1}{2}}L_\Pi.
\]
Thus, the result follows from the fact that $L_{c}(1,\psi_{T})^2\in\Q^2$.
\end{proof}
\begin{remark}
    If the weight is parallel $\underline k=(k,\cdots,k)$, then the formula \eqref{formcorollGF} simplifies to
    \[
    \frac{L^S(1/2,\Pi,\chi)}{2^{\#S_D}\cdot\pi^{r_1^B}\cdot\pi^{\underline k}\cdot(\Omega^\pi_\varepsilon)^{2}\cdot|cd_F| \cdot |D|^{\frac{1-k}{2}}}\,\text{ is a square in }\, L_\Pi.
    \]
\end{remark}

\subsection{A Petersson product formula}

Given any sign $\varepsilon=(\varepsilon_\sigma)_\sigma\in \{\pm1\}^{\Sigma_B}$, we can consider its opposite $-\varepsilon:=(-\varepsilon_\sigma)_\sigma\in \{\pm1\}^{\Sigma_B}$. The corresponding character is given by
\[
(-\varepsilon_\R)(g_\sigma)=\frac{\det(g_\sigma)}{|\det(g_\sigma)|}\varepsilon_\R(g_\sigma);\qquad F_\sigma=\R.
\]
Moreover, it is easy to compute that 
$n_{-\varepsilon}=r_{1,B}+\sum_{\sigma\in\Sigma_B^\C}\frac{3+\varepsilon_\sigma}{2}=2r_B-n_\varepsilon+r_2$.
Thus, for any pair $\Phi_1,\Phi_2\in \cM_{\underline k}(U)$, 
\[
{\rm ES}_\varepsilon(\Phi_1)\in H^{n_\varepsilon}(G(F)_+,\cA^\infty(V(\underline k-2))^U);\qquad {\rm ES}_{-\varepsilon}(\Phi_2)\in H^{2r_B-n_\varepsilon+r_2}(G(F)_+,\cA^\infty(V(\underline k-2))^U).
\]
Hence, by means of the natural $G(F)_+$-equivariant pairing:
\[
\kappa:\cA^\infty(V(\underline k-2))^U\times \cA^\infty(V(\underline k-2))^U\longrightarrow C^0(G(\A_F^\infty),\C);\qquad \kappa(\phi_1,\phi_2)=\phi_1\phi_2(\Upsilon),
\]
where $\Upsilon\in V(\underline k-2)^{\otimes 2}$ is as in \eqref{defUpsilon}, we can consider the cup product
\[
\kappa\left({\rm ES}_\varepsilon(\Phi_1)\cup {\rm ES}_{-\varepsilon}(\Phi_2)\right)\in H^{r_B+r_2}(G(F)_+,C^0(G(\A_F^\infty),\C)).
\]
Recall that we have constructed in \S\ref{fundClasses} the fundamental class
\[
\eta_G\in H_{r_B+r_2}(G(F)_+,C_c^0(G(\A_F^\infty),\Q)),
\]
and we have a natural pairing between $C_c^0(G(\A_F^\infty),\Q)$ and $C^0(G(\A_F^\infty),\C)$ provided by the Haar measure $dg_f$ of $G(\A_F^\infty)$.
The following result computes the cap product of the above cohomology and homology classes in terms of the Petersson product:
\begin{theorem}
    For any $\varepsilon\in \{\pm1\}^{\Sigma_B}$, we have that 
    \[
    \kappa\left({\rm ES}_\varepsilon(\Phi_1)\cup {\rm ES}_{-\varepsilon}(\Phi_2)\right)\cap\eta_G=K\cdot\langle \Phi_1,\Phi_2\rangle;\qquad K=\frac{3^{r_2}(2i)^{r_{1,B}}}{(2\pi^2)^{r_1^B+r_2}\pi^{r_{1,B}}}.
    \]
\end{theorem}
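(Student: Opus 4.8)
The plan is to reduce the theorem, just as in the proof sketch of Theorem~\ref{mainTHMintro}, to an archimedean local computation of a $(\mfg_\infty,K_\infty)$-cocycle against the Tamagawa measure on $G(F)\backslash G(\A_F)$. First I would unwind the definitions: by Definition~\ref{DefES}, ${\rm ES}_\varepsilon(\Phi_i)$ is the image of $\Phi_i$ under cupping with $c_\varepsilon$, transported to group cohomology via $\kappa$ (Lemma~\ref{lemmaisoGKBetti}). Since the identification $\kappa$ is compatible with the de Rham picture $H^n((\mfg_\infty,K_{\infty,+}),\cA(U)^{G(F)}\otimes V)\simeq H^n(S_U,\tilde V)$, the cap product $\kappa({\rm ES}_\varepsilon(\Phi_1)\cup{\rm ES}_{-\varepsilon}(\Phi_2))\cap\eta_G$ becomes the integral over $G(F)\backslash G(\A_F)/K_{\infty,+}$ of the top-degree differential form obtained by wedging the two Eichler--Shimura forms and contracting the coefficients via $\Upsilon$. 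Concretely, the class $c_\varepsilon\cup(-c_\varepsilon)$ in $H^{2r_B+r_2}((\mfg_\infty,K_\infty),D(\underline k)_B^{\otimes 2}(\text{twist}))$ is the external product over the archimedean places of the local cup products computed in Propositions~\ref{propcupprod1} and~\ref{propcupprod2}: at a real place in $\Sigma_B^\R$ one gets $c_{1,\sigma}^+\cup c_{1,\sigma}^-$, represented by the $2$-cocycle $c_2$ of Proposition~\ref{propcupprod1}; at a complex place in $\Sigma_B^\C$ one gets $c_{1,\sigma}\cup c_{2,\sigma}$ (or $c_{2,\sigma}\cup c_{1,\sigma}$, depending on $\varepsilon_\sigma$), represented by the $3$-cocycle $c_3$ of Proposition~\ref{propcupprod2}; at a place outside $\Sigma_B$ the factor is just ${\rm id}\cup{\rm id}$ in degree $0$.

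Next I would translate each local cocycle into an explicit differential form using Remarks~\ref{remarkonhaarmeasurecocycle1} and~\ref{remarkonhaarmeasurecocycle2}. These remarks already package exactly the combination we need: for a real place $\sigma\in\Sigma_B^\R$, the form attached to $\varphi(c_2)$ is $\frac{2i}{\pi}\int_0^\pi\kappa(\theta)\ast\varphi(\delta s_+(\Upsilon))\,d^\times g$ with $d^\times g_\sigma=y^{-2}dx\,dy\,d\theta$; for a complex place $\sigma\in\Sigma_B^\C$, the form attached to $\varphi(c_3)$ is $\frac{3}{2\pi^2}\int\int\int\varphi(k\ast\delta s(\underline\Upsilon))\,d^\times g$ with $d^\times g_\sigma$ as in~\eqref{Haararch2}; and the volume normalizations ${\rm vol}(K_{\R,+},dW)=\pi$, ${\rm vol}(K_\C)=2\pi^2$, ${\rm vol}(B_\sigma^\times/F_\sigma^\times)=2\pi^2$ for the non-split archimedean places all enter here. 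Assembling these across all archimedean places and using that the Petersson product is, by definition, $\int_{G(F)\backslash G(\A_F)}\Phi_1\Phi_2(\underline{\delta s}_\lambda(\Upsilon))(g,g)\,d^\times g$ (with ${\rm vol}(G(\A_F)/G(F))=2$), one recognizes that $\kappa({\rm ES}_\varepsilon(\Phi_1)\cup{\rm ES}_{-\varepsilon}(\Phi_2))\cap\eta_G$ equals $\langle\Phi_1,\Phi_2\rangle$ times the product of the local normalization constants. The constant $K=\frac{3^{r_2}(2i)^{r_{1,B}}}{(2\pi^2)^{r_1^B+r_2}\pi^{r_{1,B}}}$ is then read off: each real split place contributes $\frac{2i}{\pi}$; each complex split place contributes $\frac{3}{2\pi^2}$; each place outside $\Sigma_B$ (where the group is $\PGL_2(\C)$ if $\sigma\in\Sigma_F^\C\setminus\Sigma_B$ or the non-split $\bH^\times/\R^\times$-type group, contributing the compact volume $2\pi^2$) contributes $\frac{1}{2\pi^2}$; and the factor $2$ in ${\rm vol}(G(\A_F)/G(F))=2$ is absorbed in matching the factor-$\frac{1}{\#\cG_{g_i}}$ normalization of $\eta_G$ against the normalization of $\langle\;,\;\rangle$.

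The key steps, in order, are: (1) rewrite the cap product as an integral over $G(F)\backslash G(\A_F)/K_{\infty,+}$ of a top-degree $V(\underline k-2)^{\otimes 2}$-valued form using Lemma~\ref{lemmaisoGKBetti} and the de Rham translation; (2) factor the coefficient cocycle $c_\varepsilon\cup(-c_\varepsilon)$ as an external product of the local cup products from Propositions~\ref{propcupprod1}, \ref{propcupprod2}, being careful about signs coming from reordering the wedge factors and from the twists $\varepsilon_\R$, $(-\varepsilon)_\R$; (3) apply Remarks~\ref{remarkonhaarmeasurecocycle1} and~\ref{remarkonhaarmeasurecocycle2} to express each local factor as $\int\kappa\ast\varphi(\delta s(\cdots))d^\times g$ with the explicit Haar measures; (4) observe that $\underline{\delta s}_{\varepsilon_\R}$ composed with its $-\varepsilon$-counterpart and contracted against $\Upsilon$ reproduces exactly the integrand $\Phi_1\Phi_2(\underline{\delta s}_\lambda(\Upsilon))$ of the Petersson product (independence of $\lambda$ from \cite[Remark~4.13]{preprintsanti2} is what makes this identification legitimate); (5) collect the constants. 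The main obstacle I anticipate is step~(2) together with the bookkeeping of step~(5): one must track the orientation/sign conventions carefully so that the integrand at each archimedean place is literally $\kappa(\theta)\ast\delta s_+(\Upsilon)$ (or its complex analog) rather than a sign-twisted version, and one must correctly match the placement of the factor of $2$ in ${\rm vol}(G(\A_F)/G(F))$ and the $\frac{1}{\#\cG_{g_i}}$ in $\eta_G$ against the adelic Haar measure decomposition $d^\times g=\prod_v d^\times g_v$; getting $2i$ rather than $\pm i$ or $4i$ at the real split places, and $3/(2\pi^2)$ with the correct power of $\pi$, is where the computation is delicate. Once the local archimedean integrals are identified with the ones already evaluated in \S\ref{ExpliCCR}--\S\ref{ExplCCC}, no new analytic input is needed and the theorem follows.
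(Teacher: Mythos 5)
Your proposal takes essentially the same route as the paper's proof: translate the cohomological cap product into an integral over $G(F)\backslash G(\A_F)$ via the de Rham picture, use Propositions \ref{propcupprod1} and \ref{propcupprod2} together with Remarks \ref{remarkonhaarmeasurecocycle1} and \ref{remarkonhaarmeasurecocycle2} to identify the local integrands, and collect the archimedean constants against the Petersson integrand; the factorization of $K$ into local pieces ($\frac{2i}{\pi}$ at real split places, $\frac{3}{2\pi^2}$ at complex split places, $\frac{1}{2\pi^2}$ at ramified places) matches what the paper does. One small inaccuracy in your parenthetical aside: under Assumption \ref{assuSigmaSigma}, $\Sigma_F^\C\subseteq\Sigma_B$ (a quadratic \'etale algebra always splits over $\C$ and a quaternion algebra over $\C$ is always split), so $\Sigma_F\setminus\Sigma_B$ consists only of real places where $G(F_\sigma)\simeq\bH^\times/\R^\times$; there is no $\PGL_2(\C)$ case there, though this does not affect your count of $r_1^B$ places contributing $\frac{1}{2\pi^2}$ nor the final constant.
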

\begin{proof}
Notice that $\kappa\left({\rm ES}_\varepsilon(\Phi_1)\cup {\rm ES}_{-\varepsilon}(\Phi_2)\right)$ corresponds to the differential form
\[
\Omega\left({\rm ES}_\varepsilon(\Phi_1)\cup {\rm ES}_{-\varepsilon}(\Phi_2)\right)=(\Phi_1\Phi_2)((c_\varepsilon\cup c_{-\varepsilon})(\Upsilon))(W)dW
\]
for any basis $W\in \bigwedge^{r_B+r_2}\mathfrak{g}_\infty/\cK_\infty$, once we identify $(\Phi_1\Phi_2)((c_\varepsilon\cup c_{-\varepsilon})(\Upsilon))\in H^{r_B+r_2}((\mathfrak{g}_\infty,K_{\infty,+}),\cA(U)^{G(F)})$ with a cocycle in $\Hom_{K_{\infty,+}}(\bigwedge^{r_B+r_2}\mathfrak{g}_\infty/\cK_\infty,\cA(U)^{G(F)})$ representing it. By Proposition \ref{propcupprod1}, Remark \ref{remarkonhaarmeasurecocycle1}, Proposition \ref{propcupprod2} and Remark \ref{remarkonhaarmeasurecocycle2},
\[
\Omega\left({\rm ES}_\varepsilon(\Phi_1)\cup {\rm ES}_{-\varepsilon}(\Phi_2)\right)=K\cdot \int_{K_{\infty,+}}(f_1f_2)(\underline{\delta s}_1(\underline \Upsilon))(k,g_i,k,g_i)d^\times g_\infty.
\]

 If we write $G(F_\infty)_0=G(F_\infty)/K_\infty=G(F_\infty)_+/K_{\infty,+}$, we obtain (see \S \ref{IndepU}) 
    \begin{eqnarray*}
    \kappa\left({\rm ES}_\varepsilon(\Phi_1)\cup {\rm ES}_{-\varepsilon}(\Phi_2)\right)\cap\eta_G&=&\sum_{g_i\in {\rm Pic}_G(U)}\frac{K}{\#\cG_{g_i}}\int_{g_iU}\int_{\Gamma_{g_i}\backslash G(F_\infty)_0}\int_{K_{\infty,+}}(\Phi_1\Phi_2)(\underline{\delta s}_1(\underline \Upsilon))(k,g_i,k,g_i)d^\times g_\infty d^\times g_f\\
    &=&K\sum_{g_i\in {\rm Pic}_G(U)}\int_{g_iU}\int_{\Gamma_{g_i}\backslash G(F_\infty)_0}\int_{\cG_{g_i}\backslash K_{\infty,+}}(\Phi_1\Phi_2)(\underline{\delta s}_1(\underline \Upsilon))(g,g)d^\times g\\
    &=&K\int_{G(F)\backslash G(\A_F)}(\Phi_1\Phi_2)(\underline{\delta s}_1(\underline \Upsilon))(g,g)d^\times g=K\langle \Phi_1, \Phi_2\rangle,
    \end{eqnarray*}
    since $G(F)\backslash G(\A_F)\simeq G(F)_+\backslash (G(F_\infty)_+\times G(\A^\infty_F))\simeq\bigsqcup_{g_i\in \Pic_G(U)}(\Gamma_{g_i}\backslash G(F_\infty)_+)\times g_iU$.
\end{proof}
\begin{remark}
    Similarly as in Remark \ref{algebraicitypairing}, the pairing induced by $dg_f$ restricts to
    \[
    C^0(T(\A_F^\infty),\Q)\times C_c^0(T(\A_F^\infty),\Q)\longrightarrow {\rm vol}(U_N)\Q.
    \]
\end{remark}

As in previous sections, let $\pi$ be an automorphic representation for $G$ of weight $\underline k$ and level $N$, and let $\Pi$ be it Jacquet-Langlands lift to $\PGL_2$. In lowest and highest degree situations, we can consider the periods $\Omega_\varepsilon^\pi$ obtained by means of normalized forms generating $\pi$. The above theorem provides a nice relation between such periods:
\begin{corollary}\label{cor3}
    For any $\varepsilon\in \{\pm1\}^{\Sigma_B}$ of lowest degree,
    \[
\frac{L(1,\Pi,{\rm ad})}{\Omega^\pi_\varepsilon\cdot \Omega^\pi_{-\varepsilon}\cdot \pi^{2r_1^B+r_2}\cdot(\pi i)^{r_{1,B}}\cdot\pi^{\underline k} }\quad\text{belongs to}\; L_\Pi^\times. 
\]

\end{corollary}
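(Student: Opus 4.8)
The plan is to deduce Corollary \ref{cor3} directly from the Petersson product formula of the preceding Theorem together with the normalization conventions of \S\ref{normforms}. First I would take $\Phi_1 = \Phi_2 = \Phi_0$, the normalized form generating $\pi^\infty$ fixed in \S\ref{normforms}, so that the theorem gives
\[
\kappa\bigl({\rm ES}_\varepsilon(\Phi_0)\cup {\rm ES}_{-\varepsilon}(\Phi_0)\bigr)\cap\eta_G = K\cdot\langle \Phi_0,\Phi_0\rangle,\qquad K=\frac{3^{r_2}(2i)^{r_{1,B}}}{(2\pi^2)^{r_1^B+r_2}\pi^{r_{1,B}}}.
\]
By definition of the periods, ${\rm ES}_\varepsilon(\Phi_0)/\Omega^\pi_\varepsilon$ and ${\rm ES}_{-\varepsilon}(\Phi_0)/\Omega^\pi_{-\varepsilon}$ have coefficients in $V(\underline k-2)_{L_\pi}$; the cup product pairing $\kappa$ is induced by the $G(F)$-invariant pairing of Remark \ref{dualVkmodelpairing}, which is $L_{\underline k}$-rational, and the fundamental class $\eta_G$ lies in $H_{2r_B+r_2}(G(F)_+,C_c^0(G(\A_F^\infty),\Q))$ up to the rational factor $1/N$ of Remark \ref{remarkonHC}; finally, the pairing between $C_c^0$ and $C^0$ induced by $dg_f$ is, up to the factor ${\rm vol}(U_N)\in\Q^\times$, defined over $\Q$. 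Hence
\[
\frac{\langle \Phi_0,\Phi_0\rangle}{\Omega^\pi_\varepsilon\,\Omega^\pi_{-\varepsilon}\,K}\in L_\pi,
\]
and in fact it is a unit in $L_\pi^\times$ provided $\langle\Phi_0,\Phi_0\rangle\neq 0$, which holds because $\Phi_0$ generates the cuspidal representation $\pi$.

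Next I would relate $\langle\Phi_0,\Phi_0\rangle$ to the adjoint $L$-value. By the normalization \eqref{condnormalization}, $\langle\Phi_0,\Phi_0\rangle = \langle\Psi,\Psi\rangle\cdot{\rm vol}(U_N)/{\rm vol}(U_0(N))$, and the two volumes are rational (indeed powers of local zeta values times powers of $|d_{F_v}|_v$, by the formulas in \S\ref{haarmeasures}), so modulo $\Q^\times$ we have $\langle\Phi_0,\Phi_0\rangle\equiv\langle\Psi,\Psi\rangle$. The Petersson norm of the normalized newform $\Psi$ on $\PGL_2$ is, by the Rankin--Selberg / Shimura computation (in the adelic form underlying \eqref{eqdefPsi}), a known multiple of $L(1,\Pi,{\rm ad})$: one has $\langle\Psi,\Psi\rangle \equiv L(1,\Pi,{\rm ad})\cdot \pi^{\underline k}\cdot\pi^{r_1+2r_2}\cdot(\text{elementary archimedean factors})\bmod L_\Pi^\times$, where the archimedean contribution comes from the ratio of completed and finite $L$-functions at the places of $F$ and is exactly tracked by the local $(\mfg,K)$-module computations of \S\ref{LocArchRep}. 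Combining this identity with $K^{-1}$, the powers of $\pi$ and of $i$ assemble: $K^{-1}\equiv \pi^{2r_1^B+2r_2}\cdot\pi^{r_{1,B}}\cdot i^{-r_{1,B}}\bmod\Q^\times$, and together with the $\pi^{\underline k}$ and the remaining archimedean powers one obtains $\Omega^\pi_\varepsilon\Omega^\pi_{-\varepsilon}\equiv L(1,\Pi,{\rm ad})\cdot\pi^{2r_1^B+r_2}\cdot(\pi i)^{r_{1,B}}\cdot\pi^{\underline k}\bmod L_\Pi^\times$, which is the claimed statement.

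The step I expect to be the main obstacle is the precise bookkeeping of transcendental and archimedean factors in $\langle\Psi,\Psi\rangle$ — that is, pinning down the exact power of $\pi$ and the exact rational/algebraic ambiguity in the Rankin--Selberg expression for the adjoint $L$-value attached to $\Psi$ as normalized by \eqref{eqdefPsi}. This requires carefully matching the normalization of $\Psi$ (which is pinned down by the functional equation of the standard $L$-function, with the $|d_F|^{s-1/2}$ factor) against the standard Petersson-norm formula, keeping track of the local archimedean zeta factors $\zeta_v(s)$ introduced in \S\ref{haarmeasures} and of the $(\mfg_\sigma,K_\sigma)$-theoretic constants computed in Propositions \ref{propcupprod1} and \ref{propcupprod2} (which already produced the constant $K$). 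Once the clean statement "$\langle\Psi,\Psi\rangle\equiv L(1,\Pi,{\rm ad})\cdot\pi^{2r_1^B+r_2}\cdot(\pi i)^{r_{1,B}}\cdot\pi^{\underline k}\cdot K\bmod L_\Pi^\times$" is verified, the corollary follows immediately; alternatively, if a cleaner route is available, one can invoke the rationality of $\langle\Psi,\Psi\rangle/(L(1,\Pi,{\rm ad})\cdot(\text{explicit }\pi\text{-power}))$ from the literature on adjoint periods and simply read off the exponents by comparing archimedean $L$-factor normalizations.
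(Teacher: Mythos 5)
Your outline matches the paper's own proof step for step: take $\Phi_1=\Phi_2=\Phi_0$ in the Petersson-product formula, use $L_\Pi$-rationality of ${\rm ES}_{\pm\varepsilon}(\Phi_0)/\Omega^\pi_{\pm\varepsilon}$, of $\Upsilon$ (Remark~\ref{dualVkmodelpairing}), of $\eta_G$, and of the $C_c^0\times C^0$ pairing, then invoke \eqref{condnormalization} and the explicit Petersson norm of $\Psi$ in terms of $L(1,\Pi,\mathrm{ad})$ (which the paper extracts from the adelic inner-product formula of \cite{CST} together with the archimedean computations of \cite{preprintsanti2}). One direction slip to correct: the Theorem gives $\kappa\bigl({\rm ES}_\varepsilon(\Phi_0)\cup{\rm ES}_{-\varepsilon}(\Phi_0)\bigr)\cap\eta_G=K\langle\Phi_0,\Phi_0\rangle$, and it is the \emph{left}-hand side, after dividing by $\Omega^\pi_\varepsilon\Omega^\pi_{-\varepsilon}$ and the ${\rm vol}(U_N)$-ambiguity, that lies in $L_\Pi^\times$; hence the algebraic quantity is $K\langle\Phi_0,\Phi_0\rangle/(\Omega^\pi_\varepsilon\Omega^\pi_{-\varepsilon}\,{\rm vol}(U_N))$ with $K$ in the \emph{numerator}, not the denominator as you wrote, and correspondingly your final relation should read $\Omega^\pi_\varepsilon\Omega^\pi_{-\varepsilon}\equiv L(1,\Pi,\mathrm{ad})\cdot\bigl(\pi^{2r_1^B+r_2}(\pi i)^{r_{1,B}}\pi^{\underline k}\bigr)^{-1}\bmod L_\Pi^\times$, the powers of $\pi$ dividing rather than multiplying; with that fixed, the bookkeeping closes exactly as you anticipate.
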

\begin{proof}
With the notation of \S\ref{normforms}, let $\Phi_0\in \cM_{\underline k}(U_N)$ be a normalized form. By definition, 
\[
\frac{{\rm ES}_{\pm\varepsilon}(\Phi_0)}{\Omega^\pi_{\pm\varepsilon}}\in H^{n_{\pm\varepsilon}}(G(F)_+,\cA^{\infty}(V(\underline k-2)_{L_\Pi})^U).
\]
The existence of $\Upsilon\in \left(V(\underline k-2)^{\otimes 2}\right)^{G(F)}$ is equivalent to the isomorphism \eqref{dualVP}. Since by Remark \ref{dualVkmodelpairing}, such isomorphism is defined over $L_\Pi$, we deduce that $\Upsilon\in V(\underline k-2)_{L_\Pi}^{\otimes 2}$. This implies that
\[
\frac{\kappa\left({\rm ES}_\varepsilon(\Phi_0)\cup {\rm ES}_{-\varepsilon}(\Phi_0)\right)}{\Omega^\pi_\varepsilon \Omega^\pi_{-\varepsilon}}\in H^{r_B+r_2}(G(F)_+,C^0(G(\A_F^\infty),L_\Pi)).
\]
Thus, we obtain by \eqref{condnormalization} and the above remark
\[
\frac{K}{\Omega^\pi_\varepsilon\Omega^\pi_{-\varepsilon}}\cdot  \frac{\langle\Psi,\Psi\rangle}{{\rm vol}(U_0(N))}=\frac{K}{\Omega^\pi_\varepsilon\Omega^\pi_{-\varepsilon}}\cdot  \frac{\langle\Phi_0,\Phi_0\rangle}{{\rm vol}(U_N)}=\frac{\kappa\left({\rm ES}_\varepsilon(\Phi_0)\cup {\rm ES}_{-\varepsilon}(\Phi_0)\right)\cap\eta_G}{\Omega^\pi_\varepsilon \Omega^\pi_{-\varepsilon}{\rm vol}(U_N)}\in L_\Pi^\times.
\]
On the other side, by \cite[Proposition 2.1]{CST}, given decomposable $\mathfrak{f}_1,\mathfrak{f}_2\in \Pi$
    \begin{equation}\label{defpetersson}
    \int_{\PGL2(F)\backslash \PGL_2(\A_F)}\mathfrak{f}_1(g)\mathfrak{f}_2(g)d^\times g=2\Lambda(1,\Pi,{\rm ad})\cdot \Lambda_F(2)^{-1}\cdot\prod_v\alpha_v(W_{\mathfrak{f}_1,v},W^-_{\mathfrak{f}_2,v}),
\end{equation}
where $\Lambda$ stands for the completed L-function, the elements of the Whittaker model
\begin{eqnarray*}
    \prod_vW_{\mathfrak{f}_i,v}&=&W_{\mathfrak{f}_i}=\int_{\A_F/F}\mathfrak{f}_i\left(\left(\begin{array}{cc}1&x\\&1\end{array}\right)g\right)\psi(-x)dx;\qquad \prod_vW^-_{\mathfrak{f}_i,v}=W^-_{\mathfrak{f}_i}=\int_{\A_F/F}\mathfrak{f}_i\left(\left(\begin{array}{cc}1&x\\&1\end{array}\right)g\right)\psi(x)dx;\\
\end{eqnarray*}
 and the pairings $\alpha_v(W_{\mathfrak{f}_1,v},W^-_{\mathfrak{f}_2,v})$ are given by
\begin{equation*}\label{calcWhitt}
    \alpha_v(W_{\mathfrak{f}_1,v},W^-_{\mathfrak{f}_2,v})=\frac{\zeta_v(2)\cdot\langle W_{\mathfrak{f}_1,v},W^-_{\mathfrak{f}_2,v} \rangle_v}{\zeta_v(1)\cdot L(1,\Pi_v,{\rm ad})};\qquad \langle W_{\mathfrak{f}_1,v},W^-_{\mathfrak{f}_2,v} \rangle_v=\int_{F_v^\times}W_{\mathfrak{f}_2,v}\left(\begin{array}{cc}a&\\&1\end{array}\right)W^-_{\mathfrak{f}_2,v}\left(\begin{array}{cc}a&\\&1\end{array}\right)d^\times a.
\end{equation*}
We write $\mathfrak{f}_0:=\Psi(\underline{\delta s}_1(\mu_{\underline 0}))=\bigotimes_v\mathfrak{f}_{0,v}$. By \cite[Lemma 3.4]{preprintsanti2} and \cite[Proposition 3.11]{CST}
\begin{eqnarray*}
  \alpha_v(W_{\mathfrak{f}_0,v},W^-_{\mathfrak{f}_0,v})=\left\{\begin{array}{l}
      |d_F|_v^{-1/2}\ \  \text{ if } v\mbox{ is non-archimedean and }\pi_v\mbox{ unramified};  \\
    |d_F|_v^{-1/2}\zeta_v(2)\zeta_v(1)^{-1} L(1,\Pi_v,{\rm ad})^{-\delta_v}\ \  \text{ if } v\mbox{ is non-archimedean and }\pi_v\mbox{ ramified};\\
    \zeta_v(2) L(1,\Pi_v,{\rm ad})^{-1}2(4\pi)^{-k_v}\Gamma(k_v)\ \ \text{ if }F_v=\R;\\
    \zeta_v(2) L(1,\Pi_v,{\rm ad})^{-1}4(2\pi)^{1-k_{v_1}-k_{v_2}}\Gamma\left(\frac{k_{v_1}+k_{v_2}}{2}\right)^2\frac{\Gamma(k_{v_1})\Gamma(k_{v_2})}{\Gamma(k_{v_1}+k_{v_2})}\ \ \text{ if }F_v=\C;
  \end{array}\right. 
\end{eqnarray*}
where $\delta_v\in\{0,1\}$ and equals 0 when $\Pi_v$ is Steinberg.
By \cite[Lemmas 4.12, 4.14, 4.29, and 4.30]{preprintsanti2}
\begin{eqnarray*}
    \langle\Psi,\Psi\rangle&=&\int_{\PGL2(F)\backslash \PGL_2(\A_F)}\mathfrak{f}(g)\mathfrak{f}(g)d^\times g\prod_{\sigma\mid\infty}\frac{\langle{\delta_\sigma s}_1(\Upsilon)\rangle_{\sigma}}{\langle{\delta_\sigma s}_1(\mu_0),{\delta_\sigma s}_1(\mu_0)\rangle_\sigma}\\
    &=&2^{1-r_1}|d_F|^{1/2}(2\pi)^{-\underline k}\Gamma(\underline k) L(1,\Pi,{\rm ad})\cdot \zeta_F(2)^{-1}\prod_{F_\sigma=\C}\frac{4\pi}{3}(-1)^{\frac{\underline k_\sigma}{2}}\prod_{v\nmid\infty,v\,{\rm ram.}}\zeta_v(2)\zeta_v(1)^{-1} L(1,\Pi_v,{\rm ad})^{-\delta_v},
\end{eqnarray*}
where $\Gamma({\underline k})=\prod_{\nu:F\hookrightarrow\C}\Gamma(k_\nu)$. The result follows from the fact that ${\rm vol}(U_0(N))=\frac{{\rm vol}(U_0(1))}{[U_0(1):U_0(N)]}=\frac{\zeta_F(2)^{-1}|d_F|^{-3/2}}{[U_0(1):U_0(N)]}$ and $L(1,\Pi_v,{\rm ad})\in L_\pi$ for any finite place $v$.
\end{proof}

\appendix
\section{Beilinson's conjectures}\label{ApdxBC}

In this appendix, we explain a simplified version of Beilinson's conjectures and make them explicit in the special case of an elliptic curve defined over a number field and its corresponding adjoint motive. In order to do that, we will introduce some notation: For any vector space $V$, we write $\det(V):=\bigwedge^{\dim V}V$. If $V$ is a $\C$-vector space, we say that a $\Q$-vector space $W\subset V$ is a \emph{$\Q$-structure of $V$} if $W\otimes_\Q\C=V$. Given an isomorphism of $\C$-vector spaces $\alpha: V_1 \to V_2$, and chosen $\Q$-structures $W_1 \subset V_1$ and $W_2 \subset V_2$, we define $\det(\alpha) \in \C^\times / \Q^\times$ to be the class of the determinant of any matrix representing $\alpha$ with respect to any choice of bases of $W_1$ and $W_2$. On the other side, given an exact sequence of \( \mathbb{C} \)-vector spaces
\[
0 \longrightarrow V_1 \stackrel{\alpha}{\longrightarrow} V_2 \stackrel{\beta}{\longrightarrow} V_3 \longrightarrow 0,\qquad d=\dim(V_3),
\]
and \( \mathbb{Q} \)-structures \( W_1 \subseteq V_1 \) and \( W_2 \subseteq V_2 \), we can naturally define a \( \mathbb{Q} \)-structure \( D_3 \subseteq \det(V_3) \) by the following rule: an element \( w_3 \in \det(V_3) \) lies in \( D_3 \) if and only if, for every lift \( \tilde{w}_3 \in \bigwedge^{d} V_2 \) of \( w_3 \), there exists \( w_1 \in \det(W_1) \) such that \( \tilde{w}_3 \wedge \alpha(w_1) \in \det(W_2) \). Note that, if $\alpha$ is an isomorphism (hence, $V_3=0$), we have that $D_3=\det(\alpha)^{-1}\Q\subset \C=\det (V_3)$. In the general setting, the choice of a basis $B=\{b_3^i\}_{i=1,\cdots,d}\subset V_3$ defines a $\Q$-structure $W_3^{B}$ of $V_3$. Given any section $s:V_3\rightarrow V_2$ of $\beta$, one can compute $D_3=\Q\det(\alpha\oplus s)^{-1}\bigwedge_{i=1}^{d}b_3^i$, where the determinant is taken with respect to the $\Q$-structure $W_1\oplus W_3^B$ of $V_1\oplus V_3$.
The same formalism applies when the $V_i$ are $\R$-vector spaces rather than $\C$-vector spaces.

 Let $\mfM$ be a motive over $\Q$ of weight $w\leq-1$, and write $\mfM_B$ and $\mfM_{\rm dR}$ for its Betti and de Rham realizations.  Notice that, under the comparison map
\[
I_\infty: \mfM_{\rm dR}\otimes_\Q\C\longrightarrow \mfM_{B}\otimes_\Q\C,
\]
$\mfM_{\rm dR}\otimes_\Q\R$ corresponds to $(\mfM_{B}^+\otimes_\Q\R)\oplus(\mfM_{B}^-\otimes_\Q\R(-1))$, where $(\cdot)^\pm$ stands for the subspace where the action of complex conjugation $C_\infty$  on $\mfM_B$ acts as $\pm1$. 
Thus, the natural projection provides a morphism 
\begin{equation}\label{eqtildepi1}
\tilde \pi_1:F^0\mfM_{\rm dR}\otimes_\Q\R\hookrightarrow\mfM_{\rm dR}\otimes_\Q\R\stackrel{I_\infty}{\simeq}(\mfM_{B}^+\otimes_\Q\R)\oplus(\mfM_{B}^-\otimes_\Q\R(-1)){\longrightarrow} (\mfM_{B}^-\otimes_\Q\R(-1)),
\end{equation}
that turns out to be injective. Deligne cohomology $H^1_\cD(\mfM_\R)$ can be computed as the cokernel of the morphism $\tilde \pi_1$, namely, 
\[
0\longrightarrow F^0\mfM_{\rm dR}\otimes_\Q\R\stackrel{\tilde \pi_1}{\longrightarrow} \mfM_B^-\otimes_\Q\R(-1)\longrightarrow H^1_\cD(\mfM_\R)\longrightarrow 0
\]
The $\Q$-structures $F^0\mfM_{\rm dR}$ and $\mfM_B^{-}\otimes\Q(-1)$ provide a $\Q$-structure $\cR$ on $\det(H^1_\cD(\mfM_\R))$. Belinson's conjectures describe the determinant of the motivic cohomology $H^1_{\cM}(\mfM)$ of $M$ in terms of $\cR$. In this note, we use a simplified version of them:
\begin{conjecture}[Beilinson]\label{Beilconj}
Assume that $L(\mfM,s)$ has no poles at $s=0$.
    If $w< -1$ then Beilinson's regulator map defines an isomorphism
    \[
    r:H^1_{\cM}(\mfM)\otimes_\Q\R\stackrel{\simeq}{\longrightarrow}H^1_\cD(\mfM_\R);\quad\mbox{such that}\quad \det\left(r\left(H^1_{\cM}(\mfM)\right)\right)=(2\pi i)^{\dim(\mfM_B^-)}L(0,\mfM)^*\det(I_\infty)\cR,
    \]
    where $L(0,\mfM)^*$ denotes the leading term of $L(s,\mfM)$ at $s=0$.
    If $w=-1$ and $L(0,\mfM)\neq 0$, then we have
    \[
    (2\pi i)^{\dim(\mfM_B^-)}L(0,\mfM)\det(I_\infty)\cR=\Q. 
    \]
\end{conjecture}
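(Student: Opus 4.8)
\textbf{Proof proposal for Conjecture \ref{Beilconj} in the case $w=-1$.} The strategy is to reduce the statement to a verification that, under the comparison isomorphism $I_\infty$, the $\Q$-structure $\cR$ on $\det(H^1_\cD(\mfM_\R))$ coincides (up to $\Q^\times$) with the $\Q$-structure coming from the leading term of the $L$-function, and this reduction is really just the formalism of determinants of exact sequences set up in the first paragraph of the appendix. Concretely: when $w=-1$ and $L(0,\mfM)\neq 0$, the period conjecture of Deligne (which is subsumed in Beilinson's picture) predicts that $H^1_{\cM}(\mfM)\otimes_\Q\R$ and $H^1_\cD(\mfM_\R)$ are both zero (the motivic cohomology has no rational classes in this range, because a nonzero $L$-value at the near-central point forces the relevant Ext-group to vanish), so that the regulator statement degenerates and only the comparison of $\Q$-structures on $\det(H^1_\cD(\mfM_\R)) = \C$ remains. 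The plan is therefore to first invoke the vanishing $H^1_\cD(\mfM_\R)=0$, which follows from a dimension count: by the defining exact sequence
\[
0 \longrightarrow F^0\mfM_{\rm dR}\otimes_\Q\R \stackrel{\tilde\pi_1}{\longrightarrow} \mfM_B^-\otimes_\Q\R(-1) \longrightarrow H^1_\cD(\mfM_\R) \longrightarrow 0,
\]
the Deligne cohomology vanishes precisely when $\dim_\Q F^0\mfM_{\rm dR} = \dim_\Q \mfM_B^-$, and this equality of dimensions for a weight $-1$ motive is exactly the statement that the archimedean $L$-factor $L_\infty(\mfM,s)$ has neither a zero nor a pole at $s=0$ (equivalently, that the ``sign'' condition making $s=1/2$ a near-central critical point is met), which one checks from the Hodge numbers.

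Once $H^1_\cD(\mfM_\R)=0$, the $\Q$-structure $\cR$ on $\det(H^1_\cD(\mfM_\R))=\C$ is, by the general rule for determinants of exact sequences recalled above, exactly $\det(\tilde\pi_1)^{-1}\Q$, where $\det(\tilde\pi_1)$ is computed with respect to the rational bases $F^0\mfM_{\rm dR}$ and $\mfM_B^-\otimes\Q(-1)$. The next step is to relate $\det(\tilde\pi_1)$ to $\det(I_\infty)$. Here one uses that $\tilde\pi_1$ is the composite of $I_\infty$ (restricted to $F^0\mfM_{\rm dR}\otimes\R$, viewed inside $\mfM_{\rm dR}\otimes\C$) with the projection onto the $\mfM_B^-\otimes\R(-1)$-factor of the real Hodge decomposition $\mfM_{\rm dR}\otimes\R\simeq(\mfM_B^+\otimes\R)\oplus(\mfM_B^-\otimes\R(-1))$. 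Since the period matrix $\det(I_\infty)$ naturally factors (up to $\Q^\times$) through the determinants of $\tilde\pi_1$ and of the analogous projection associated to the $+$-eigenspace --- and since for $w=-1$ with the sign condition the latter is a rational isomorphism hence trivial modulo $\Q^\times$ --- we get $\det(\tilde\pi_1)\equiv\det(I_\infty)\pmod{\Q^\times}$, up to the ubiquitous powers of $2\pi i$ that enter when one replaces $\mfM_B^-$ by its Tate twist $\mfM_B^-\otimes\Q(-1)$; tracking these twists produces exactly the factor $(2\pi i)^{\dim(\mfM_B^-)}$. Therefore $\cR = (2\pi i)^{-\dim(\mfM_B^-)}\det(I_\infty)^{-1}\Q$, which rearranges to $(2\pi i)^{\dim(\mfM_B^-)}\det(I_\infty)\,\cR = \Q$ --- but this is the \emph{period} (as opposed to $L$-value) content. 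To bring in $L(0,\mfM)$ one finally appeals to the weight $w=-1$ case of Beilinson's (or here really Deligne's, since the motive is critical) conjecture in the form already granted for $w<-1$ together with a functional-equation/limit argument: the leading-term statement for $w<-1$ specializes, via the functional equation relating $L(s,\mfM)$ near $s=0$ to $L(1-s,\mfM^\vee)$ and the nonvanishing of $L(0,\mfM)$, to the assertion that $L(0,\mfM)$ itself lies in $(2\pi i)^{-\dim(\mfM_B^-)}\det(I_\infty)^{-1}\Q$, which is the claim.

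The main obstacle, and the place where genuine care is needed rather than formal bookkeeping, is the step identifying $\det(\tilde\pi_1)$ with $\det(I_\infty)$ up to the correct power of $2\pi i$ and an element of $\Q^\times$: one must show that the ``other half'' of the period matrix --- the piece corresponding to the $+$-eigenspace and $F^0$ in complementary degrees, or equivalently the contribution of $H^0$-type and dual pieces --- is rational, so that it does not contaminate the determinant. For a weight $-1$ motive satisfying the critical-value hypothesis this is Deligne's rationality of the relevant period $c^+(\mfM)$ (or $c^-$), but one has to be precise about which eigenspace is forced to be rational by the Hodge-theoretic sign condition, and to check that the de Rham filtration jump is concentrated in the single degree $F^0$. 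The secondary difficulty is making the functional-equation reduction from $w<-1$ to $w=-1$ rigorous: one needs the archimedean $\Gamma$-factors to contribute only $\Q^\times$-rational and $(2\pi i)$-power discrepancies, which is standard but must be stated, and one needs the nonvanishing $L(0,\mfM)\neq 0$ to ensure there is no order-of-vanishing correction. Everything else --- the vanishing of Deligne cohomology, the determinant formalism, the treatment of Tate twists --- is routine given the setup in this appendix and in \cite{preprintsanti2}.
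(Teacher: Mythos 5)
This statement is a \emph{conjecture}, not a theorem, and the paper does not prove it: the appendix simply records a standard (slightly simplified) form of Beilinson's conjecture and then \emph{specializes} it to the motives $\mfM^1 = h^1(A)_\Q(1)$ and $\mfM^2 = \mathrm{Ad}(h^1(A)_\Q)(1)$ to obtain Conjectures~\ref{BSDBeilinson} and~\ref{R2calc}, which are used in the body of the text as hypotheses. Attempting to ``prove'' Conjecture~\ref{Beilconj} is therefore misguided: this is essentially Beilinson's conjecture (together with, in the weight $-1$ case, a Deligne-type period statement near the central point), one of the deepest open problems in arithmetic geometry, and no proof is known in this generality.

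Beyond the conceptual misstep, the specific arguments in your proposal have gaps even when read as heuristic justification of the \emph{formulation}. First, your key intermediate claim that $\det(\tilde\pi_1) \equiv \det(I_\infty) \pmod{\Q^\times}$, up to the factor $(2\pi i)^{\dim(\mfM_B^-)}$, is false in general: in the paper's own computation for $\mfM^1$ one has $\det(I_\infty^1) \equiv (2\pi i)^{-d} \pmod{\Q^\times}$ (via the Weil pairing), whereas $\det(\tilde\pi_1)$ is a genuinely transcendental combination of the real periods $\Omega_{1,\sigma}$ and imaginary parts $\mathrm{Im}(\Omega_{1,\sigma}\overline{\Omega_{2,\sigma}})$; these are not related by a rational scalar and a power of $2\pi i$. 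The two quantities enter the conjecture multiplicatively, but not because one reduces to the other. Second, the proposed ``functional-equation/limit argument'' reducing the $w=-1$ case to the already-granted $w<-1$ case does not exist: they are logically independent instances of the conjecture, and the functional equation relates $L(s,\mfM)$ to $L(1-s,\mfM^\vee)$, not to the $L$-function of a motive of a different weight; there is no specialization of the $w<-1$ statement that yields the $w=-1$ statement. Third, your justification of the vanishing of $H^1_\cD(\mfM_\R)$ via ``a nonzero $L$-value at the near-central point forces the relevant Ext-group to vanish'' is itself part of the conjectural package (the expected injectivity/bijectivity of the regulator together with the order-of-vanishing prediction), not an independent input one can invoke to prove the conjecture. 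The correct posture, and the one taken by the paper, is to \emph{assume} Conjecture~\ref{Beilconj} and then carry out the explicit period computations for the motives $\mfM^1$ and $\mfM^2$ to extract the concrete statements used downstream.
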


In the remainder of the section we will compute $\cR_i\subset \det(H^1_\cD(\mfM^i_\R))$ for the motives $\mfM^1=h^1(A)_\Q(1)$, of weight $w_1=-1$, and $\mfM^2={\rm Ad}(h^1(A)_\Q)(1)$, of weight $w_2=-2$. Let us decompose $\Sigma_F=\Sigma_F^\R\sqcup\Sigma_F^\C$ by the real and complex places. For any archimedean place $\sigma$, we will fix an embedding $\sigma:F\hookrightarrow \C$ representing it and, if $\sigma\in\Sigma_F^\C$, we will denote by $\bar\sigma:F\hookrightarrow \C$ its composition with complex conjugation. For any embedding $\nu:F\hookrightarrow\C$, we can describe $A_{\nu}=A\times_{\nu}\C$ as a complex torus satisfying
\[
A_{\nu}(\C)\sim \C/\Lambda_{\nu},\qquad \Lambda_{\nu}=\Z\Omega_{{\nu},1}+\Z\Omega_{{\nu},2},\qquad \left\{\begin{array}{ll}
   \Omega_{\sigma,1}\in\R;\quad \Omega_{\sigma,2}\in \R i,\quad \tau_\sigma:=\Omega_{\sigma,1}/\Omega_{\sigma,2},  &\sigma\in\Sigma_F^\R;  \\
    \overline{\Omega_{\sigma,1}}=\Omega_{\bar\sigma,1},\quad\overline{\Omega_{\sigma,2}}=\Omega_{\bar\sigma,2}, & \sigma\in\Sigma_F^\C.
\end{array}\right.
\]
Notice that our chosen basis for $\Lambda_{\nu}=\Z\Omega_{{\nu},1}+\Z\Omega_{{\nu},2}$ provides the identification
    \[
    \mfM^1_B=(2\pi i)h^1(A)_{B}=\bigoplus_{\nu:F\hookrightarrow\C}2\pi i\Hom(\Lambda_{\nu},\Q)=\bigoplus_{\nu:F\hookrightarrow\C}2\pi i\Q^2.
    \]
    We will assume that a $F$-rational invariant differential $\omega_A$ is identified with the differential $dz$ of $\C/\Lambda_\sigma$, for every $\sigma\in\Sigma_F$. This implies that a choice of a $F$-basis $\{\omega_A,\eta_A\}$ provides an identification $\mfM^1_{\rm dR}=H^1_{\rm dR}(A)=F^2$. The comparison isomorphism between $\mfM^1_{\rm dR}\otimes_\Q\C$ and $\mfM^1\otimes_\Q\C$ is provided by the injection 
    \[
    I_{\infty}^1:\mfM^1_{\rm dR}\hookrightarrow \mfM^1_{B}\otimes_\Q\C;\qquad (e,f)\longmapsto \left(({\nu}(e)\Omega_{1,{\nu}}+{\nu}(f)\lambda_{1,{\nu}}\Omega_{1,{\nu}},{\nu}(e)\Omega_{2,{\nu}}+{\nu}(f)\lambda_{2,{\nu}}\Omega_{2,{\nu}})_\nu\right).
    \]
    for some $\lambda_{1,{\nu}},\lambda_{2,{\nu}}\in \C$.
    To compute $\det(I_{\infty})$, notice that $I_{\infty}$ is the composition of the morphisms
    \[
    I^1_{\infty}:\mfM^1_{\rm dR}\simeq F^2\stackrel{\alpha_1}{\longrightarrow}\bigoplus_{\nu:F\hookrightarrow\C}\C^2\stackrel{\alpha_2}{\longrightarrow}\bigoplus_{\nu:F\hookrightarrow\C}\C^2\simeq \mfM^1_B\otimes_\Q\C,
    \]
    where $\alpha_1(e,f)=(\nu(e),\nu(f))_\nu$ and $\alpha_2(x_\nu,y_\nu)_\nu=((x_\nu\Omega_{1,{\nu}}+y_\nu\lambda_{1,{\nu}}\Omega_{1,{\nu}},x_\nu\Omega_{2,{\nu}}+y_\nu\lambda_{2,{\nu}}\Omega_{2,{\nu}}))_\nu$. If we consider the $\Q$-structure provided by the canonical basis in the middle space and the $\Q$-basis of $F^2$ provided by an integral basis of $\cO_F$, it is clear that $\det(\alpha_1)=\Delta_F$, the discriminant of $F$, and $\det(\alpha_2)=(2\pi i)^{-2d}\prod_{\nu}\Omega_{1,{\nu}}\Omega_{2,{\nu}}\prod_{{\nu}}(\lambda_{2,{\nu}}-\lambda_{1,{\nu}})$, hence,
    \begin{eqnarray*}
    \det (I_{\infty}^1)=\det(\alpha_1)\det(\alpha_2)=\frac{\Delta_F}{(2\pi i)^{2d}}\prod_{\nu}\Omega_{1,{\nu}}\Omega_{2,{\nu}}\prod_{{\nu}}(\lambda_{2,{\nu}}-\lambda_{1,{\nu}}).
    \end{eqnarray*}
    
Using the Weil pairing, one can deduce that $\wedge^{2d}\mfM^1\simeq \Q(d)$, hence,
    \begin{equation}\label{relationlambdaomega}
\det( I^1_{\infty})\Q^\times=(2\pi i)^{-2d}\prod_{\nu}\Omega_{1,{\nu}}\Omega_{2,{\nu}}\prod_{{\nu}}(\lambda_{2,{\nu}}-\lambda_{1,{\nu}})\Q^\times=(2\pi i)^{-d}\Q^\times.
    \end{equation}
For any $\sigma\in\Sigma_F$, let $\mfM^1_{{\rm dR},\sigma}$ be the component of $\mfM^1_{\rm dR}\otimes_\Q\R$ corresponding to $\sigma$ under the identification $F\otimes_\Q\R\simeq\bigoplus_{\sigma\in\Sigma_F^\R}\R\oplus\bigoplus_{\sigma\in\Sigma_F^\C}\C$. The previously introduced  $I^1_{\infty}$ provides morphisms:
    \[
    \left\{\begin{array}{lll}\mfM^1_{\rm dR,\sigma}\simeq \R^2\longrightarrow\C^2\simeq \mfM^1_{B,\sigma};& \big(\begin{smallmatrix}a\\b\end{smallmatrix}\big)\longmapsto\big(\begin{smallmatrix}
        \Omega_{1,\sigma}&\lambda_{1,\sigma}\Omega_{1,\sigma}\\\Omega_{2,\sigma}&\lambda_{2,\sigma}\Omega_{2,\sigma}
    \end{smallmatrix}\big)\big(\begin{smallmatrix}a\\b\end{smallmatrix}\big);&\sigma\in\Sigma_F^\R;\\
    \mfM^1_{\rm dR,\sigma}\simeq \C^2\longrightarrow\C^2\oplus\C^2\simeq \mfM^1_{B,\sigma};&\big(\begin{smallmatrix}a\\b\end{smallmatrix}\big)\longmapsto\left(\big(\begin{smallmatrix}
        \Omega_{1,\sigma}&\lambda_{1,\sigma}\Omega_{1,\sigma}\\\Omega_{2,\sigma}&\lambda_{2,\sigma}\Omega_{2,\sigma}
\end{smallmatrix}\big)\big(\begin{smallmatrix}a\\b\end{smallmatrix}\big),\big(\begin{smallmatrix}
\overline{\Omega_{1,\sigma}}&\overline{\lambda_{1,\sigma}}\overline{\Omega_{1,\sigma}}\\\overline{\Omega_{2,\sigma}}&\overline{\lambda_{2,\sigma}}\overline{\Omega_{2,\sigma}}
\end{smallmatrix}\big)\big(\begin{smallmatrix}\bar a\\\bar b\end{smallmatrix}\big)\right);&\sigma\in\Sigma_F^\C.\end{array}\right.
    \]
    By the above description of the lattices $\Lambda_\nu$, the action of $C_\infty$  on $\mfM^1_{B,\sigma}$ is given by multiplication by $\big(\begin{smallmatrix}
        -1&0\\0&1
    \end{smallmatrix}\big)$, if $\sigma\in\Sigma_F^\R$, and first swapping the two components isomorphic to $\C^2$ and then multiplying by $-1$, if $\sigma\in\Sigma_F^\C$. This implies that 
    \[
   \mfM^{1-}_B\otimes_\Q \R(-1)=\bigoplus_{\sigma\in\Sigma_F}\mfM^{1-}_{B,\sigma};\qquad \left\{\begin{array}{ll}
        \mfM^{1-}_{B,\sigma}=\left\{\big(\begin{smallmatrix}a\\ 0\end{smallmatrix}\big);\;a\in\R\right\}\subset \mfM^{1}_{B,\sigma}; &\sigma\in\Sigma_F^\R;  \\
         \mfM^{1-}_{B,\sigma}=\left\{\left(\big(\begin{smallmatrix}a\\ b\end{smallmatrix}\big),\big(\begin{smallmatrix}a\\ b\end{smallmatrix}\big)\right);\;a,b\in\R\right\}\subset \mfM^{1}_{B,\sigma};& \sigma\in\Sigma_F^\C.
    \end{array}\right.
    \]
    Moreover, the morphism $\tilde\pi_1$ of \eqref{eqtildepi1} in this setting is provided by
    \[
    \tilde \pi_1:(F\otimes_\Q\R)\omega_A\simeq F\otimes_\Q\R\simeq \bigoplus_{\sigma\in\Sigma_F}F_\sigma\stackrel{\bigoplus_\sigma\tilde\pi_\sigma}{\longrightarrow} \bigoplus_{\sigma\in\Sigma_F}\mfM^{1-}_{B,\sigma};\qquad \tilde\pi_\sigma(x_\sigma)=\left\{\begin{array}{ll}
         \Omega_{1,\sigma}\big(\begin{smallmatrix}x_\sigma\\ 0\end{smallmatrix}\big); &\sigma\in\Sigma_F^\R;  \\
        \left(\big(\begin{smallmatrix}{\rm Re}(x_\sigma\Omega_{1,\sigma})\\ {\rm Re}(x_\sigma\Omega_{2,\sigma})\end{smallmatrix}\big),\big(\begin{smallmatrix}{\rm Re}(x_\sigma\Omega_{1,\sigma})\\ {\rm Re}(x_\sigma\Omega_{2,\sigma})\end{smallmatrix}\big)\right);& \sigma\in\Sigma_F^\C.
    \end{array}\right.
    \]
    Thus, $\tilde\pi_1$ is an isomorphism and $\cR^1=\det(\tilde\pi_1)^{-1}\Q$. For convenience, we consider the $\Q$-basis of $F^0\mfM^1_{\rm dR}\simeq F\omega_A\simeq F$ provided by an integral basis of $\cO_F$, and the basis $\{e_\sigma,\;\sigma\in\Sigma_F^\R,\;e_\sigma^1,e_\sigma^2,\sigma\in\Sigma_F^\C\}\subset\mfM_{B}^{1-}\otimes\Q(-1)$, where
    \[
    e_\sigma=\big(\begin{smallmatrix}1\\ 0\end{smallmatrix}\big),\quad\sigma\in\Sigma_F^\R;\qquad e_\sigma^1=\left(\big(\begin{smallmatrix}1\\ 0\end{smallmatrix}\big),\big(\begin{smallmatrix}1\\ 0\end{smallmatrix}\big)\right);\quad e_\sigma^2=\left(\big(\begin{smallmatrix}0\\ 1\end{smallmatrix}\big),\big(\begin{smallmatrix}0\\ 1\end{smallmatrix}\big)\right),\quad\sigma\in\Sigma_F^\C.
    \]
    Notice that we can interpret the restriction $\tilde\pi_1\mid_F$ as the composition
    \begin{eqnarray*}
    &&\tilde\pi_1\mid_F:F\stackrel{\alpha_1}{\longrightarrow}\bigoplus_{\nu:F\hookrightarrow\C}\C\stackrel{\beta_2}{\longrightarrow}\bigoplus_{\sigma\in\Sigma_F}(\mfM^{1-}_{B,\sigma}\otimes_\R\C);\qquad \alpha_1(a)=(\nu(a))_{\nu:F\hookrightarrow\C}\\
    &&\beta_2\left((x_\nu)_{\nu:F\hookrightarrow\C}\right)=\sum_{\sigma\in\Sigma_F^\R}\Omega_{1,\sigma}x_\sigma e_\sigma+\sum_{\sigma\in\Sigma_F^\R}\left(\frac{x_\sigma\Omega_{1,\sigma}+x_{\bar\sigma}\overline{\Omega_{1,\sigma}}}{2}\right)e_\sigma^1+\left(\frac{x_\sigma\Omega_{2,\sigma}+x_{\bar\sigma}\overline{\Omega_{2,\sigma}}}{2}\right)e_\sigma^2.
    \end{eqnarray*}
    If we consider the $\Q$-structure on $\bigoplus_{\nu:F\hookrightarrow\C}\C$ provided by the canonical basis, we deduce using Brill's theorem
    \[
\det(\tilde\pi_1)=\det(\alpha_1)\det(\beta_2)=\Delta_F^{1/2}\prod_{\sigma\in\Sigma_F^\R}\Omega_{1,\sigma}\prod_{\sigma\in\Sigma_F^\C}\frac{i{\rm Im}(\Omega_{1,\sigma}\overline{\Omega_{2,\sigma}})}{2}=|d_F|^{1/2}\prod_{\sigma\in\Sigma_F^\R}\Omega_{1,\sigma}\prod_{\sigma\in\Sigma_F^\C}\frac{{\rm Im}(\Omega_{1,\sigma}\overline{\Omega_{2,\sigma}})}{2}.
    \]
    Thus, Conjecture \ref{Beilconj} (see Conjecture \ref{BSDBeilinson}) predicts that, when $L(0,\mfM^1)=L(1,A)\neq0$,
    \begin{equation*}
    L(1,A)=L(0,\mfM^1)\in(2\pi i)^{-\dim(\mfM_B^{1-})}\det(I^1_\infty)^{-1}\cR_1^{-1}=\det((\tilde\pi_1))\Q=|d_F|^{1/2}\prod_{\sigma\in\Sigma_F^\R}\Omega_{1,\sigma}\prod_{\sigma\in\Sigma_F^\C}{\rm Im}(\Omega_{1,\sigma}\overline{\Omega_{2,\sigma}})\Q.
    \end{equation*}

Let us consider now $\mfM^2={\rm Ad}(h^1(A)_\Q)(1)$. In this setting, the comparison morphism between $\mfM^2_{\rm dR}\otimes_\Q\C$ and $\mfM^2_B\otimes_\Q\C$ is provided by the injection
\begin{eqnarray*}
I^2_{\infty}:\mfM^2_{\rm dR}=\M_2(F)_0&\hookrightarrow& \bigoplus_{{\nu}:F\hookrightarrow\C}\M_2(\C)_0=\mfM^2_B\otimes_\Q\C;\\ 
\beta&\mapsto& \left(g_{\nu} {\nu}(\beta)g_{\nu}^{-1}\right)_{\nu},\qquad\qquad\qquad g_{\nu}=\left(\begin{array}{cc}\Omega_{1,{\nu}}&\lambda_{1,{\nu}}\Omega_{1,{\nu}}\\\Omega_{2,{\nu}}&\lambda_{2,{\nu}}\Omega_{2,{\nu}}\end{array}\right).
\end{eqnarray*}
To compute $\det(I^2_\infty)$, we consider the following basis of $\M_2(F)_0$ and $\mfM_B^2=\bigoplus_{\nu:F\hookrightarrow\C}(2\pi i)\M_2(\Q)_0$:
\begin{eqnarray*}
\left\{b_1=\big(\begin{smallmatrix}1&0\\ 0&-1\end{smallmatrix}\big),b_2=\big(\begin{smallmatrix}0&1\\ 0&0\end{smallmatrix}\big),b_3=\big(\begin{smallmatrix}0&0\\ 1&0\end{smallmatrix}\big)\right\}&\subset&\M_2(F)_0,\\
 \left\{b_\nu^1=(2\pi i)\big(\begin{smallmatrix}1&0\\ 0&-1\end{smallmatrix}\big),b_\nu^2=(2\pi i)\big(\begin{smallmatrix}0&1\\ 0&0\end{smallmatrix}\big),b_\nu^3=(2\pi i)\big(\begin{smallmatrix}0&0\\ 1&0\end{smallmatrix}\big)\right\}_{\nu:F\hookrightarrow\C}&\subset&\bigoplus_{\nu:F\hookrightarrow\C}(2\pi i)\M_2(\Q)_0.
\end{eqnarray*}
Then it is clear that $I_\infty$ is provided by the composition
\begin{eqnarray*}
I_\infty^2:\M_2(F)_0&\stackrel{\alpha_1}{\longrightarrow}&\bigoplus_{\nu:F\hookrightarrow\C}\M_2(\C)_0\stackrel{\delta_2}{\longrightarrow}\bigoplus_{\nu:F\hookrightarrow\C}\M_2(\C)_0;\\ \alpha_1(f_1b_1+f_2b_2+f_3b_3)&=&\left(\nu(f_1)({2\pi i})^{-1}b_\nu^1+\nu(f_2)({2\pi i})^{-1}b_\nu^2+\nu(f_3)({2\pi i})^{-1}b_\nu^3\right)_\nu;\\
\delta_2\left(\left((b_\nu^1, b_\nu^2, b_\nu^3)\left(\begin{smallmatrix}x_\nu\\y_\nu\\z_\nu\end{smallmatrix}\right)\right)_\nu\right)&=&
\left((b_\nu^1, b_\nu^2, b_\nu^3)\left(\begin{smallmatrix}\frac{\lambda_{2,\nu}+\lambda_{1,\nu}}{\lambda_{2,\nu}-\lambda_{1,\nu}}&\frac{-1}{\lambda_{2,\nu}-\lambda_{1,\nu}}&\frac{\lambda_{2,\nu}\lambda_{1,\nu}}{\lambda_{2,\nu}-\lambda_{1,\nu}}\\\frac{-2\tau_\nu\lambda_{1,\nu}}{\lambda_{2,\nu}-\lambda_{1,\nu}}&\frac{\tau_\nu}{\lambda_{2,\nu}-\lambda_{1,\nu}}&\frac{\tau_\nu\lambda_{1,\nu}^2}{\lambda_{2,\nu}-\lambda_{1,\nu}}\\\frac{2\tau_\nu^{-1}\lambda_{2,\nu}}{\lambda_{2,\nu}-\lambda_{1,\nu}}&\frac{-\tau_\nu^{-1}}{\lambda_{2,\nu}-\lambda_{1,\nu}}&\frac{\tau_\nu^{-1}\lambda_{2,\nu}^2}{\lambda_{2,\nu}-\lambda_{1,\nu}}\end{smallmatrix}\right)\left(\begin{smallmatrix}x_\nu\\y_\nu\\z_\nu\end{smallmatrix}\right)\right)_\nu,\qquad\qquad\qquad\qquad\quad
\end{eqnarray*}
       where $\tau_\nu=\Omega_{1,\nu}\Omega_{2,\nu}^{-1}$. An easy calculation shows that the determinant of the matrix defining $\delta_2$ is 1, thus,
\[
\det(I^2_\infty)=\det(\alpha_1)\det(\delta_2)=\Delta_F^{3/2}(2\pi i)^{-3d}.
\]

The action of $C_\infty$ on $\mfM^2_B=\bigoplus_{\sigma\in \Sigma_F^\R}(2\pi i)\M_2(\Q)_0\oplus \bigoplus_{\sigma\in\Sigma_F^\C}(2\pi i)\M_2(\Q)_0\oplus(2\pi i)\M_2(\Q)_0)$ is given as follows: on the components indexed by real places in $\sigma\in\Sigma_F^\R$, it acts by multiplication by $-1$, followed by conjugation by $\big(\begin{smallmatrix}
    1&0\\0&-1
\end{smallmatrix}\big)$; on the components indexed by complex places $\sigma\in\Sigma_F^\C$, it interchanges the two summands isomorphic to $(2\pi i)\M_2(\Q)_0$ and then multiplies by $-1$. This implies that 
    \[
    \mfM^{2-}_B\otimes_\Q \R(-1)=\bigoplus_{\sigma\in\Sigma_F}\mfM^{2-}_{B,\sigma};\qquad \left\{\begin{array}{ll}
        \mfM^{2-}_{B,\sigma}=(2\pi i)^{-1}\R b^1_\sigma\subset \mfM^{2}_{B,\sigma}; &\sigma\in\Sigma_F^\R;  \\
         \mfM^{2-}_{B,\sigma}=\R\frac{b_\sigma^1+b_{\bar\sigma}^1}{2\pi i}+\R\frac{b_\sigma^2+b_{\bar\sigma}^2}{2\pi i}+\R\frac{b_\sigma^3+b_{\bar\sigma}^3}{2\pi i}\subset \mfM^{2}_{B,\sigma};& \sigma\in\Sigma_F^\C.
    \end{array}\right.
    \]
    In this situation, our previous choice of a $F$-basis $\{\omega_A,\eta_A\}$ of $H^1_{\rm dR}(A)$ provides an  identification 
\begin{equation}\label{dRfil}
0\simeq F^1\mfM^2_{\rm dR}\subset F^0\mfM^2_{\rm dR}=Fb_2\subset F^{-1}\mfM^2_{\rm dR}=Fb_1+Fb_2\subset F^{-2}\mfM^2_{\rm dR}=\mfM^2_{\rm dR}\simeq \M_2(F)_0.
\end{equation}
Thus, the Hodge decomposition of $\mfM^2_B$ is given by
\begin{equation}\label{HdgDeceq}
  \begin{small}
    (\mfM^2_B)^{-1,-1}=\bigoplus_{{\nu}:F\hookrightarrow\C}\C\left(\begin{smallmatrix}-{\rm Re}(\tau_\nu)&|\tau_\nu|^2\\-1&{\rm Re}(\tau_\nu)\end{smallmatrix}\right);\quad  (\mfM^2_B)^{0,-2}=\bigoplus_{{\nu}:F\hookrightarrow\C}\C\left(\begin{smallmatrix}1&-\tau_\nu\\\tau_\nu^{-1}&-1\end{smallmatrix}\right); \quad  (\mfM^2_B)^{-2,0}=\overline{(\mfM^2_B)^{0,-2}}.\end{small}
    \end{equation}
 Moreover, the morphism $\tilde\pi_1$ of \eqref{eqtildepi1} is provided  by the composition 
  \[
  \tilde \pi_1:F^0\mfM^2_{\rm dR}\otimes_\Q\R= (F\otimes_\Q\R)b_2\stackbin[\simeq]{\alpha_1\mid_{F^0}}{\longrightarrow} \bigoplus_{\sigma\in\Sigma_F}F_\sigma (2\pi i)^{-1}b_{\sigma}^2\stackbin{\bigoplus_\sigma\tilde\pi_\sigma}{\longrightarrow} \bigoplus_{\sigma\in\Sigma_F}\mfM^{2-}_{B,\sigma},
  \]
  where
    \begin{equation*}\label{tildepisigma}
     \tilde\pi_\sigma(x_\sigma(2\pi i)^{-1}b_{\sigma}^2)=\left\{\begin{array}{ll}
         -x_\sigma(\lambda_{2,\sigma}-\lambda_{1,\sigma})^{-1}(2\pi i)^{-1}b_\sigma^1; &\sigma\in\Sigma_F^\R;  \\
        {\rm Re}\left(\frac{-x_\sigma}{\lambda_{2,\sigma}-\lambda_{1,\sigma}}\right)\frac{b_\sigma^1+b_{\bar\sigma}^1}{2\pi i}+{\rm Re}\left(\frac{x_\sigma\tau_\sigma}{\lambda_{2,\sigma}-\lambda_{1,\sigma}}\right)\frac{b_\sigma^2+b_{\bar\sigma}^2}{2\pi i}+{\rm Re}\left(\frac{-x_\sigma\tau_\sigma^{-1}}{\lambda_{2,\sigma}-\lambda_{1,\sigma}}\right)\frac{b_\sigma^3+b_{\bar\sigma}^3}{2\pi i};& \sigma\in\Sigma_F^\C.
    \end{array}\right.
    \end{equation*}
Hence,  we can identify 
\[
H^1_{\cD}(\mfM^2_\R)=\bigoplus_{\sigma\in\Sigma_F^\C}\M_2(\R)_0/\R\left(\begin{smallmatrix}1&-{\rm Re}\left(\tau_{\sigma}\right)\\{\rm Re}\left(\tau_{\sigma}^{-1}\right)&-1\end{smallmatrix}\right)\oplus \R\left(\begin{smallmatrix}0&{\rm Im}\left(\tau_{\sigma}\right)\\-{\rm Im}\left(\tau_{\sigma}^{-1}\right)&0\end{smallmatrix}\right).
\]
It is convenient to choose the following representatives of the above quotients:
\begin{equation}\label{defHsigma}
H_\sigma:=\frac{-{\rm Re}(\tau_\sigma)}{{\rm Im}(\tau_\sigma)}\frac{b_\sigma^1}{2\pi i}+\frac{|\tau_\sigma|^2}{{\rm Im}(\tau_\sigma)}\frac{b_\sigma^2}{2\pi i}-\frac{1}{{\rm Im}(\tau_\sigma)}\frac{b_\sigma^3}{2\pi i}\in\M_2(\R)_0/\R\left(\begin{smallmatrix}1&-{\rm Re}\left(\tau_{\sigma}\right)\\{\rm Re}\left(\tau_{\sigma}^{-1}\right)&-1\end{smallmatrix}\right)\oplus \R\left(\begin{smallmatrix}0&{\rm Im}\left(\tau_{\sigma}\right)\\-{\rm Im}\left(\tau_{\sigma}^{-1}\right)&0\end{smallmatrix}\right).
\end{equation}
It is easy to show that $\{H_\sigma\}_{\sigma\in\Sigma_F^\C}$ form a basis for $H^1_{\cD}(\mfM^2_\R)$. Thus, 
$\cR_2=\Q\det(\tilde\pi_1\oplus s)^{-1}\bigwedge_{\sigma\in\Sigma_F^\C} H_\sigma$,  for any section $s:H^1_{\cD}(\mfM^2_\R)\rightarrow\mfM^{2-}_B\otimes_\Q\R(-1)$, and the determinant is taken with respect to the $\Q$-structures $Fb_2\oplus\bigoplus_{\sigma\in\Sigma_F^\C}\Q H_\sigma$ and $\mfM^{2-}_B\otimes_\Q\Q(-1)$. We can choose $\tilde\pi_1\oplus s$ to be induced by the composition
\[
F b_2\oplus \bigoplus_{\sigma\in\Sigma_F^\C}\Q H_\sigma\stackrel{\alpha_1\oplus{\rm Id}}{\longrightarrow}\bigoplus_{\nu:F\hookrightarrow\C}\C \frac{b_\nu^2}{2\pi i}\oplus\bigoplus_{\sigma\in\Sigma_F^\C}\Q H_\sigma\stackrel{\gamma_2}{\longrightarrow}\mfM^{2-}_B\otimes_\Q\C,
\]
where  $\gamma_2$ is given by
\begin{small}
\begin{eqnarray*}
\gamma_2\left(\sum_\nu x_\nu \frac{b^2_\nu}{2\pi i}+\sum_\sigma y_\sigma H_\sigma\right)=\qquad\qquad\qquad\qquad\qquad\qquad\qquad\qquad\qquad\qquad\qquad\qquad\qquad\qquad\qquad\qquad\qquad\qquad\\
 =\sum_{\sigma\in\Sigma_F^\R}\frac{-x_\sigma}{\lambda_{2,\sigma}-\lambda_{1,\sigma}} \frac{b^1_\sigma}{2\pi i}+\sum_{\sigma\in\Sigma_F^\R}\left(\frac{b^1_\sigma}{2\pi i},\frac{b^2_\sigma}{2\pi i},\frac{b^3_\sigma}{2\pi i}\right)\left(\begin{array}{ccc}
    \frac{-2^{-1}}{\lambda_{2,\sigma}-\lambda_{1,\sigma}} & \frac{-2^{-1}}{\bar\lambda_{2,\sigma}-\bar\lambda_{1,\sigma}} &\frac{-{\rm Re}(\tau_\sigma)}{{\rm Im}(\tau_\sigma)}\\
    \frac{\tau_\sigma2^{-1}}{\lambda_{2,\sigma}-\lambda_{1,\sigma}} &\frac{\bar\tau_\sigma2^{-1}}{\bar\lambda_{2,\sigma}-\bar\lambda_{1,\sigma}} &\frac{|\tau_\sigma|^2}{{\rm Im}(\tau_\sigma)}\\
    \frac{-\tau_\sigma^{-1}2^{-1}}{\lambda_{2,\sigma}-\lambda_{1,\sigma}}&\frac{-\bar\tau_\sigma^{-1}2^{-1}}{\bar\lambda_{2,\sigma}-\bar\lambda_{1,\sigma}}&\frac{-1}{{\rm Im}(\tau_\sigma)}
\end{array}\right)\left(\begin{array}{c}x_\sigma\\x_{\bar\sigma}\\y_\sigma\end{array}\right).
\end{eqnarray*}
\end{small}
Since $\{(2\pi i)^{-1}b_\sigma^k\}$ defines a basis for $\mfM^{2-}_B\otimes_\Q(-1)$, this implies that 
\begin{small}
\[
\det(\tilde\pi_1\oplus s)=\Delta_F^{1/2}\prod_{\sigma\in\Sigma_F^\R}\frac{-1}{\lambda_{2,\sigma}-\lambda_{1,\sigma}}\prod_{\sigma\in\Sigma_F^\C}\left|\begin{smallmatrix}
    \frac{-2^{-1}}{\lambda_{2,\sigma}-\lambda_{1,\sigma}} & \frac{-2^{-1}}{\bar\lambda_{2,\sigma}-\bar\lambda_{1,\sigma}} &\frac{-{\rm Re}(\tau_\sigma)}{{\rm Im}(\tau_\sigma)}\\
    \frac{\tau_\sigma2^{-1}}{\lambda_{2,\sigma}-\lambda_{1,\sigma}} &\frac{\bar\tau_\sigma2^{-1}}{\bar\lambda_{2,\sigma}-\bar\lambda_{1,\sigma}} &\frac{|\tau_\sigma|^2}{{\rm Im}(\tau_\sigma)}\\
    \frac{-\tau_\sigma^{-1}2^{-1}}{\lambda_{2,\sigma}-\lambda_{1,\sigma}}&\frac{-\bar\tau_\sigma^{-1}2^{-1}}{\bar\lambda_{2,\sigma}-\bar\lambda_{1,\sigma}}&\frac{-1}{{\rm Im}(\tau_\sigma)}
\end{smallmatrix}\right|=\frac{\Delta_F^{1/2}}{(-1)^{r_1}}\frac{\prod_{\sigma\in\Sigma_F^\C}|\tau_\sigma|^{-2}{\rm Im}(\tau_\sigma)^2}{\prod_{\nu}(\lambda_{2,\nu}-\lambda_{1,\nu})}.
\]
\end{small}
Thus, by relation \eqref{relationlambdaomega},
\begin{equation*}
    \cR_2=\Q\det(\lambda_s)^{-1}\bigwedge_{\sigma\in\Sigma_F^\C} H_\sigma=\Q\Delta_F^{1/2}(2\pi i)^d\prod_{\sigma\in\Sigma_F^\R}\Omega_{1,{\sigma}}^{-1}\Omega_{2,{\sigma}}^{-1}\prod_{\sigma\in\Sigma_F^\C}{\rm Im}(\Omega_{1,\sigma}\overline{\Omega_{2,\sigma}})^{-2}\bigwedge_{\sigma\in\Sigma_F^\C} H_\sigma,
\end{equation*}
and by Conjecture \ref{Beilconj} (see Conjecture \ref{R2calc})
\[
 \det\left(r\left(H^1_{\cM}(\mfM^2)\right)\right)= L(0,\mfM^2)^*(2\pi i)^{-r_1-r_2}\prod_{\sigma\in\Sigma_F^\R}\Omega_{1,{\sigma}}^{-1}\Omega_{2,{\sigma}}^{-1}\prod_{\sigma\in\Sigma_F^\C}{\rm Im}(\Omega_{1,\sigma}\overline{\Omega_{2,\sigma}})^{-2}\left(\bigwedge_{\sigma\in\Sigma_F^\C} H_\sigma\right)\Q.
\]

\section{Strongly admissible automorphic representations}\label{sec:apendix B} Let $A$ be an elliptic curve over a number field $F$ of conductor $N$. Suppose that $A$ corresponds to an automorphic representation $\Pi$ of $\PGL_2$.  Recall that we call a sign vector $\varepsilon\in\{\pm 1\}^{\Sigma_F}$ of lowest degree if $\varepsilon_v=1$ for all complex places $v$. Following, and slightly generalizing, a terminology introduced by Oda in \cite{oda83}, we say that $\Pi$ is \emph{strongly admissible} if for any sign vector of lowest degree $\varepsilon\in \{\pm 1\}^{\Sigma_F}$, there exists a quadratic Hecke character $\rho$ of $F$ of conductor coprime to $N$ such that $L(1,\Pi,\rho)\neq 0$ and $\rho_v(-1)=\varepsilon_v$ for all $v\in\Sigma_F$.

Denote by $\Pi_\rho$ the twist of $\Pi$ by a quadratic character $\rho$ of $F$ of conductor coprime to $N$. The signs (root numbers) of $\Pi$ and $\Pi_\rho$ are related by the formula
\begin{equation}\label{signs}
  \sgn(\Pi) \cdot \sgn(\Pi_\rho) = \sign(\rho)\cdot \rho(N),
\end{equation}
where $\sign(\rho)=\prod_{v\in\Sigma_F}\rho_v(-1)$ (see, e.g., \cite[p. 338]{rohrlich}).

\begin{proposition}\label{strongadm}
If $N$ is not a square, then $\Pi$ is strongly admissible.
\end{proposition}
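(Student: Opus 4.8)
The plan is to combine the local root-number formula \eqref{signs} with a non-vanishing theorem for quadratic twists on $\GL_2$ over $F$; the role of the hypothesis that $N$ is not a square is precisely to provide the flexibility needed to arrange that the global root number of the twist equals $+1$ for \emph{every} lowest-degree sign vector, irrespective of $\sgn(\Pi)$.

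First I would fix a lowest-degree sign vector $\varepsilon\in\{\pm1\}^{\Sigma_F}$ and set $s:=\prod_{v\in\Sigma_F}\varepsilon_v$. Since $N$ is not a square, there is a finite place $\fq$ with $\ord_\fq(N)$ odd. By standard class field theory (existence of a quadratic Hecke character with prescribed local components at a finite set of places, after possibly allowing auxiliary ramification away from $N$ and $\infty$), for any prescribed sign $c\in\{\pm1\}$ there is a quadratic Hecke character $\rho$ of $F$ of conductor coprime to $N$ with $\rho_v(-1)=\varepsilon_v$ for all $v\in\Sigma_F$, with $\rho_\fq$ unramified and $\rho(\fq)=c$, and with $\rho_\fp$ trivial for the remaining primes $\fp\mid N$. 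For such $\rho$ one has $\sign(\rho)=s$ and $\rho(N)=\prod_{\fp\mid N}\rho(\fp)^{\ord_\fp(N)}=\rho(\fq)^{\ord_\fq(N)}=c$, so that \eqref{signs} gives
\[
\sgn(\Pi)\cdot\sgn(\Pi_\rho)=\sign(\rho)\cdot\rho(N)=s\cdot c.
\]
Choosing $c=\sgn(\Pi)\cdot s$ yields a quadratic Hecke character $\rho_0$ with conductor coprime to $N$, with $\rho_{0,v}(-1)=\varepsilon_v$ for all $v\in\Sigma_F$, and with $\sgn(\Pi_{\rho_0})=+1$.

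Next I would feed this into a non-vanishing theorem. The family of quadratic Hecke characters $\rho$ agreeing with $\rho_0$ at every archimedean place and at every place dividing $N$ is infinite; each member has conductor coprime to $N$, the prescribed archimedean signs, and—by the same computation, since $\sign(\rho)$ and $\rho(N)$ depend only on those components—global root number $+1$. By the non-vanishing results of Waldspurger, in the form available over an arbitrary number field (e.g.\ Friedberg--Hoffstein), which allow one to prescribe the twisting character at a finite set of places subject only to the constraint that the resulting root number be $+1$, a positive proportion of the $\rho$ in this family satisfy $L(1,\Pi,\rho)\neq0$. Picking one such $\rho$ shows that $\Pi$ is strongly admissible, which proves Proposition~\ref{strongadm}.

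The main obstacle is citing the non-vanishing input in the exact generality required: it must hold over a general number field $F$ and must be compatible with prescribing the twist at the archimedean places (the signs $\varepsilon_v$) and at the primes dividing $N$ (keeping the conductor coprime to $N$), with the only serviceable hypothesis being that the global root number can be made $+1$—which is exactly what the first step guarantees. Everything else reduces to elementary class field theory and the bookkeeping of \eqref{signs}.
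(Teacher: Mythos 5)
Your proposal is correct and follows essentially the same strategy as the paper: use the prime of odd valuation in $N$ together with the root-number relation \eqref{signs} and Grunwald--Wang to build an auxiliary quadratic character $\rho_0$ (the paper's $\theta$) with prescribed archimedean signs, conductor coprime to $N$, and $\sgn(\Pi_{\rho_0})=+1$, then conclude by a Waldspurger-type non-vanishing theorem. The only differences are presentational: the paper organizes the sign bookkeeping by choosing a set $\Sigma$ of the right parity and picking local characters case by case rather than solving directly for $c$; and the paper applies Waldspurger's Theorem~4 from \cite{Wa91} to $\Pi_{\theta}$, producing an additional twist $\theta_\xi$ with $|\xi-1|_v<1$ for $v\mid N$ so that $\rho=\theta\cdot\theta_\xi$ inherits the local components of $\theta$ at $N$ and $\infty$, rather than invoking a positive-proportion statement for the family agreeing with $\rho_0$ at those places. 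These are equivalent ways of packaging the same argument; the paper's version has the minor advantage of pinning down a single Waldspurger reference with exactly the features needed, whereas your formulation leans on a non-vanishing input that must be checked to allow the simultaneous archimedean and $N$-local constraints.
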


\begin{proof}
Let $\varepsilon$ be a sign vector of lowest degree. Let $\fp $ be a prime dividing $N$ such that $\mathrm{ord}_\fp(N)$ is odd, and let  let $\Sigma$ be a set of places of $F$ defined either as
  \begin{equation}\label{sigma11}
\Sigma=    \{v\in\Sigma_F \text{ such that } \varepsilon_v = -1\},
  \end{equation}
  or as
  \begin{equation}\label{sigma12}
   \Sigma= \{v\in\Sigma_F \text{ such that } \varepsilon_v = -1\}\cup \{\fp\}.
  \end{equation}
  Given $\Pi$ and $\varepsilon$, we choose between definitions \eqref{sigma11} and \eqref{sigma12} in such a way that $\Sigma$ has even cardinality if $\sgn(\Pi)=1$ and odd cardinality if $\sgn(\Pi)=-1$. For every place $v$ which is either archimedean or such that it divides $N$, choose a local character $\theta_v$ of $F_v^\times$ as follows:
  \begin{enumerate}
  \item if $v\in \Sigma_F$ and  $\varepsilon_v = -1$, define $\theta_v$ to be the nontrivial quadratic character;
  \item  if $v\in\Sigma_F$ and  $\varepsilon_v = 1$, define  $\theta_v=1$;
  \item if $v\mid N$ and $v\neq \fp$, define $\theta_v=1$;
    \item if $v = \fp$ and $\fp\not\in\Sigma$, define $\theta_v = 1$;
  \item if $v=\fp$ and $\fp\in \Sigma$, define $\theta_v$ to be the nontrivial unramified character of $F_v^\times$.
    \end{enumerate}

By the Grunwald-Wang Theorem (see, e.g., \cite[Theorem 2.4]{milneCFT}) there exists a quadratic Hecke character $\theta$ of $F$ that locally coincides with $\theta_v$ at all archimedean $v$ and all  $v\mid N$. By our choice of the $\theta_v$'s we have that $\theta$ has conductor coprime to $N$ and
  \begin{equation*}
    \sgn(\Pi) \cdot \sgn(\Pi_\theta) = (-1)^{|\Sigma|},
  \end{equation*}
which, by the choice of $\Sigma$, implies that $\sgn(\Pi_\theta)=1$. Moreover, we have that $\theta_v(-1) = \varepsilon_v$ for $v\in\Sigma_F$.

Now we apply Waldspurger's theorem \cite[Th. 4]{Wa91} to $\Pi_\theta$: there exists $\xi\in F$ such that $\xi_v>0$ for all real $v$ such that $L(1,\Pi_\theta,\theta_\xi)\neq 0$, where $\theta_\xi$ denotes the quadratic character associated to $F(\sqrt{\xi})/F$. Define $\rho = \theta\cdot \theta_\xi$. For $v\in\Sigma_F$ we have that $\rho_v(-1)=(\theta\cdot \theta_\xi)_v(-1)=\theta_v(-1)=\varepsilon_v$. The theorem of Waldspurger guarantees that $\xi$ can be taken to satisfy also that
\begin{align}\label{eq: condition walds}
|\xi-1|_v<1 \text{ for all } v\mid N.  
\end{align}
 This shows that $\theta_\xi$ can be taken to be trivial outside $N$. Therefore, $\rho$ is unramified outside $N$, and we have that $L(1,\Pi_\rho)\neq 0$.
\end{proof}
\begin{remark}\label{rk:appendix}
  In condition $(3)$, the character $\theta_v$ can also be defined to be the non-trivial unramified character of $F_v^\times$ at an even number of places $v$ where $\ord_v(N)$ is odd, since this does not change the sign of $\Pi_\theta$. 
\end{remark}
\begin{remark}\label{rk:appendix2}
  Condition \ref{eq: condition walds} implies that, for any $v\mid N$, the character $\theta_{\xi,v}$ is the trivial character. Therefore, $\rho_v=\theta_v$ for $v\mid N$. We will use this property in Proposition \ref{prop:apendix second} below, in which we need to choose $\rho_v$ carefully at primes $v\mid N$.
\end{remark}

As in the main body of the text, we denote by $B$ a quaternion algebra over $B$ and by $G$ the algebraic group associated to $B^\times/F^\times$.
\begin{proposition}\label{prop:apendix second}
  Suppose that $N$ is not a square and that $\Pi$ admits a Jacquet--Langlands lift to $G$. Let $\varepsilon\in\{\pm 1\}^{\Sigma_B}$ be sign vector of lowest degree and let $\lambda\in\{\pm 1\}^{\Sigma_F\setminus\Sigma_B}$ be any sign vector. There exist quadratic Hecke characters $\rho_1,\rho_2:\I_F^\times/F^\times\rightarrow \{\pm 1\}$ with sign vectors $(\varepsilon,\lambda)$ and $(\varepsilon,-\lambda)$, respectively, of conductor coprime to $N$, and such that $L(1, \Pi, {\rho_i})\neq 0$. Moreover, $\rho_1$ and $\rho_2$ can be chosen so that the quadratic extension $E/F$ associated $\rho_1\cdot\rho_2$ admits an embedding into $B$.
\end{proposition}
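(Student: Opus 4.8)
The strategy is to follow the argument of Proposition \ref{strongadm} essentially verbatim, but with two refinements: first, we must produce \emph{two} characters $\rho_1,\rho_2$ with the prescribed sign vectors $(\varepsilon,\lambda)$ and $(\varepsilon,-\lambda)$ at the archimedean places (here $\varepsilon$ prescribes the signs at $\Sigma_B$ and $\pm\lambda$ the signs at $\Sigma_F\setminus\Sigma_B$); second, we must control the ramification of $\rho_1$ and $\rho_2$ at the finite places so that the quadratic extension $E/F$ attached to $\rho_1\cdot\rho_2$ embeds into $B$. Recall that $E/F$ embeds into $B$ if and only if no place of $F$ that ramifies in $B$ splits in $E$; equivalently, for every place $v$ in the (finite, even-cardinality) ramification set $\mathrm{Ram}(B)$ of $B$, the local character $(\rho_1\rho_2)_v$ must be nontrivial. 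Since $\Pi$ admits a Jacquet--Langlands lift to $G$, the archimedean places in $\mathrm{Ram}(B)$ are exactly $\Sigma_F\setminus\Sigma_B$; at those places $(\rho_1\rho_2)_\sigma(-1) = \lambda_\sigma\cdot(-\lambda_\sigma) = -1$, so $(\rho_1\rho_2)_\sigma$ is automatically the nontrivial quadratic character. Thus the only real constraint is at the finite places of $\mathrm{Ram}(B)$, which are disjoint from the places dividing $N$ (a quaternion algebra is split at the places dividing the conductor of a form it supports, since $\Pi$ has a Jacquet--Langlands lift).

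\textbf{Key steps.} First I would run the construction in the proof of Proposition \ref{strongadm} twice in parallel. For $\rho_1$: choose local characters $\theta_v^{(1)}$ at archimedean $v$ according to the sign vector $(\varepsilon,\lambda)$ (nontrivial quadratic if the prescribed sign is $-1$, trivial otherwise), at $v\mid N$ exactly as in rules (3)--(5) there (trivial except possibly the nontrivial unramified character at the chosen prime $\fp\mid N$ with $\ord_\fp(N)$ odd), and \emph{additionally} at each finite place $v\in\mathrm{Ram}(B)$ set $\theta_v^{(1)}$ to be the nontrivial unramified quadratic character of $F_v^\times$. By Remark \ref{rk:appendix}, adjoining these unramified twists at finitely many places changes $\sgn(\Pi_{\theta^{(1)}})$ only by a sign $(-1)^{\#\{v\in\mathrm{Ram}(B)\cap\Sigma_{\mathrm{fin}}\}}$; we absorb this parity into the choice (between \eqref{sigma11} and \eqref{sigma12}) of whether to include $\fp$ in $\Sigma$, so that we still arrange $\sgn(\Pi_{\theta^{(1)}})=1$. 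Apply Grunwald--Wang to get a quadratic Hecke character $\theta^{(1)}$ realizing all these local conditions, with conductor coprime to $N$; apply Waldspurger \cite[Theorem 4]{Wa91} to get $\xi_1\in F_+^\times$ with $L(1,\Pi_{\theta^{(1)}},\theta_{\xi_1})\ne 0$ and, by the strengthened form of that theorem, $|\xi_1-1|_v<1$ for all $v\mid N$; set $\rho_1 = \theta^{(1)}\cdot\theta_{\xi_1}$. Then $L(1,\Pi,\rho_1)\ne0$, $\rho_1$ has the right archimedean sign vector, and $\rho_1$ is unramified outside $N$ with $\rho_{1,v}=\theta_v^{(1)}$ for $v\mid N$ (Remark \ref{rk:appendix2}). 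Repeat identically for $\rho_2$, using the sign vector $(\varepsilon,-\lambda)$ at the archimedean places and the \emph{same} choice of unramified quadratic characters at finite places of $\mathrm{Ram}(B)$.

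\textbf{Checking the embedding condition.} It remains to verify that $\rho_1\cdot\rho_2$ is nontrivial at every place of $\mathrm{Ram}(B)$. At the archimedean places in $\mathrm{Ram}(B)=\Sigma_F\setminus\Sigma_B$ this was noted above: the signs are opposite, so $(\rho_1\rho_2)_\sigma(-1)=-1$. At a finite place $v\in\mathrm{Ram}(B)$, both $\rho_{1,v}$ and $\rho_{2,v}$ are the nontrivial \emph{unramified} quadratic character of $F_v^\times$ by construction (such a place is coprime to $N$, so $\xi_i$ is a unit there up to the Waldspurger condition and $\theta_{\xi_i,v}$ is trivial — one should just confirm $v\nmid N$, which holds because $\Pi$, having a Jacquet--Langlands lift to $G$, forces $B$ to be split at primes dividing $N$). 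Hence $(\rho_1\rho_2)_v = \rho_{1,v}^2 = 1$ --- wait, that is trivial, which is the wrong direction. So instead one must choose $\theta_v^{(1)}$ and $\theta_v^{(2)}$ to be \emph{different} characters at finite $v\in\mathrm{Ram}(B)$; since $F_v^\times/(F_v^\times)^2$ has order $\geq 4$ (for $v$ finite), pick $\theta_v^{(1)}$ nontrivial unramified and $\theta_v^{(2)}$ a nontrivial \emph{ramified} quadratic character, so that $(\rho_1\rho_2)_v=\theta_v^{(1)}\theta_v^{(2)}$ is again a nontrivial quadratic character. One must then re-examine the sign bookkeeping: the ramified choice at $v$ contributes a local root number to $\sgn(\Pi_{\theta^{(2)}})$ governed by \eqref{signs} through the term $\sign(\rho)\rho(N)$; since $v\nmid N$, the factor $\rho_v(N_v)=1$ and the only effect is through $\sign(\theta^{(2)})$, which is a product over archimedean places and is unaffected by a finite ramified twist --- so actually the parity is unchanged and no extra adjustment is needed beyond choosing $\Sigma$ via \eqref{sigma11}/\eqref{sigma12} as before. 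The main obstacle is precisely this simultaneous bookkeeping: making the finite-place ramification of $\rho_1$ versus $\rho_2$ genuinely differ at every $v\in\mathrm{Ram}(B)$ (so that $\rho_1\rho_2$ stays ramified, hence nontrivial, there) while keeping both root numbers equal to $+1$ and both conductors coprime to $N$, and checking that Grunwald--Wang and the effective form of Waldspurger's theorem tolerate the full list of imposed local conditions. Once the local conditions are laid out consistently, the rest is a routine invocation of the three cited theorems.
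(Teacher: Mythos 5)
Your proposal rests on a claim that is false and that inverts the correct relationship between $\mathrm{Ram}(B)$ and the primes dividing $N$. You assert that the finite ramified places of $B$ are \emph{disjoint} from the places dividing $N$, on the grounds that ``a quaternion algebra is split at the places dividing the conductor of a form it supports.'' In fact the opposite inclusion holds: if $B$ ramifies at a finite place $v$ and $\Pi$ admits a Jacquet--Langlands lift to $B^\times/F^\times$, then $\Pi_v$ must be a discrete series (special or supercuspidal) representation of $\PGL_2(F_v)$, hence ramified, hence $v\mid N$. The paper's own proof uses precisely this point: ``all finite places where $B$ ramifies are places that divide $N$ with valuation 1.'' So the finite part of $\mathrm{Ram}(B)$ is a \emph{subset} of $\{v:v\mid N\}$, not disjoint from it.

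This error is fatal to your proposed fix. After noticing mid-proof that imposing the same unramified quadratic character on both $\rho_1$ and $\rho_2$ at finite $v\in\mathrm{Ram}(B)$ gives $(\rho_1\rho_2)_v=1$ (wrong direction), you patch it by letting $\theta_v^{(2)}$ be a nontrivial \emph{ramified} quadratic character there. But since such $v$ divides $N$, this makes $\rho_2$ ramified at a prime dividing $N$, which violates the requirement that $\rho_2$ have conductor coprime to $N$. The paper avoids this by observing that the characters $\rho_1,\rho_2$ must differ at some such $v$ only by \emph{unramified} quadratic characters, and arranges this through a delicate parity analysis: when $r_1^B$ is odd, $s_{(\varepsilon,\lambda)}$ and $s_{(\varepsilon,-\lambda)}$ have opposite parity, so one character takes $\Sigma$ as in \eqref{sigma11} and the other as in \eqref{sigma12}, already forcing $(\rho_1\rho_2)_\fp$ to be unramified nontrivial at the distinguished prime $\fp$; the remaining (even number of) ramified finite primes of $B$ are handled by putting the unramified quadratic character on $\rho_1$ only, using the flexibility of Remark \ref{rk:appendix}. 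When $r_1^B$ is even a similar case split is carried out with two distinguished primes $\fp_1,\fp_2$. You would need to redo the sign bookkeeping along these lines rather than patching with ramified characters, which are unavailable at the primes in question.

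Finally, your appeal to $|F_v^\times/(F_v^\times)^2|\ge 4$ to find a ramified quadratic character is beside the point: the obstruction was never existence of such a character but compatibility with the conductor condition, and your premise about where $B$ ramifies made the problem look tractable in a way it is not.
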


\begin{proof}
  The first assertion follows directly from Proposition \ref{strongadm}. It remains to see that $\rho_1$ and $\rho_2$ can be chosen in such a way that $E$ admits an embedding into $B$. For this, we will see that all places $v$ that ramify in $B$ are inert or ramify in $E/F$. This is clearly satisfied at all the archimedean places, since by our choice of the sign vectors of $\rho_1$ and $\rho_2$ we have that $(\rho_1\cdot\rho_2)_v(-1)=-1$ at all $v\in\Sigma_F\setminus \Sigma_B$.

  Next, we deal with the condition at finite places. Suppose first that $|\Sigma_F\setminus\Sigma_B|$ is odd; that is, that $B$ ramifies at an odd number of archimedean places. Then $B$ ramifies an odd number of finite places. Since $\Pi$ admits a Jacquet--Langlands lift to $B$, all finite places where $B$ ramifies are places that divide $N$ with valuation 1. In particular, when choosing $\rho_1$ and $\rho_2$, we can take the prime $\fp$ in the proof of Proposition \ref{strongadm} to be a prime where $B$ ramifies. Also, $s_{(\varepsilon,\lambda)}$ and $s_{(\varepsilon,-\lambda)}$ have different parity (i.e., the number of coordinates where the sign is $-1$ in each vector have different parity), and this means that one of the characters $\rho_i$  has the set $\Sigma$ as in \eqref{sigma11} and the other as in \eqref{sigma12}. Therefore, the character $(\rho_1\cdot\rho_2)_\fp$ is the quadratic unramifed character of $F_\fp^\times$. For the remaining primes $\fq$ that ramify in $B$, since there are an even number of them, by Remark \ref{rk:appendix} and Remark \ref{rk:appendix2}, we can take $\rho_{1,\fq}$ to be the quadratic unramified character and $\rho_{2,\fq}$ to be the trivial character. Therefore, $(\rho_1\cdot\rho_2)_\fq$ is the quadratic unramifed character of $F_\fq^\times$.  This implies that, with this choices of $\rho_1$ and $\rho_2$, the extension $E/F$ is inert at all finite primes where $B$ ramifies.

  Suppose now that $|\Sigma_F\setminus\Sigma_B|$ is even; that is, that $B$ ramifies at an even number of archimedean places. Then $B$ ramifies an even number of finite places. If $B$ does not ramify at any finite place, there is no condition to check. If $B$ ramifies at some finite place, then it ramifies at least in two places, say $\fp_1$ and $\fp_2$. In this case $s_{(\varepsilon,\lambda)}$ and $s_{(\varepsilon,-\lambda)}$ have the same parity, so both characters $\rho_1$ and $\rho_2$  have the set $\Sigma$ as in \eqref{sigma11}, or both as in \eqref{sigma12}. If both fall in case \eqref{sigma12}, we can take the prime $\fp$ to be $\fp_1$ for $\rho_1$ and $\fp_2$ for $\rho_2$. In the choice of $\rho_1$, we choose $\rho_{1,\fp_2}$ to be trivial and in the choice of $\rho_2$ we choose $\rho_{2,\fp_1}$ to be trivial. For the other primes $\fq$ where $B$ ramifies (there are an even number of them), we take $\rho_{1,\fq}$ to be the unramified quadratic character and $\rho_{2,\fq}$ to be the trivial character. This implies that at all primes $\fq$ where $B$ is ramified (including $\fp_1$ and $\fp_2$), the local character $(\rho_1\cdot\rho_2)_\fq$ is the quadratic unramified character and therefore $\fq$ is inert in $E/F$. If both characters fall in case \eqref{sigma11}, then we can take $\rho_{1,\fq}$ to be the unramified quadratic character and $\rho_{2,\fq}$ to be the trivial character at all finite primes where $B$ ramifies, and again we have that $\fq$ is inert in $E/F$ for all such primes $\fq$.
\end{proof}
\begin{remark}\label{rk:ass}
  From the choice of $\rho_1$ and $\rho_2$, we see that for all $\fq\mid N$ such that $\fq$ is non-split in $E/F$, we have that $\ord_\fq(N)=1$. This is because $\rho_{i,\fq}$ can only be non-trivial at primes $\fq$ that ramify in $B$, so they divide $N$ exactly.
\end{remark}
\bibliographystyle{amsalpha}
\bibliography{biblio}
\end{document}